\definecolor{mblue}{RGB}{65,105,225}
\newtheorem{thm}[subsection]{Theorem}
\newtheorem{prop}[subsection]{Proposition}
\newtheorem{cor}[subsection]{Corollary}
\newtheorem{lemma}[subsection]{Lemma}
\theoremstyle{definition}  
\newtheorem{defn}[subsection]{Definition}
\newtheorem{ex}[subsection]{Example}
\newtheorem{remark}[subsection]{Remark}
\newtheorem{strategy}[subsection]{Strategy}
\newcommand{\map}{\rightarrow}
\newcommand{\cl}{\mathrm{cl}}
\newcommand{\tp}{\mathrm{top}}
\newcommand{\sigmabar}{\overline{\sigma}}
\newcommand{\kappabar}{\overline{\kappa}}
\newcommand{\hsf}{\mathsf{h}}
\newcommand{\an}[1]{\left\langle {#1}\right\rangle}
\newcommand{\AR}{{\mathcal{A}}}
\newcommand{\AC}{{\mathcal{A}^\C_*}}
\newcommand{\Acl}{{\mathcal{A}^{\text{cl}}_*}}
\newcommand{\tensor}{\otimes}
\newcommand{\field}[1]  {\mathbb #1} 
\newcommand{\F}         {\field F}
\newcommand{\R}         {\field R}
\newcommand{\Z}         {\field Z}
\newcommand{\C}         {\field C}
\newcommand{\M}         {\field M}
\newcommand{\A}         {\field A}
\DeclareMathOperator{\Sq}{Sq}
\DeclareMathOperator{\Ext}{Ext}
\DeclareMathOperator{\coker}{coker}
\numberwithin{equation}{section} 
\begin{document}

\title{$\R$-motivic stable stems}

\author{Eva Belmont} 
\address{Department of Mathematics \\Northwestern University \\Evanston, IL
60208}
\email{ebelmont@northwestern.edu}

\author{Daniel C.\ Isaksen}
\address{Department of Mathematics \\ Wayne State University\\
Detroit, MI 48202}
\email{isaksen@wayne.edu}

\subjclass[2000]{14F42, 55Q45, 55S10, 55T15}

\keywords{motivic stable homotopy group,
motivic Adams spectral sequence,
$\rho$-Bockstein spectral sequence, 
Mahowald invariant, root invariant}


\begin{abstract}
We
compute some $\R$-motivic stable homotopy groups.
For $s - w \leq 11$,
we describe the motivic stable homotopy groups
$\pi_{s,w}$ of a completion of the $\R$-motivic sphere spectrum.  
We apply the $\rho$-Bockstein spectral
sequence to obtain $\R$-motivic $\Ext$ groups 
from the $\C$-motivic $\Ext$ groups, which are well-understood in a large
range.
These $\Ext$ groups are the input to the
$\R$-motivic Adams spectral sequence.  
We fully analyze the Adams differentials in a range, and we
also analyze hidden extensions by $\rho$, $2$, and $\eta$.
As a consequence of our computations,
we recover Mahowald invariants of many low-dimensional
classical stable homotopy elements.
\end{abstract}

\maketitle

\section{Introduction}

The goal of this article is to compute the stable homotopy groups
of the $\R$-motivic sphere spectrum in a range.  These stable
homotopy groups are the most fundamental invariants of the
$\R$-motivic stable homotopy category, and thus lead to a deeper 
understanding of many of the computational aspects of
$\R$-motivic homotopy theory.
More specifically, we work in cellular $\R$-motivic stable homotopy
theory, completed appropriately at $2$ and $\eta$ so that the
$\R$-motivic Adams spectral sequence converges.  

Our main tool is the $\R$-motivic Adams spectral sequence, which 
takes the form
\[
E_2 = \Ext_{\AR}(\M_2, \M_2) \implies \pi_{**}.
\]
Here $\AR$ is the $\R$-motivic Steenrod algebra,
$\M_2$ is the $\R$-motivic cohomology of a point, and
$\pi_{*,*}$ is the bigraded homotopy groups of the
$\R$-motivic sphere (completed at $2$ and $\eta$).
We obtain complete results about
$\pi_{s,w}$ for $s - w \leq 11$.
This approach follows \cite{DI17}, which computed
$\pi_{s,w}$ for $s-w \leq 3$.

See \cite{BI20} for large-scale $\R$-motivic Adams charts.
These charts are an essential companion to this manuscript.
In a sense, this manuscript consists of a series of arguments for
the computational facts displayed in the Adams charts.

\subsection{The $\rho$-Bockstein spectral sequence}
The first step in an Adams spectral sequence program is to obtain
the algebraic $E_2$-page.  We study this computation in 
Sections \ref{sctn:rho-Bockstein}, \ref{sctn:Bock-diff}, and
\ref{section:bockstein-extn}.
We use the $\rho$-Bockstein spectral sequence, which takes
the form
\[
\Ext_{\AR^\C}(\M_2^\C, \M_2^\C) [ \rho ] \implies
\Ext_{\AR}(\M_2, \M_2).
\]
Here $\AR^\C$ is the $\C$-motivic Steenrod algebra,
and $\M_2^\C$ is the $\C$-motivic cohomology of a point.

The $\rho$-Bockstein spectral sequence is a tool that passes from
$\C$-motivic $\Ext$ groups to $\R$-motivic $\Ext$ groups.
We discuss the general properties of this spectral sequence
in Section \ref{sctn:rho-Bockstein}, and we describe an 
unexpectedly effective strategy for computing differentials.
The key idea is to compute 
the $\rho$-periodic groups 
$\Ext_{\AR}(\M_2, \M_2)[\rho^{-1}]$ in advance.
Then naive combinatorial considerations force a very large number of
Bockstein differentials.
We discuss specific Bockstein differential computations in
Section \ref{sctn:Bock-diff}.

Having obtained the $E_\infty$-page of the $\rho$-Bockstein spectral
sequence, we do not yet have a complete knowledge of
$\Ext_{\AR}(\M_2, \M_2)$.  It remains to resolve extensions that are
hidden by the $\rho$-Bockstein filtration.
There is an unmanageable quantity of hidden extensions,
so we do not attempt to analyze them completely, not even in a range.
Nevertheless, we do analyze all extensions by $h_0$ and $h_1$ 
in the range under consideration.  These computations are carried out
in Section \ref{section:bockstein-extn}.

\subsection{The $\R$-motivic Adams spectral sequence}

Having obtained the $E_2$-page 
of the $\R$-motivic Adams spectral sequence,
the next step is to determine Adams differentials.
We carry out these computations in Section \ref{sctn:Adams-diff}.
These differentials can be obtained by a variety of techniques.
One important technique is the use of the Moss Convergence Theorem \ref{thm:Moss} to compute Toda brackets, which determine that
certain elements are permanent cycles.
Another technique is comparison to previously
established computations in the $\C$-motivic and classical computations.
See Section \ref{subsctn:intro-comparison} 
for more discussion of these comparisons.

After computing Adams differentials and obtaining the 
Adams $E_\infty$-page, there are once again hidden extensions to
resolve.  As in the algebraic case, there are too many extensions
to study exhaustively, but we do consider all extensions by
$\rho$, $\hsf$, and $\eta$ exhaustively (where $\rho$, $\hsf$, and
$\eta$ are stable homotopy elements detected by
$\rho$, $h_0$, and $h_1$ respectively).
These computations are carried out in Section \ref{sctn:Adams-extns}.
Once again, the key techniques are shuffling relations involving Toda brackets and comparison to the $\C$-motivic and classical cases.

\subsection{Comparison of homotopy theories}
\label{subsctn:intro-comparison}
An essential ingredient in our computations is comparison between
the $\R$-motivic, $\C$-motivic, $C_2$-equivariant, and classical
stable homotopy theories, as depicted in the diagram
\begin{equation}
\label{eq:comparison}
\xymatrix{
\R\textrm{-motivic} \ar[rr]^{\textrm{realization}} \ar[d]_{\textrm{extension of scalars}} & & C_2\textrm{-equivariant} \ar[d]^{\textrm{forgetful}} \\
\C\textrm{-motivic} \ar[rr]_{\textrm{realization}} & & \textrm{classical.}
}
\end{equation}
The horizontal arrows labelled ``realization" refer to
the Betti realization functors that 
take a variety over $\C$ (resp., over $\R$)
to the space (resp., $C_2$-equivariant space) of $\C$-valued points.
The vertical arrow labelled ``extension of scalars" refers
to the functor that takes a variety over $\R$ and views it as a
variety over $\C$.
The vertical arrow labelled ``forgetful" refers to the functor
that takes a $C_2$-equivariant object to its underlying non-equivariant
object.

Our philosophy in this article is to accept computational information
about the $\C$-motivic and classical stable homotopy groups
as given, and to use this information to study the
$\R$-motivic stable homotopy groups.
See \cite{Isaksen14c} for an extensive summary of
computational information about the
$\C$-motivic and classical Adams spectral sequences.
The presence of the $C_2$-equivariant stable homotopy category in
this diagram is relevant for our consideration of
Mahowald invariants, to be discussed below in 
Section \ref{subsctn:intro-root}.

There is a surprising connection between $\C$-motivic and $\R$-motivic 
that enables many of our detailed computations.
Namely, Theorem \ref{thm:pi-S/rho} shows that the
$\C$-motivic stable homotopy groups are isomorphic to the
$\R$-motivic homotopy groups of the cofiber $S/\rho$ of $\rho$.
This means that the structure of $\C$-motivic stable homotopy groups
governs both the cokernel and the kernel of multiplication by $\rho$.
This allows us to deduce many $\R$-motivic computational facts
with relative ease from known $\C$-motivic information.

\subsection{Mahowald invariants}
\label{subsctn:intro-root}

Let $\alpha$ be a non-zero classical stable homotopy element.
The Mahowald invariant (or root invariant) $R(\alpha)$ is a non-zero equivalence class
of classical stable homotopy elements in a stem that is higher
than the stem of $\alpha$.
One source of interest in Mahowald invariants is that $R(\alpha)$
appears to have greater chromatic complexity than $\alpha$.
Thus one can construct more exotic stable homotopy elements
out of elements that are better understood
\cite{mahowald-ravenel-root}.

Bruner and Greenlees reformulated the definition of the Mahowald
invariant in terms of $C_2$-equivariant stable homotopy groups 
\cite{bredon-loffler}.  Although we do not study $C_2$-equivariant
homotopy groups directly, we have indirectly obtained information
about them because the 
$\R$-motivic and $C_2$-equivariant stable homotopy groups
are isomorphic in a range \cite{BGI19}.
In Section \ref{sctn:root}, we show how many Mahowald invariants
can be immediately deduced from our $\R$-motivic computations.
While these results only recover previously known Mahowald invariants
\cite{mahowald-ravenel-root} \cite{Behrens07},
we believe that our techniques can be extended into uncharted territory
without much more effort.

\begin{thm}
\label{thm:R-Mahowald}
Table \ref{tab:Mahowald} gives some values of the
Mahowald invariant.
\end{thm}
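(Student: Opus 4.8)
The plan is to combine the Bruner--Greenlees reformulation of the Mahowald invariant with our computation of $\pi_{s,w}$ and the comparison diagram \eqref{eq:comparison}. Bruner and Greenlees express the Mahowald invariant $R(\alpha)$ of a nonzero classical element $\alpha \in \pi_n$ entirely in terms of $C_2$-equivariant stable homotopy: $R(\alpha)$ is the leading term, in the cellular filtration, of the image of $\alpha$ in the tower of stunted projective spectra $\mathbb{RP}^\infty_{-k}$, which arise as the $C_2$-fixed points of $(EC_2)_+ \wedge S^{-k\sigma}$; equivalently, it is governed by the behavior of $\alpha$ in the $C_2$-Tate construction. By \cite{BGI19}, the $C_2$-equivariant and $\R$-motivic stable stems agree in the range we consider, with the Euler class $a_\sigma$ corresponding to $\rho$; so $R(\alpha)$ is determined by the $\rho$-multiplicative structure of $\pi_{**}$, which we have computed in Sections~\ref{sctn:Adams-diff} and \ref{sctn:Adams-extns}, together with the identification of the $\rho$-cofiber homotopy with $\C$-motivic homotopy in Theorem~\ref{thm:pi-S/rho}.

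Concretely, for each $\alpha$ listed in Table~\ref{tab:Mahowald} I would proceed as follows. First, pin down the $\R$-motivic class that detects $\alpha$, using that $\pi_{**}$ in the region of large negative weight (where multiplication by $\rho$ is invertible) recovers the classical stable stems. Second, transport this lift through the $\rho$-Bockstein: track how it interacts with multiplication by $\rho$ and with the $\rho$-Bockstein differentials until the ``leading term'' of the Bruner--Greenlees construction appears, and identify which class of $\pi_{**}$ it is; since we know $\pi_{**}$ completely in this range, together with all relevant $\rho$-, $\hsf$-, and $\eta$-extensions, this identification is forced. Third, realize back to a classical element along the bottom row of \eqref{eq:comparison} and match the bidegree with the classical stem recorded in the table. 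Finally, compare the output with the values of \cite{mahowald-ravenel-root} and \cite{Behrens07}; because the argument is driven entirely by our $\pi_{**}$-data, these known values serve only as consistency checks.

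The main obstacle is the translation itself: the Bruner--Greenlees definition lives naturally in the $C_2$-equivariant category and is phrased through the cellular filtration of the spectra $\mathbb{RP}^\infty_{-k}$, so matching its indexing precisely with the $\rho$-Bockstein filtration on $\pi_{**}$ (while keeping careful track of motivic weights, and distinguishing the algebraic ``leading term'' from the topological one) is where the real work lies. One must also verify, for each entry of the table, both that \cite{BGI19} applies in the relevant bidegrees and that the target stem of $R(\alpha)$ lies inside the range $s - w \le 11$ in which our computation of $\pi_{**}$ is complete. A final minor point: Mahowald invariants are cosets, so one should check that the indeterminacy recorded in the table matches the appropriate $\rho$-divisibility and Betti-realization cosets in $\pi_{**}$; in the low-dimensional range at hand these indeterminacies are small or trivial, reducing this to a finite verification.
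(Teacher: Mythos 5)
Your proposal is correct and follows essentially the same route as the paper: the paper packages your plan as the $\R$-motivic Mahowald invariant (Definition \ref{defn:R-Mahowald}), computed from the $\rho$-divisibility structure of the calculated $\pi^\R_{*,*}$ (Table \ref{tab:R-Mahowald}), transferred to the classical invariant via the \cite{BGI19} range comparison (Theorem \ref{thm:root-compare}) and read off under Betti realization (Table \ref{tab:R-to-C}). Your caveats about checking the range condition for each entry and about the indeterminacy are exactly the points the paper handles via the hypothesis $2w-s<4$ in Theorem \ref{thm:root-compare} and the excluded cases listed in Remark \ref{rem:root-compare-failure}.
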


\begin{longtable}{llll}
\caption{Some Mahowald invariants} \\
\toprule 
stem & $\alpha$ & $R(\alpha)$ & 
	indeterminacy \\
\midrule \endhead
\bottomrule \endfoot
\label{tab:Mahowald}
$0$ & $2$ & $\eta$ & \\
$0$ & $4$ & $\eta^2$ & \\
$0$ & $8$ & $\eta^3$ & \\
$1$ & $\eta$ & $\nu$ & $2\nu$, $4 \nu$ \\
$2$ & $\eta^2$ & $\nu^2$ \\
$3$ & $\nu$ & $\sigma$ & $2 \sigma$, $4 \sigma$, $8 \sigma$ \\
$3$ & $2\nu$ & $\eta \sigma$ & $\epsilon$ \\
$3$ & $4 \nu$ & $\eta^2 \sigma$ & $\eta \epsilon$ \\
$6$ & $\nu^2$ & $\sigma^2$ & $\kappa$ \\
$7$ & $\sigma$ & $\sigma^2$ \\
$7$ & $2 \sigma$ & $\eta_4$ & $\eta \rho_{15}$ \\
$7$ & $4 \sigma$ & $\eta \eta_4$ & $\nu \kappa$, $\eta^2 \rho_{15}$ \\
$8$ & $\eta \sigma$ & $\nu_4$ & $2 \nu_4$, $4 \nu_4$ \\
$8$ & $\epsilon$ & $\sigmabar$ \\
$9$ & $\eta^2 \sigma$ & $\nu \nu_4$ & $\eta \kappabar$ \\
\end{longtable}

\begin{proof}
Theorem \ref{thm:root-compare} reduces the computation
to an $\R$-motivic Mahowald invariant, as defined in 
Section \ref{subsctn:R-root}.
Table \ref{tab:R-Mahowald} gives the
values of the $\R$-motivic Mahowald invariant.  Finally,
Table \ref{tab:R-to-C} gives the Betti realizations of the
$\R$-motivic Mahowald invariants.
\end{proof}

See Examples \ref{ex:MR(sigma)} and \ref{ex:R(sigma)} for detailed
illustrations of how this technique plays out in practice.

We have computed the Mahowald invariant of most, but not every, 
$\alpha$ through the 11-stem.  In particular, we do not compute 
the Mahowald invariants of $2^k$ for $k \geq 4$,
$8 \sigma$, $\eta \epsilon$, $\mu_9$, $\eta \mu_9$,
nor $\zeta_{11}$ and its multiples.
In these cases,
the problem is that the inequality of Theorem \ref{thm:root-compare}
does not apply, so our $\R$-motivic computations do not determine
$C_2$-equivariant behavior.

\section{Notation}

We write $\M_2$ for the $\R$-motivic homology of a point
with coefficients in $\F_2$.  Recall that $\M_2$ is isomorphic to
$\F_2[\rho, \tau]$, where $\rho$ and $\tau$ have degrees $(-1,-1)$ and
$(0,-1)$ respectively \cite{Voevodsky03b}.

We write $\AR$ for the $\R$-motivic dual Steenrod algebra.
Recall that $\AR$ is described by the equations
\begin{align*}
\\\AR  & = \M_2[\tau_0,\tau_1,\dots,\xi_1,\xi_2,\dots]/(\tau_k^2=\tau
\xi_{k+1}+\rho \tau_{k+1}+ \rho \tau_0 \xi_{k+1})
\\\eta_L(\tau) & = \tau, \hspace{10pt} \eta_R(\tau) =\tau+\rho \tau_0,
\hspace{10pt} \eta_L(\rho)=\eta_R(\rho)=\rho
\\\Delta(\tau_k) & = \tau_k\tensor 1 + \sum {\xi}_{k-i}^{2^i}\tensor \tau_i
\\\Delta(\xi_k) & = \sum \xi_{k-i}^{2^i}\tensor \xi_i,
\end{align*}
where $\tau_i$ and $\xi_k$ have degrees
$(2^{i+1}-1, 2^i-1)$ and $(2^{i+1}-2,2^i-1)$ respectively
\cite{Voevodsky10}.

We write $\M_2^\C$ for the $\C$-motivic homology of a point
with coefficients in $\F_2$, and we write
$\AC$ for the $\C$-motivic dual Steenrod algebra.
These objects are easily described in terms of
$\M_2$ and $\AR$.  Namely, they are the result of setting
$\rho$ equal to zero.

We write $\Acl$ for the classical dual Steenrod algebra,
which can be obtained from $\AR$ by setting $\rho$ and $\tau$
to be $0$ and $1$ respectively.

We write $\Ext$ or $\Ext_\R$ for $\Ext_{\mathcal{A}}(\M_2, \M_2)$, i.e.,
the cohomology of the $\R$-motivic Steenrod algebra.
We write $\Ext_\C$ and $\Ext_\cl$ for the
cohomologies of the $\C$-motivic and classical Steenrod algebras
respectively.

We write $\pi_{p,q}$ or $\pi_{p,q}^\R$ for the
stable homotopy groups of the $\R$-motivic sphere spectrum.
Similarly, we write
$\pi_{p,q}^\C$ for the stable homotopy groups of the
$\C$-motivic sphere spectrum.
We adopt the usual motivic grading convention, so that
$\pi_{p,q}X$ denotes maps out of $S^{p,q}$, 
where $S^{p,q}$ is the smash product of $p-q$ copies of
the simplicial sphere and $q$ copies of $\A^1-0$.

We write $\pi^{C_2}_{p,q}$ for the
stable homotopy groups of the $C_2$-equivariant sphere spectrum.
We use an equivariant grading convention that is compatible with
the motivic grading convention, so that
$\pi_{p,q} X$ denotes maps out of $S^{p,q}$, 
where $S^{p,q}$ is the one-point compactification of $\R^p$,
with $C_2$ acting by negating the last $q$ coordinates.
Betti realization takes $\R$-motivic $S^{p,q}$ to
$C_2$-equivariant $S^{p,q}$.

We write $\pi_p$ for the classical stable homotopy groups.

All stable homotopy groups are suitably completed so that
Adams spectral sequences converge.  Classically, this means
completion at $2$.  In the motivic cases, this means completion
at $2$ and $\eta$ \cite{HKO11a}.

\subsection*{Grading conventions} 

Following \cite{Isaksen14c} and
\cite{DI17}, we use the following grading convention for the motivic Adams
spectral sequence: $s$ denotes the stem, $f$ denotes the Adams filtration, and $w$
denotes the motivic weight. 
Then the internal degree is $s+f$.
In this grading, Adams differentials take the form
$$ d_r:E_r^{s,f,w} \to E_r^{s-1,f+r,w}. $$

The \emph{coweight} of an element in degree $(s, f, w)$
is defined to be $s-w$.  Note that $\rho$
has coweight 0.  In particular, 
an element $x$ and its $\rho$-multiple $\rho x$ lie in the same
coweight.  This makes coweights particularly
useful in the $\rho$-Bockstein perspective that we adopt.

\subsection{Stable homotopy elements}
\label{subsctn:pi-notation}

We adopt conventional notation, as used (for example) in \cite{Isaksen14c}
\cite{IWX19}, for the names of elements in the classical
stable homotopy groups $\pi_*$ and the $\C$-motivic stable homotopy groups
$\pi^\C_{*,*}$.

Table \ref{tab:pi-notation} gives the notation that we use for 
elements of $\pi^\R_{*,*}$.
We define these elements in terms of the elements of the Adams
$E_\infty$-page that detect them.  These definitions have
indeterminacy parametrized by elements of the 
Adams $E_\infty$-page in higher Adams filtration.
As a general rule, this indeterminacy does not matter to our 
computations.  It is possible to use Toda brackets, or geometric
constructions (see \cite{DI13}), to eliminate the indeterminacy
in many cases.

\begin{remark}
\label{rem:hsf}
We use the symbol $\hsf$ to denote an element of $\pi_{0,0}$ that
is detected by $h_0$.  The symbol stands for ``hyperbolic" because
it corresponds to the hyperbolic plane in the Grothendieck-Witt
group interpretation of $\pi_{0,0}$ \cite{Morel04}*{Remark 6.4.2}.  
(Alternatively,
it can also stand for ``Hopf", since $\hsf$ is the zeroth Hopf map.)
Beware that $\hsf$ does not equal $2$; in fact, $2 = \hsf + \rho \eta$.  
\end{remark}

\begin{remark}
\label{rem:sigma}
The element $\sigma$ requires more discussion.  We write $\sigma$
for an element of $\pi_{7,4}$ that is detected by $h_3$.
There are 256 possible choices for $\sigma$, because of the presence
of elements in higher Adams filtration.  One such element in higher
filtration is $\rho c_0$.
Lemma \ref{lem:t^2h2 * c0} shows that $\tau^2 h_2 \cdot \rho c_0$
equals $\rho^4 d_0$.
Therefore, some possible choices of $\sigma$ have the property that
$\tau^2 \nu \cdot \sigma$ is detected by $\rho^4 d_0$ in $\pi_{10,4}$,
while other possible choices of $\sigma$ have the property that
$\tau^2 \nu \cdot \sigma$ is zero.
(The elements $\tau h_1 \cdot \tau P h_1$ and 
$\rho h_1 \cdot \tau h_1 \cdot \tau P h_1$ are not relevant,
by comparison to $kq$ as in Remark \ref{rem:kq}.)

We will need to use the relation $\tau^2 \nu \cdot \sigma = 0$ in
later computations, so we must assume that our choice of $\sigma$
satisfies this condition.
\end{remark}

\begin{remark}
In some cases, we have chosen names for elements of $\pi^\R_{*,*}$
that reflect the values of the extension of scalars
functor given in Table \ref{tab:R-to-C}.  For example,
we write $\tau \sigma^2$ for an element of $\pi^\R_{14,7}$ that
is detected by $\rho h_4$, since this element maps to
$\tau \sigma^2$ in $\pi^\C_{14,7}$.
\end{remark}

\begin{remark}
Beware that our use of the symbol $\kappabar$ is inconsistent with
its usage in \cite{Isaksen14c}.  In this manuscript,
$\tau \kappabar$ refers to a non-zero element of $\pi^\C_{20,11}$
that is detected by $\tau g$.
The symbol $\kappabar$ is used in \cite{Isaksen14c} for the same element.
\end{remark}

\begin{remark}
Occasionally we refer to stable homotopy elements that have no 
standard name.  In these cases, we use the symbol $\{ x\}$ to
indicate a stable homotopy element that is detected by an element
$x$ of an Adams $E_\infty$-page.
\end{remark}

\section{Comparison between $\R$-motivic and $\C$-motivic homotopy}
\label{sctn:compare-R-C}

We first discuss the relationship
between $\R$-motivic and $\C$-motivic stable homotopy theory.
We will use these ideas frequently in later sections
to obtain $\R$-motivic information from known $\C$-motivic information.

Consider the cofiber sequence
\[
\xymatrix@1{
S^{-1,-1} \ar[r]^\rho & S^{0,0} \ar[r] & S/\rho.
}
\]
The cofiber $S/\rho$ of $\rho$ is a $2$-cell complex whose
structure governs multiplication by $\rho$ in
the $\R$-motivic stable homotopy groups, in a sense to be made
precise in this section.
In addition, we will draw an unexpected connection between 
the $\R$-motivic homotopy groups of $S/\rho$ and
$\C$-motivic stable homotopy groups.

As shown in diagram (\ref{eq:comparison}), 
there is an extension of scalars functor
from $\R$-motivic stable homotopy theory to
$\C$-motivic stable homotopy theory, and a
Betti realization functor from $\C$-motivic stable homotopy 
theory to classical stable homotopy theory.
These functors take Eilenberg-Mac Lane spectra to Eilenberg-Mac Lane
spectra, and thus interact nicely with Adams spectral sequences.
In particular, they induce highly structured morphisms of Adams
spectral sequences.  We will frequently use these comparison
functors to deduce information about the $\R$-motivic Adams spectral
sequence from already known information about the $\C$-motivic
and classical Adams spectral sequences.
See \cite{Isaksen14c} for an extensive summary of
computational information about the
$\C$-motivic and classical Adams spectral sequences.

Extension of scalars takes the element $\rho$ of $\pi_{-1,-1}$ to zero.
In particular, it induces the map
$\M_2 \map \M_2^\C$ that takes $\rho$ to zero, and it similarly
induces the map $\AR \map \AC$ that takes $\rho$ to zero.

For an $\R$-motivic spectrum, we write $\Ext_\R(X)$ for
the $E_2$-page of the $\R$-motivic Adams spectral sequence
that converges to $\pi_{*,*}(X)$, i.e., for
$\Ext_{\mathcal{A}}(\M_2, H^{*,*}(X))$,
and similarly for $\Ext_\C(X)$.

Extension of scalars induces a diagram
\[
\xymatrix{
\ar[r] & \Ext_\R(S^{-1,-1}) \ar[r]^\rho \ar[d] & \Ext_\R(S^{0,0}) \ar[r] \ar[d] &
\Ext_\R(S/\rho) \ar[d] \ar[r] & \\ 
\ar[r] & \Ext_\C(S^{-1,-1}) \ar[r]^0 & \Ext_\C(S^{0,0}) \ar[r] &
\Ext_\C(S^{0,0} \vee S^{-2,-1}) \ar[r] & .
}
\]
Because $\rho$ becomes zero after extension of scalars, the bottom
row of the diagram splits.
The map $\Ext_\R(S/\rho) \map \Ext_\C(S^{0,0} \vee S^{-2,-1})$
lifts to a map $\Ext_\R(S/\rho) \map \Ext_\C(S^{0,0})$ that makes
the diagram
\[
\xymatrix{
\Ext_\R(S^{0,0}) \ar[r] \ar[d] & \Ext_\R(S/\rho) \ar[dl] \\
\Ext_\C(S^{0,0})
}
\]
commute.

\begin{prop}
\label{prop:Ext-S/rho}
The map $\Ext_\R(S/\rho) \map \Ext_\C(S^{0,0})$
is an isomorphism.
\end{prop}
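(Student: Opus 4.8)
The plan is to identify $\Ext_\R(S/\rho)$ with the cohomology of an explicit Hopf algebroid built from $\AR$, and then recognize that this Hopf algebroid is precisely $\AC$. Concretely, the $\R$-motivic cohomology $H^{*,*}(S/\rho)$ is $\M_2/\rho \cong \F_2[\tau]$ as an $\M_2$-module with the evident $\AR$-comodule structure inherited from $\M_2$. The first step is to observe that $\M_2/\rho$, as an algebra, is $\M_2^\C = \F_2[\tau]$, and that the $\AR$-comodule structure on $\M_2/\rho$ factors through the quotient map $\AR \to \AR \otimes_{\M_2} \M_2/\rho$. The key algebraic fact — which I would verify directly from the presentation of $\AR$ given in the Notation section — is that $\AR \otimes_{\M_2} \M_2/\rho$, equipped with the quotient structure maps, is isomorphic as a Hopf algebroid over $\M_2^\C$ to $\AC$. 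Indeed, setting $\rho = 0$ in the defining relations $\tau_k^2 = \tau\xi_{k+1} + \rho\tau_{k+1} + \rho\tau_0\xi_{k+1}$ yields $\tau_k^2 = \tau\xi_{k+1}$, which is exactly the $\C$-motivic relation; the right unit $\eta_R(\tau) = \tau + \rho\tau_0$ becomes $\eta_R(\tau) = \tau$; and the coproduct formulas are insensitive to $\rho$. So $\AR \otimes_{\M_2} \M_2/\rho \cong \AC$ as Hopf algebroids.

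Granting this, the second step is a change-of-rings isomorphism: since $H^{*,*}(S/\rho) = \M_2/\rho$ is the comodule obtained by base change of the trivial $\M_2^\C$-comodule along $\M_2 \to \M_2/\rho = \M_2^\C$, we get
\[
\Ext_{\AR}(\M_2, \M_2/\rho) \cong \Ext_{\AR \otimes_{\M_2} \M_2/\rho}(\M_2/\rho, \M_2/\rho) \cong \Ext_{\AC}(\M_2^\C, \M_2^\C) = \Ext_\C(S^{0,0}).
\]
The left-hand side is $\Ext_\R(S/\rho)$. The final step is to check that this composite isomorphism agrees with the map $\Ext_\R(S/\rho) \to \Ext_\C(S^{0,0})$ constructed in the paragraph preceding the proposition — i.e., the lift of the extension-of-scalars map. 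This is a naturality check: both maps are induced by the same underlying algebra map $\M_2/\rho \cong \M_2^\C$ and the same Hopf algebroid quotient $\AR \to \AC$, so they must coincide; one can also pin this down by comparing what each does on the unit class and using multiplicativity, or by chasing the commuting triangle with $\Ext_\R(S^{0,0})$.

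The main obstacle is the first step: verifying cleanly that the quotient Hopf algebroid $\AR \otimes_{\M_2} \M_2/\rho$ really is $\AC$, including checking that the comodule coaction on $\M_2/\rho$ is the trivial one (i.e., that $\tau$ coacts via $\eta_R(\tau) = \tau$ after killing $\rho$, with no correction terms), so that the change-of-rings hypotheses genuinely apply. This is essentially bookkeeping with the explicit formulas, but it is where all the content lies; once it is in place, the change-of-rings theorem and the naturality check are routine. One should also take a moment to confirm that $S/\rho$ is still in the cellular subcategory where the Adams spectral sequence and these $\Ext$ computations behave, which is immediate since $S/\rho$ is a finite cell complex.
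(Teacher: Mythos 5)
Your proposal is correct and is essentially the paper's own argument in different packaging: the paper proves the proposition by noting that the cobar complex of $\AR$ reduced modulo $\rho$ is simultaneously the cobar complex of $\AC$ (since $\AC = \AR/\rho$ and $\M_2^\C = \M_2/\rho$, which the Notation section already records) and, because $\rho$ acts injectively, the cobar complex computing $\Ext_\R(S/\rho)$ --- which is exactly the verification of your change-of-rings step for the quotient Hopf algebroid. The points you flag (that $H^{*,*}(S/\rho)\cong\M_2/\rho$ with the induced coaction, and the naturality comparison with the constructed map) are the same bookkeeping the paper's cobar comparison handles implicitly, so there is no gap.
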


\begin{proof}
Let $C^*_\R$ and $C^*_\C$ be the cobar complexes for
$\Ext_\R(S^{0,0})$ and $\Ext_\C(S^{0,0})$ respectively.
Note that $C^*_\C$ is isomorphic to $C^*_\R / \rho$.
Because multiplication by $\rho$ is injective on
$C^*_\R$,
this is also isomorphic to the cobar complex that computes
$\Ext_\R(S/\rho)$.
\end{proof}

\begin{remark}
\label{rem:ExtC-ExtR-module}
Because of the isomorphism of Proposition \ref{prop:Ext-S/rho},
the object $\Ext_\C$ is a module over $\Ext_\R$.
By careful inspection of definitions, this module action is
easy to describe.  Using the $\rho$-Bockstein spectral sequence
notation from Section \ref{sctn:rho-Bockstein}, a typical
element of $\Ext_\R$ is of the form $\rho^k x$, where
$x$ belongs to $\Ext_\C$.  
The $\Ext_\R$-module action on $\Ext_\C$ is described by
\[
\rho^k x \cdot y =
\left\{
\begin{array}{ll}
0 & \text{ if } k > 0 \\
xy & \text{ if } k = 0,
\end{array}
\right.
\]
where the last expression $xy$ is to be interpreted as the usual Yoneda product
of elements in $\Ext_\C$.
\end{remark}

\begin{remark}
\label{rem:Ext-R-C-exact}
Proposition \ref{prop:Ext-S/rho} implies that there is a long exact sequence
\[
\xymatrix@1{
\cdots \ar[r] & \Ext_\R \ar[r]^\rho & \Ext_\R \ar[r]^i & \Ext_\C \ar[r]^p &
\Ext_\R \ar[r]^\rho & \Ext_\R \ar[r] & \cdots
}
\]
of $\Ext_\R$-module maps, where $\Ext_\C$ is an $\Ext_\R$-module
as in Remark \ref{rem:ExtC-ExtR-module}.
If $x$ is a permanent cycle in the $\rho$-Bockstein spectral sequence,
then the map $i$ takes $x$ in $\Ext_\R$ to the element
of $\Ext_\C$ of the same name.
\end{remark}

Now consider the diagram
\begin{equation}
\label{eq:pi-S/rho}
\xymatrix{
\pi^\R_{*+1,*+1} \ar[r]^-\rho & \pi^\R_{*,*} \ar[r]\ar[d] & 
	\pi^\R_{*,*}(S/\rho)  \ar[dl] \\
& \pi^\C_{*,*}, }
\end{equation}
in which the diagonal arrow exists because $\rho$ maps to
zero in $\pi^\C_{*,*}$.

\begin{thm}
\label{thm:pi-S/rho}
The map $\pi^\R_{*,*}(S/\rho) \map \pi^\C_{*,*}$
is an isomorphism.
\end{thm}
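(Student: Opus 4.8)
The plan is to mimic the proof of Proposition~\ref{prop:Ext-S/rho}, but one level up: replace the cobar complexes (which compute $\Ext$) with the actual Adams spectral sequences (which compute homotopy), and run a comparison-of-spectral-sequences argument. First I would observe that $S/\rho$ is a cellular $\R$-motivic spectrum with $H^{*,*}(S/\rho; \F_2) \cong \M_2 \oplus \Sigma^{2,1}\M_2$ as an $\AR$-module, and that the short exact sequence of $\AR$-comodules
\[
0 \to \M_2 \to H^{*,*}(S/\rho) \to \Sigma^{2,1}\M_2 \to 0
\]
together with Proposition~\ref{prop:Ext-S/rho} identifies the $\R$-motivic Adams $E_2$-page for $S/\rho$ with $\Ext_\C$. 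So the map in diagram~(\ref{eq:pi-S/rho}) is induced by a map of Adams spectral sequences whose effect on $E_2$-pages is precisely the isomorphism $\Ext_\R(S/\rho) \to \Ext_\C(S^{0,0})$ of Proposition~\ref{prop:Ext-S/rho}.

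Next I would check that this is genuinely a morphism of spectral sequences — i.e., that the diagonal arrow $\pi^\R_{*,*}(S/\rho) \to \pi^\C_{*,*}$ is realized on the level of the Adams towers, not just abstractly on homotopy. The composite $\R\text{-motivic sphere} \to S/\rho$ followed by the putative map to the $\C$-motivic sphere is exactly extension of scalars, which is a map of Adams spectral sequences; one then argues that the factorization through $S/\rho$ is compatible with Adams filtrations because extension of scalars kills $\rho$ and the first cell of $S/\rho$ splits off after extension of scalars. Granting this, since the map of $E_2$-pages is an isomorphism, an easy induction on $r$ shows every $d_r$ is carried isomorphically, hence the map on $E_\infty$-pages is an isomorphism. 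Because both spectral sequences converge (both homotopy groups are appropriately completed, by the standing convention and \cite{HKO11a}), a standard comparison argument — the $E_\infty$-isomorphism plus convergence gives an isomorphism on associated graded, and then one concludes on the filtered groups themselves — finishes the proof.

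Alternatively, and perhaps more cleanly, I would bypass spectral sequences and argue directly with long exact sequences. From the cofiber sequence $S^{-1,-1} \xrightarrow{\rho} S^{0,0} \to S/\rho$ one gets the long exact sequence in the top row of~(\ref{eq:pi-S/rho}). The point is to produce a comparison long exact sequence on the $\C$-motivic side and a map between them. Here the key input is that $\pi^\C_{*,*}$ also fits into a long exact sequence realizing multiplication by $\rho$ on $\pi^\R_{*,*}$ — this is the homotopy-level analogue of Remark~\ref{rem:Ext-R-C-exact}, and indeed one expects it to follow from Theorem~\ref{thm:pi-S/rho} itself, so care is needed to avoid circularity. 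The honest route is therefore the spectral-sequence comparison above.

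The main obstacle will be verifying that the diagonal map respects Adams filtration well enough to be a morphism of spectral sequences, and more precisely pinning down that the induced map on $E_2$ really is the Proposition~\ref{prop:Ext-S/rho} isomorphism rather than merely \emph{an} isomorphism. Once that bookkeeping is in place, the rest — induction on pages, convergence, passing from $E_\infty$ to homotopy — is routine. A secondary subtlety is convergence of the $S/\rho$ Adams spectral sequence: one should note $S/\rho$ is cellular and its homotopy is built from that of the sphere, so the completion hypotheses transfer, ensuring the relevant $\lim^1$ terms vanish.
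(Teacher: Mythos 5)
Your proposal is correct and is essentially the paper's argument: the paper's proof simply cites Proposition~\ref{prop:Ext-S/rho} for the isomorphism of Adams $E_2$-pages and concludes that the targets of the (convergent) spectral sequences are isomorphic. The extra bookkeeping you flag — realizing the diagonal map at the level of Adams towers and checking convergence for $S/\rho$ — is exactly the detail the paper leaves implicit, so you have the same proof, just spelled out more carefully.
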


\begin{proof}
Proposition \ref{prop:Ext-S/rho} shows that there is an isomorphism
of $E_2$-pages of Adams spectral sequences, so the targets
of the spectral sequences are also isomorphic.
\end{proof}

\begin{cor}
\label{cor:pi-rho}
Let $\alpha$ be an element of $\pi^\R_{*,*}$.
Extension of scalars takes $\alpha$ to zero in $\pi^\C_{*,*}$
if and only if
$\alpha$ is divisible by $\rho$.
\end{cor}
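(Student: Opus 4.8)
The plan is to deduce this formally from Theorem~\ref{thm:pi-S/rho} by chasing diagram~(\ref{eq:pi-S/rho}). Recall that the cofiber sequence
\[
S^{-1,-1} \xrightarrow{\rho} S^{0,0} \to S/\rho
\]
induces a long exact sequence of $\R$-motivic stable homotopy groups, the relevant portion of which is the top row of~(\ref{eq:pi-S/rho}):
\[
\pi^\R_{*+1,*+1} \xrightarrow{\rho} \pi^\R_{*,*} \to \pi^\R_{*,*}(S/\rho).
\]
By exactness, an element $\alpha$ of $\pi^\R_{*,*}$ maps to zero under the map $\pi^\R_{*,*} \to \pi^\R_{*,*}(S/\rho)$ if and only if $\alpha$ lies in the image of multiplication by $\rho$, which is exactly what it means for $\alpha$ to be divisible by $\rho$.

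First I would record that, by the construction of diagram~(\ref{eq:pi-S/rho}), the extension of scalars map $\pi^\R_{*,*} \to \pi^\C_{*,*}$ factors as the composite of the cofiber map $\pi^\R_{*,*} \to \pi^\R_{*,*}(S/\rho)$ with the diagonal map $\pi^\R_{*,*}(S/\rho) \to \pi^\C_{*,*}$. Next I would invoke Theorem~\ref{thm:pi-S/rho}, which asserts that this diagonal map is an isomorphism. Combining these two facts, $\alpha$ maps to zero in $\pi^\C_{*,*}$ if and only if $\alpha$ maps to zero in $\pi^\R_{*,*}(S/\rho)$, and by the previous paragraph this happens precisely when $\alpha$ is divisible by $\rho$.

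I do not anticipate any genuine obstacle: the substantive content is Theorem~\ref{thm:pi-S/rho}, which we are free to assume, and the rest is a formal consequence of the exactness of the cofiber sequence together with the commutativity of~(\ref{eq:pi-S/rho}). The only points requiring a moment's care are the bookkeeping identification of ``$\alpha$ is divisible by $\rho$'' with ``$\alpha$ lies in the image of $\rho \colon \pi^\R_{*+1,*+1} \to \pi^\R_{*,*}$'', and confirming that the vertical map in~(\ref{eq:pi-S/rho}) is literally the extension of scalars functor referenced in the statement.
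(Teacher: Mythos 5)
Your argument is correct and is exactly the paper's proof: the paper's proof of Corollary \ref{cor:pi-rho} simply says to chase diagram (\ref{eq:pi-S/rho}) using that the diagonal map is an isomorphism (Theorem \ref{thm:pi-S/rho}), which is precisely the chase you have spelled out via exactness of the cofiber sequence. No differences worth noting.
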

\begin{proof}
Chase the diagram \eqref{eq:pi-S/rho}, using that
the diagonal map is an isomorphism.
\end{proof}
\begin{remark}
Corollary \ref{cor:pi-rho} has a $C_2$-equivariant analogue,
as stated later in Proposition \ref{prop:C2-rho-divisible}.
\end{remark}

\begin{remark}
\label{rem:Behrens-Shah}
The isomorphism of Theorem \ref{thm:pi-S/rho} can be strengthened to an
equivalence of categories \cite{behrens-shah-C2}*{Corollary 8.6}.
Namely, the $2$-complete $\C$-motivic cellular stable homotopy category
is equivalent to the homotopy category of $S/\rho$-modules in the
$2$-complete $\R$-motivic cellular stable homotopy category.
\end{remark}

\begin{cor}\label{cor:LES}
There is a long exact sequence 
\[
\xymatrix@1{
\cdots \ar[r] & \pi_{s+1,w+1}^\R(S) \ar[r]^{\rho} & 
\pi_{s,w}^\R(S) \ar[r] &
\pi_{s,w}^\C(S) \ar[r] & \pi_{s,w+1}^\R(S) \ar[r] & \cdots.
}
\]
\end{cor}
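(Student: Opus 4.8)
The plan is to derive Corollary \ref{cor:LES} as a direct consequence of Theorem \ref{thm:pi-S/rho} together with the cofiber sequence defining $S/\rho$. First I would apply $\pi^\R_{*,*}(-)$ to the cofiber sequence
\[
\xymatrix@1{
S^{-1,-1} \ar[r]^\rho & S^{0,0} \ar[r] & S/\rho,
}
\]
which produces a long exact sequence of $\R$-motivic homotopy groups. The connecting map lowers the stem by one and, since $\rho$ has bidegree $(-1,-1)$, the relevant shift on the group $\pi^\R_{*,*}(S/\rho)$ places the terms as $\pi^\R_{s+1,w+1}(S) \xrightarrow{\rho} \pi^\R_{s,w}(S) \to \pi^\R_{s,w}(S/\rho) \to \pi^\R_{s,w+1}(S^{-1,-1})\cong \pi^\R_{s+1,w+1}(S)$; the only subtlety here is bookkeeping the grading so that the suspension $S^{-1,-1}$ matches up correctly with the weight shift appearing in the statement.

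Second, I would invoke Theorem \ref{thm:pi-S/rho} to replace $\pi^\R_{s,w}(S/\rho)$ by $\pi^\C_{s,w}(S)$ throughout, using that the isomorphism is natural in the relevant gradings. Substituting this identification into the long exact sequence of the preceding paragraph yields exactly the claimed sequence
\[
\xymatrix@1{
\cdots \ar[r] & \pi_{s+1,w+1}^\R(S) \ar[r]^{\rho} & \pi_{s,w}^\R(S) \ar[r] & \pi_{s,w}^\C(S) \ar[r] & \pi_{s,w+1}^\R(S) \ar[r] & \cdots.
}
\]
One should check that the maps in the sequence are the expected ones: the map $\pi^\R_{s,w}(S) \to \pi^\C_{s,w}(S)$ should agree with extension of scalars, which follows by comparing diagram \eqref{eq:pi-S/rho} with the cofiber-sequence construction, and the map back into $\pi^\R_{*,*}$ should be the connecting homomorphism.

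I do not anticipate a serious obstacle; this corollary is essentially a repackaging of Theorem \ref{thm:pi-S/rho}. The one place that requires care is the grading: the $2$-cell complex $S/\rho$ has its top cell in bidegree $(-1,-1)$ relative to the bottom cell (dually, cohomology concentrated in degrees $0$ and $(1,1)$, matching the $S^{0,0}\vee S^{-2,-1}$ that appeared on the $\Ext$ level after splitting), so one must confirm that the weight index on the fourth term genuinely reads $w+1$ and not $w$ or $w-1$. Tracking this through the long exact sequence of the cofiber sequence, with $\rho$ in bidegree $(-1,-1)$, gives the stated indexing.
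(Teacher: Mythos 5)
Your proposal is correct and is essentially the paper's own proof: apply homotopy to the cofiber sequence $S^{-1,-1}\xrightarrow{\rho} S^{0,0}\to S/\rho$ and then substitute the identification $\pi^\R_{*,*}(S/\rho)\cong\pi^\C_{*,*}$ from Theorem \ref{thm:pi-S/rho}. (One small bookkeeping correction: the connecting map lands in $\pi^\R_{s-1,w}(S^{-1,-1})\cong \pi^\R_{s,w+1}(S)$, not ``$\pi^\R_{s,w+1}(S^{-1,-1})\cong\pi^\R_{s+1,w+1}(S)$'' as written in your first paragraph; your final displayed sequence has the correct indexing.)
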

\begin{proof}
This is the long exact sequence in homotopy for the fiber sequence 
\[
\xymatrix@1{
S \ar[r]^\rho & S \ar[r] &  S/\rho
}
\]
in $\R$-motivic spectra, after applying the identification in
Theorem \ref{thm:pi-S/rho}.
\end{proof}

\section{Mahowald invariants}
\label{sctn:root}

The goal of this section is to use $\R$-motivic computations
to recompute some Mahowald invariants.
See \cite{Behrens07}*{Section 4} for a careful discussion of the 
definition, using Lin's theorem that $\R P^\infty_{-\infty}$ is
equivalent to $S^{-1}$.

\subsection{$C_2$-equivariant homotopy theory and Mahowald invariants}

Using $C_2$-equivariant homotopy theory, 
Bruner and Greenlees \cite{bredon-loffler} gave an alternative
definition of the Mahowald invariant.
We will summarize this definition, but 
first we need some background on $C_2$-equivariant homotopy theory.

Let $S^{a,b}$ be the one-point compactification of $\R^a$,
where $C_2$ acts by negating the last $b$ coordinates.
Then $\rho: S^{0,0} \map S^{1,1}$ is the inclusion of fixed points.
Note that the cofiber of this map is $\Sigma (C_2)_+$, i.e.,
the suspension of the based free $C_2$-space.

We use the same notation $\rho$ for the
map $S^{-1,-1} \map S^{0,0}$ in the $C_2$-equivariant stable
homotopy group $\pi_{-1,-1}^{C_2}$.
The identification of the cofiber of $\rho$ leads immediately
to the following proposition, whose short proof appears in
\cite{GHIR20}*{Proposition 11.2}.

\begin{prop}
\label{prop:C2-rho-divisible}
Let $\alpha$ be a $C_2$-equivariant stable homotopy element.
The underlying classical stable homotopy element $U(\alpha)$ of
$\alpha$ is zero if and only if $\alpha$ is divisible by $\rho$.
\end{prop}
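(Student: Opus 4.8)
The plan is to mimic exactly the argument that was used for the $\R$-motivic statement in Corollary~\ref{cor:pi-rho}, replacing the extension-of-scalars functor with the forgetful functor $U$ from $C_2$-equivariant spectra to classical spectra. First I would record the $C_2$-equivariant cofiber sequence
\[
\xymatrix@1{
S^{-1,-1} \ar[r]^\rho & S^{0,0} \ar[r] & \Sigma (C_2)_+,
}
\]
which was just identified in the text: the cofiber of $\rho$ is the suspension of the free $C_2$-space $(C_2)_+$. The key point is that $\Sigma (C_2)_+$ is a free $C_2$-spectrum, so its $C_2$-equivariant homotopy groups are computed by the underlying homotopy groups: there is a natural isomorphism $\pi_{*,*}^{C_2}(\Sigma (C_2)_+) \cong \pi_{*-1}(S)$ (an adjunction / Adams-isomorphism type statement, $\Sigma^\infty (C_2)_+ \simeq C_2 \ltimes S$), under which the map $\pi_{*,*}^{C_2}(S^{0,0}) \to \pi_{*,*}^{C_2}(\Sigma (C_2)_+)$ corresponds to the forgetful map $U$ up to a degree shift.

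With this in hand, I would apply $\pi_{*,*}^{C_2}(-)$ to the cofiber sequence to get a long exact sequence
\[
\xymatrix@1{
\cdots \ar[r] & \pi_{a+1,b+1}^{C_2}(S) \ar[r]^-\rho & \pi_{a,b}^{C_2}(S) \ar[r] & \pi_{a-1}(S) \ar[r] & \pi_{a-1,b+1}^{C_2}(S) \ar[r] & \cdots,
}
\]
where the third map is identified with $U$ via the observation above. Exactness at $\pi_{a,b}^{C_2}(S)$ then says precisely that $\alpha$ maps to zero under $U$ if and only if $\alpha$ lies in the image of multiplication by $\rho$, i.e.\ $\alpha$ is divisible by $\rho$. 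This is the same diagram chase that proves Corollary~\ref{cor:pi-rho}, now in the $C_2$-equivariant setting.

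The only subtle point — and the step I would be most careful about — is the identification of the connecting/quotient map $\pi_{*,*}^{C_2}(S^{0,0}) \to \pi_{*,*}^{C_2}(\Sigma (C_2)_+)$ with the forgetful functor $U$. One must check that the natural equivalence $F((C_2)_+, X)^{C_2} \simeq X^e$ (equivalently $\Sigma^\infty (C_2)_+ \wedge X \simeq C_2 \ltimes (X^e)$ after a suitable dualizing) carries the collapse map $S^{0,0} \to \Sigma(C_2)_+$ to the unit of this adjunction, so that on homotopy groups it becomes the underlying-object map $U$, not some twisted variant. Since the excerpt explicitly cites \cite{GHIR20}*{Proposition 11.2} for the short proof, I would simply invoke that reference after setting up the cofiber sequence, and note that the argument is formally identical to the proof of Corollary~\ref{cor:pi-rho} with $\Sigma(C_2)_+$ (a cofree/free spectrum whose equivariant homotopy is the underlying homotopy) playing the role that $S/\rho$ plays in the motivic story via Theorem~\ref{thm:pi-S/rho}.
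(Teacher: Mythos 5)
Your proposal is correct and matches the paper's approach: the paper gives no independent argument but points to the identification of the cofiber of $\rho$ with a free $C_2$-spectrum and cites \cite{GHIR20}*{Proposition 11.2}, whose short proof is exactly the long exact sequence argument you reconstruct, with the free cofiber playing the role that $S/\rho$ plays in Corollary \ref{cor:pi-rho}. The only quibble is bookkeeping: after desuspending, the cofiber of $S^{-1,-1} \xrightarrow{\rho} S^{0,0}$ is $(C_2)_+$ itself (the twist untwists on a free spectrum), so the third term of your long exact sequence is $\pi_a$ and the map is $U$ with no degree shift — a slip that does not affect the diagram chase or the conclusion.
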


Geometric fixed points gives a map
$\pi_{a,b}^{C_2} \map \pi_{a-b}$, and this map takes $\rho$ to $1$.
The $\rho$-periodic groups $\pi_{*,*}^{C_2} [\rho^{-1}]$
are isomorphic to $\pi_* \otimes \Z[\rho^{\pm 1}]$,
i.e., to the classical stable homotopy groups with
$\rho$ and $\rho^{-1}$ adjoined 
\cite{Bredon67}*{Proposition}
\cite{AI82}*{Proposition 7.0}.

With this background on $C_2$-equivariant stable homotopy groups,
we now give the Bruner-Greenlees definition of the Mahowald invariant.
Start with a classical stable homotopy element $\alpha$
in $\pi_n$, which we identify with the obvious element 
of $\pi_* \otimes \Z[\rho^{\pm 1}]$ in degree $(0,-n)$.
Using the isomorphism
\[
\pi_* \otimes \Z[\rho^{\pm 1}] \cong
\pi^{C_2}_{*,*}[\rho^{-1}],
\]
write $\alpha = \rho^k \beta$ for some $\beta$ in
$\pi^{C_2}_{*,*}$ and some integer $k$, with $k$ maximal.
Finally, the Mahowald invariant $R(\alpha)$ is the 
underlying classical stable homotopy element $U(\beta)$ of
$\beta$.

Note that the Mahowald invariant is not strictly defined; 
it is a set of classical stable homotopy elements.  
While the choice of $k$ is unique,
the choice of $\beta$ is not.  Different choices of $\beta$ can
lead to different values of $U(\beta)$.

Also note that $U(\beta)$ is necessarily non-zero by 
Proposition \ref{prop:C2-rho-divisible}.  The point is that
$\beta$ is not divisible by $\rho$, since $k$ was chosen to be
maximal.

\subsection{$\R$-motivic homotopy theory and
Mahowald invariants}
\label{subsctn:R-root}

We will now adapt the framework of Bruner and Greenlees
\cite{bredon-loffler} from the $C_2$-equivariant to the
$\R$-motivic settings.  In order to carry this out, we need
to observe some key $\R$-motivic properties.

First, the $\rho$-periodic groups $\pi_{*,*}^{\R} [\rho^{-1}]$
are isomorphic to $\pi_* \otimes \Z[\rho^{\pm 1}]$,
i.e., to the classical stable homotopy groups with
$\rho$ and $\rho^{-1}$ adjoined 
\cite{DI17}.  See also \cite{Bachmann18} for a more structured
version of this isomorphism.
Second,
Corollary \ref{cor:pi-rho} relates $\rho$-divisibility to
the kernel of the extension of scalars map.

\begin{defn}
\label{defn:R-Mahowald}
Let $\alpha$ be a classical stable homotopy element in $\pi_n$.
The $\R$-motivic Mahowald invariant $R^\R(\alpha)$ is
defined as follows.
Identify $\alpha$ with the obvious element of 
\[
\pi_* \otimes \Z[\rho^{\pm 1}] \cong
\pi^\R_{*,*} [\rho^{-1}]
\]
in degree $(0,-n)$.
Write $\alpha = \rho^k \beta$ for some $\beta$ in
$\pi^\R_{*,*}$ and some integer $k$, with $k$ maximal.
Define $R^\R(\alpha)$ in $\pi^\C_{*,*}$ to be the extension
of scalars of $\beta$.
\end{defn}

\begin{remark}
\label{rem:Mahowald-indeterminacy}
As for the traditional Mahowald invariant, 
the $\R$-motivic Mahowald invariant is not strictly defined.
Different choices of $\beta$ can have different values 
in $\pi_{*,*}^\C$ under extension of scalars.
\end{remark}

\begin{remark}
\label{rem:Mahowald-nonzero}
As for the traditional Mahowald invariant,
the $\R$-motivic Mahowald invariant is always non-zero
by Corollary \ref{cor:pi-rho}.
The point is that
$\beta$ is not divisible by $\rho$, since $k$ was chosen to be
maximal.
\end{remark}

\begin{remark}
\label{rem:Quigley-comparison}
See \cite{Quigley18} \cite{Quigley19} for a different consideration of 
Mahowald invariants in the motivic context.  Our construction
does not compare directly.
\end{remark}

\begin{thm}\label{thm:motivic-mahowald-invariants}
Some values of the $\R$-motivic Mahowald invariant are given
in Table \ref{tab:R-Mahowald}.
\end{thm}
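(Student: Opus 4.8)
The plan is to compute, for each classical stable homotopy element $\alpha \in \pi_n$ listed in Table~\ref{tab:R-Mahowald}, the integer $k$ and the $\R$-motivic element $\beta$ appearing in Definition~\ref{defn:R-Mahowald}, and then read off $R^\R(\alpha)$ as the extension of scalars of $\beta$ in $\pi^\C_{*,*}$. The central tool is Corollary~\ref{cor:pi-rho}: extension of scalars kills exactly the $\rho$-divisible elements. So the strategy for a given $\alpha$ in coweight $s-w = n$ (after the identification with degree $(0,-n)$ in $\pi^\R_{*,*}[\rho^{-1}]$) is to examine the $\rho$-Bockstein/Adams information for $\pi^\R_{*,*}$ restricted to coweight $n$, find the element $\beta$ of smallest weight that becomes $\alpha$ after inverting $\rho$, check that $\beta$ is \emph{not} $\rho$-divisible, and then identify its image under extension of scalars. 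First I would set up the bookkeeping: since $\rho$ has coweight $0$, the entire computation for a fixed $\alpha$ happens inside a single coweight, so the relevant data is the coweight-$n$ column of the $\R$-motivic homotopy groups together with the $\rho$-multiplication structure, all of which is available from the computations of Sections~\ref{sctn:Adams-diff} and~\ref{sctn:Adams-extns} (and, for $\rho$-periodicity, from \cite{DI17}).

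The main steps, in order, are: (1) for each $\alpha$, locate its $\rho$-periodic incarnation and determine $k$ by finding the bottom of the $\rho$-tower in coweight $n$ that maps to $\alpha \cdot \rho^{\mathbb{Z}}$ under the isomorphism $\pi^\R_{*,*}[\rho^{-1}] \cong \pi_* \otimes \Z[\rho^{\pm 1}]$; (2) pin down $\beta$ as a homotopy element detected by a specific Adams $E_\infty$-class, verifying via the $\rho$-multiplication structure that $\rho^k \beta = \alpha$ with $k$ maximal, i.e.\ $\beta$ is $\rho$-indivisible; (3) apply extension of scalars to $\beta$, using Table~\ref{tab:R-to-C} (as advertised in the proof of Theorem~\ref{thm:R-Mahowald}) to name the resulting $\C$-motivic element, which after Betti realization gives the classical Mahowald invariant; (4) record the indeterminacy, which by Remark~\ref{rem:Mahowald-indeterminacy} comes from the ambiguity in choosing $\beta$ among homotopy elements detected by the same $E_\infty$-class modulo higher-filtration classes in the same degree, together with any additional choices of $\beta$ with the same $\rho^k\beta = \alpha$. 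Because the table lists only finitely many $\alpha$ through the $11$-stem (minus the exceptions noted after Theorem~\ref{thm:R-Mahowald}), this is a finite check, and the worked Examples~\ref{ex:MR(sigma)} and \ref{ex:R(sigma)} serve as templates for the individual entries.

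The hard part will be the $\rho$-indivisibility verifications in step~(2): one must be certain that no element in lower weight and the same coweight also hits $\alpha$ after inverting $\rho$, which requires knowing the $\rho$-multiplications (including hidden ones) completely in the relevant coweight — precisely the content of Section~\ref{sctn:Adams-extns}. A secondary subtlety is that the isomorphism $\pi^\R_{*,*}[\rho^{-1}] \cong \pi_* \otimes \Z[\rho^{\pm 1}]$ of \cite{DI17} must be used with the correct normalization so that the classical element $\alpha$ is matched with the right $\rho$-periodic class; getting the degree conventions consistent with the $(0,-n)$ placement is where sign-of-grading errors would creep in. Once those are handled, step~(3) is essentially a lookup in Table~\ref{tab:R-to-C}, and the theorem follows by assembling the per-row computations into Table~\ref{tab:R-Mahowald}.

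\begin{proof}
The computations of Sections~\ref{sctn:Adams-diff} and~\ref{sctn:Adams-extns}
determine $\pi^\R_{*,*}$ in the relevant range, including all
$\rho$-multiplications, and \cite{DI17} identifies
$\pi^\R_{*,*}[\rho^{-1}]$ with $\pi_* \otimes \Z[\rho^{\pm 1}]$.
For each classical element $\alpha \in \pi_n$ in Table~\ref{tab:R-Mahowald},
work in coweight $s - w = n$; since $\rho$ has coweight $0$, the
computation of $R^\R(\alpha)$ takes place entirely within this coweight.
Using the $\rho$-periodicity isomorphism, identify $\alpha$ with the
corresponding class in degree $(0,-n)$ of $\pi^\R_{*,*}[\rho^{-1}]$, and
let $k$ be the maximal integer for which $\alpha = \rho^k \beta$ with
$\beta \in \pi^\R_{*,*}$; the maximality of $k$ is exactly the statement
that $\beta$ is not $\rho$-divisible, which one checks directly from the
$\rho$-multiplication structure in coweight $n$.
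By Corollary~\ref{cor:pi-rho}, extension of scalars sends $\beta$ to a
non-zero element of $\pi^\C_{*,*}$; this is $R^\R(\alpha)$ by
Definition~\ref{defn:R-Mahowald}.
Table~\ref{tab:R-to-C} records the extension of scalars of each such
$\beta$, and the indeterminacy in the final column of
Table~\ref{tab:R-Mahowald} is the set of values of extension of scalars
over the possible choices of $\beta$, as in
Remark~\ref{rem:Mahowald-indeterminacy}.
Carrying out this finite case analysis for each row of
Table~\ref{tab:R-Mahowald}---see Examples~\ref{ex:MR(sigma)} and
\ref{ex:R(sigma)} for representative computations---establishes the
theorem.
\end{proof}
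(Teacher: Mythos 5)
Your proposal is correct and follows essentially the same route as the paper, whose own (very terse) proof likewise reduces each row of Table~\ref{tab:R-Mahowald} to the $\pi^\R_{*,*}$ computations of the later sections together with the extension-of-scalars values in Table~\ref{tab:R-to-C}, with Example~\ref{ex:MR(sigma)} as the template for the per-element check; your formal paragraph is an accurate expansion of this. One minor terminological slip in your narrative: since $\rho$ lowers stem and weight, maximizing $k$ means $\beta$ sits at the \emph{largest} weight (the top of the $\rho$-tower, i.e.\ the $\rho$-indivisible end), not the smallest---your proof text gets this right by phrasing maximality of $k$ as $\rho$-indivisibility of $\beta$.
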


\begin{proof}
This follows immediately from the computations carried out later
in the article.  In particular, one needs the values of the 
extension of scalars map, as shown in Table \ref{tab:R-to-C}
and discussed in Section \ref{sctn:extn-scalars}
\end{proof}

\begin{ex}
\label{ex:MR(sigma)}
We illustrate Theorem \ref{thm:motivic-mahowald-invariants} by
describing the computation of $M^\R(\sigma)$.
The element $\sigma$ in $\pi_7$ is identified with the
element $\alpha$ of $\pi^\R_{*,*} \otimes \Z[\rho^{\pm 1}]$ in degree
$(0,-7)$ that is detected by $\rho^{15} h_4$.
Then $\alpha$ equals $\rho^{14} \beta$, where $\beta$ is detected
by $\rho h_4$.  Finally, Table \ref{tab:R-to-C} shows that
the realization of $\beta$ is $\tau \sigma^2$ in $\pi^\C_{14,7}$.
\end{ex}

In general, the relationship between $R(\alpha)$ and $R^{\R}(\alpha)$
is not obvious.  The choices involved in the definitions are not
necessarily compatible.
For example, it is possible that 
an element $\beta$ in $\pi^\R_{*,*}$ is not divisible by $\rho$,
while its realization in $\pi^{C_2}_{*,*}$ is divisible by 
$\rho$.

The main result of \cite{BGI19} tells us that 
the $\R$-motivic and $C_2$-equivariant stable homotopy groups
agree in a range.  In this range, $R(\alpha)$ and $R^\R(\alpha)$
are easier to compare.

\begin{thm}
\label{thm:root-compare}
Let $R^\R(\alpha)$ belong to $\pi^\C_{s,w}$, and 
Suppose that
$2w - s < 4$.
Then $R(\alpha)$ equals the Betti realization of
$R^\R(\alpha)$.
\end{thm}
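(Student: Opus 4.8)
The plan is to use the comparison diagram \eqref{eq:comparison} together with the range result of \cite{BGI19}. First I would recall that \cite{BGI19} gives an isomorphism $\pi^\R_{p,q} \cong \pi^{C_2}_{p,q}$ whenever the coweight $p - q$ is sufficiently large relative to the weight; concretely, the isomorphism holds in the range where $2q - p < 4$ (equivalently, this is the Milnor--Witt range where the $\R$-motivic and $C_2$-equivariant computations coincide). Under this isomorphism, Betti realization $\pi^\R_{p,q} \to \pi^{C_2}_{p,q}$ is the identity map. The hypothesis $2w - s < 4$ on the degree of $R^\R(\alpha)$ is exactly the condition that places $R^\R(\alpha)$ inside this range. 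I would also want the isomorphism to be compatible with multiplication by $\rho$, so that the $\rho$-divisibility bookkeeping in Definition \ref{defn:R-Mahowald} matches the corresponding bookkeeping in the Bruner--Greenlees definition recalled in Section \ref{subsctn:R-root}.

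Next I would carry out the comparison of the two constructions. Start with $\alpha$ in $\pi_n$, regarded in $\pi^\R_{*,*}[\rho^{-1}] \cong \pi_* \otimes \Z[\rho^{\pm 1}]$ in degree $(0,-n)$, and write $\alpha = \rho^k \beta$ with $k$ maximal, so that $\beta$ lies in $\pi^\R_{*,*}$ and is not $\rho$-divisible. By Remark \ref{rem:Mahowald-nonzero} (via Corollary \ref{cor:pi-rho}) the extension of scalars of $\beta$, which is $R^\R(\alpha)$, is nonzero in $\pi^\C_{s,w}$. The key point is that the $\rho$-inverted isomorphism $\pi^\R_{*,*}[\rho^{-1}] \cong \pi^{C_2}_{*,*}[\rho^{-1}]$ (both being $\pi_* \otimes \Z[\rho^{\pm 1}]$ after \cite{DI17} on the motivic side and \cite{Bredon67}\cite{AI82} on the equivariant side) intertwines the two identifications of $\alpha$. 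Now apply Betti realization to the equation $\alpha = \rho^k \beta$: it sends $\beta \in \pi^\R_{*,*}$ to some $\beta' \in \pi^{C_2}_{*,*}$ with $\alpha = \rho^k \beta'$. Since $R^\R(\alpha)$ is nonzero in $\pi^\C_{s,w}$ and the square \eqref{eq:comparison} commutes, the underlying classical element $U(\beta')$ is nonzero; hence $\beta'$ is not $\rho$-divisible in $\pi^{C_2}_{*,*}$ by Proposition \ref{prop:C2-rho-divisible}, so $k$ is also the maximal power of $\rho$ in the equivariant factorization, and $\beta'$ is a legitimate choice of $\beta$ in the Bruner--Greenlees definition of $R(\alpha)$. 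Therefore $U(\beta')$ is a value of $R(\alpha)$. It remains to identify $U(\beta')$ with the Betti realization of $R^\R(\alpha)$: this follows by chasing the commutative square \eqref{eq:comparison}, since $R^\R(\alpha)$ is the $\C$-motivic realization of $\beta$, its classical Betti realization is $U$ of the $C_2$-equivariant realization $\beta'$ of $\beta$, which is exactly $U(\beta')$.

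The main obstacle I expect is making the degree condition $2w - s < 4$ line up precisely with the range of validity of the \cite{BGI19} isomorphism and checking that $\beta$ itself (not just $R^\R(\alpha)$) sits in that range, along with all the intermediate elements used in the $\rho$-divisibility argument. The subtlety flagged in the text just before the theorem is that $\rho$-divisibility in $\pi^\R_{*,*}$ and in $\pi^{C_2}_{*,*}$ need not agree in general; the whole point of the hypothesis is to restrict to a range where they do. So I would be careful to verify that the element $\beta$ has coweight equal to that of $R^\R(\alpha) \in \pi^\C_{s,w}$ under extension of scalars (extension of scalars preserves coweight, and $\rho$-multiplication preserves coweight, so $\beta$ lives in coweight $n$... ) and that the relevant weight is controlled so that $2w - s < 4$ forces $\beta$ and any potential $\rho$-multiple of it into the Milnor--Witt range where \cite{BGI19} applies. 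A secondary, more bookkeeping-level obstacle is confirming that Betti realization commutes with $\rho$-inversion, so that the identification of $\alpha$ as an element of $\pi_* \otimes \Z[\rho^{\pm 1}]$ is the same whether approached through $\pi^\R_{*,*}[\rho^{-1}]$ or $\pi^{C_2}_{*,*}[\rho^{-1}]$; this should be routine but is essential to the argument.
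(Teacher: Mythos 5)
Your overall architecture (realize the factorization $\alpha = \rho^k\beta$, check that the realized $\beta'$ is a legitimate choice in the Bruner--Greenlees definition, then chase the square \eqref{eq:comparison}) is the same as the paper's, and your final identification of $U(\beta')$ with the Betti realization of $R^\R(\alpha)$ via commutativity is exactly the paper's second sentence. But the decisive step in your argument has a genuine gap: you deduce that $U(\beta')$ is nonzero from the fact that $R^\R(\alpha)$ is nonzero in $\pi^\C_{s,w}$, using commutativity of the square. Commutativity only gives $U(\beta') = $ (classical Betti realization of $R^\R(\alpha)$), and the realization map $\pi^\C_{s,w}\to\pi_s$ is far from injective -- for instance $\eta^4$ is nonzero in $\pi^\C_{4,4}$ but realizes to zero -- so nonvanishing of $R^\R(\alpha)$ in $\pi^\C$ does not give nonvanishing of $U(\beta')$. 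In fact the nonvanishing of the classical realization of $R^\R(\alpha)$ is essentially the content of the theorem (since $R(\alpha)$ is always nonzero), so your chain of deductions assumes what must be proved.

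A symptom of the problem is that your actual deduction of non-$\rho$-divisibility of $\beta'$ never uses the hypothesis $2w - s < 4$: you cite \cite{BGI19} in the setup, but the inequality plays no role in the chain ``$R^\R(\alpha)\neq 0 \Rightarrow U(\beta')\neq 0 \Rightarrow \beta'$ not $\rho$-divisible.'' If that chain were valid, the theorem would hold with no hypothesis, contradicting Remark \ref{rem:root-compare-failure}, where the paper explicitly declines to draw the conclusion for $2^k$ ($k\geq 4$), $8\sigma$, $\eta\epsilon$, $\mu_9$, etc. The paper's proof uses the hypothesis exactly at this point: in the range $2w-s<4$ the realization map $\pi^\R_{s,w}\to\pi^{C_2}_{s,w}$ is an isomorphism by \cite{BGI19} (compatibly with $\rho$ and with the $\rho$-inverted identifications), so the motivic $\beta$, which is not $\rho$-divisible, realizes to an element that is not $\rho$-divisible in $\pi^{C_2}_{*,*}$ and hence is a valid choice of $\beta$ in the equivariant definition; only then does one invoke Proposition \ref{prop:C2-rho-divisible} and the square. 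Your closing paragraph does flag exactly this issue ($\rho$-divisibility upstairs versus downstairs, and whether the relevant adjacent degree lies in the \cite{BGI19} range), but the proof you commit to routes around it with the invalid injectivity claim rather than through the isomorphism.
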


\begin{proof}
The isomorphism between $\R$-motivic and $C_2$-equivariant stable
homotopy groups \cite{BGI19} implies that the choice of $\beta$
in the definition of $R^\R(\alpha)$ realizes to the
choice of $\beta$ in the definition of $R(\alpha)$.  
By the commutativity of the diagram (\ref{eq:comparison}),
the realization of $R^\R(\alpha)$ equals $R(\alpha)$.
\end{proof}

\begin{ex}
\label{ex:R(sigma)}
We showed in Example \ref{ex:MR(sigma)} that
$R^\R(\sigma)$ equals $\tau \sigma^2$ in $\pi^\C_{14,7}$.
The numerical condition of Theorem \ref{thm:root-compare} is
satisfied.
It follows that $R(\sigma)$ equals $\sigma^2$ in $\pi_{14}$, since
$\sigma^2$ is the realization of $\tau \sigma^2$.
\end{ex}

\begin{remark}
\label{rem:root-compare-failure}
Theorem \ref{thm:root-compare}, together with our 
computations of $\R$-motivic stable homotopy groups,
can be used to compute the Mahowald invariants $R(\alpha)$ for 
most $\alpha$ up to the $11$-stem.
The exceptions are $2^k$ for $k \geq 4$,
$8 \sigma$, $\eta \epsilon$, $\mu_9$, $\eta \mu_9$,
and $\zeta_{11}$ and its multiples.
In these cases, $R^\R(\alpha)$ can still be computed as shown
in Table \ref{tab:R-Mahowald}.  However, the numerical condition of
Theorem \ref{thm:root-compare} does not hold, so we cannot 
draw a conclusion about $R(\alpha)$ in these cases.
\end{remark}

\section{The $\rho$-Bockstein spectral sequence}
\label{sctn:rho-Bockstein}

We briefly recall some background on the $\rho$-Bockstein spectral
sequence that computes the cohomology of the
$\R$-motivic Steenrod algebra.  See \cite{Hill11} and \cite{DI17} for additional details.

Begin with the observation that 
the $\C$-motivic cohomology of a point $\M_2^\C$ equals
$\M_2 / \rho$, and the
$\C$-motivic dual Steenrod algebra $\AC$ equals
$\AR / \rho$.  Then filter the cobar complex by powers of $\rho$
to obtain 
the $\rho$-Bockstein spectral sequence
\begin{equation}
\label{rho-bockstein} 
E_1 = \Ext_{\AC}^{**}(\M_2^\C,
\M_2^\C)[\rho] \implies \Ext_{\AR}^{**}(\M_2,\M_2). 
\end{equation}

Our goal is to analyze the $\rho$-Bockstein spectral sequence
\eqref{rho-bockstein} in computational detail in a range of degrees.  
We recall some structural results
about this spectral sequence from \cite{DI17}.

\begin{prop}
\label{prop:rho-torsion}
\cite{DI17}*{Lemma 3.4}
If $d_r(x)$ is nontrivial in the $\rho$-Bockstein spectral
sequence, then $x$ and $d_r(x)$ are both $\rho$-torsion
free on the $E_r$-page.
\end{prop}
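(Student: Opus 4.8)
The plan is to argue directly with the $\rho$-Bockstein differential at the level of the filtered cobar complex. Recall that the $\rho$-Bockstein spectral sequence arises from filtering the $\R$-motivic cobar complex $C^*_\R$ by powers of $\rho$, and that multiplication by $\rho$ is injective on $C^*_\R$ (as used in the proof of Proposition \ref{prop:Ext-S/rho}). A class on the $E_r$-page is represented by a cobar cochain $c$ with $d(c) \equiv 0 \pmod{\rho^r}$, say $d(c) = \rho^r e$, and the differential $d_r$ sends the class of $c$ to the class of $e$. The statement to prove is that if $[e]$ is nonzero on $E_r$, then neither $[c]$ nor $[e]$ is annihilated by a power of $\rho$ on $E_r$.

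First I would treat the class $x = [c]$. Suppose for contradiction that $\rho^k x = 0$ on $E_r$ for some $k \geq 1$; choosing $k$ minimal, we may pass to $\rho^{k-1} x$ and reduce to the case $k = 1$, i.e.\ $\rho x = 0$ on $E_r$ while $d_r(x) \neq 0$. But $\rho x$ is represented by $\rho c$, and $d(\rho c) = \rho \cdot d(c) = \rho^{r+1} e$; since $\rho x = 0$ on $E_r$ means $\rho c$ is a boundary plus something in higher $\rho$-filtration, one sees that $\rho c$ survives to $E_{r+1}$ (its would-be $d_r$ already vanishes by the filtration count), and then $d_{r+1}(\rho x)$ is represented by $e$. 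On the other hand, a nonzero class that dies on $E_r$ cannot support a $d_{r+1}$. The contradiction is obtained by using injectivity of multiplication by $\rho$ on the cobar complex to conclude that $x$ itself must already be a boundary on $E_r$: if $\rho c = d(b) + \rho^{r+1}(\text{stuff})$, divide by $\rho$ — which is legitimate since the left and right sides are $\rho$-divisible and $\rho$ is a non-zero-divisor — to express $c$ as a boundary modulo higher filtration, contradicting $d_r(x) \neq 0$.

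The argument for the target $y = d_r(x) = [e]$ is the symmetric one: if $\rho^k y = 0$ on $E_r$, again reduce to $k = 1$, so $\rho e = d(b) + \rho^{r+1}(\text{stuff})$. Then $d(c') = \rho^r e'$ where I can arrange, using $\rho$-divisibility and the non-zero-divisor property, to modify $c$ by $\rho$-divisible corrections so that its $d_r$-image becomes $\rho$-divisible on $E_r$ — but then by the injectivity one can actually divide, producing a cochain whose differential is $\rho^{r-1}$ times something, which would make $[e]$ hit in an \emph{earlier} differential or be zero, contradicting that $[e] \neq 0$ is genuinely a $d_r$-target. Concretely: $\rho y = 0$ on $E_r$ forces $e$ to be a boundary plus higher filtration \emph{after multiplying by $\rho$}, and cancelling the $\rho$ shows $e$ is itself such, i.e.\ $[e] = 0$ on $E_r$.

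The main obstacle is bookkeeping the filtration degrees carefully: one must check that when a representative cochain is multiplied by $\rho$ and the result turns out to be $\rho$-divisible as an element of the associated graded, the division can be carried out \emph{at the cochain level} compatibly with the $\rho$-Bockstein filtration, rather than merely at the level of homology classes. This is exactly where injectivity of multiplication by $\rho$ on $C^*_\R$ is essential, and it is the step I would write out most carefully. Everything else is formal manipulation of the exact couple, and indeed the cleanest writeup may bypass cochains entirely by phrasing the whole argument in terms of the long exact sequence of Remark \ref{rem:Ext-R-C-exact} together with the standard description of $d_r$ via that sequence; I would decide between the two presentations based on which makes the filtration bookkeeping most transparent.
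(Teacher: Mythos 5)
The paper does not prove this statement itself --- it quotes \cite{DI17}*{Lemma 3.4} --- so your proposal has to stand on its own, and its second half essentially does: when $\rho\, d_r(x)=0$ on $E_r$, the witnessing relation $\rho e = d(b) + z$ has $b$ and $z$ in filtrations at least $s+2$ and $s+r+2$ respectively, so both are genuinely divisible by $\rho$ in the cobar complex, and cancelling $\rho$ (using that $\rho$ is a non-zero-divisor) exhibits $e$ itself as an admissible boundary plus a higher-filtration cycle, i.e.\ $d_r(x)=0$ on $E_r$. That is the standard argument.

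The first half, however, has a genuine gap. If $\rho x=0$ on $E_r$ with $x$ in filtration $s$, the relation is $\rho c = d(b) + z$ where the boundary witness $b$ lies in filtration $s-r+2$, not $s+1$; so while $d(b)$ is divisible by $\rho$, $b$ itself need not be, and $d(b)/\rho$ is not an admissible boundary for the $E_r$-page. Your proposed division therefore does not go through, and the conclusion you are driving at --- ``$x$ itself must already be a boundary on $E_r$'' --- is in fact false in general: nonzero $\rho$-torsion classes are ubiquitous in this spectral sequence (e.g.\ $\tau^2 h_2$ on the $E_5$-page is nonzero but annihilated by $\rho^4$ after $d_4(\tau^4)=\rho^4\tau^2 h_2$). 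The content of the lemma is only that such torsion classes cannot \emph{support} a nontrivial $d_r$, not that they vanish. The fix is to apply $d$ to the relation first: $\rho\, d(c) = d(z)$, and since $z$ lies in filtration $s+2$ one can write $z=\rho z'$ and cancel $\rho$ to get $d(c)=d(z')$ with $z'$ an $(r-1)$-cycle in filtration $s+1$, so $d_r(x)=0$ on $E_r$ --- the desired contradiction. (Two smaller points: the intermediate discussion of $\rho c$ ``surviving to $E_{r+1}$'' and supporting a $d_{r+1}$ is incoherent once $\rho x=0$ on $E_r$; and your reduction to $k=1$ for the source requires $d_r(\rho^{k-1}x)=\rho^{k-1}d_r(x)\neq 0$, i.e.\ torsion-freeness of the target, so either prove the target statement first or, more simply, divide by $\rho^k$ directly without any reduction.)
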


Recall that $\Acl$ is the classical dual Steenrod algebra.

\begin{prop}\cite{DI17}*{Theorem 4.1}
\label{prop:rho-local}
There is an isomorphism
$$ \Ext_{\Acl}(\F_2,\F_2)[\rho^{\pm 1}] \cong 
\Ext_\AR(\M_2,\M_2) [\rho^{-1}] $$
that takes elements
of degree $(s,f)$ in $\Ext_{\Acl}(\F_2,\F_2)$ to
elements of degree $(2s+f,f,s+f)$ in $\Ext_\AR(\M_2,\M_2)$.
In particular, the classical element $h_n$ corresponds
to the $\R$-motivic element $h_{n+1}$.
Moreover, the
isomorphism is highly structured, i.e., preserves products
and Massey products.
\end{prop}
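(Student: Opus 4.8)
The plan is to prove the statement at the level of Hopf algebroids, by identifying the $\rho$-inverted $\R$-motivic dual Steenrod algebra with a regraded form of the classical dual Steenrod algebra with $\rho^{\pm 1}$ adjoined. Since localization at $\rho$ is exact and the cobar complex computing $\Ext_\AR(\M_2,\M_2)$ consists of free $\M_2$-modules, $\Ext_\AR(\M_2,\M_2)[\rho^{-1}]$ is the cohomology of the cobar complex of the localized Hopf algebroid $(\M_2[\rho^{-1}], \AR[\rho^{-1}])$; likewise $\Ext_{\Acl}(\F_2,\F_2)[\rho^{\pm 1}]$ is computed by the cobar complex of $(\F_2[\rho^{\pm 1}], \Acl \otimes \F_2[\rho^{\pm 1}])$. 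So it suffices to compare these two Hopf algebroids.

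First I would make $\AR[\rho^{-1}]$ explicit. The defining relation $\tau_k^2 = \tau\xi_{k+1} + \rho\tau_{k+1} + \rho\tau_0\xi_{k+1}$ can be solved for $\tau_{k+1}$, so inductively every $\tau_k$ with $k \geq 1$ becomes a polynomial in $\rho^{\pm 1}$, $\tau$, $\tau_0$, and $\xi_1, \dots, \xi_k$. Hence $\AR[\rho^{-1}] = \F_2[\rho^{\pm 1}, \tau][\tau_0, \xi_1, \xi_2, \dots]$ as an algebra, with no surviving relations. Re-expressing the structure maps in these coordinates, one finds that $\Delta(\xi_k) = \sum \xi_{k-i}^{2^i} \otimes \xi_i$ is unchanged and is precisely the classical Milnor coproduct, that $\tau_0$ is primitive, and that $\tau_0 = \rho^{-1}(\eta_R(\tau) - \eta_L(\tau))$.

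These formulas exhibit $(\M_2[\rho^{-1}], \AR[\rho^{-1}])$ as the base change of $(\F_2[\rho^{\pm 1}], \Acl \otimes \F_2[\rho^{\pm 1}])$ along the faithfully flat extension $\F_2[\rho^{\pm 1}] \to \M_2[\rho^{-1}] = \F_2[\rho^{\pm 1}, \tau]$: one matches $\bar\xi_k$ with $\xi_k$ and the two copies of $\tau$ arising from the left and right units with $\tau$ and $\tau + \rho\tau_0$. The Hopf-algebroid change-of-rings (descent) theorem then gives an equivalence of comodule categories, and hence a ring isomorphism $\Ext_{\Acl}(\F_2,\F_2)[\rho^{\pm 1}] \cong \Ext_\AR(\M_2,\M_2)[\rho^{-1}]$ which, being induced by an equivalence of comodule categories, automatically preserves Yoneda products and Massey products. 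The grading statement follows by tracking $\rho \mapsto \rho$ and the motivic weight through $\bar\xi_k = \xi_k$: a classical cobar generator in degree $(s,f)$ acquires tridegree $(2s+f, f, s+f)$, and in particular $h_n$, represented by $[\bar\xi_1^{2^n}]$, lands in the tridegree of $h_{n+1}$, which is the only nonzero class of $\Ext_\AR(\M_2,\M_2)$ there, so $h_n \mapsto h_{n+1}$. I expect the one genuinely computational point to be the main obstacle: verifying that, after eliminating the redundant $\tau_k$, the right unit and the coproduct on $\AR[\rho^{-1}]$ really do agree with the base-changed classical structure maps on the nose — the coproduct on the $\xi_k$ is immediate, but the identities involving $\tau$ and $\tau_0$ require a short check. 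Everything else (exactness of localization, the descent theorem, preservation of the multiplicative and Massey-product structure, and the degree bookkeeping) is formal.
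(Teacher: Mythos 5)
The paper offers no proof of this proposition---it is imported directly from \cite{DI17}*{Theorem 4.1}---and your argument is correct and runs along essentially the same lines as that reference: invert $\rho$, use the relation $\tau_k^2 = \tau\xi_{k+1}+\rho\tau_{k+1}+\rho\tau_0\xi_{k+1}$ to eliminate the higher $\tau_k$, identify the resulting Hopf algebroid with a base change of the regraded classical dual Steenrod algebra over $\F_2[\rho^{\pm 1}]$ along $\F_2[\rho^{\pm 1}]\to\F_2[\rho^{\pm 1}][\tau]$, and conclude by faithfully flat change of rings, which preserves products and Massey products. The verification you flag does go through: $\Delta(\tau_0)=\tau_0\otimes 1+1\otimes\tau_0$ and $\eta_R(\tau)=\tau+\rho\tau_0$, together with the identity $x\,\eta_R(b)\otimes y = x\otimes \eta_L(b)\,y$ in $\AR[\rho^{-1}]\otimes_{\M_2[\rho^{-1}]}\AR[\rho^{-1}]$, give $\Delta(\eta_R(\tau))=1\otimes\eta_R(\tau)$, so the structure maps agree on the nose with those of the base-changed classical Hopf algebroid.
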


The point of Proposition \ref{prop:rho-local} is that we 
a priori know the elements of $\Ext_\R$ that are 
$\rho$-periodic, in the sense that they support infinitely
many non-zero multiplications by $\rho$.  
In the range considered in this manuscript, these 
$\rho$-periodic elements are
$h_1$, $h_2$, $h_3$, $h_4$, $c_1$, $h_2 g$, $h_3 g$, as well as products of these elements.
This corresponds to the fact that through the $11$-stem,
$\Ext_{\cl}$
is generated by the classical elements
$h_0$, $h_1$, $h_2$, $h_3$, $c_0$, $P h_1$, and $P h_2$.
We may effectively ignore these $\rho$-periodic elements
when analyzing the $\rho$-Bockstein spectral sequence, since they 
can be neither source nor target of any 
$\rho$-Bockstein differential.

Let $\{ x_i \}$ be an $\F_2$-linear basis
for $\Ext_\C$, i.e., an $\F_2[\rho]$-linear basis
for the $\rho$-Bockstein $E_1$-page,
excluding the $\rho$-periodic
permanent cycles described in the previous paragraph.
For every $i$, either $x_i$ supports a differential, or
$\rho^r x_i$ is the target of the $d_r$ differential for some $r$.
In other words,
the set $\{ x_i \}$ may be partitioned into pairs $(x_i,x_j)$ such that
$d_r(x_i)=\rho^r x_j$ for some $j$.
Actually, one must be somewhat careful about the choice of basis
in situations where two or more elements of the basis have the same
degree.  Nevertheless, it is always possible to change basis so that
the basis elements can be partitioned into pairs.

The Bockstein differential 
$d_r:E_r^{s,f,w}\to E_r^{s-1,f+1,w}$ preserves the
quantity $s+f-w$, and $\rho$ lies in a degree satisfying $s+f-w = 0$.
Thus we may consider one value of $s+f-w$ at a time
when analyzing the $\rho$-Bockstein spectral sequence.

We exploit this structure in the following strategy for
analyzing the $\rho$-Bockstein spectral sequence.

\begin{strategy}
\label{strategy}
\mbox{}
\begin{enumerate}
\item
\label{step:N}
Fix a value $N = s+f-w$.
\item
Find an $\F_2[\rho]$-basis $B_N$ for
the part of the $\rho$-Bockstein $E_1$-page in degrees 
$(s,f,w)$ satisfying
$N = s+f-w$.
\label{step:basis}
\item
Remove elements from $B_N$
that detect $\rho$-periodic elements of $\Ext_\R$.
\item
\label{step:diff}
Use a variety of techniques, to be described below, to identify some
differential $d_r(x_i) = \rho^r x_j$, where $x_i$ and $x_j$
belong to $B_N$.
\item
\label{step:remove}
Remove $x_i$ and $x_j$ from $B_N$.
\item
Repeat steps (\ref{step:diff}) and (\ref{step:remove})
until $B_N$ is empty.
\end{enumerate}
\end{strategy}

For this strategy to be effective, we need to know that
the basis $B_N$ chosen in step \ref{step:basis} is finite.
Lemma \ref{lem:s+f-w-finite} establishes this fact.

\begin{lemma}
\label{lem:s+f-w-finite}
Let $N$ be fixed.  In degrees $(s,f,w)$ satisfying
$N = s+f-w$,
the $\rho$-Bockstein $E_1$-page is
a finitely generated $\F_2[\rho]$-module.
\end{lemma}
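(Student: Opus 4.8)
The plan is to recall the explicit description of the $\rho$-Bockstein $E_1$-page as $\Ext_\C[\rho]$ and to count, for each fixed $N = s+f-w$, how many $\F_2[\rho]$-generators can possibly contribute. First I would note that since $\rho$ has tridegree $(-1,-1,-1)$, it preserves the quantity $N = s+f-w$ (indeed $N$ drops to $s - 1 + f - (w-1) = N$), so multiplication by $\rho$ acts within the fixed-$N$ part of the $E_1$-page, and the fixed-$N$ part is an $\F_2[\rho]$-submodule. Thus it suffices to show that in each fixed coweight-like slice $N$, only finitely many $\C$-motivic $\Ext$ classes $x$ can serve as $\F_2[\rho]$-module generators, i.e. that the set of tridegrees $(s,f,w)$ of elements of $\Ext_\C$ with $s+f-w$ in a bounded range below $N$ (those that, after multiplying by a power of $\rho$, land in degree with invariant exactly $N$) is finite.

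The key step is a finiteness statement about $\Ext_\C = \Ext_{\AC}(\M_2^\C, \M_2^\C)$. Here I would use two standard facts. First, $\Ext_\C$ is, in each tridegree $(s,f,w)$, a finite-dimensional $\F_2$-vector space, and moreover $\Ext_\C$ is finitely generated over $\M_2^\C = \F_2[\tau]$ in each Adams degree; more to the point, the $\C$-motivic Steenrod algebra $\AC$ is a connected graded algebra, so $\Ext_\C$ is concentrated in the ranges $0 \le f$ and $f \le s$ (and $s \le \text{something}$ in any bounded internal degree). Second, and crucially, $\tau$ has degree $(0,0,-1)$, so multiplying by $\tau$ \emph{increases} $s+f-w$ by $1$; since $\Ext_\C$ is $\tau$-power torsion-free in the relevant generators and free or finitely generated over $\F_2[\tau]$ in each stem, the classes of $\Ext_\C$ with $s+f-w$ bounded above by $N$ and lying in bounded stem are finite in number, and one needs to bound the stem. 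The stem bound comes from the vanishing line: in $\Ext_\C$ one has $f \le s$ approximately (away from the $h_0$-tower, and the $h_0$-tower only contributes in stem $0$), combined with $w$ being controlled by $s$ and $f$ via the weight of $\M_2^\C$; concretely $s + f - w = N$ with $0 \le w$ and $f \ge 0$ forces $s \le N + w - f$, but we also have (from the structure of $\AC$, where each $\xi_k$ and $\tau_k$ contributes weight roughly half its stem) an inequality of the shape $w \ge \tfrac{1}{2}(s - f)$ or similar, which together with $s + f - w = N$ bounds $s$.

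So the key steps, in order, are: (1) identify the fixed-$N$ part of the $E_1$-page as an $\F_2[\rho]$-submodule, using that $\rho$ preserves $N$; (2) observe that an $\F_2[\rho]$-generator set can be taken among the $\F_2$-basis elements $x_i$ of $\Ext_\C$ whose own tridegree has $s+f-w$ between $N$ and $N$ minus the largest $\rho$-divisibility occurring, hence with $s+f-w \le N$; (3) invoke the structure of $\Ext_\C$ — finite-dimensionality in each tridegree, the Adams vanishing line $f \lesssim \tfrac{1}{2}(s+3)$ or at least $f \le s$ away from stem $0$, and the weight inequality coming from the generators of $\AC$ — to show only finitely many tridegrees $(s,f,w)$ of $\Ext_\C$ satisfy $s+f-w \le N$ together with these constraints; (4) conclude there are finitely many $x_i$, hence $B_N$ is finitely generated over $\F_2[\rho]$. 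The main obstacle is step (3): making the weight inequality for $\Ext_\C$ precise enough to bound the stem $s$ in terms of $N$ alone. One clean way around it is to appeal directly to the known explicit chart of $\Ext_\C$ in the range of interest (cited as \cite{Isaksen14c}), but for a degree-independent statement I would instead extract the inequality $2w \ge s - f$ (equivalently $s + f - w \le 2(f + \tfrac{1}{2}(s-f)) - w + \dots$) from the fact that every monomial in the cobar complex on $\AC$ has weight at least $\tfrac{1}{2}$ its topological degree, so $w \ge \tfrac12 s$ for positive-stem classes; combined with $s + f - w = N$ and $f \le s$, this yields $s \le 2N$ (roughly), after which finiteness in each bounded stem is immediate from connectivity of $\AC$.
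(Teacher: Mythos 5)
Your overall plan is the paper's plan: reduce to showing that the slice of $\Ext_\C$ with $s+f-w$ controlled by $N$ is finite, using a vanishing region for $\Ext_\C$ phrased in terms of weight versus degree. But the key inequality in your step (3) is stated backwards, and this is exactly the step the whole lemma rests on. The true statement (the paper cites \cite{Isaksen14c}*{Remark 2.20}) is that $\Ext_\C$ vanishes unless $s+f-2w\geq 0$, i.e.\ $w\leq \tfrac12(s+f)$: each generator $\tau_i,\xi_i$ of $\AC$ has weight \emph{at most} half its internal degree, and $\tau\in\M_2^\C$ has internal degree $0$ and nonpositive weight, so every cobar monomial has weight at most half its internal degree. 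Your claimed inequality $w\geq\tfrac12 s$ (``weight at least half the topological degree'') is false -- $\tau^k h_1$ has $s=1$, $w=1-k$ -- precisely because of those $\tau$-powers, and more importantly it points the wrong way: granting it, together with $w=s+f-N$ and $f\leq s$, you get $s\geq 2(N-f)$, a \emph{lower} bound on the stem, not $s\leq 2N$. With the correct inequality the argument is immediate: $N=s+f-w\geq\tfrac12(s+f)$, so $s+f\leq 2N$; since $s\geq 0$, $f\geq 0$, and $w$ is determined by $w=s+f-N$, only finitely many tridegrees occur, each a finite-dimensional $\F_2$-vector space, so the fixed-$N$ part of the $E_1$-page is a finitely generated (indeed free) $\F_2[\rho]$-module. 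Your fallback of consulting the explicit $\Ext_\C$ chart in a range does not prove the lemma, which is stated for every $N$.

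Two smaller points. The degree of $\rho$ is $(s,f,w)=(-1,0,-1)$, not $(-1,-1,-1)$; your arithmetic already treats the filtration shift as $0$, so the conclusion that $\rho$ preserves $s+f-w$ is right, but the stated tridegree would contradict it. And since $\rho$ preserves $s+f-w$, the $\F_2[\rho]$-module generators of the fixed-$N$ slice lie exactly in $s+f-w=N$, so there is no need for the ``between $N$ and $N$ minus the largest $\rho$-divisibility'' discussion, nor for the Adams vanishing line $f\leq s$: positivity of $s$ and $f$ plus the weight bound already suffice.
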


\begin{proof}
Recall that $\Ext_\C$ is non-zero only in degrees $(s,f,w)$ satisfying
$s+f-2w \geq 0$ 
\cite{Isaksen14c}*{Remark 2.20}.  
This inequality can be rewritten in the form
\[
s+f-w \geq \frac{1}{2}(s+f).
\]
In other words, we only need consider the part of
$\Ext_\C$ in total degree at most $2N$.
\end{proof}

One consequence of our strategy is that we 
do not compute the Bockstein differentials $d_r$ in order of 
increasing $r$.  Rather, we obtain all differentials as part of the
same process.

Step (\ref{step:diff}) is the limiting factor in the practical
effectiveness of our algorithm.  The 
ad hoc arguments required to establish specific differentials become
more difficult as the value of $N$ increases.  
However, these difficulties increase at a surprisingly slow rate,
and we are able to carry out the computation remarkably
far without much difficulty.

Our goal is to compute the $\rho$-Bockstein spectral
sequence through coweight $13$.
Unfortunately, infinitely many values of
$N$ in Step \ref{step:N} are relevant in this range.
For example, consider the elements $h_1^k$ of coweight $0$, which belong
to degrees satisfying $s+f-w = k$.

Similarly, any $h_1$-periodic sequence of elements
$h_1^k x$ of $\Ext_\C$ lies in degrees for which $s+f-w$ is
unbounded.  Fortunately, it is only these $h_1$-periodic
families that are problematic.

\begin{lemma}\label{lem:s+f-w}
Let $x$ be a non-zero element of $\Ext_\C$ of degree $(s,f,w)$
whose coweight is at most $k$.
Then:
\begin{enumerate}
\item
\label{case:h1-periodic}
$x$ is an $h_1$-periodic element, in the sense that 
$h_1^i x$ is non-zero for all $i \geq 0$; or
\item
$s+f-w\leq 3k+3$.
\end{enumerate}
\end{lemma}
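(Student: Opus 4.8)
The plan is to exploit the known structure of $\Ext_\C$ through a sufficiently large range and to understand how the invariant $s+f-w$ behaves on the generators and on $h_1$-towers. First I would recall the coweight bound: the hypothesis says $2w - s - f \geq \text{(something)}$, or more precisely, combining the hypothesis $\mathrm{cw}(x) = s - w \leq k$ with the vanishing line $s + f - 2w \geq 0$ from Lemma \ref{lem:s+f-w-finite}, we get $f \leq 2w - s \leq w$ and also $w \leq s - \text{(nonneg)} + \dots$; the useful consequence is that $x$ lives in a bounded region once $k$ is fixed, \emph{except} in directions where $\Ext_\C$ has infinitely many classes, which (by Proposition \ref{prop:rho-local} and the standard $\C$-motivic computation) happens only along $h_1$-towers and along the $\rho$-periodic families already excluded. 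So the strategy is: either $x$ is hit by an infinite $h_1$-tower, putting us in case \eqref{case:h1-periodic}, or $x$ lies in a genuinely bounded part of $\Ext_\C$, and then one checks that boundedness is controlled linearly by $k$, yielding $s + f - w \leq 3k+3$.

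The key steps, in order: (1) Rewrite the two inequalities. Coweight $\leq k$ gives $w \geq s - k$. The $\C$-motivic vanishing region $s + f - 2w \geq 0$ gives $f \leq 2w - s$. Subtracting, $s + f - w \leq s + (2w - s) - w = w$, so it suffices to bound $w$. Meanwhile from $w \geq s-k$ we don't immediately get an upper bound on $w$, so the content is really an upper bound on $w$ in terms of $k$ for non-$h_1$-periodic classes. (2) Invoke the explicit description of $\Ext_\C$ in a range (e.g. from \cite{Isaksen14c}): away from the $\tau$-free $h_1$-periodic part, $\Ext_\C$ is, in each coweight, a \emph{finitely generated} module over $\F_2[\tau]$ — equivalently, in each coweight $c = s - w$ there are only finitely many $\tau$-towers, and each such tower has bounded weight. (3) Track the weight growth: a generator $x_0$ in coweight $c$ sits in some weight $w_0$, and multiplication by $\tau$ raises $w$ by $1$ while fixing $s$, hence lowering the coweight; so a class of coweight exactly $\leq k$ obtained as $\tau^j x_0$ has bounded $j$, hence bounded $w$. (4) Assemble: the finitely many non-$h_1$-periodic generators through coweight $k$ have weights bounded by an explicit affine function of $k$; chasing constants through the low-degree $\C$-motivic chart gives the clean bound $s + f - w \leq 3k + 3$.

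The main obstacle is step (4): producing the \emph{sharp} constants $3k+3$ rather than some cruder $Ck + C'$. This requires actually knowing the generators of $\Ext_\C$ and their degrees well enough to see that the worst offender — presumably something like a power of $h_2$ or the classes $c_0$, $Ph_1$, $Ph_2$, or products $g^i$ times a bounded-weight factor — never exceeds $s+f-w = 3k+3$ in its coweight-$k$ incarnation. I would do this by: listing, for each coweight $c \le k$, the generators of the $\tau$-free-modulo-$h_1$-tower part of $\Ext_\C$ (a finite list, stable as $c$ grows because of the algebra structure); checking that each contributes $s + f - w \leq 3c + 3$ at its top; and verifying that $\tau$-multiples only decrease $s+f-w$ while preserving the conclusion since $3c+3$ is increasing in $c$. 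The $h_1$-tower classes are exactly the ones for which no finite weight bound exists, which is why they must be the stated exception. One should also double-check that the excluded $\rho$-periodic permanent cycles ($h_1,h_2,h_3,h_4,c_1,h_2g,h_3g$ and products) are consistent with — indeed are instances of — case \eqref{case:h1-periodic} or satisfy the bound, so that no class falls through the cracks.
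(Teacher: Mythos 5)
Your proposal is missing the one nontrivial input on which the paper's proof rests: the $\C$-motivic Adams vanishing line of slope $1/2$ \cite{guillou-isaksen-1/2}, which says that an element of $\Ext_\C$ with $2f-s\geq 4$ is $h_1$-periodic. Once that is in hand the lemma is a one-line manipulation: if $x$ is not $h_1$-periodic then $2f-s<4$, and combining with $s+f-2w\geq 0$ \cite{Isaksen14c}*{Remark 2.20} and $s-w\leq k$ via the identity $s+f-w=(2f-s)-(s+f-2w)+3(s-w)<4+0+3k$ gives $s+f-w\leq 3k+3$. Your route instead tries to bound the weight by claiming that, away from $h_1$-towers, each coweight of $\Ext_\C$ is finitely generated over $\F_2[\tau]$ and then ``chasing constants'' through a chart. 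That claim is not a citable structural fact and is false as stated: the $h_0$-tower lies entirely in coweight $0$, is not $h_1$-periodic ($h_0h_1=0$), and is not finitely generated over $\F_2[\tau]$. More importantly, even granting some version of the finiteness claim, nothing in your outline produces the linear bound $3k+3$ valid for every $k$; step (4) is an open-ended verification over infinitely many generators, i.e.\ exactly the gap that the vanishing line closes in closed form.

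There are also two computational errors in the reduction steps. First, $s+f-2w\geq 0$ gives $f\geq 2w-s$, not $f\leq 2w-s$; consequently $s+f-w\geq w$ rather than $\leq w$, so the claim that ``it suffices to bound $w$'' does not follow. Second, $\tau$ has degree $(s,f,w)=(0,0,-1)$, so multiplication by $\tau$ lowers the weight and raises the coweight by one — the opposite of what you assert in step (3). With the correct direction, knowing that the $\tau$-exponent is bounded only bounds $w$ from below (in terms of the weights of the generators), so your weight-bounding scheme is circular unless you already control the degrees of all non-$h_1$-periodic classes, which is precisely the content of the Guillou--Isaksen vanishing line that the paper invokes.
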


\begin{proof}
If $2f - s \geq 4$,
then $x$ is $h_1$-periodic \cite{guillou-isaksen-1/2}.
So we may assume that
$2f - s < 4$.

By
\cite{Isaksen14c}*{Remark 2.20},
we also have the inequality
$s+f-2w\geq 0$. 
Combining with the assumption $s-w \leq k$, we conclude that
\[
s+f-w = (2f - s) - (s+f-2w) + 3(s - w) < 4 + 0 + 3k = 3k + 4.
\]
\end{proof}

As we wish to consider elements up to coweight $13$,
Lemma \ref{lem:s+f-w} suggests
we need to look at degrees satisfying the inequality $s+f-w\leq 42$,
in addition to studying $h_1$-periodic elements.
However, inspection of elements in $\Ext_\C$
shows that $s+f-w\leq 28$ for all elements that are relevant
in our range.

The $h_1$-periodic elements of $\Ext_\C$
are well-understood \cite{GI14}.
Up to coweight $13$,
all such elements are of the form
$1$, $P^k h_1$, $P^k c_0$, $P^k d_0$, $P^k e_0$,
$P^k c_0 d_0$, $d_0^2$, or $c_0 e_0$,
as well as the
$h_1$-multiples of these elements.
Lemma \ref{lem:s+f-w} indicates that the
behavior of the $\rho$-Bockstein spectral sequence on these elements
must be studied separately.
See Proposition \ref{prop:Bock-h1-local}
for the analysis of these $h_1$-periodic elements.

\section{$\rho$-Bockstein differentials}
\label{sctn:Bock-diff}

The goal of this section is to describe a variety of methods
for determining $\rho$-Bockstein differentials.  These methods
are applied in Step (\ref{step:diff}) of Strategy \ref{strategy}.
Taken together, these methods allow us to determine
all $\rho$-Bockstein differentials through
coweight $13$.

We begin with a result that describes all $\rho$-Bockstein
differentials on the elements of Adams filtration zero.

\begin{prop}
\label{prop:bock-tau^k}
\cite{DI17}*{Proposition 3.2}
\mbox{}
\begin{enumerate}
\item
$d_1(\tau) = \rho h_0$.
\item
$d_{2^k} (\tau^{2^k} ) = \rho^{2^k} \tau^{2^{k-1}} h_k$ 
for $k \geq 1$.
\end{enumerate}
\end{prop}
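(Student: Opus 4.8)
The plan is to compute directly in the cobar complex $C^*_\R$ for $\Ext_\R$, whose filtration by powers of $\rho$ defines the $\rho$-Bockstein spectral sequence, and to keep careful track of $\rho$-divisibility. Write $d_{\mathrm{cobar}}$ for the cobar differential. For part (1), on the degree-zero part $C^0_\R = \M_2$ the differential is $\eta_R - \eta_L$, which in characteristic $2$ together with $\eta_L(\tau) = \tau$ and $\eta_R(\tau) = \tau + \rho\tau_0$ gives $d_{\mathrm{cobar}}(\tau) = \rho\tau_0$. The chain $\rho\tau_0$ is divisible by exactly $\rho^1$, and $\tau_0$ is a cocycle (it is primitive, by the formula for $\Delta(\tau_0)$) representing $h_0$ in $\Ext_\C$. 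Since $\tau$ has Adams filtration $0$ it can never be the target of a Bockstein differential, so it first supports a differential on $E_1$, namely $d_1(\tau) = \rho h_0$.

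For part (2) I would run the same argument one Frobenius deeper. Since $\eta_R\colon \M_2 \to \AR$ is a ring homomorphism and $\AR$ has characteristic $2$,
\[
\eta_R(\tau^{2^k}) = (\tau + \rho\tau_0)^{2^k} = \tau^{2^k} + \rho^{2^k}\tau_0^{2^k},
\]
so $d_{\mathrm{cobar}}(\tau^{2^k}) = \rho^{2^k}\tau_0^{2^k}$. To extract the leading coefficient modulo $\rho$, iterate the defining relation $\tau_0^2 = \tau\xi_1 + \rho\tau_1 + \rho\tau_0\xi_1$: setting $u = \tau_1 + \tau_0\xi_1$ and taking $2^{k-1}$-st powers,
\[
\tau_0^{2^k} = (\tau\xi_1 + \rho u)^{2^{k-1}} = \tau^{2^{k-1}}\xi_1^{2^{k-1}} + \rho^{2^{k-1}} u^{2^{k-1}},
\]
so $\tau_0^{2^k} \equiv \tau^{2^{k-1}}\xi_1^{2^{k-1}}\pmod{\rho}$ for every $k \geq 1$. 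The element $\xi_1$ is primitive (from $\Delta(\xi_1) = \xi_1\otimes 1 + 1\otimes\xi_1$), hence so is $\xi_1^{2^{k-1}}$, and this $\C$-motivic cocycle represents $h_k$ in $\Ext_\C$ by inspection of degrees ($\xi_1^{2^{k-1}}$ has bidegree $(2^k, 2^{k-1})$, so filtration $1$ and stem $2^k-1$). Therefore $d_{\mathrm{cobar}}(\tau^{2^k}) = \rho^{2^k}\tau_0^{2^k}$ is divisible by exactly $\rho^{2^k}$, with $\rho^{2^k}$-coefficient reducing mod $\rho$ to $\tau^{2^{k-1}}h_k$; and, as in part (1), the filtration-$0$ class $\tau^{2^k}$ is never a target, so this is precisely the $d_{2^k}$-differential $d_{2^k}(\tau^{2^k}) = \rho^{2^k}\tau^{2^{k-1}}h_k$.

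The argument is mechanical once the cobar model is set up; the only delicate point is the $\rho$-power bookkeeping around $\tau_0^2 = \tau\xi_1 + \rho\tau_1 + \rho\tau_0\xi_1$ — one must check both that the error term $\rho^{2^{k-1}}u^{2^{k-1}}$ dies modulo $\rho$ (which is where $k \geq 1$ is used) and that the monomial $\tau_0^{2^k}$ is genuinely not $\rho$-divisible as a chain, so that $\tau^{2^k}$ supports no differential $d_r$ with $r < 2^k$. One could instead invoke the general structure of the spectral sequence, for instance Proposition \ref{prop:rho-torsion}, to restrict which $r$ can occur, but the direct cobar computation is the most transparent route.
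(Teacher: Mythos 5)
Your argument is correct, and it is essentially the standard proof: the paper itself does not prove this proposition but simply cites \cite{DI17}*{Proposition 3.2}, where the differentials are obtained by exactly this kind of cobar-level computation with the right unit $\eta_R(\tau)=\tau+\rho\tau_0$, the Frobenius, and the relation $\tau_0^2=\tau\xi_1+\rho\tau_1+\rho\tau_0\xi_1$. The only cosmetic remark is that once you know $d_{\mathrm{cobar}}(\tau^{2^k})=\rho^{2^k}\tau_0^{2^k}$ lies in $\rho$-filtration at least $2^k$, the vanishing of $d_r(\tau^{2^k})$ for $r<2^k$ is automatic from the filtration-spectral-sequence formalism; the non-$\rho$-divisibility of $\tau_0^{2^k}$ (equivalently $\tau_0^{2^k}\equiv\tau^{2^{k-1}}\xi_1^{2^{k-1}}\not\equiv 0 \pmod\rho$) is what makes the resulting $d_{2^k}$ nonzero, which you do verify.
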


Next we consider $h_1$-periodic elements.  These elements must
be treated as special cases because of Case (\ref{case:h1-periodic})
of Lemma \ref{lem:s+f-w}.

\begin{prop}\label{prop:Bock-h1-local}
Table \ref{tab:Bock-h1-local} gives some 
Bockstein differentials that are non-zero after
inverting $h_1$. 
Through coweight $13$,
these are the only
$h_1$-periodic $\rho$-Bockstein differentials.
\end{prop}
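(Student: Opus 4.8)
The plan is to exploit the $h_1$-local calculation of the $\rho$-Bockstein spectral sequence, which is governed entirely by the classical and $\C$-motivic $h_1$-local $\Ext$ groups recalled in \cite{GI14} and \cite{GI14}. First I would pass to the $h_1$-inverted $\rho$-Bockstein spectral sequence; since inverting $h_1$ is exact, this is again a spectral sequence converging to $\Ext_\AR(\M_2,\M_2)[h_1^{-1}]$, and by Proposition \ref{prop:rho-local} (or rather its $h_1$-local shadow) the target is known. The $h_1$-local $\C$-motivic $E_1$-page $\Ext_\C[h_1^{-1}][\rho]$ is the polynomial-type algebra on the classes listed after Lemma \ref{lem:s+f-w} --- namely $v_1^4$-periodic families built from $1$, $P^k h_1$, $P^k c_0$, $P^k d_0$, $P^k e_0$, $P^k c_0 d_0$, $d_0^2$, $c_0 e_0$ --- together with $\rho$ and the $\rho$-periodic permanent cycles. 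The target $\Ext_\AR[h_1^{-1}]$ is comparatively small, being essentially the $h_1$-local classical $\Ext$ with $\rho$ adjoined, so a large number of classes on the $E_1$-page must die.

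The key steps, in order: (1) write down the finite list of $h_1$-periodic generators through coweight $13$ and their tridegrees; (2) identify among them the $\rho$-periodic permanent cycles (these are detected by Proposition \ref{prop:rho-local}) and set them aside; (3) for each remaining generator, determine its $\rho$-Bockstein differential by a counting argument in the spirit of Strategy \ref{strategy} --- in each relevant coweight, the $h_1$-local $E_1$-page is a small $\F_2[\rho,h_1^{\pm 1}]$-module, the target is known, and for degree reasons most differentials are forced; (4) check the Leibniz rule for consistency across the $h_1$- and $P$-multiplications, so that it suffices to determine the differential on each generator once and propagate; (5) assemble the results into Table \ref{tab:Bock-h1-local} and verify that no room is left, i.e. the listed differentials account for exactly the discrepancy between the $E_1$-page and the known $h_1$-local target.

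The main obstacle I expect is step (3) for the $P^k d_0$ and $P^k e_0$ families, and for the sporadic classes $d_0^2$ and $c_0 e_0$: these are the generators whose differentials are not immediately forced by a one-dimensional count, and where one must invoke either comparison with the classical $v_1$-periodic or $\eta$-periodic Adams spectral sequence, or a Massey-product / shuffling argument using the structured nature of the isomorphism in Proposition \ref{prop:rho-local}. A secondary subtlety is the basis-change caveat already flagged before Lemma \ref{lem:s+f-w-finite}: when two $h_1$-periodic classes share a tridegree one must be careful that the stated differentials hold for a suitable choice of generators, so part of the work is pinning down the right basis. Once these families are handled, the closing claim ``these are the only $h_1$-periodic differentials through coweight $13$'' follows by the exhaustion argument of Strategy \ref{strategy}: the tabulated differentials, together with the $\rho$-periodic permanent cycles, saturate the $h_1$-local $E_1$-page in every coweight $\leq 13$.
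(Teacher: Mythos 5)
There are two genuine gaps here. First, your identification of the abutment $\Ext_\R[h_1^{-1}]$ is not correct: Proposition \ref{prop:rho-local} computes the $\rho$-localization, not the $h_1$-localization, and ``the $h_1$-local classical $\Ext$ with $\rho$ adjoined'' is actually zero, since $h_1^4 = 0$ classically. (Under the dictionary of Proposition \ref{prop:rho-local}, the $\R$-motivic $h_1$ corresponds to the classical $h_0$, so inverting both $\rho$ and $h_1$ only yields $\F_2[h_1^{\pm 1}, \rho^{\pm 1}]$, i.e., the $\rho$-periodic part of the target.) The group $\Ext_\R[h_1^{-1}]$ is exactly what the $h_1$-localized Bockstein spectral sequence computes, so your ``count against the known target'' is either circular or collapses to the pairing argument of Strategy \ref{strategy}, and you have not verified that degree reasons alone force the differentials on the $P^k d_0$, $P^k e_0$, $d_0^2$, $c_0 e_0$ families --- you flag these as the hard cases but offer no mechanism for them. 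The paper sidesteps all of this by quoting the complete computation of the $h_1$-periodic $\rho$-Bockstein spectral sequence from \cite{guillou-isaksen-eta-R}, rather than rederiving it.

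Second, and more importantly, complete knowledge of the localized spectral sequence does not by itself prove the second sentence of Proposition \ref{prop:Bock-h1-local}. The localized differentials determine $d_r(h_1^k x)$ only for $k$ large; for small $k$ the element $h_1^k x$ could a priori support a shorter differential whose target is $h_1$-torsion and hence invisible after inverting $h_1$. Your step (5), ``no room is left,'' is a count inside the localized $E_1$-page and cannot detect such targets. This descent issue is precisely what the paper's proof has to address --- it checks by inspection that no such shorter differentials occur through coweight $13$ --- and the remark following the proposition shows the worry is not vacuous, since shorter differentials on $h_1$-periodic classes do occur in higher coweights. An explicit low-$k$ inspection against the $h_1$-torsion part of the genuine (non-localized) $E_1$-page is therefore an unavoidable step, and it is missing from your proposal.
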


For legibility, we have not included powers of $\rho$ in the
values of the Bockstein differentials in Table \ref{tab:Bock-h1-local}.
For example, the first row of the table is to be interpreted as
$d_3(P h_1) = \rho^3 h_1^3 c_0$.

\begin{proof}
The differentials in the
$h_1$-periodic $\rho$-Bockstein spectral sequence
are completely known \cite{guillou-isaksen-eta-R}.
For each $h_1$-periodic element $x$, this 
determines $d_r(h_1^k x)$ for large
values of $k$. However, it is possible that the elements $h_1^k x$ support
shorter differentials for small values of $k$.  By inspection, no such shorter
differentials occur.
\end{proof}

\begin{remark}
The phenomenon considered at the end of the
proof of Proposition \ref{prop:Bock-h1-local} turns out not
to occur through coweight $13$.  However, it does
occur in higher coweights.
\end{remark}

The following examples are representative arguments for establishing
$\rho$-Bockstein differentials.
In many situations, more than one argument leads to the same result.

\begin{ex}
Table \ref{tab:s+f-w=6} summarizes the analysis of Bockstein
differentials in degrees $(s,f,w)$ satisfying $s+f-w = 6$.
In these degrees, the $E_1$-page consists of $\rho$ multiples
of twenty elements.  The first part of Table \ref{tab:s+f-w=6}
lists the two elements that are $\rho$-periodic, as in 
Proposition \ref{prop:rho-local}.  They correspond to the classical
elements $h_0^6$ and $h_0^2 h_2$.

The second section of Table \ref{tab:s+f-w=6} lists some 
differentials that are easily deduced from
Proposition \ref{prop:bock-tau^k} and the Leibniz rule.

At this point, only the elements $\tau^4 h_1^2$ and $c_0$
remain unaccounted.  The third section of Table \ref{tab:s+f-w=6}
gives the only possibility.

\begin{longtable}{lllll}
\caption{Bockstein differentials for $s+f-w = 6$} \\
\toprule 
coweight & $(s,f,w)$ & $x$ & $d_r$ & $d_r(x)$ \\
\midrule \endfirsthead
\caption[]{Bockstein differentials for $s+f-w = 6$} \\
\toprule 
coweight & $(s,f,w)$ & $x$ & $d_r$ & $d_r(x)$ \\
\midrule \endhead
\bottomrule \endfoot
\label{tab:s+f-w=6}
$0$ & $(6, 6, 6)$ & $h_1^6$ \\
$3$ & $(9, 3, 6)$ & $h_1^2 h_3$ \\
\hline
$6$ & $(0,0,-6)$ & $\tau^6$ & $d_2$ & $\tau^5 h_1$ \\
$5$ & $(0,1,-5)$ & $\tau^5 h_0$ & $d_1$ & $\tau^4 h_0^2$ \\
$3$ & $(0,1,-3)$ & $\tau^3 h_0^3$ & $d_1$ & $\tau^2 h_0^4$ \\
$1$ & $(0,1,-1)$ & $\tau h_0^5$ & $d_1$ & $h_0^6$ \\
$4$ & $(3, 2, -1)$ & $\tau^3 h_0 h_2$ & $d_1$ & $\tau^3 h_1^3$ \\
$5$ & $(7, 1, 2)$ & $\tau^2 h_3$ & $d_2$ & $\tau h_1 h_3$ \\
$4$ & $(7, 2, 3)$ & $\tau h_0 h_3$ & $d_1$ & $h_0^2 h_3$ \\
$5$ & $(3, 1, -2)$ & $\tau^4 h_2$ & $d_4$ & $\tau^2 h_2^2$ \\
\hline
$4$ & $(2, 2, -2)$ & $\tau^4 h_1^2$ & $d_7$ & $c_0$ \\
\end{longtable}
\end{ex}

\begin{ex}
In some situations, a more careful analysis of multiplicative
structure establishes a differential.
For example, $d_1(f_0)$
cannot equal $\rho h_1 e_0$ because $h_1 f_0 = 0$
but $\rho h_1^2 e_0$ is not zero.

For a slightly more complicated example,
consider the relation $h_0 \cdot \tau g = \tau \cdot h_0 g$.
This implies that
\[
h_0 \cdot d_1(\tau g) = d_1 (\tau) \cdot h_0 g = 
\rho h_0^2 g,
\]
so $d_1(\tau g)$ must equal $\rho h_0 g$.
\end{ex}

\begin{ex}
\label{ex:Bockstein-multiplicative}
Sometimes, the multiplicative structure and an already known differential
imply that a certain element is killed by $\rho^k$.
Then that element must be killed by a differential $d_r$ with $r \leq k$.
For example,
the element $\tau^4 h_1^2 h_3 = (\tau^2 h_2)^2 h_2$ 
is a permanent cycle because it is a product of permanent cycles.
There are two possible differentials that could hit a $\rho$-multiple of
it:
$d_4(\tau^6 h_2^2)$ or $d_8(\tau^8 h_1^2)$.
Note that $\tau^4 h_1^2 h_3$
is killed by
$\rho^4$ because of the differential
$d_4(\tau^4) = \rho^4 \tau^2 h_2$.
Therefore, $\rho^4 \tau^4 h_1^2 h_3$ must be hit by a $d_r$ differential
with $r \leq 4$.
The only possibility is that $d_4(\tau^6 h_2^2 ) = \rho^4 \tau^2 h_1^2 h_3$.

This differential can be obtained another way using the Leibniz
rule, the multiplicative relation $\tau^6 h_2^2 =
\tau^4 \cdot \tau^2 h_2 \cdot h_2$, and the differential
$d_4(\tau^4) = \rho^4 \tau^2 h_2$. 
\end{ex}

\begin{ex}
Sometimes one must look ahead
to larger values of $s+f-w$ in order to use
multiplicative relations to rule out differentials. 
For example, in order to
show that $d_4(i) = \rho^4 h_1 c_0 e_0$ 
(in degrees satisfying $s+f-w=18$), we first use other techniques
to rule out possible differentials until it suffices to eliminate the
possibility that $d_{11}(\tau^4 P c_0)$ might equal 
$\rho^{11} h_1 c_0 e_0$. But this would imply that
$d_{11}(\tau^4 P h_1 c_0)$ equals $h_1^2 c_0 e_0$ 
(in degrees satisfying $s+f-w=19$), and
this contradicts the $h_1$-periodic
differential $d_3(P e_0) = \rho^3 h_1^2 c_0 e_0$ from
Table \ref{tab:Bock-h1-local}.
\end{ex}

\begin{ex}
The Leibniz rule implies that certain elements survive at least
to a certain page of the spectral sequence.
For example,
the element $\tau^6 h_3^2$ cannot be hit by a differential,
so it must support a differential.
There are two possibilities:
$d_4(\tau^6 h_3^2)$ might equal $\rho^4 \tau^4 h_1^2 h_4$,
or $d_6(\tau^6 h_3^2)$ might equal $\rho^6 \tau^3 c_1$.
The Leibniz rule and the relation $\tau^6 h_3^2 = \tau^4 \cdot \tau^2 h_3^2$
imply that 
\[
d_4 (\tau^6 h_3^2) = d_4(\tau^4) \cdot \tau^2 h_3^2 =
\rho^4 \tau^2 h_2 \cdot \tau^2 h_3^2 = 0.
\]
Therefore, $d_6(\tau^6 h_3^2)$ must equal $\rho^6 \tau^3 c_1$.
\end{ex}

\begin{ex}
The multiplicative structure implies that certain elements do not
support any differentials because they are the product of elements
that do not support any differentials.
\end{ex}

Extending Example \ref{ex:Bockstein-multiplicative},
sometimes the Massey product structure of $\Ext_\R$ implies that
some element $\rho^k x$ must be zero.  Then
$\rho^k x$ must be the target of a Bockstein $d_r$ differential
for $r \leq k$.  Through coweight 12, we apply this method only once
in the following Lemma \ref{lem:d2-t^2g}.  However, we anticipate
that this approach will become more and more important in higher
coweights.  Massey products in $\Ext_\R$ are discussed below in
Section \ref{section:bockstein-extn} and Table \ref{tab:Massey}.

\begin{lemma}
\label{lem:d2-t^2g}
$d_2(\tau^2 g) = \rho^2 h_2 f_0$.
\end{lemma}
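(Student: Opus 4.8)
The plan is to establish $d_2(\tau^2 g) = \rho^2 h_2 f_0$ by showing that $\rho^2 h_2 f_0$ must be zero in $\Ext_\R$, and that the only Bockstein differential capable of killing it is $d_2(\tau^2 g)$. Since the paper signals that this is the lone place (through coweight 12) where the Massey product structure of $\Ext_\R$ is invoked to force a differential, I expect the argument to run through a bracket relation of the form $h_2 f_0 \in \langle \cdots \rangle$ together with a relation among the entries that forces $\rho^2$ to annihilate the bracket.

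First I would locate the relevant degree: $\tau^2 g$ has the $\C$-motivic degree of $g$ shifted by the two factors of $\tau$, and $\rho^2 h_2 f_0$ sits in the degree with $s+f-w$ equal to whatever value this fixes (one checks it is consistent with $d_2$ raising $f$ by $1$ and the $\rho^2$ dropping $w$ appropriately; $d_2\colon E_2^{s,f,w}\to E_2^{s-1,f+2,w}$ composed with the two $\rho$'s matches up). Having pinned the degree, I would enumerate, using Strategy \ref{strategy} and the $E_1$-page in this coweight, every element whose $d_r$ image could be a $\rho$-multiple of $h_2 f_0$; the claim is that after removing elements already paired off by earlier differentials (Propositions \ref{prop:bock-tau^k}, \ref{prop:Bock-h1-local}, the Leibniz rule, and the representative techniques of the preceding examples), the only surviving candidate is $\tau^2 g \xrightarrow{d_2} \rho^2 h_2 f_0$.

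The heart of the matter is then to show $\rho^2 h_2 f_0 = 0$ in $\Ext_\R$. Here I would use a Massey product identification: $h_2 f_0$ (or $\tau h_2 f_0$, or a suitable $\tau$-multiple tracking the weight bookkeeping) lies in a Massey product $\langle a, b, c\rangle$ in $\Ext_\R$ with entries drawn from low-filtration classes such as $h_0$, $h_1$, $h_2$, $c_0$, $d_0$, $e_0$. Combining the shuffle/juggling identities for Massey products with a known relation in $\Ext_\R$ among these entries — of the type $\rho^2 a = 0$ or $\rho^2 \langle b, c, d\rangle \ni 0$, exactly as in the $\rho$-Bockstein computations already recorded — forces $\rho^2 h_2 f_0$ into the indeterminacy, hence to be zero on the relevant page. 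Once $\rho^2 h_2 f_0 = 0$ is established, Proposition \ref{prop:rho-torsion} and the pairing structure of Strategy \ref{strategy} guarantee $\rho^2 h_2 f_0$ is hit by some $d_r$ with $r \le 2$; since a $d_1$ on $\tau^2 g$ is ruled out (it would have to land in a degree with the wrong parity of $\tau$-power, or on a class already accounted for), the differential is exactly $d_2(\tau^2 g) = \rho^2 h_2 f_0$.

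The main obstacle is pinning down the precise Massey product expression for $h_2 f_0$ and the matching relation that makes $\rho^2$ kill it — this requires care with the $\tau$-weight bookkeeping (so that the $\rho^2$-torsion statement sits in the correct tridegree) and with indeterminacy, since the Massey product is only defined up to a coset and one must check the coset does not already contain a $\rho^2$-divisible element that would spoil the argument. A secondary, more routine obstacle is verifying exhaustively that no competing differential (for instance a longer $d_r$ out of a $\tau$-power times $f_0$ or out of some $\tau^k g$) can reach $\rho^2 h_2 f_0$; this is handled by the same combinatorial force-and-eliminate bookkeeping used throughout Section \ref{sctn:Bock-diff}, together with the Leibniz rule applied to factorizations of $\tau^2 g$.
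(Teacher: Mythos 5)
Your outline follows the paper's strategy exactly: show $\rho^2 h_2 f_0 = 0$ in $\Ext_\R$, observe that a class in $\rho$-filtration exactly $2$ can only be hit by a $d_1$ or $d_2$, and then check that the only available source is $\tau^2 g$. But the proposal stops short precisely at the step that constitutes the entire content of the proof: you never produce the Massey product expression for $h_2 f_0$ or the relation that makes the shuffle vanish, and you explicitly defer this as ``the main obstacle.'' The paper's argument is that $h_2 f_0 = \an{\tau h_1, h_1^4, h_4}$ in $\Ext_\R$ (recorded in Table \ref{tab:Massey}, obtained by comparison with the known $\C$-motivic bracket via the May Convergence Theorem \ref{thm:may-convergence}, with zero indeterminacy), and then the shuffle $\rho^2 \an{\tau h_1, h_1^4, h_4} = \an{\rho^2, \tau h_1, h_1^4} h_4$, which vanishes because $\an{\rho^2, \tau h_1, h_1^4}$ is zero; note that this bracket is defined since $\rho^2 \cdot \tau h_1 = 0$ in $\Ext_\R$ (forced by $d_2(\tau^2) = \rho^2 \tau h_1$). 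Without naming this bracket and this relation, the claim $\rho^2 h_2 f_0 = 0$ is unsupported, so the proposal as written has a genuine gap at its crux. Your guess that the bracket entries are drawn from $h_0, h_1, h_2, c_0, d_0, e_0$ also points in a slightly wrong direction: the decisive entries are $\tau h_1$, $h_1^4$, and $h_4$, and it is the $\rho^2$-torsion of $\tau h_1$ (not of $h_2 f_0$'s ``low-filtration'' neighbors) that drives the vanishing.

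Two smaller points. First, the justification you cite for ``killed by $\rho^2$ implies hit by $d_r$ with $r \le 2$'' is not really Proposition \ref{prop:rho-torsion}; the correct reason is the filtration argument used in Example \ref{ex:Bockstein-multiplicative}: a $d_r$ differential raises $\rho$-filtration by $r$ and sources lie in nonnegative filtration, so an element of filtration exactly $2$ cannot be the target of any $d_r$ with $r > 2$. Second, once that is in place, your ``secondary obstacle'' of excluding longer differentials into $\rho^2 h_2 f_0$ is vacuous, and what actually remains is only the routine check that no class other than $\tau^2 g$ can support a $d_1$ or $d_2$ hitting $\rho^2 h_2 f_0$, which is the bookkeeping the paper (and your outline) handles by inspection.
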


\begin{proof}
Table \ref{tab:Massey} shows that
$h_2 f_0$ equals the Massey product $\an{\tau h_1, h_1^4, h_4}$
in $\Ext_\R$.
Shuffle to obtain
\[
\rho^2 \an{\tau h_1, h_1^4, h_4} =
\an{\rho^2, \tau h_1, h_1^4} h_4,
\]
which equals zero because the last bracket is zero.
Therefore, $\rho^2 h_2 f_0$ is hit by a $d_1$ or $d_2$ differential,
and the only possibility is that $d_2(\tau^2 g) = \rho^2 h_2 f_0$.
\end{proof}

Theorem \ref{thm:Bock} summarizes the results of the
analysis of $\rho$-Bockstein differentials.

\begin{thm}
\label{thm:Bock}
Table \ref{tab:Bock} lists some values of the $\rho$-Bockstein
$d_r$ differentials on multiplicative generators of the
$E_r$-page.  Through coweight $13$,
the $d_r$ differential vanishes on all other multiplicative
generators of the $E_r$-page.
\end{thm}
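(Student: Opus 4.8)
The plan is to run Strategy \ref{strategy} for every relevant value of $N = s+f-w$ and to assemble the resulting differentials into Table \ref{tab:Bock}. The first task is to reduce to finitely many values of $N$. By Lemma \ref{lem:s+f-w}, every non-$h_1$-periodic element of $\Ext_\C$ of coweight at most $13$ lies in a degree with $s+f-w \leq 42$, and inspection of $\Ext_\C$ in this range sharpens the bound to $s+f-w \leq 28$. The $h_1$-periodic families relevant through coweight $13$ are the finitely many classes $1$, $P^k h_1$, $P^k c_0$, $P^k d_0$, $P^k e_0$, $P^k c_0 d_0$, $d_0^2$, $c_0 e_0$ together with their $h_1$-multiples, and their $\rho$-Bockstein differentials are already recorded by Proposition \ref{prop:Bock-h1-local}. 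So it remains to carry out Strategy \ref{strategy} for the finitely many values $0 \leq N \leq 28$.

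For a fixed $N$, Lemma \ref{lem:s+f-w-finite} guarantees that the $\F_2[\rho]$-basis $B_N$ of step (\ref{step:basis}) is finite; Proposition \ref{prop:rho-local} identifies and removes the $\rho$-periodic permanent cycles, which in our range are $h_1$, $h_2$, $h_3$, $h_4$, $c_1$, $h_2 g$, $h_3 g$ and their products. One then repeatedly applies step (\ref{step:diff}) to pair off the remaining basis elements by differentials. The backbone of step (\ref{step:diff}) is Proposition \ref{prop:bock-tau^k} together with the Leibniz rule, which determine a large proportion of the differentials mechanically, as in the second block of Table \ref{tab:s+f-w=6}. The leftover classes are treated by the ad hoc methods illustrated in Section \ref{sctn:Bock-diff}: ruling out a putative $d_r(x) = \rho^r y$ by exhibiting a multiplicative relation that forces $\rho^r y$ to be nonzero while some product involving $x$ must vanish; observing that a product of permanent cycles is annihilated by $\rho^k$ for small $k$, so that its $\rho^k$-multiple must be hit by a $d_r$ with $r \leq k$; propagating constraints forward into larger values of $N$ to eliminate otherwise-possible long differentials; comparison with the $\C$-motivic, classical, and $\rho$-inverted spectral sequences via the maps of Section \ref{sctn:compare-R-C} and Proposition \ref{prop:rho-local}; and, in the single case of Lemma \ref{lem:d2-t^2g}, a Massey-product shuffle using Table \ref{tab:Massey}. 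Throughout, one must keep track of the multiplicative generators of each successive page $E_r$ --- new generators such as $\tau^{2^k}$ and $\tau^2 g$ appear once their would-be factors die --- but once $B_N$ has been exhausted for every $N$ the Leibniz rule shows that the generator-level differentials listed in Table \ref{tab:Bock} account for all $\rho$-Bockstein differentials through coweight $13$, and every other generator of every $E_r$-page is a permanent cycle.

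The main obstacle is step (\ref{step:diff}): the ad hoc identification of individual differentials, which is where essentially all of the work lies. As $N$ increases the number of a priori possible differentials in a given tridegree grows, the ``free'' input from Proposition \ref{prop:bock-tau^k} and the Leibniz rule no longer closes the bookkeeping, and one is pushed into the more delicate look-ahead, Massey-product, and cross-category comparison arguments. The real content of the theorem is not any one differential but the assertion that the pairing of each $B_N$ is complete and forced --- equivalently, that the listed differentials leave no unaccounted classes. I expect the hardest cases to occur where the $h_1$-periodic towers (the $P^k$-families and the classes $e_0$, $f_0$, $g$, $i$) interact with the non-periodic part of $\Ext_\C$, since there the combinatorial forcing of Strategy \ref{strategy} is tightest; the look-ahead argument sketched for $d_4(i)$ and the Massey-product argument of Lemma \ref{lem:d2-t^2g} are representative of the care that is needed.
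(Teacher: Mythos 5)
Your proposal follows essentially the same route as the paper: Theorem \ref{thm:Bock} has no separate proof there, its justification being exactly the combination you describe --- Strategy \ref{strategy} run for each value of $N=s+f-w$ bounded via Lemma \ref{lem:s+f-w} and Lemma \ref{lem:s+f-w-finite}, removal of the $\rho$-periodic classes of Proposition \ref{prop:rho-local}, the separate treatment of $h_1$-periodic families in Proposition \ref{prop:Bock-h1-local}, and the pairing-off of the remaining basis elements by Proposition \ref{prop:bock-tau^k}, the Leibniz rule, and the ad hoc multiplicative, look-ahead, and Massey-product arguments (Lemma \ref{lem:d2-t^2g}) of Section \ref{sctn:Bock-diff}. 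Your account of where the real work lies (step (\ref{step:diff}) and the completeness of the pairing) matches the paper's presentation, so the proposal is correct.
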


For legibility, we have not included powers of $\rho$ in the
values of the Bockstein differentials in Table \ref{tab:Bock}.
For example, the first row of the table is to be interpreted as
$d_1(\tau) = \rho h_0$.

\section{Hidden extensions in the $\rho$-Bockstein spectral sequence}
\label{section:bockstein-extn}

Section \ref{sctn:Bock-diff} explains how to obtain the
$E_\infty$-page of the $\rho$-Bockstein spectral sequence
through coweight $12$.
As usual, this $E_\infty$-page is an associated graded
object of $\Ext_\R$.

We abuse notation and use the same name for generators
of the $\rho$-Bockstein $E_\infty$-page and
elements of $\Ext_\R$ that they represent.
A generator of the $\rho$-Bockstein $E_\infty$-page can
represent more than one element in $\Ext_\R$, where the indeterminacy is 
parametrized by
elements of the $E_\infty$-page in higher filtration.
For example, the element $\tau^2 h_2$ of the $E_\infty$-page
represents two elements of $\Ext_\R$ whose difference
is $\rho^4 h_3$.

We adopt the following convention in selecting generators
in $\Ext_\R$.  We always choose an element of $\Ext_\R$ that is 
annihilated by the same power of $\rho$ as its representative
in the $E_\infty$-page.  
For example, $\tau^2 h_2$ is annihilated by $\rho^4$ in the
$E_\infty$-page. 
Therefore, we write $\tau^2 h_2$ for the
(unique) element of $\Ext_\R$ that is annihilated by $\rho^4$.
(The other possible choice is $\rho$-periodic.)

This convention concerning annihilation by powers of $\rho$
eliminates much of the ambiguity in passing from the $E_\infty$-page
to $\Ext_\R$.  In some cases, our convention does not eliminate all
ambiguities.  However, the remaining ambiguities make little
practical difference.

In order to recover the full structure of 
$\Ext_\R$ from the $\rho$-Bockstein
$E_\infty$-page, we must determine hidden multiplicative extensions.
We adopt the precise definition of a hidden extension given in
\cite{Isaksen14c}*{Section 4.1.1}.
In this section, we will analyze all hidden extensions
by $h_0$ and $h_1$ through coweight $12$.

The $\rho$-Bockstein spectral sequence has numerous hidden extensions
by other elements.  There are so many examples that it is not practical
to enumerate them exhaustively.  In practice, these other hidden
extensions are occasionally useful, and we treat them on an
ad hoc basis as necessary.

\begin{defn}
\label{defn:decomposable-hidden}
A hidden $a$ extension from $x$ to $y$ is decomposable if there
exists a hidden $a$ extension from $u$ to $v$, and there
exists $z$ such that $x = z u$ and $y = z v$ in the $E_\infty$-page.
\end{defn}

\begin{ex}
\label{ex:decomposable-hidden}
There is a hidden $h_0$ extension from $\tau h_1$ to $\rho \tau h_1^2$.
Multiplication by $\tau h_1$ gives the decomposable
hidden $h_0$ extension 
from $\tau^2 h_1^2$ to $\rho \tau^2 h_1^3$.
\end{ex}

Definition \ref{defn:decomposable-hidden} allows us to focus
only on the hidden extensions that are most significant.
In practice, decomposable hidden extensions are easy to understand,
once the indecomposable hidden extensions have been studied.

\begin{remark}
\label{rem:rho-extn}
The structure of the $\rho$-Bockstein spectral sequence 
guarantees that there are no hidden extensions by $\rho$.
For degree reasons, 
if there is a possible hidden $\rho$ extension from
$x$ to $y$, then in fact $y$ is a multiple of $\rho$.
According to the definition of a hidden extension
\cite{Isaksen14c}*{Section 4.1.1}, this means that $y$ cannot be the target
of a hidden $\rho$ extension.
\end{remark}

\subsection{Massey products}

Our main tool for establishing hidden extensions 
is the May Convergence Theorem 
\cite{May69}*{Theorem 4.1}, restated here for convenience.

\begin{thm}[May Convergence Theorem]\label{thm:may-convergence}
Let $\alpha_0$, $\alpha_1$, and $\alpha_2$ be elements of $\Ext_\R$ such that
the Massey product $\an{\alpha_0,\alpha_1,\alpha_2}$ is defined. For each $i$,
let $a_i$ be a permanent cycle in the Bockstein $E_r$-page that detects
$\alpha_i$. Suppose further that:
\begin{enumerate} 
\item there exist elements $a_{01}$ and $a_{12}$ in the 
Bockstein $E_r$-page
such that $d_r(a_{01})$ equals $a_0a_1$ and $d_r(a_{12})$ equals $a_1a_2$;
\item 
\label{condition:crossing}
if either $a_{01}$ or $a_{12}$ has degree $(s,f,w)$ and $\rho$-Bockstein
degree $m$, 
and $x$ is an element in degree $(s,f,w)$ and $\rho$-Bockstein
degree $m'$ such that $m' \leq m$, then $d_t(x) = 0$ for all
$t$ such that $m'+t > (m -m') + r$.
\end{enumerate}
Then $a_0a_{12}+a_{01}a_2$ is a permanent cycle in the
$\rho$-Bockstein spectral sequence, and it detects an element of
$\an{\alpha_0,\alpha_1,\alpha_2}$ in $\Ext_\R$.
\end{thm}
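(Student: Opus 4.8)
The plan is to deduce this from May's general Convergence Theorem \cite{May69}*{Theorem 4.1}, which applies to the spectral sequence associated to any suitably filtered differential graded algebra. First I would recall the filtered DGA underlying \eqref{rho-bockstein}: the $\R$-motivic cobar complex $C^*_\R$ computing $\Ext_\R$, filtered by powers of $\rho$. Since $\rho$ is central in $\M_2$ and is a cocycle of filtration zero, multiplication by $\rho^k$ shifts filtration by exactly $k$, the cobar differential preserves the filtration, and the associated spectral sequence is precisely the $\rho$-Bockstein spectral sequence. The data in the hypotheses --- permanent cycles $a_i$ detecting $\alpha_i$, and elements $a_{01}, a_{12}$ of the $E_r$-page with $d_r(a_{01}) = a_0 a_1$ and $d_r(a_{12}) = a_1 a_2$ --- is exactly the input required by May's theorem specialized to this filtered DGA, and the conclusion is exactly its output; so the real content of the proof is checking that the abstract hypotheses of \cite{May69}*{Theorem 4.1} hold in this setting.

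With the setup in place, the argument follows the standard cochain-level pattern. I would first lift each $a_i$ to a cocycle $z_i$ in $C^*_\R$ that represents $\alpha_i$ and has $\rho$-Bockstein leading term $a_i$. From the equation $d_r(a_{01}) = a_0 a_1$ on the $E_r$-page I would produce a cochain $z_{01}$ with leading term $a_{01}$ satisfying $d(z_{01}) = z_0 z_1$; the $E_r$-level identity only gives this up to terms of strictly higher $\rho$-filtration, which are then killed inductively, possibly after modifying $z_0$ and $z_1$ within their cohomology classes. Constructing $z_{12}$ in the same way, the element $z_0 z_{12} + z_{01} z_2$ is a cocycle of $C^*_\R$, by the usual computation using $d z_i = 0$ together with $d(z_{01}) = z_0 z_1$ and $d(z_{12}) = z_1 z_2$; by definition it represents an element of $\an{\alpha_0, \alpha_1, \alpha_2}$ in $\Ext_\R$.

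The main obstacle is condition \eqref{condition:crossing}, the crossing-differentials hypothesis, which is what upgrades ``represents an element of the Massey product'' to the precise detection statement: it guarantees that the $\rho$-Bockstein leading term of the cocycle $z_0 z_{12} + z_{01} z_2$ really is $a_0 a_{12} + a_{01} a_2$ and has not dropped to higher filtration. The subtle point is that the corrections made to $z_{01}$ and $z_{12}$ in order to force $d(z_{01}) = z_0 z_1$ and $d(z_{12}) = z_1 z_2$ on the nose live in higher $\rho$-filtration, and without control over the differentials emanating from that range of filtrations these corrections could lower the filtration of the resulting cocycle. Condition \eqref{condition:crossing} is exactly the hypothesis that rules this out, and translating May's abstract crossing condition into the $(s,f,w)$ grading and $\rho$-Bockstein degree bookkeeping used here is the one step that requires genuine care; everything else is the routine specialization of \cite{May69}*{Theorem 4.1} to the filtered cobar complex.
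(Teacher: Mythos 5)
Your route is the same as the paper's: Theorem \ref{thm:may-convergence} is presented there simply as a restatement of \cite{May69}*{Theorem 4.1} specialized to the $\rho$-Bockstein spectral sequence of the $\rho$-filtered cobar complex, with no independent proof given, and your plan of verifying that this filtered-DGA setup satisfies May's hypotheses (with the crossing-differentials condition translated into $\rho$-Bockstein degrees) is exactly that specialization. The additional cochain-level sketch you give is essentially the content of May's own proof, so it is correct but not something the paper redoes.
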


We will often use Theorem \ref{thm:may-convergence} in the situation
when $a_{01}$ has $\rho$-Bockstein degree $0$ and 
$a_{12}$ has negative $\rho$-Bockstein degree.  
Since the $\rho$-Bockstein spectral sequence is zero in
negative $\rho$-Bockstein degrees,
condition (\ref{condition:crossing}) of Theorem \ref{thm:may-convergence}
simplifies to the condition that no element in the
same degree as $a_{01}$ with $\rho$-Bockstein degree $0$
supports a longer differential.

\begin{prop}\label{prop:Massey}
Table \ref{tab:Massey} lists some Massey products in $\Ext_\R$.
\end{prop}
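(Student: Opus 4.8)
The plan is to verify each entry of Table \ref{tab:Massey} by a direct application of the May Convergence Theorem \ref{thm:may-convergence} to an appropriate three-fold Massey product in $\Ext_\C$, lifted through the $\rho$-Bockstein spectral sequence. For a typical entry asserting $y \in \an{\alpha_0, \alpha_1, \alpha_2}$ in $\Ext_\R$, I would first choose permanent cycles $a_0, a_1, a_2$ in the $\rho$-Bockstein $E_r$-page detecting the $\alpha_i$; in nearly all cases these are standard named elements with $\rho$-Bockstein degree $0$, so the relevant products $a_0 a_1$ and $a_1 a_2$ are visible on the $E_1$-page. Then I would identify the nullhomotopies: an element $a_{01}$ with $d_r(a_{01}) = a_0 a_1$ and an element $a_{12}$ with $d_r(a_{12}) = a_1 a_2$, drawing on the differentials already tabulated in Theorem \ref{thm:Bock} and Table \ref{tab:Bock}. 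In the common situation where one of $a_0 a_1$, $a_1 a_2$ already vanishes on $E_1$ (for instance because one factor is $\rho^k$ times something, or because of a known $\Ext_\C$ relation), the corresponding $a_{01}$ or $a_{12}$ can be taken to have strictly negative $\rho$-Bockstein degree, hence is zero, and that half of the bracket contributes nothing.

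Next I would check the crossing condition \eqref{condition:crossing} of Theorem \ref{thm:may-convergence}. As noted in the paragraph following the theorem, in the frequent case that $a_{01}$ sits in $\rho$-Bockstein degree $0$ and $a_{12}$ sits in negative $\rho$-Bockstein degree, this reduces to the single requirement that no element in the same tridegree as $a_{01}$ with $\rho$-Bockstein degree $0$ supports a $\rho$-Bockstein differential longer than $d_r$. This is a finite check against the list of differentials in Theorem \ref{thm:Bock}, and for most entries it is immediate because the relevant tridegree contains only $a_{01}$ itself or only $\rho$-periodic permanent cycles. Having verified the hypotheses, the theorem produces the permanent cycle $a_0 a_{12} + a_{01} a_2$ detecting an element of the Massey product, and I would identify that permanent cycle with the claimed generator $y$ in the $E_\infty$-page (up to the indeterminacy already built into our naming convention for $\Ext_\R$ elements). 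Finally, for those entries where the Massey product has nontrivial indeterminacy, I would record the indeterminacy by computing $\alpha_0 \cdot \Ext_\R + \Ext_\R \cdot \alpha_2$ in the appropriate tridegree directly from the known additive structure of $\Ext_\R$.

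For a handful of entries the inputs live in $\Ext_\C$ rather than being lifted from simple $\rho$-Bockstein elements, and there the strategy is instead to compute the Massey product first in $\Ext_\C$ — where the structure is well understood in our range by \cite{Isaksen14c} — and then transport it to $\Ext_\R$ using the long exact sequence of Remark \ref{rem:Ext-R-C-exact} together with the fact (Proposition \ref{prop:rho-local}) that $\rho$-periodic elements are inert. The main obstacle I anticipate is the crossing condition: unlike the Moss Convergence Theorem for Adams spectral sequences, the May Convergence Theorem's hypothesis \eqref{condition:crossing} can fail when there is a longer Bockstein differential emanating from the tridegree of one of the nullhomotopies, and ruling this out requires knowing the $\rho$-Bockstein differentials in a slightly larger range than the Massey product itself occupies. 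In the rare cases where the crossing condition genuinely fails — or where the most obvious choice of $a_{01}$ does not exist — I would fall back on shuffling relations among Massey products (as in the proof of Lemma \ref{lem:d2-t^2g}) or on comparison with the classical and $\C$-motivic Massey product computations via the maps in diagram \eqref{eq:comparison}.
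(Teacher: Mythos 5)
Your proposal is correct and follows essentially the same route as the paper: the paper's proof verifies most entries by the May Convergence Theorem (with the relevant $\rho$-Bockstein differential recorded in the sixth column of the table), handles the remaining entries by comparison to the $\C$-motivic case --- where the only subtlety is that comparison determines the bracket up to $\rho$-multiples, eliminated by a shuffling argument of the kind you describe --- and computes indeterminacies by inspection. No gaps.
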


\begin{proof}
Most of these Massey products are straightforward
applications of the May Convergence Theorem \ref{thm:may-convergence}.
In those cases,
the sixth column of Table \ref{tab:Massey} gives the
$\rho$-Bockstein differential that is relevant for computing
the Massey product.

In some cases, the Massey products follow by comparison to the $\C$-motivic
case.  This is denoted by the word ``$\C$-motivic" in the sixth column of Table
\ref{tab:Massey}. However, this only determines the Massey product up to
multiples of $\rho$. 
These ambiguities can typically be eliminated by the multiplicative
structure.  In particular, 
if the Massey product $\an{x,y,z}$ is defined and $\rho^a x$
and $\rho^b z$ are both zero, then
\[
\rho^{a+b} \an{x,y,z} =
\rho^b \an{\rho^a, x, y} z = 0.
\]

The indeterminacies can be computed by inspection.
\end{proof}

Table \ref{tab:Massey} is not meant to be an exhaustive list of Massey
products.
It merely provides an assortment of Massey products that are 
needed for various specific computations throughout the manuscript.

\subsection{Hidden $h_0$ extensions}

\begin{prop}\label{prop:h0-extn}
Table \ref{tab:Bock-h0-extn} lists all
indecomposable hidden $h_0$ extensions in the
$\rho$-Bockstein spectral sequence, through coweight $12$.
\end{prop}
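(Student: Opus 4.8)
The plan is to run through the $E_\infty$-page of the $\rho$-Bockstein spectral sequence one coweight at a time, and for every generator $x$ with $h_0 \cdot x = 0$ on the $E_\infty$-page, decide whether $h_0$ acts by a hidden extension on $x$ and, if so, name the target. By Definition \ref{defn:decomposable-hidden} it suffices to pin down the indecomposable cases: once those are known, multiplicativity of $\Ext_\R$ generates every decomposable hidden $h_0$ extension, exactly as in Example \ref{ex:decomposable-hidden}. So the argument splits into two tasks — establishing each entry of Table \ref{tab:Bock-h0-extn}, and proving that no further indecomposable hidden $h_0$ extension exists through coweight $12$.

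For the first task, the main engine is the May Convergence Theorem \ref{thm:may-convergence}. For a proposed extension $h_0 \cdot x = \rho^k z$ one writes $z$ (up to a power of $\rho$) as a Massey product $\an{h_0, a, b}$ with $ab = x$ on the $E_\infty$-page, and then runs the same shuffle bookkeeping already used for Proposition \ref{prop:Massey} and Lemma \ref{lem:d2-t^2g}; the relevant three-fold Massey products are recorded in Table \ref{tab:Massey}. A handful of entries come instead from comparison: for a $\rho$-Bockstein permanent cycle $x$ the map $i \colon \Ext_\R \to \Ext_\C$ of Remark \ref{rem:Ext-R-C-exact} sends $x$ to the class of the same name, and likewise for the $\rho$-inversion isomorphism of Proposition \ref{prop:rho-local}; known $h_0$-multiplications in $\Ext_\C$ and in classical $\Ext$ then determine $h_0 \cdot x$ modulo $\rho$, and the annihilation-by-powers-of-$\rho$ convention fixed at the start of Section \ref{section:bockstein-extn} removes the residual ambiguity. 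Multiplicativity then upgrades these base cases to the decomposable extensions listed in the table.

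For the second task, one must rule out a hidden $h_0$ extension on every remaining $x$ with $h_0 \cdot x = 0$ on $E_\infty$. The available obstructions are three. First, the long exact sequence of Remark \ref{rem:Ext-R-C-exact}: when $x$ is a $\rho$-Bockstein permanent cycle, $h_0 \cdot x = 0$ on $E_\infty$ already forces $h_0 \cdot x$ into $\rho \cdot \Ext_\R$, so the only question is whether the candidate $\rho$-multiple is actually nonzero, and this can often be settled by tracking $\rho$-divisibility through the exact sequence or through $\C$-motivic and classical comparison. Second, multiplicative relations: if $x = uv$ with $u$, $v$ chosen so that neither factor can carry the putative extension, then $h_0 \cdot x = 0$, exactly the style of argument used for the $\rho$-Bockstein differentials in Section \ref{sctn:Bock-diff}. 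Third, Massey-product shuffles showing the would-be target itself vanishes in $\Ext_\R$, as in the proof of Lemma \ref{lem:d2-t^2g}. The $h_1$-periodic generators ($1$, $P^k h_1$, $P^k c_0$, $P^k d_0$, $P^k e_0$, and their $h_1$-multiples) are treated separately, using that their $h_0$-multiplications are already determined after inverting $h_1$; through coweight $12$ these families produce nothing beyond what multiplicativity already gives.

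The main obstacle is the bookkeeping of the second task. There are many generators $x$ on the $E_\infty$-page with $h_0 \cdot x = 0$, and while most are dispatched by a one-line comparison or factorization argument, a few require genuinely ad hoc shuffles; the delicate point is to be sure the list of candidate targets is complete, in particular not to overlook a potential target sitting in strictly higher $\rho$-Bockstein filtration in the same tridegree as $h_0 \cdot x$. The restriction to coweight $12$ is exactly what keeps this finite case analysis manageable.
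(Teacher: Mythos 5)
Your second half (showing no further indecomposable $h_0$ extensions exist, via relations such as $h_0h_1=0$, $\rho h_0=0$, and closure under products) matches the paper. The gap is in the first half: the mechanism you propose for \emph{establishing} the extensions does not work. You cannot ``write $z$ as a Massey product $\an{h_0,a,b}$ with $ab=x$ on the $E_\infty$-page,'' because $\an{h_0,a,b}$ is only defined when $ab=0$ in $\Ext_\R$, which is incompatible with $ab$ detecting the nonzero class $x$; and Table \ref{tab:Massey} contains no brackets of this shape to shuffle with. Your comparison fallback also cannot see these extensions: the targets (e.g.\ $\rho\tau h_1^2$, $\rho^2 Ph_2$) are $\rho$-divisible and hence map to zero under $i\colon \Ext_\R\map\Ext_\C$, while the sources (e.g.\ $\tau h_1$, $\tau c_0$, $\tau^2h_1c_0$) are $\rho$-torsion and hence invisible in the $\rho$-inverted classical comparison of Proposition \ref{prop:rho-local}. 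So neither tool determines $h_0\cdot x$ in the filtration where the extension lives.

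The paper's actual engine (from \cite{DI17}*{Lemma 6.2}) is a contradiction argument that you are missing: if $h_0\cdot x$ were zero in $\Ext_\R$, the bracket $\an{\rho,h_0,x}$ would be defined, and the May Convergence Theorem \ref{thm:may-convergence} applied to $d_1(\tau)=\rho h_0$ would force $\tau x$ to be a permanent cycle in the $\rho$-Bockstein spectral sequence; but $\tau x$ supports a known Bockstein differential (for instance $d_3(\tau^3h_1c_0)=\rho^3Ph_2$ in the illustrative case $x=\tau^2h_1c_0$), so $h_0\cdot x\neq 0$. The target is then identified not by computing a bracket but by the requirement that it lie in strictly higher $\rho$-Bockstein filtration, together with simple relations (e.g.\ the target cannot support $\rho$-multiplication since $\rho h_0=0$), which leaves a unique candidate in each case through coweight $12$. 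Every entry of Table \ref{tab:Bock-h0-extn} is proved by this single technique, so without it your argument has no way to produce the positive entries of the table.
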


\begin{proof}
All of the hidden $h_0$ extensions in Table \ref{tab:Bock-h0-extn}
are proved using a single technique, 
which was introduced in the proof of
\cite{DI17}*{Lemma 6.2}. 
To illustrate this technique, we will show that there is a hidden
$h_0$ extension from $\tau^2 h_1 c_0$ to $\rho^2 P h_2$.

First we show that the product $h_0 \cdot \tau^2 h_1 c_0$ is nonzero in $\Ext_\R$. If not,
then the Massey product $\an{\rho, h_0, \tau^2 h_1 c_0}$ would be defined in
$\Ext_\R$. 
The May Convergence Theorem \ref{thm:may-convergence},
together with the $\rho$-Bockstein differential
$d_1(\tau) = \rho h_0$, would then imply that $\tau^3 h_1 c_0$
is a permanent cycle.
But this contradicts the $\rho$-Bockstein differential
$d_3(\tau^3 h_1 c_0) = \rho^3 P h_2$.

This shows that there must be a hidden $h_0$ extension
on $\tau^2 h_1 c_0$.  The target of this hidden extension
can only be $\rho^2 P h_2$ or $\tau P h_1$.
But the target must have higher $\rho$-Bockstein
filtration than the source,
which rules out $\tau P h_1$. 

In some cases, one needs to use multiplicative
relations to rule out possible hidden $h_0$ extensions.
For example, the target of a hidden $h_0$ extension cannot support 
a $\rho$ multiplication, since $\rho h_0 = 0$ in $\Ext_\R$.

We must also show that many elements do not support hidden
$h_0$ extensions.
In all cases through coweight $12$, the non-existence
follows from simple multiplicative relations.
For example, if $x$ is already
known to not support an $h_0$ extension, then the product $xy$
cannot support an $h_0$ extension.
Similarly, if $h_1 y$ or $\rho y$ is non-zero, 
then $y$ cannot be the target
of a hidden extension because of the relations $h_0 h_1 = 0$
and $\rho h_0 = 0$ in $\Ext_\R$.
\end{proof}

\subsection{Hidden $h_1$ extensions}

\begin{prop}
\label{prop:h1-extn}
Table \ref{tab:Bock-h1-extn} lists all indecomposable hidden $h_1$
extensions in the $\rho$-Bockstein
spectral sequence, through coweight $12$.
\end{prop}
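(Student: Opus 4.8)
The goal is to determine all indecomposable hidden $h_1$ extensions in the $\rho$-Bockstein spectral sequence through coweight $12$, i.e.\ to prove that Table \ref{tab:Bock-h1-extn} is complete. My approach mirrors the proof of Proposition \ref{prop:h0-extn}: work degree by degree (organized by coweight, as in Strategy \ref{strategy}), and for each element $x$ of the $E_\infty$-page in the relevant range, decide whether $h_1 x$ supports a hidden extension, and if so, identify the target. First I would lay out the three tactics used to \emph{establish} a hidden $h_1$ extension: (i) the Massey-product/May Convergence argument, showing that if $h_1 x = 0$ in $\Ext_\R$ then some Massey product $\an{\rho^k, h_1, x}$ would be defined, forcing a permanent cycle in the $\rho$-Bockstein spectral sequence that is in fact hit by a differential (contradiction); (ii) comparison with the $\C$-motivic Adams $E_2$-page, where the analogous $h_1$ extension is known, together with the map $i\colon \Ext_\R \to \Ext_\C$ of Remark \ref{rem:Ext-R-C-exact}; and (iii) shuffling against already-established extensions, e.g.\ promoting a known hidden $h_1$ extension through multiplication to deduce the value on a product, then using that to pin down an indecomposable one by subtraction.

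**Key steps in order.** First I would fix the range: by Lemma \ref{lem:s+f-w}, elements of coweight at most $12$ that are not $h_1$-periodic satisfy $s+f-w \le 39$, and in fact inspection of $\Ext_\C$ shows $s+f-w \le 28$ suffices; the $h_1$-periodic families ($1$, $P^k h_1$, $P^k c_0$, $P^k d_0$, $P^k e_0$, $P^k c_0 d_0$, $d_0^2$, $c_0 e_0$ and their $h_1$-multiples) must be handled separately using the known $h_1$-periodic $\rho$-Bockstein differentials of Proposition \ref{prop:Bock-h1-local}. Second, within each coweight I would enumerate the $E_\infty$-page generators (and hence, via the annihilation-by-$\rho$ convention of Section \ref{section:bockstein-extn}, their lifts to $\Ext_\R$) and run the May Convergence argument: for a candidate source $x$, suppose $h_1 x = 0$; then $\an{\rho^k, h_1, x}$ is defined for the appropriate $k$; apply Theorem \ref{thm:may-convergence} with the differential $d_k$ killing $\rho^k h_1$ off a suitable element $a_{01}$ (typically in $\rho$-Bockstein degree $0$, so condition (\ref{condition:crossing}) reduces to checking no crossing differential in that degree), concluding that some $\tau$-multiple of $x$ is a permanent cycle — and contradict this with a $\rho$-Bockstein differential on that element drawn from Theorem \ref{thm:Bock}/Table \ref{tab:Bock}. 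Third, once existence is forced, identify the target by the usual constraints: the target must have strictly higher $\rho$-Bockstein filtration, must be annihilated by $h_1$ (since $h_1^2 \ne 0$ would obstruct, using known $h_1$-multiplications) and cannot support a nonzero $\rho$-multiplication if $\rho h_1 x$ is already zero; in the overwhelming majority of cases these leave a unique candidate. Fourth, for the \emph{non-existence} claims — showing many $h_1 x$ are genuinely zero or, more precisely, that $x$ supports no \emph{hidden} $h_1$ extension — I would use simple multiplicative relations: if $x = yz$ with $y$ already known to carry no hidden $h_1$ extension, neither does $x$; and if $\rho y$ or $h_0 y$ is nonzero then $y$ cannot be the target of a hidden $h_1$ extension by $\rho h_1 = 0$ or $h_0 h_1 = 0$. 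Finally, I would separately dispatch the $h_1$-periodic families by reducing to the $h_1$-inverted computation of \cite{guillou-isaksen-eta-R} and checking that no shorter extensions intervene.

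**The main obstacle.** The hard part will be the $h_1$-periodic families, where Lemma \ref{lem:s+f-w} fails to bound $s+f-w$ and the naive degree-by-degree bookkeeping does not terminate. For a family like $P^k e_0$ (or $c_0 e_0$, $d_0^2$) one must simultaneously control the $h_1$-tower structure in $\Ext_\R$ and make sure that the $\rho$-Bockstein filtration on the \emph{whole tower} is consistent, since a hidden $h_1$ extension on one element of a tower propagates up the tower. The resolution is to lean on the fully-computed $h_1$-inverted $\rho$-Bockstein spectral sequence of \cite{guillou-isaksen-eta-R}, which pins down the behavior of $h_1^k x$ for $k$ large; the only remaining work is to rule out anomalous \emph{shorter} hidden $h_1$ extensions near the bottom of each tower, which can be done by hand using the Massey-product technique and the explicit low-coweight differentials, and which (as remarked after Proposition \ref{prop:Bock-h1-local}) happens not to produce surprises through coweight $12$. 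A secondary, more routine difficulty is the combinatorial size of the case analysis around coweights $10$–$12$, where the $E_\infty$-page becomes crowded; here the indecomposability filter of Definition \ref{defn:decomposable-hidden} substantially reduces the number of cases that require a genuine argument, and comparison with the $\C$-motivic and classical hidden $h_1$ extensions (via the square \eqref{eq:comparison}) resolves most of the rest.
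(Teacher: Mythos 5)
Your proposal has a genuine gap at its core: the main engine you propose for \emph{establishing} the hidden $h_1$ extensions does not exist. The bracket $\an{\rho^k, h_1, x}$ is never defined, because $\rho^k h_1 \neq 0$ for every $k$: by Proposition \ref{prop:rho-local}, $h_1$ is one of the $\rho$-periodic classes (it corresponds to the classical $h_0$), so it is $\rho$-torsion free in $\Ext_\R$. The Massey-product argument you are importing is the one used for hidden $h_0$ extensions, and it works there only because $d_1(\tau)=\rho h_0$ forces $\rho h_0=0$; there is no analogous relation for $h_1$. The same false relation ``$\rho h_1=0$'' also invalidates your non-existence criterion ``if $\rho y\neq 0$ then $y$ is not a target'' (the paper's non-existence arguments for $h_1$ use only $h_0h_1=0$ and propagation along products). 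Your tactic (ii) cannot fill the hole either: the target of a hidden Bockstein extension lies in strictly positive $\rho$-filtration, hence is divisible by $\rho$ and lies in the kernel of $i\colon \Ext_\R \map \Ext_\C$, so pushing forward along $i$ only yields $h_1\cdot i(x)=0$, which carries no information.

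What is missing is the paper's key tool: the map $p\colon \Ext_\C \map \Ext_\R$ from the long exact sequence of Remark \ref{rem:Ext-R-C-exact}, which is a map of $\Ext_\R$-modules and can land in positive $\rho$-filtration. For instance, the relation $h_1\cdot \tau^3 c_0=\tau^3 h_1 c_0$ in $\Ext_\C$ gives $h_1\cdot p(\tau^3 c_0)=p(\tau^3 h_1 c_0)$, and since $p(\tau^3 c_0)=\rho\tau h_1\cdot \tau c_0$ and $p(\tau^3 h_1 c_0)=\rho^2 P h_2$, one obtains the hidden $h_1$ extension from $\tau^2 h_1 c_0$ to $\rho P h_2$; this is how most entries of Table \ref{tab:Bock-h1-extn} are produced. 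The remaining hard cases are handled by arguments close to your tactic (iii), but with the crucial modification that the Massey products shuffled there have genuinely $\rho$-torsion entries, e.g.\ $\an{\rho^2,\tau h_1,-}$ using $d_2(\tau^2)=\rho^2\tau h_1$ (Lemmas \ref{lem:t^8 h1 c0 * h1} and \ref{lem:h1 * t^3 c1}), or by multiplying known extensions such as $h_1\cdot \tau h_2^2=\rho c_0$ and dividing by powers of $h_1$ (Lemmas \ref{lem:h1 t^3 h1^2 h3} and \ref{lem:t^3 h2^2 e0}). Finally, your concern about $h_1$-periodic towers is misplaced here: in the paper that issue arises for Bockstein \emph{differentials} (Proposition \ref{prop:Bock-h1-local}), not for the hidden $h_1$ extensions of this proposition.
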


\begin{proof}
Many of the extensions are established using the map
\[
\xymatrix@1{
\Ext_\C \ar[r]^{p} & \Ext_\R
}
\]
of Remark \ref{rem:Ext-R-C-exact}.
To illustrate this technique,
we will show that there is a hidden $h_1$ extension from
$\tau^2 h_1 c_0$ to $\rho P h_2$.
The relation $h_1 \cdot \tau^3 c_0 = \tau^3 h_1 c_0$
in $\Ext_\C$ implies that
$h_1 \cdot p(\tau^3 c_0) = p(\tau^3 h_1 c_0)$.
Observe that
$p(\tau^3 c_0) = \rho \tau h_1 \cdot \tau c_0$ and
$p(\tau^3 h_1 c_0) = \rho^2 P h_2$.
This shows that there is a hidden $h_1$ extension from
$\rho \tau^2 h_1 c_0$ to $\rho^2 P h_2$, and it follows that there is
also a hidden $h_1$ extension from
$\tau^2 h_1 c_0$ to $\rho P h_2$.

Several more difficult cases are established in the following lemmas.

We must also show that many elements do not support hidden
$h_1$ extensions.
In most cases through coweight 12, the non-existence
follows from simple multiplicative relations.
For example, if $x$ is already
known to not support an $h_1$ extension, then the product $xy$
cannot support an $h_1$ extension.
Similarly, if $h_0 y$ is non-zero, 
then $y$ cannot be the target
of a hidden $h_1$ extension because of the relation $h_0 h_1 = 0$
in $\Ext_\R$. 

Additionally, the map $p: \Ext_\C \map \Ext_\R$ can be used to
detect the absence of some $h_1$ extensions.
\end{proof}

\begin{remark}
\label{rem:h1-extn-low}
The first three extensions in Table \ref{tab:Bock-h1-extn}
were established in \cite{DI17}.
\end{remark}

\begin{lemma}\label{lem:h1 t^3 h1^2 h3}
There is a hidden $h_1$ extension from $\tau^3 h_2^3$ to
$\rho^4 d_0$.
\end{lemma}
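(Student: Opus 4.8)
The plan is to compute $h_1\cdot\tau^3 h_2^3$ in $\Ext_\R$ and to verify that it is detected, modulo higher $\rho$-Bockstein filtration, by $\rho^4 d_0$. The extension is hidden for a simple reason: in $\Ext_\C$ one has $h_1\cdot\tau^3 h_2^3=\tau^3 h_1^3 h_3=\tau^2 h_0^2\,(h_2 h_3)=0$, using the $\C$-motivic relations $h_2^3=h_1^2 h_3$, $\tau h_1^3=h_0^2 h_2$, and $h_2 h_3=0$. Thus the product lies in $\rho$-Bockstein filtration at least $1$, and the content of the lemma is that it is actually nonzero.

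The primary approach is to transport a $\C$-motivic identity across the $\Ext_\R$-module map $p\colon\Ext_\C\to\Ext_\R$ of Remark~\ref{rem:Ext-R-C-exact}, exactly as in the body of the proof of Proposition~\ref{prop:h1-extn}, but with the wrinkle that neither $\tau^3 h_2^3$ nor $\rho^4 d_0$ is itself in the image of $p$, so one must work a few powers of $\rho$ away. Concretely, I would proceed as follows. Using the techniques of Section~\ref{sctn:Bock-diff} and the differentials recorded in Table~\ref{tab:Bock}, I would first identify the $\rho$-Bockstein differential $d_r(z)=\rho^r\,\tau^3 h_2^3$ truncating the $\rho$-tower on $\tau^3 h_2^3$, which expresses $p(z)=\rho^{\,r-1}\,\tau^3 h_2^3$ in $\Ext_\R$. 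I would then compute the product $h_1 z$ in $\Ext_\C$ using the relations above, recognizing $h_1 z$ as the class that supports the Bockstein differential whose target is the relevant power of $d_0$, which gives $p(h_1 z)=\rho^{\,r+3}\,d_0$. Since $p$ is a map of $\Ext_\R$-modules, $\rho^{\,r-1}(h_1\cdot\tau^3 h_2^3)=p(h_1 z)=\rho^{\,r+3}\,d_0$; cancelling $\rho^{\,r-1}$—legitimate once one checks that no competing $\rho^{\,r-1}$-torsion class occupies the relevant tridegree of the $\rho$-Bockstein $E_\infty$-page—gives $h_1\cdot\tau^3 h_2^3=\rho^4 d_0$.

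I expect the $\rho$-exponent bookkeeping to be the main obstacle: pinning down the exact lengths of the $\rho$-towers on $\tau^3 h_2^3$ and on $d_0$, establishing the two auxiliary Bockstein differentials (which may in turn require multiplicative or Massey-product inputs in the style of Example~\ref{ex:Bockstein-multiplicative}, Lemma~\ref{lem:d2-t^2g}, and Table~\ref{tab:Massey}), and confirming that the final cancellation is unambiguous. Should that cancellation be obstructed by indeterminacy, the fallback is to realize $h_1\cdot\tau^3 h_2^3$ as a Massey product in $\Ext_\R$—for instance by shuffling $h_1$ into a bracket presentation of $\tau^3 h_2^3$ coming from the relations $h_2^3=h_1^2 h_3$ and $\tau h_1^3=h_0^2 h_2$—and to evaluate it with the May Convergence Theorem~\ref{thm:may-convergence}, the remaining work being the verification of its crossing-differential hypothesis.
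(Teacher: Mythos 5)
Your primary approach breaks down at a concrete point, and the failure is structural rather than a matter of bookkeeping. The class whose Bockstein differential truncates the $\rho$-tower on $\tau^3 h_2^3$ is $\tau^5 h_2^2 = \tau^4\cdot \tau h_2^2$, with $d_4(\tau^5 h_2^2)=\rho^4\tau^3 h_2^3$ forced by $d_4(\tau^4)=\rho^4\tau^2 h_2$ (Table~\ref{tab:Bock}); so $r=4$ and $p(\tau^5 h_2^2)$ is detected by $\rho^3\tau^3 h_2^3$. But $h_1\cdot \tau^5 h_2^2=0$ in $\Ext_\C$, since $h_1h_2=0$ there; in particular it is \emph{not} the class $\tau^4 c_0$ that supports $d_7(\tau^4 c_0)=\rho^7 d_0$, and $p(h_1 z)=0$. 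Module-naturality of $p$ therefore only yields $\rho^3\,(h_1\cdot\tau^3 h_2^3)=0$, which is vacuous: the asserted target $\rho^4 d_0$ is annihilated by $\rho^3$ anyway, because $d_7(\tau^4 c_0)=\rho^7 d_0$ means the tower on $d_0$ has length $7$. Equivalently, in your own notation the method would need $\rho^{r+3}d_0\neq 0$, i.e.\ $r\leq 3$, whereas $r=4$. So there is no equation from which to ``cancel $\rho^{r-1}$''; the image of $p$ only sees the top of the $\rho$-tower on $\tau^3 h_2^3$, and this particular hidden extension dies one step before that top. This is exactly why the comparison-along-$p$ technique, which does prove several entries of Table~\ref{tab:Bock-h1-extn}, cannot prove this one. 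Your fallback (a Massey-product presentation of $\tau^3 h_2^3$ plus the May Convergence Theorem) is left entirely unexecuted, so it does not fill the gap.

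For comparison, the paper's argument is short and purely multiplicative: the $E_\infty$-class $\tau^3 h_2^3$ detects $\tau^2 h_2\cdot \tau h_2^2$ in $\Ext_\R$, and the already-established hidden extensions $h_1\cdot\tau h_2^2=\rho c_0$ and $h_1\cdot \tau^2 h_2 = \rho^2\tau h_2^2$ (hence $h_1^2\cdot\tau^2 h_2=\rho^3 c_0$) give $h_1^3\cdot(\tau^2 h_2\cdot\tau h_2^2)=\rho^4 c_0^2=\rho^4 h_1^2 d_0$, from which $h_1\cdot(\tau^2 h_2\cdot\tau h_2^2)=\rho^4 d_0$ follows by dividing by $h_1^2$. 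If you want to salvage your write-up, replace the $p$-naturality argument with this factorization (or carry out the Massey-product fallback in detail, including the crossing-differential hypothesis); as it stands, the main step would fail.
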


\begin{proof}
The element $\tau^3 h_2^3$ of the $\rho$-Bockstein
$E_\infty$-page detects the element
$\tau^2 h_2 \cdot \tau h_2^2$ in $\Ext_\R$.
Table \ref{tab:Bock-h1-extn} shows that
$h_1 \cdot \tau h_2^2 = \rho c_0$, and
$h_1^2 \cdot \tau^2 h_2 = \rho^3 c_0$.
Therefore,
\[
h_1^3 \cdot \tau^2 h_2 \cdot \tau h_2^2 =
\rho^3 c_0 \cdot \rho c_0 = \rho^4 h_1^2 d_0.
\]
It follows that $h_1 \cdot \tau^2 h_2 \cdot \tau h_2^2$ equals
$\rho^4 d_0$.
\end{proof}

\begin{lemma}\label{lem:t^2 f0 * h1}
There is a hidden $h_1$ extension from $\tau^2 f_0$ to $\rho^2 \tau^2 h_1 g$.
\end{lemma}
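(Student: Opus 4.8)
The plan is to use the map $p \colon \Ext_\C \to \Ext_\R$ from Remark~\ref{rem:Ext-R-C-exact}, exactly as in the main body of the proof of Proposition~\ref{prop:h1-extn}, combined with known $\C$-motivic multiplicative relations and the already-established hidden $h_1$ extensions from Table~\ref{tab:Bock-h1-extn}. First I would locate a $\C$-motivic element that maps under $p$ to (a $\rho$-multiple of) $\tau^2 f_0$, and track the image under $p$ of the corresponding $h_1$-multiple. The natural candidate is to exploit a relation in $\Ext_\C$ of the form $h_1 \cdot \tau^3 f_0 = \tau^3 h_1 f_0$; applying $p$ and using that $p$ is a map of $\Ext_\R$-modules gives $h_1 \cdot p(\tau^3 f_0) = p(\tau^3 h_1 f_0)$. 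The point is then to identify $p(\tau^3 f_0)$ with $\rho \tau h_1 \cdot \tau^2 f_0$ (up to the usual $\rho$-Bockstein conventions) and $p(\tau^3 h_1 f_0)$ with $\rho^3 \tau^2 h_1 g$, which together exhibit a hidden $h_1$ extension from $\rho \tau^2 f_0$ to $\rho^3 \tau^2 h_1 g$, hence from $\tau^2 f_0$ to $\rho^2 \tau^2 h_1 g$ after cancelling one factor of $\rho$.

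The key intermediate facts I would need are: (i) the $\C$-motivic relation expressing $\tau^3 h_1 f_0$ in terms of $g$, namely that $h_1 f_0 = h_0^2 g$ fails but rather $h_1 f_0$ is related to $h_1^2 e_0$ or $\tau h_1 g$ in the appropriate bidegree — so I would instead use the known $\C$-motivic identity $h_1 \cdot \tau f_0 = \tau^2 h_1 g$ (coweight bookkeeping: $f_0$ has coweight $20-11=9$, and $\tau^2 h_1 g$ should land in the same coweight and stem as $h_1 f_0$), and (ii) the explicit description of $p$ on the relevant elements, which is read off from the $\rho$-Bockstein $E_\infty$-page and the convention fixed in Section~\ref{section:bockstein-extn} that representatives are annihilated by the correct power of $\rho$. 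The long exact sequence of Remark~\ref{rem:Ext-R-C-exact} pins down $p(\tau^k x)$ as $\rho^{k-1}(\tau h_1)^{?} \cdot (\text{something})$ whenever $\tau^k x$ is not in the image of $i$; I would compute the exact form by chasing which $\R$-motivic element maps to $\tau^k x$ under $i$ and taking the connecting map.

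An alternative route, in case the module-map argument leaves a $\rho$-ambiguity I cannot resolve, is to use the May Convergence Theorem~\ref{thm:may-convergence} together with a Massey product expression for $\tau^2 h_1 g$ in $\Ext_\R$ — for instance realizing $\tau^2 h_1 g$ as a bracket $\an{\,\cdot\,,\,\cdot\,,\tau^2 f_0}$ and shuffling — mirroring the structure of Lemma~\ref{lem:d2-t^2g} and the proof of Proposition~\ref{prop:h0-extn}. Alternatively, comparison with the classical hidden $h_1$ extension on $f_0$ (which hits $h_1 g$ classically) via the Betti realization functor in diagram~\eqref{eq:comparison} would force the $\R$-motivic extension up to $\rho$-multiples, and the $\rho$-torsion conventions would then fix the power of $\rho$.

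The main obstacle I expect is bookkeeping: correctly identifying $p(\tau^3 f_0)$ and $p(\tau^3 h_1 f_0)$ (or their analogues) on the nose, including the exact power of $\rho$ and whether a factor of $\tau h_1$ appears, since $f_0$ and $g$ live in a relatively crowded part of the chart (coweight $9$ and nearby) where several elements of the same degree compete, and the hidden extension only survives after cancelling a single $\rho$. Getting the $\rho$-exponent wrong by one would give a false statement, so the careful step is verifying that $\tau^2 f_0$ is $\rho$-torsion of exactly the order that makes the cancellation legitimate, which I would check against Table~\ref{tab:Bock} and the $\rho$-Bockstein $E_\infty$-page in coweight $9$.
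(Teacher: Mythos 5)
Your primary route does not work, and the failure is concrete rather than a matter of bookkeeping. The $p$-map technique requires a \emph{nonzero} relation in $\Ext_\C$ to push through the connecting map, but $h_1 f_0 = 0$ in $\Ext_\C$ (the paper uses this fact explicitly, e.g.\ to rule out $d_1(f_0) = \rho h_1 e_0$ and in the proof that $d_2(f_0)=h_0^2 e_0$). Hence $h_1 \cdot \tau^3 f_0 = \tau^3 h_1 f_0 = 0$, and applying $p$ gives only $0=0$. Your fallback identity ``$h_1 \cdot \tau f_0 = \tau^2 h_1 g$'' is false for degree reasons alone: $h_1\cdot \tau f_0$ lies in the $19$-stem while $\tau^2 h_1 g$ lies in the $21$-stem. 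More structurally, the image of $p$ is the kernel of $\rho$, and the only $\rho$-power multiple of $\tau^2 f_0$ lying there is $\rho^4 \tau^2 f_0 = p(\tau^5 h_0 h_3^2)$ (from $d_5(\tau^5 h_0 h_3^2) = \rho^5\tau^2 f_0$); but $h_1 \cdot \tau^5 h_0 h_3^2 = 0$ since $h_0 h_1 = 0$, and correspondingly $\rho^4 \cdot \rho^2 \tau^2 h_1 g = 0$ because $\tau^2 h_1 g$ is also $\rho^5$-torsion. So the $p$-comparison is structurally blind to this extension: the product is only nonzero ``in the middle'' of the $\rho$-torsion range, which is exactly why the paper treats it as one of the harder cases. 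Your classical-comparison alternative fails for the same reason: extension of scalars kills the target (it is $\rho$-divisible), and the $\rho$-inverted comparison with classical $\Ext$ kills the source (it is $\rho^5$-torsion), so neither functor retains any information.

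Your second alternative (a Massey product expression plus shuffling) is the right idea and is essentially what the paper does, but as stated it is only a gesture: you would need the specific ingredients. The paper's argument uses $\tau^2 f_0 \in \an{\tau^2 h_2, h_3, h_0^2 h_3}$ and $\tau^2 h_1 g \in \an{\tau h_2^2, h_3, h_0^2 h_3}$ (both obtained by $\C$-motivic comparison via the May Convergence Theorem, recorded in Table \ref{tab:Massey}), together with the previously established hidden $h_1$ extension from $\tau^2 h_2$ to $\rho^2 \tau h_2^2$; then
\[
h_1 \an{\tau^2 h_2, h_3, h_0^2 h_3} = \an{\rho^2 \tau h_2^2, h_3, h_0^2 h_3} = \rho^2 \an{\tau h_2^2, h_3, h_0^2 h_3},
\]
with indeterminacies checked, identifies the target as $\rho^2 \tau^2 h_1 g$. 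Without producing these brackets (or some equivalent input), your proposal has a genuine gap.
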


\begin{proof}
Table \ref{tab:Massey} shows that $\tau^2 f_0$ belongs to the Massey
product $\an{\tau^2 h_2, h_3, h_0^2 h_3}$.
Table \ref{tab:Bock-h1-extn} shows that there is a hidden
$h_1$ extension from $\tau^2 h_2$ to $\rho^2 \tau h_2^2$.  Therefore, we have
\[
h_1 \an{\tau^2 h_2, h_3, h_0^2 h_3} =
\an{\rho^2 \tau h_2^2, h_3, h_0^2 h_3} =
\rho^2 \an{\tau h_2^2, h_3, h_0^2 h_3},
\]
where the equalities follow from inspection of indeterminacies.
Table \ref{tab:Massey} shows that the element $\tau^2 h_1 g$ of 
the Bockstein $E_\infty$-page detects both elements of the
Massey product $\an{\tau h_2^2, h_3, h_0^2 h_3}$,
so $\rho^2 \tau^2 h_1 g$ is the target of the hidden $h_1$ extension.
\end{proof}

\begin{lemma}\label{lem:t^8 h1 c0 * h1}
\mbox{}
\begin{enumerate}
\item
There is a hidden $h_1$ extension from $\tau^8 h_1 c_0$ to $\rho \tau^6 P h_2$.
\item
There is a hidden $h_1$ extension from $\tau^6 P h_2$ to $\rho^2 \tau^5 h_0^2
d_0$.
\item
There is a hidden $h_1$ extension from $\tau^4 P h_1 c_0$ to 
$\rho \tau^2 P^2 h_2$.
\item
There is a hidden $h_1$ extension from $\tau^2 P^2 h_2$ to 
$\rho^2 \tau P h_0^2 d_0$.
\end{enumerate}
\end{lemma}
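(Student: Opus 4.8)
The plan is to establish all four extensions from the map $p \colon \Ext_\C \to \Ext_\R$ of Remark \ref{rem:Ext-R-C-exact}, following the template of the worked example in the proof of Proposition \ref{prop:h1-extn}. Recall that there the hidden extension $h_1 \cdot \tau^2 h_1 c_0 = \rho P h_2$ was produced from the trivial $\Ext_\C$-relation $h_1 \cdot \tau^3 c_0 = \tau^3 h_1 c_0$ by applying $p$, using that $p$ is a map of $\Ext_\R$-modules, and evaluating the two sides as $p(\tau^3 c_0) = \rho\,(\tau h_1)(\tau c_0)$ and $p(\tau^3 h_1 c_0) = \rho^2 P h_2$; these values are dictated by the $\rho$-Bockstein differentials $d_2(\tau^3 c_0) = \rho^2 (\tau h_1)(\tau c_0)$ and $d_3(\tau^3 h_1 c_0) = \rho^3 P h_2$, so that one first obtains the hidden $h_1$ extension from $\rho\,\tau^2 h_1 c_0$ to $\rho^2 P h_2$ and then divides by $\rho$. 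For part (1) I would run this argument one ``$\tau$-tower'' higher: apply $p$ to the $\Ext_\C$-relation $h_1 \cdot \tau^{N} c_0 = \tau^{N} h_1 c_0$ for a suitable $N$, and evaluate both sides using the \emph{longer} $\rho$-Bockstein differentials on $\tau^N c_0$ and $\tau^N h_1 c_0$ recorded in Theorem \ref{thm:Bock} and Table \ref{tab:Bock}, which produce $\rho$-multiples of $\tau^6 P h_2$. Comparing powers of $\rho$ then yields the hidden $h_1$ extension from $\tau^8 h_1 c_0$ to $\rho\,\tau^6 P h_2$; the final division by $\rho$ is legitimate because $\tau^8 h_1 c_0$ and $\rho\,\tau^6 P h_2$ are annihilated by exactly the powers of $\rho$ fixed by the convention of Section \ref{section:bockstein-extn}.

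Parts (2)--(4) should follow from the identical mechanism, with the family governing $\{\tau^k h_1 c_0\}$ replaced by the analogous families carrying, respectively, $\tau^k P h_2$, $\tau^k P h_1 c_0$, and $\tau^k P^2 h_2$ onto $\rho$-multiples (up to $\tau$-powers) of $h_0^2 d_0$, $P^2 h_2$, and $P h_0^2 d_0$. In each case the inputs are an $h_1$-multiplication in $\Ext_\C$ together with the appropriate higher $\rho$-Bockstein differential from Table \ref{tab:Bock}, and the conclusion is the stated hidden $h_1$ extension after the usual bookkeeping with powers of $\rho$. (For parts (1) and (3) one might instead hope to multiply already-established extensions, as in Lemma \ref{lem:h1 t^3 h1^2 h3} --- for instance multiplying $h_1 \cdot \tau^2 h_1 c_0 = \rho P h_2$ up by $\tau^6$ --- but this is presumably not decomposable on the $\rho$-Bockstein $E_\infty$-page, since $\tau^8 h_1 c_0$ and $\tau^2 h_1 c_0$ are annihilated by different powers of $\rho$, which is precisely why a separate argument is needed. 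Where convenient, Massey products and the May Convergence Theorem \ref{thm:may-convergence} could substitute for the $p$-argument, as in Lemma \ref{lem:t^2 f0 * h1}.)

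The main obstacle will be twofold. First, because all four extensions sit high in the coweight range under consideration, one must verify that the invoked $\rho$-Bockstein differentials are precisely the ones that occur, with no shorter competing differential on an element of the same degree in $\rho$-Bockstein filtration $0$ --- the sort of check already flagged after Proposition \ref{prop:Bock-h1-local} --- so that the values of $p$ used above are unambiguous. Second, parts (2) and (4) land inside $h_0$-towers, at $\rho^2 \tau^5 h_0^2 d_0$ and $\rho^2 \tau P h_0^2 d_0$; there, pinning down the exact power of $\rho$ in the target and ruling out a target one tower-step away will require the multiplicative and annihilation-order arguments of Section \ref{section:bockstein-extn}. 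I expect this second point to be where the genuine care is needed.
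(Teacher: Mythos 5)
There is a genuine gap: the input your argument needs does not exist. For part (1) the template of Proposition \ref{prop:h1-extn} would require a $\rho$-Bockstein differential on some $\tau^N h_1 c_0$ landing on a $\rho$-power multiple of $\tau^6 P h_2$ (degree-wise this forces $N=9$ and $d_3(\tau^9 h_1 c_0)=\rho^3\tau^6 P h_2$), together with a differential $d_2(\tau^9 c_0)=\rho^2\tau^8 h_1 c_0$. Neither exists, and neither is in Table \ref{tab:Bock}. Indeed $d_2(\tau^{2k+1}c_0)=k\,\rho^2\tau^{2k}h_1c_0$, which vanishes for $k=4$; in fact $\tau^9 c_0$ and $\tau^9 h_1 c_0=\tau^9 h_1\cdot c_0$ are products of permanent cycles and are themselves permanent cycles (both figure in the $E_\infty$-page, e.g.\ $\tau^9 c_0$ in Table \ref{tab:Bock-h0-extn}), so by Remark \ref{rem:Ext-R-C-exact} they lie in the image of $i$ and are annihilated by $p$. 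Your proposed relation $h_1\cdot p(\tau^N c_0)=p(\tau^N h_1 c_0)$ therefore degenerates to $0=0$. The only Bockstein differential with target $\tau^6 P h_2$ is $d_5(\tau^9 h_0^3 h_3)$, whose source is an $h_0$-multiple and hence killed by $h_1$, so the same collapse happens if you try to run the method for part (2); the analogous obstruction recurs for (3) and (4). This is exactly why the paper singles these four extensions out as ``more difficult'': the $p$-map trick that works at coweights $6$ and $9$ (for $\tau^2 h_1 c_0$ and $\tau^6 h_1 c_0$, where $\tau^3 h_1 c_0$ and $\tau^7 h_1 c_0$ do support differentials hitting $P h_2$ and $\tau^4 P h_2$) fails one $\tau^4$-period higher.

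The paper's actual proof goes through Massey products and shuffling instead: it shows $h_1^3\cdot\tau^8 c_0=\rho^3\tau^5 h_0^2 d_0$, which gives (1) and (2) simultaneously, using $h_1\cdot\tau^8 c_0=\an{\tau h_1\cdot\tau^5 c_0,\tau h_1,\rho^2}$ and $\an{h_1,\tau^4 P h_2,\tau h_1}=\tau^5 h_0^2 d_0$ from Table \ref{tab:Massey}, the already-established coweight-$9$ hidden extension from $\tau^6 h_1 c_0$ to $\rho\tau^4 P h_2$, and the shuffle $\rho h_1\an{\tau^4 P h_2,\tau h_1,\rho^2}=\rho^3\an{h_1,\tau^4 P h_2,\tau h_1}$; the identical argument with $\tau^4 P c_0$ and $\an{h_1,P^2 h_2,\tau h_1}=\tau P h_0^2 d_0$ gives (3) and (4). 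Your passing remark that Massey products ``could substitute'' points at the right tool, but without these specific brackets and the lower-coweight extension as input there is no proof; and the difficulties you flag (competing Bockstein differentials, pinning down $\rho$-powers inside $h_0$-towers) are not where the real obstruction lies.
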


\begin{proof}
We will show that $h_1^3 \cdot \tau^8 c_0$ equals $\rho^3 \tau^5 h_0^2 d_0$.
This will establish the first two extensions simultaneously.

Table \ref{tab:Massey} shows that $h_1 \cdot \tau^8 c_0$ equals the 
Massey product $\an{\tau h_1 \cdot \tau^5 c_0, \tau h_1, \rho^2}$.
By inspection of indeterminacies,
\[
h_1^2 \an{\tau h_1 \cdot \tau^5 c_0, \tau h_1, \rho^2} =
h_1 \an{h_1 \cdot \tau h_1 \cdot \tau^5 c_0, \tau h_1, \rho^2}.
\]
This expression equals
$h_1 \an{\rho \tau^4 P h_2, \tau h_1, \rho^2}$, since 
Table \ref{tab:Bock-h1-extn} shows that there is a hidden
$h_1$ extension from $\tau^6 h_1 c_0$ to $\rho \tau^4 P h_2$.
By inspection of indeterminacies again, this also equals
$\rho h_1 \an{\tau^4 P h_2, \tau h_1, \rho^2}$.

Now shuffle to obtain
\[
\rho h_1 \an{\tau^4 P h_2, \tau h_1, \rho^2} = 
\rho^3 \an{h_1, \tau^4 P h_2, \tau h_1}.
\]
Finally, Table \ref{tab:Massey} shows that
$\an{h_1, \tau^4 P h_2, \tau h_1}$ equals
$\tau^5 h_0^2 d_0$.
This establishes the first two extensions.

The argument for the last two extensions is essentially identical.
The Massey product $\an{\tau h_1 \cdot \tau P c_0, \tau h_1, \rho^2}$
equals $h_1 \cdot \tau^4 P c_0$.
We have
\[
h_1^2 \an{\tau h_1 \cdot \tau P c_0, \tau h_1, \rho^2} =
h_1 \an{h_1 \cdot \tau h_1 \cdot \tau P c_0, \tau h_1, \rho^2},
\]
which equals
\[
h_1 \an{\rho P^2 h_2, \tau h_1, \rho^2} =
\rho h_1 \an{P^2 h_2, \tau h_1, \rho^2}.
\]
Finally, shuffle to obtain
\[
\rho h_1 \an{P^2 h_2, \tau h_1, \rho^2} = 
\rho^3 \an{h_1, P^2 h_2, \tau h_1} = \rho^3 \tau P h_0^2 d_0.
\]
\end{proof}

\begin{lemma}\label{lem:h1 * t^3 c1}
There is a hidden $h_1$-extension from $\tau^3 c_1$ to $\rho^2 \tau^2
h_2 c_1$.
\end{lemma}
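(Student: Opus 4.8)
\textbf{Proof proposal for Lemma \ref{lem:h1 * t^3 c1}.}

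The plan is to mimic the strategy used for the earlier $h_1$-extensions in Table \ref{tab:Bock-h1-extn}, namely to realize $\tau^3 c_1$ as a suitable Massey product and then to use an already-established $h_1$-extension together with a shuffle. First I would locate $\tau^3 c_1$ in Table \ref{tab:Massey} as an element of a Massey product of the form $\an{\alpha, \beta, \gamma}$ in $\Ext_\R$ (the natural candidate involves $\tau h_3^2$ or $\tau^2 h_3^2$ and the relations producing $c_1$, paralleling the $\rho$-Bockstein differential $d_6(\tau^6 h_3^2) = \rho^6 \tau^3 c_1$ noted earlier in the excerpt). Then, knowing that one of the three entries — say $\alpha = \tau^2 h_3^2$ or one of its factors — supports a hidden $h_1$-extension recorded in Table \ref{tab:Bock-h1-extn}, I would compute
\[
h_1 \an{\alpha, \beta, \gamma} = \an{h_1 \alpha, \beta, \gamma},
\]
replace $h_1 \alpha$ by the target of its hidden $h_1$-extension, and pull out the resulting power of $\rho$, checking at each step that the indeterminacies do not interfere (exactly as in Lemmas \ref{lem:t^2 f0 * h1} and \ref{lem:t^8 h1 c0 * h1}).

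The second route, to be used as a cross-check or as the primary argument if the Massey-product bookkeeping is cleaner that way, is the map $p \colon \Ext_\C \map \Ext_\R$ of Remark \ref{rem:Ext-R-C-exact}. In $\Ext_\C$ there is the relation $h_1 \cdot \tau^4 c_1 = \tau^4 h_1 c_1$ (or the analogous relation one degree up), and applying $p$ and identifying $p(\tau^4 c_1)$ and $p(\tau^4 h_1 c_1)$ in terms of the $\rho$-Bockstein $E_\infty$-generators $\tau^3 c_1$ and $\tau^2 h_2 c_1$ would yield a hidden $h_1$-extension from $\rho \tau^3 c_1$ to $\rho^2 \tau^2 h_2 c_1$, hence from $\tau^3 c_1$ to $\rho^2 \tau^2 h_2 c_1$ after dividing by $\rho$. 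This requires knowing the $\C$-motivic relation $h_1 c_1 = \text{(the appropriate class)}$ and the precise images under $p$, both of which follow from the description of $p$ in Remark \ref{rem:ExtC-ExtR-module} and the $\rho$-Bockstein $E_\infty$-page.

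Once the source is shown to support a hidden $h_1$-extension, the target is essentially forced: among the $\rho$-Bockstein $E_\infty$-generators in the relevant degree with strictly higher $\rho$-Bockstein filtration than $\tau^3 c_1$, the class $\rho^2 \tau^2 h_2 c_1$ should be the only candidate not already ruled out (e.g. by $h_0 h_1 = 0$ or by a competing $\rho$-periodic class), so a short inspection of that degree finishes the proof.

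\textbf{The main obstacle} I anticipate is the bookkeeping of indeterminacies in the Massey-product approach: shuffling $h_1$ into $\an{\alpha,\beta,\gamma}$ and then substituting the target of a hidden extension is only valid once one checks that the intermediate brackets have indeterminacy that does not absorb the power of $\rho$ one is trying to extract, and that the final bracket $\an{h_1, \alpha', \beta'}$ is detected uniquely (up to acceptable ambiguity) by $\tau^2 h_2 c_1$ in the $E_\infty$-page. The $p$-map route sidesteps the bracket indeterminacies but instead demands that one pin down $p(\tau^4 c_1)$ precisely, which involves knowing exactly which $\rho$-Bockstein $E_\infty$-class represents $\tau^4 c_1$ after multiplication by $\rho\tau h_1$ — a routine but error-prone identification. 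I would carry out whichever of the two is less delicate in this particular degree and use the other as a sanity check.
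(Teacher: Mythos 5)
Your first route is the right general shape, but the concrete inputs you guess are not the ones that work, and as stated the mechanism would not go through. The bracket that expresses $\tau^3 c_1$ is not built from $\tau h_3^2$ or $\tau^2 h_3^2$: the differential $d_6(\tau^6 h_3^2) = \rho^6 \tau^3 c_1$ puts $\tau^3 c_1$ in the \emph{target} position of the May Convergence Theorem, so it produces brackets with $\tau^3 c_1$ as an inside entry, not an expression for $\tau^3 c_1$ itself. The entry of Table \ref{tab:Massey} that is actually used is $\tau^3 c_1 \in \an{\rho^2, \tau h_1, \tau c_1}$, coming from $d_2(\tau^2) = \rho^2 \tau h_1$. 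Moreover, no hidden $h_1$-extension on a bracket entry is substituted (that is the mechanism of Lemmas \ref{lem:t^2 f0 * h1} and \ref{lem:t^8 h1 c0 * h1}, not of this one); instead one multiplies by $h_1$ on the outside and shuffles, $\an{\rho^2, \tau h_1, \tau c_1} h_1 = \rho^2 \an{\tau h_1, \tau c_1, h_1}$, and then quotes the separate Table \ref{tab:Massey} computation (by $\C$-motivic comparison) that $\tau^2 h_2 c_1$ detects every element of $\an{\tau h_1, \tau c_1, h_1}$. That last identification, which you only gesture at, is the essential step that pins down the target; without it the "target is forced by inspection" paragraph does not suffice.

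Your second route fails outright. The very fact that the bracket $\an{\tau h_1, \tau c_1, h_1}$ is defined (in $\Ext_\C$ as well as $\Ext_\R$) forces $\tau h_1 c_1 = 0$ in $\Ext_\C$, hence $\tau^4 h_1 c_1 = 0$; so the relation $h_1 \cdot \tau^4 c_1 = \tau^4 h_1 c_1$ reads $h_1 \cdot \tau^4 c_1 = 0$ and yields no hidden extension. Independently, the value of $p$ on $\tau^4 c_1$ is governed by the Bockstein differential it supports, namely $d_4(\tau^4 c_1) = \rho^4 \tau^2 h_2 c_1$ (Leibniz from $d_4(\tau^4) = \rho^4 \tau^2 h_2$), so $p(\tau^4 c_1)$ is a $\rho$-multiple of $\tau^2 h_2 c_1$, not of $\tau^3 c_1$: the class $\tau^3 c_1$, the source of the claimed extension, never appears in this route. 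To reach $\tau^3 c_1$ through $p$ you would have to start from $\tau^6 h_3^2$ (the unique class whose Bockstein differential hits $\rho^6 \tau^3 c_1$) and then control $p(\tau^6 h_1 h_3^2)$, which is not easier than the Massey-product argument. So the proposal, as written, does not contain a proof; only the broad outline of route one survives, and it needs the correct bracket $\an{\rho^2, \tau h_1, \tau c_1}$ and the identification of $\an{\tau h_1, \tau c_1, h_1}$ with $\tau^2 h_2 c_1$ to be completed.
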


\begin{proof}
Table \ref{tab:Massey} shows that
$\tau^3 c_1$ is contained in the Massey product
$\an{\rho^2, \tau h_1, \tau c_1}$.  Shuffle to obtain
\[
\an{\rho^2, \tau h_1, \tau c_1} h_1 =
\rho^2 \an{\tau h_1, \tau c_1, h_1}.
\]
Table \ref{tab:Massey} shows that
the element $\tau^2 h_2 c_1$ of the Bockstein $E_\infty$-page
detects both elements of $\an{\tau h_1, \tau c_1, h_1}$,
so $\rho^2 \tau^2 h_2 c_1$ is the target of the hidden $h_1$ extension.
\end{proof}

\begin{lemma}\label{lem:t^3 h2^2 e0}
~\begin{enumerate} 
\item There is a hidden $h_1$ extension from $\tau^3 h_2^2 e_0$ to $\rho^2 j$.
\item There is a hidden $h_1$ extension from $j$ to $\rho d_0^2$.
\end{enumerate}
\end{lemma}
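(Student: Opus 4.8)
The plan is to establish both extensions simultaneously, following the pattern of the proofs of Lemmas \ref{lem:h1 t^3 h1^2 h3}, \ref{lem:t^8 h1 c0 * h1}, and \ref{lem:h1 * t^3 c1}. First I would compute the product $h_1^2 \cdot \tau^3 h_2^2 e_0$ in $\Ext_\R$ and show that it equals $\rho^3 d_0^2$. Because $h_1 h_2 = 0$ already in $\Ext_\C$, the exact sequence of Remark \ref{rem:Ext-R-C-exact} shows that $h_1 \cdot \tau^3 h_2^2 e_0$ is $\rho$-divisible in $\Ext_\R$; granting $h_1^2 \cdot \tau^3 h_2^2 e_0 = \rho^3 d_0^2 \neq 0$, an inspection of the $\rho$-Bockstein $E_\infty$-page in the relevant degree then identifies $h_1 \cdot \tau^3 h_2^2 e_0$ with $\rho^2 j$, which is part (1), and the relation $h_1 \cdot \rho^2 j = \rho^3 d_0^2$ forces $h_1 \cdot j = \rho d_0^2$, which is part (2).

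For the computation of $h_1^2 \cdot \tau^3 h_2^2 e_0$ I would combine the Massey product machinery with hidden $h_1$ extensions already in hand. Concretely, Table \ref{tab:Massey} should present $\tau^3 h_2^2 e_0$ (or the element of $\Ext_\R$ it detects) as a three-fold bracket with entries among $\rho$, $\tau h_1$, and classes detecting $h_2$-, $h_2^2$-, or $e_0$-multiples; I would then shuffle two copies of $h_1$ past this bracket, importing along the way the hidden extensions $h_1 \cdot \tau h_2^2 = \rho c_0$ and $h_1^2 \cdot \tau^2 h_2 = \rho^3 c_0$ from Table \ref{tab:Bock-h1-extn} (exactly as in the proof of Lemma \ref{lem:h1 t^3 h1^2 h3}), together with the relation $c_0^2 = h_1^2 d_0$, so as to land on $\rho^3 d_0^2$. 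An alternative, in the style of the proof of Proposition \ref{prop:h1-extn}, is to use the map $p: \Ext_\C \to \Ext_\R$ of Remark \ref{rem:Ext-R-C-exact}: one takes a class of $\Ext_\C$ detected by a suitable power of $\tau$ times $d_0^2$, computes its image under $p$, and matches it against $h_1$-multiples of $p$ applied to classes detected by powers of $\tau$ times $j$ and times $h_2^2 e_0$. As is typical in this manuscript, I expect both routes to succeed and would take whichever keeps the indeterminacies cleanest.

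The main obstacle will be twofold. The routine part is tracking indeterminacies through the shuffles and checking the crossing hypothesis of the May Convergence Theorem \ref{thm:may-convergence} for any new brackets that arise. The substantive part is confirming, against the $\rho$-Bockstein $E_\infty$-chart summarized by Theorem \ref{thm:Bock} and the surrounding tables, that in the degrees of $\rho^2 j$ and of $\rho d_0^2$ there is no competing element of strictly higher $\rho$-Bockstein filtration, so that the two targets are genuinely forced and not merely consistent with the filtration jumps; this is also what lets me pass from $\rho^2 \cdot (h_1 j) = \rho^2 \cdot (\rho d_0^2)$ to $h_1 j = \rho d_0^2$. One must also keep in mind that $d_0^2$ lies in an $h_1$-periodic family (Proposition \ref{prop:Bock-h1-local} and the discussion of $h_1$-local elements), so part (2) requires the extra check that the asserted $h_1$ extension on $j$ is truly hidden rather than visible on the $E_\infty$-page; this too is settled by the same inspection of filtrations.
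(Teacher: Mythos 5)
Your global strategy is the same as the paper's: deduce both extensions at once from the single computation that a suitable $h_1$-power of the class detected by $\tau^3 h_2^2 e_0$ equals a $\rho$-multiple of a power of $h_1$ times $d_0^2$, and your bookkeeping (the $\rho$-divisibility of $h_1\cdot\tau^3 h_2^2 e_0$ via Remark \ref{rem:Ext-R-C-exact}, and the need to rule out competitors in the degrees of $\rho^2 j$ and $\rho d_0^2$) is sound. The gap is in the one step that carries all the content: you never actually produce the product computation. You defer it to a hoped-for entry of Table \ref{tab:Massey} presenting $\tau^3 h_2^2 e_0$ as a three-fold bracket; no such bracket exists in that table, and the auxiliary inputs you propose to import, $h_1 \cdot \tau h_2^2 = \rho c_0$ and $h_1^2 \cdot \tau^2 h_2 = \rho^3 c_0$, are the ingredients of Lemma \ref{lem:h1 t^3 h1^2 h3} for $\tau^3 h_2^3$ and do not reach $\tau^3 h_2^2 e_0$: that class does not factor through $\tau^2 h_2$ on the $E_\infty$-page (there is no surviving class ``$\tau h_2 e_0$'', since $d_1(\tau h_0 g) = \rho h_0^2 g \neq 0$). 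So ``I expect both routes to succeed'' is exactly where the proof still has to happen; the alternative route through $p\colon \Ext_\C \to \Ext_\R$ is likewise not carried far enough to assess.

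The missing observation, which makes the computation a two-line multiplication with no Massey products at all, is that $\tau^3 h_2^2 e_0$ detects the product $\tau h_2^2 \cdot \tau^2 e_0$ in $\Ext_\R$, combined with the other hidden extension from Table \ref{tab:Bock-h1-extn} that you did not invoke: the hidden $h_1$ extension from $\tau^2 h_1 e_0$ to $\rho \tau h_2^2 d_0$, i.e.\ $h_1^2 \cdot \tau^2 e_0 = \rho \tau h_2^2 \cdot d_0$. Then $h_1^3 \cdot \tau^2 e_0 = \rho\,(h_1 \cdot \tau h_2^2)\, d_0 = \rho^2 c_0 d_0$, and
\[
h_1^4 \cdot \tau h_2^2 \cdot \tau^2 e_0 \;=\; \rho c_0 \cdot \rho^2 c_0 d_0 \;=\; \rho^3 c_0^2 d_0 \;=\; \rho^3 h_1^2 d_0^2,
\]
using $c_0^2 = h_1^2 d_0$ (which you correctly anticipated). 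From this nonzero value both extensions follow by exactly the filtration inspection you describe. In short: right skeleton and right target value, but the central identity is asserted rather than proved, and the specific tools you name would not assemble into it without the factorization $\tau h_2^2 \cdot \tau^2 e_0$ and the $\tau^2 h_1 e_0$ extension.
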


\begin{proof}
Table \ref{tab:Bock-h1-extn} shows that
$h_1 \cdot \tau h_2^2 = \rho c_0$, and
$h_1^3 \cdot \tau^2 e_0 = h_1 \cdot \rho \tau h_2^2 \cdot d_0 =
\rho^2 c_0 d_0$.
Therefore,
\[
h_1^4 \cdot \tau h_2^2 \cdot \tau^2 e_0 =
\rho^3 c_0^2 d_0 = \rho^3 h_1^2 d_0^2.
\]
Both hidden extensions are immediate consequences.
\end{proof}

\subsection{Miscellaneous relations}

We briefly consider a few other types of hidden extensions.

In the Bockstein $E_\infty$-page, we have the relation
$h_1^2 \cdot \tau^4 h_3 + (\tau^2 h_2)^2 h_2 = 0$.
However, in $\Ext_\R$,
it is possible that the sum
$h_1^2 \cdot \tau^4 h_3 + (\tau^2 h_2)^2 h_2$ equals
a non-zero element that is detected 
in higher $\rho$-Bockstein filtration.
Lemma \ref{lem:compound} 
demonstrates that this does in fact occur.
It provides one additional piece of information
about the multiplicative structure of $\Ext_\R$.

\begin{lemma}
\label{lem:compound}
In $\Ext_\R$ we have the relation 
\[
h_1^2 \cdot \tau^4 h_3 + (\tau^2 h_2)^2 h_2 = \rho^5 \tau h_0 h_3^2.
\]
\end{lemma}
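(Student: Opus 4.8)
The plan is to verify the relation by working with a suitable Massey product presentation and tracking the $\rho$-Bockstein filtration carefully. First I would use the $\rho$-Bockstein spectral sequence and the differential $d_4(\tau^4) = \rho^4 \tau^2 h_2$ from Proposition \ref{prop:bock-tau^k}, together with the Leibniz rule, to understand how the classes $\tau^4 h_3$ and $(\tau^2 h_2)^2 h_2 = \tau^4 h_1^2 h_3$ (using the Bockstein $E_\infty$-relation $(\tau^2 h_2)^2 = \tau^4 h_1^2 h_3$, which itself comes from $d_4(\tau^6 h_2^2) = \rho^4 \tau^2 h_1^2 h_3$ as in Example \ref{ex:Bockstein-multiplicative}) interact. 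The key point is that $h_1^2 \cdot \tau^4 h_3$ and $(\tau^2 h_2)^2 h_2$ agree modulo higher $\rho$-Bockstein filtration, so their sum lives in a filtration jump, and I need to identify the filtration at which it is detected.

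Second, I would express one of the two terms as a Massey product in $\Ext_\R$ so as to apply a shuffling argument. The natural candidate is to write $\tau^4 h_3$ (or $h_1^2 \tau^4 h_3$) in terms of a bracket such as $\an{\tau^2 h_2, \ldots}$ or to use the relation $h_1^2 \tau^4 h_3 = (\tau^2 h_2)^2 h_2$ together with a bracket for $\tau^2 h_2$ coming from the hidden $h_1$ extension in Table \ref{tab:Bock-h1-extn} (namely $h_1 \cdot \tau^2 h_2$ detected by $\rho^2 \tau h_2^2$, or $h_1^2 \cdot \tau^2 h_2 = \rho^3 c_0$). Using $h_1^2 \cdot \tau^2 h_2 = \rho^3 c_0$ one gets $h_1^2 \cdot (\tau^2 h_2)^2 h_2 = \rho^3 c_0 \cdot \tau^2 h_2 \cdot h_2$; comparing this with $h_1^2 \cdot h_1^2 \tau^4 h_3 = h_1^4 \tau^4 h_3$ should pin down the discrepancy $\rho^5 \tau h_0 h_3^2$ after dividing by the appropriate power of $h_1$, exactly as in the arguments of Lemmas \ref{lem:h1 t^3 h1^2 h3} and \ref{lem:t^3 h2^2 e0}.

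Third, I would confirm the target by a degree/combinatorial check: the element $\rho^5 \tau h_0 h_3^2$ lies in the correct internal degree and has $\rho$-Bockstein filtration $5$, higher than the filtrations of the two summands on the left, and one must rule out other possible detecting classes in that degree (there should be none by inspection of the $E_\infty$-page and the $\rho$-annihilation conventions of Section \ref{section:bockstein-extn}). A small subtlety is that $\rho h_0 = 0$ in $\Ext_\R$, so any claimed relation of this form must be consistent with multiplying through by $\rho$; here $\rho \cdot \rho^5 \tau h_0 h_3^2 = \rho^6 \tau h_0 h_3^2$, and I would check this vanishes (or equals the appropriate $\rho$-multiple of the left side) using the Bockstein differentials, which gives a useful consistency check rather than an obstruction.

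The main obstacle I anticipate is the third step: cleanly ruling out alternative detecting classes and nailing down the exact power of $\rho$ and the exact element (as opposed to the element up to higher filtration). The shuffling will naturally produce the relation only modulo indeterminacy, and extracting the precise coefficient $\rho^5 \tau h_0 h_3^2$ requires either a careful inspection of the relevant chart degree in coweight $12$ or an auxiliary comparison — for instance via the map $p \colon \Ext_\C \to \Ext_\R$ of Remark \ref{rem:Ext-R-C-exact}, or by comparison with the $\C$-motivic relation $h_1^2 \cdot \tau^4 h_3 + (\tau^2 h_2)^2 h_2 = 0$ which shows the left side is $\rho$-divisible, hence (by the structure of $\Ext_\R$ in this degree) a $\rho^5$-multiple of $\tau h_0 h_3^2$. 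I expect the computation itself to be short once the right Massey product identity is chosen.
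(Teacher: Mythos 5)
Your plan does not close up, and the gap is precisely at the point where all the content of the lemma sits. Your main step is to multiply by $h_1^2$, compute $h_1^2\cdot(\tau^2 h_2)^2 h_2 = \rho^3 c_0\cdot \tau^2 h_2\cdot h_2$ from the known hidden extensions, compare with $h_1^4\cdot\tau^4 h_3$, and then ``divide by $h_1$.'' But the known hidden extensions ($h_1\cdot\tau h_2^2=\rho c_0$, $h_1^2\cdot\tau^2 h_2=\rho^3 c_0$, $\tau^2 h_2\cdot c_0=\rho^3 d_0$) only control the $(\tau^2 h_2)^2 h_2$ term. The other term $h_1^4\cdot\tau^4 h_3$ is invisible on the Bockstein $E_\infty$-page (since $h_1^3 h_3 = h_1 h_2^3 = 0$ already in $\Ext_\C$), so evaluating it requires exactly the hidden $h_1$ extensions on $\tau^4 h_1 h_3$ and its $h_1$-multiples that the lemma is meant to establish; the comparison is circular, and $\tau^4 h_3$ is not a product of elements with known hidden extensions, unlike the sources in Lemmas \ref{lem:h1 t^3 h1^2 h3} and \ref{lem:t^3 h2^2 e0} that you cite as models. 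Your fallback is also insufficient: the $\C$-motivic relation $h_1^2 h_3 + h_2^3 = 0$ shows, via exactness (kernel of $i$ equals image of $\rho$ in Remark \ref{rem:Ext-R-C-exact}), that the sum is $\rho$-divisible, but that is consistent with the sum being \emph{zero}. Nonvanishing is the whole point of the lemma, and $\rho$-divisibility plus inspection of the degree $(s,f,w)=(9,3,2)$ (coweight $7$, incidentally, not $12$) only narrows the answer to $0$ or $\rho^5\tau h_0 h_3^2$.

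The paper's proof supplies exactly the missing input, and does so by a different mechanism than any you carry out: apply the $\Ext_\R$-module map $p\colon \Ext_\C\map\Ext_\R$ to the relation $h_1\cdot\tau^8 h_1=\tau^8 h_1^2$ in $\Ext_\C$, and read off $p(\tau^8 h_1)=\rho^7\tau^4 h_1 h_3$ and $p(\tau^8 h_1^2)=\rho^{12}\tau h_0 h_3^2$ from the Bockstein differentials $d_8(\tau^8 h_1)=\rho^8\tau^4 h_1 h_3$ and $d_{13}(\tau^8 h_1^2)=\rho^{13}\tau h_0 h_3^2$. This yields the hidden $h_1$ extension $h_1\cdot\rho^7\tau^4 h_1 h_3=\rho^{12}\tau h_0 h_3^2\neq 0$, hence $\rho^7\cdot\bigl(h_1^2\tau^4 h_3+(\tau^2 h_2)^2 h_2\bigr)=\rho^{12}\tau h_0 h_3^2$ (the second summand being $\rho^4$-torsion), which both proves nonvanishing and pins the sum down to $\rho^5\tau h_0 h_3^2$. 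You do mention $p$ as a possible auxiliary comparison, but only as a gesture, without the essential input (the class $\tau^8 h_1$, its $h_1$-multiple, and their Bockstein differentials); as written, your argument would not determine whether the correction term is present at all.
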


\begin{proof}
This follows by comparison along the map
$p: \Ext_\C \map \Ext_R$ of Remark \ref{rem:Ext-R-C-exact}.
The relation $h_1 \cdot \tau^8 h_1 = \tau^8 h_1^2$ in
$\Ext_\C$ implies that $h_1 \cdot p(\tau^8 h_1) = p(\tau^8 h_1^2)$
in $\Ext_\R$.
Observe that
$p(\tau^8 h_1) = \rho^7 \tau^4 h_1 h_3$ and
$p(\tau^8 h_1^2) = \rho^{12} \tau h_0 h_3^2$.
This shows that there is a hidden $h_1$ extension from
$\rho^7 \tau^4 h_1 h_3$ to $\rho^{12} \tau h_0 h_3^2$,
which implies the desired relation.
\end{proof}

\begin{lemma}
\label{lem:t^2h2 * c0}
There is a hidden $\tau^2 h_2$ extension from $c_0$
to $\rho^3 d_0$.
\end{lemma}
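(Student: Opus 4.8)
The plan is to detect the product $\tau^2 h_2 \cdot c_0$ indirectly: first compute $\rho \cdot (\tau^2 h_2 \cdot c_0)$ from hidden $h_1$ extensions already in hand, then descend using the absence of hidden $\rho$ extensions in the $\rho$-Bockstein spectral sequence.  Note first that $\tau^2 h_2 \cdot c_0$ vanishes on the $\rho$-Bockstein $E_\infty$-page, since $h_2 c_0$ is already zero in $\Ext_\C$; hence, as an element of $\Ext_\R$, this product is detected in strictly positive $\rho$-Bockstein filtration (or is zero), and the content of the lemma is to pin down that it is detected by $\rho^3 d_0$.

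For the computation, Table \ref{tab:Bock-h1-extn} records the hidden $h_1$ extension from $\tau h_2^2$ to $\rho c_0$, that is, the relation $h_1 \cdot \tau h_2^2 = \rho c_0$ in $\Ext_\R$.  Multiplying by $\tau^2 h_2$ gives
\[
\rho \cdot (\tau^2 h_2 \cdot c_0) = \tau^2 h_2 \cdot h_1 \cdot \tau h_2^2 = h_1 \cdot (\tau^2 h_2 \cdot \tau h_2^2).
\]
By Lemma \ref{lem:h1 t^3 h1^2 h3}, whose proof identifies $\tau^2 h_2 \cdot \tau h_2^2$ with the class detected by $\tau^3 h_2^3$ and establishes the hidden $h_1$ extension from it to $\rho^4 d_0$, the right-hand side equals $\rho^4 d_0$.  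In particular $\tau^2 h_2 \cdot c_0$ is nonzero.

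To descend, let $z$ be the class on the $\rho$-Bockstein $E_\infty$-page detecting $\tau^2 h_2 \cdot c_0$, of $\rho$-Bockstein filtration $m \geq 1$.  Since there are no hidden $\rho$ extensions (Remark \ref{rem:rho-extn}), $\rho z$ is nonzero on the $E_\infty$-page and detects $\rho^4 d_0$, so $\rho z = \rho^4 d_0$ there; comparing $\rho$-Bockstein filtrations forces $m = 3$, and then an inspection of the $E_\infty$-page in this tridegree — where $\rho^3 d_0$ is the only class of filtration $3$, with no $\rho$-torsion partner that could be added to it — gives $z = \rho^3 d_0$.  This is exactly the claimed hidden $\tau^2 h_2$ extension.

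The step I expect to be the main obstacle is this last descent: it relies on knowing the precise structure of the $\rho$-Bockstein $E_\infty$-page in the tridegree of $\tau^2 h_2 \cdot c_0$, in particular that $\rho^3 d_0$ is the unique class of $\rho$-Bockstein filtration $3$ there, which is a finite but non-automatic check against the computations of Sections \ref{sctn:Bock-diff} and \ref{section:bockstein-extn}.  Everything else is a short manipulation of relations already established.
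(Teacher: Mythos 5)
Your proposal is correct and is essentially the paper's own argument: both rest on the identity $\rho\cdot(\tau^2 h_2\cdot c_0) = \tau^2 h_2\cdot h_1\cdot \tau h_2^2 = h_1\cdot \tau^3 h_2^3 = \rho^4 d_0$, using the hidden $h_1$ extensions from $\tau h_2^2$ to $\rho c_0$ and from $\tau^3 h_2^3$ to $\rho^4 d_0$ (Table \ref{tab:Bock-h1-extn}, Lemma \ref{lem:h1 t^3 h1^2 h3}). The only difference is that you spell out the final descent from $\rho^4 d_0$ to $\rho^3 d_0$ via Remark \ref{rem:rho-extn} and an inspection of the $E_\infty$-page, a step the paper leaves implicit.
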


\begin{proof}
Table \ref{tab:Bock-h1-extn} shows that there are hidden $h_1$
extensions from $\tau h_2^2$ to $\rho c_0$, and from
$\tau^3 h_2^2$ to $\rho^4 d_0$.
Therefore,
\[
\tau^2 h_2 \cdot \rho c_0 =
\tau^2 h_2 \cdot h_1 \cdot \tau h_2^2 = \rho^4 d_0.
\]
\end{proof}

\begin{lemma}
\label{lem: h2 * h2 f0}
There is a hidden $h_2$ extension from $h_2 f_0$ to 
$\rho h_1^2 h_4 c_0$.
\end{lemma}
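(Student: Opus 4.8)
The plan is to follow the pattern of Lemmas \ref{lem:compound} and \ref{lem:h1 t^3 h1^2 h3}, combining the $\Ext_\R$-module map $p\colon \Ext_\C \to \Ext_\R$ of Remark \ref{rem:Ext-R-C-exact} with the $\rho$-Bockstein differential of Lemma \ref{lem:d2-t^2g}. The differential $d_2(\tau^2 g) = \rho^2 h_2 f_0$ means precisely that $p(\tau^2 g) = \rho h_2 f_0$ in $\Ext_\R$.

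The first step is to multiply by $h_2$. Since $p$ is $\Ext_\R$-linear and $h_2$ has $\rho$-Bockstein filtration $0$, this yields
\[
\rho h_2^2 f_0 = h_2 \cdot p(\tau^2 g) = p(\tau^2 h_2 g)
\]
in $\Ext_\R$. It therefore suffices to evaluate $p(\tau^2 h_2 g)$ a second way. Since $h_2^2 f_0$ must vanish on the $\rho$-Bockstein $E_1$-page (otherwise the extension would not be hidden), the class $\tau^2 h_2 g$ is a $d_2$-cycle, and the desired hidden extension is equivalent to the $\rho$-Bockstein differential $d_3(\tau^2 h_2 g) = \rho^3 h_1^2 h_4 c_0$; this gives $p(\tau^2 h_2 g) = \rho^2 h_1^2 h_4 c_0$ and hence $\rho h_2^2 f_0 = \rho^2 h_1^2 h_4 c_0$, which an inspection of the relevant tridegree of $\Ext_\R$ upgrades to $h_2 \cdot h_2 f_0 = \rho h_1^2 h_4 c_0$. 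To establish the differential $d_3(\tau^2 h_2 g) = \rho^3 h_1^2 h_4 c_0$ I would use the hidden $h_1$ extension $h_1 \cdot \tau h_2^2 = \rho c_0$ from Table \ref{tab:Bock-h1-extn} (so that $\rho^3 h_1^2 h_4 c_0 = \rho^2 h_1^3 h_4 \cdot \tau h_2^2$), together with a $\C$-motivic multiplicative relation for $h_2 g$ and the Leibniz rule applied to a previously known differential such as $d_4(\tau^4) = \rho^4 \tau^2 h_2$.

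An alternative, more direct route starts from the Massey product $h_2 f_0 = \langle \tau h_1, h_1^4, h_4 \rangle$ of Table \ref{tab:Massey} and shuffles $h_2 \cdot \langle \tau h_1, h_1^4, h_4 \rangle$, invoking the May Convergence Theorem \ref{thm:may-convergence}; the naive three-fold reassociations are obstructed (for instance $h_2 h_4 \neq 0$ in $\Ext_\R$), so this route must first move the factor $h_2$ into the bracket by way of the value of $\tau h_1 h_2$ in $\Ext_\R$. Either way, the main obstacle is controlling the powers of $\rho$ when passing from the $\rho$-Bockstein $E_\infty$-page to $\Ext_\R$, and in particular ruling out that $h_2 \cdot h_2 f_0$ is zero; the non-vanishing is exactly what the computation of $d_3(\tau^2 h_2 g)$ above is designed to force, after which the tridegree count leaves $\rho h_1^2 h_4 c_0$ as the only possibility.
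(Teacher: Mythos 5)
Your main argument is exactly the paper's: apply the $\Ext_\R$-linear map $p$ of Remark \ref{rem:Ext-R-C-exact} to the relation $h_2 \cdot \tau^2 g = \tau^2 h_2 g$ in $\Ext_\C$, using $p(\tau^2 g) = \rho h_2 f_0$ and $p(\tau^2 h_2 g) = \rho^2 h_1^2 h_4 c_0$, and then strip one power of $\rho$ to get the hidden extension from $h_2 f_0$ to $\rho h_1^2 h_4 c_0$. The only inefficiency is that you need not re-derive $d_3(\tau^2 h_2 g) = \rho^3 h_1^2 h_4 c_0$ (nor fall back on the obstructed Massey-product route): that differential is already recorded in Table \ref{tab:Bock} as part of Theorem \ref{thm:Bock}, which is precisely how the paper reads off the value of $p(\tau^2 h_2 g)$.
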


\begin{proof}
We use the map $p: \Ext_\C \map \Ext_\R$
of Remark \ref{rem:Ext-R-C-exact}.
The relation $h_2 \cdot \tau^2 g = \tau^2 h_2 g$
in $\Ext_\C$ implies that
$h_2 \cdot p(\tau^2 g) = p(\tau^2 h_2 g)$.
Observe that $p(\tau^2 g) = \rho h_2 f_0$, and
$p(\tau^2 h_2 g) = \rho^2 h_1^2 h_4 c_0$.

Therefore, there is a hidden $h_2$ extension from
$\rho h_2 f_0$ to $\rho^2 h_1^2 h_4 c_0$, and also
a hidden $h_2$ extension from $h_2 f_0$ to $\rho h_1^2 h_4 c_0$.
\end{proof}

\section{Adams differentials}
\label{sctn:Adams-diff}

Sections \ref{sctn:Bock-diff} and \ref{section:bockstein-extn}
describe how to compute $\Ext_\R$, which serves as the
$E_2$-page of the $\R$-motivic Adams spectral sequence.
We now proceed to analyze Adams differentials.
We remind the reader of the notation for stable homotopy elements
discussed in Section \ref{subsctn:pi-notation} and
Table \ref{tab:pi-notation}.

Recall from Section \ref{sctn:compare-R-C} that extension of scalars
induces a map from the $\R$-motivic Adams spectral sequence
to the $\C$-motivic Adams spectral sequence.
We will frequently use these comparison
functors to deduce information about the $\R$-motivic Adams spectral
sequence from already known information about the $\C$-motivic
and classical Adams spectral sequences.
See \cite{Isaksen14c} for an extensive summary of
computational information about the
$\C$-motivic and classical Adams spectral sequences.

\subsection{Toda brackets}

The Moss Convergence Theorem \ref{thm:Moss} is a key tool for determining Toda brackets \cite{Moss70} \cite{Isaksen14c}*{Section 3.1}.
We restate a version of the theorem here for convenience.

\begin{thm}[Moss Convergence Theorem]
\label{thm:Moss}
Let $\alpha_0$, $\alpha_1$, and $\alpha_2$ be elements of the 
$\R$-motivic stable homotopy groups such that
the Toda bracket $\an{\alpha_0, \alpha_1, \alpha_2}$
is defined.
Let $a_i$ be a permanent cycle on the Adams $E_r$-page that detects
$\alpha_i$ for each $i$.
Suppose further that:
\begin{enumerate}
\item
the Massey product $\an{a_0, a_1, a_2}_{E_{r}}$ is defined
(in $\Ext_\R$ when $r=2$, or 
using the Adams $d_{r-1}$ differential when $r \geq 3$).
\item
if $(s,f,w)$ is the degree of either $a_0 a_1$ or $a_1 a_2$;
$f' < f - r + 1$; $f'' > f$; and $t = f''-f'$; then every 
Adams differential 
$d_{t} : E_{t}^{s+1,f',w} \map E_{t}^{s,f'',w}$
is zero.
\end{enumerate}
Then
$\an{a_0, a_1, a_2}_{E_{r}}$ contains a permanent cycle
that detects an element of the Toda bracket
$\an{\alpha_0, \alpha_1, \alpha_2}$.
\end{thm}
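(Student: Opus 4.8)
The plan is to adapt the classical argument of Moss \cite{Moss70} to the $\R$-motivic setting; the motivic trigrading plays no essential role beyond bookkeeping. First I would reduce to the situation of a composable sequence of maps of (suspensions of) the $\R$-motivic sphere, realizing representatives of $\alpha_0$, $\alpha_1$, $\alpha_2$, and form the cofiber sequence of (a representative of) $\alpha_1$, say $S^{a,b} \xrightarrow{\alpha_1} S^{0,0} \map C_{\alpha_1} \map S^{a+1,b}$. An element of the Toda bracket $\an{\alpha_0,\alpha_1,\alpha_2}$ is produced by choosing an extension $\widetilde{\alpha_0}$ of $\alpha_0$ over $C_{\alpha_1}$ (possible since $\alpha_0 \alpha_1 = 0$), a lift $\widetilde{\alpha_2}$ of a suspension of $\alpha_2$ through $C_{\alpha_1} \map S^{a+1,b}$ (possible since $\alpha_1 \alpha_2 = 0$), and composing. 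The Adams spectral sequence of $C_{\alpha_1}$ receives and emits maps from the Adams spectral sequences of the spheres in this cofiber sequence; on $E_2$ these assemble into the long exact sequence in $\Ext_\R$, and on $E_r$ they are compatible with the $d_{r-1}$ differential.

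Next I would interpret the hypothesis that $\an{a_0,a_1,a_2}_{E_r}$ is defined in these terms: vanishing of $a_0 a_1$ up to $d_{r-1}$-boundaries yields a class $\widetilde a_0$ on the $E_r$-page of the Adams spectral sequence for $C_{\alpha_1}$ restricting to $a_0$, and vanishing of $a_1 a_2$ yields a class $\widetilde a_2$ mapping to $a_2$; the Massey-product representative is the composite $\widetilde a_0 \widetilde a_2$ read back in $E_r$ of $S^{0,0}$. The heart of the proof is to choose the \emph{geometric} extension $\widetilde{\alpha_0}$ and lift $\widetilde{\alpha_2}$ so that they are detected by $\widetilde a_0$ and $\widetilde a_2$ respectively; granting this, $\widetilde{\alpha_0}\widetilde{\alpha_2}$ is an element of $\an{\alpha_0,\alpha_1,\alpha_2}$ detected by $\widetilde a_0 \widetilde a_2$, which is the assertion. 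The indeterminacy claim is handled by checking that different algebraic and geometric choices differ by elements of the corresponding indeterminacies, which match under detection.

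The main obstacle is precisely this filtration-matching step. A priori the nullhomotopy of $\alpha_0 \alpha_1$ witnessed on $E_r$ by a $d_{r-1}$-primitive need not be realized by a geometric extension of the expected Adams filtration: a ``crossing'' filtration jump could occur, so that $\widetilde{\alpha_0}$ sits in a lower filtration than $\widetilde a_0$, or a longer $d_t$-differential with $t > r$ could interfere with the survival of the relevant class in the spectral sequence of $C_{\alpha_1}$. Hypothesis (2) of the theorem --- that every Adams $d_t$ differential into the tridegree of $a_0 a_1$ or $a_1 a_2$ vanishes in the stated range of source and target filtrations --- is exactly the ``no crossing differentials'' condition that rules this out, and it is used to promote the partial lift on the $E_r$-page of the Adams spectral sequence for $C_{\alpha_1}$ to a genuine permanent cycle detecting the geometric element. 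I would package this using the exact-couple description of that spectral sequence, tracking the image of $a_0$ under the connecting map and verifying it survives; the remaining checks --- that the various composites are permanent cycles and that the bracket and Massey-product indeterminacies correspond --- are routine once the filtration control is in hand.
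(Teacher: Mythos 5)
First, a point of context: the paper does not prove Theorem \ref{thm:Moss} at all. It is explicitly ``restated \ldots for convenience'' and attributed to Moss \cite{Moss70} and to \cite{Isaksen14c}*{Section 3.1}; the only implicit claim being made is that Moss's argument goes through in the (completed, cellular) $\R$-motivic category, where the Adams spectral sequence has the requisite multiplicative/pairing structure and converges by \cite{HKO11a}. So there is no in-paper proof to compare yours against, and your decision to reconstruct the classical argument via the cofiber of $\alpha_1$ is the right plan and matches how the cited sources proceed: extend $\alpha_0$ over $C_{\alpha_1}$, coextend $\alpha_2$, compose, and control everything through the Adams spectral sequence of $C_{\alpha_1}$ and the induced long exact sequences.

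That said, as a proof your proposal has a genuine gap, and it sits exactly where you defer to ``routine'' checks. The entire content of Moss's theorem is the filtration-matching step you isolate in your last paragraph: showing that the algebraic witnesses for the vanishing of $a_0 a_1$ and $a_1 a_2$ on the $E_{r}$-page (via $d_{r-1}$) can be realized by geometric nullhomotopies of $\alpha_0\alpha_1$ and $\alpha_1\alpha_2$ of the corresponding Adams filtrations, so that the resulting classes in the Adams spectral sequence of $C_{\alpha_1}$ survive and detect the chosen extension and coextension, and then that the product class maps back to a permanent cycle in $\an{a_0,a_1,a_2}_{E_r}$ detecting the composite. Saying that hypothesis (2) ``is exactly the no-crossing-differentials condition that rules this out'' names the hypothesis but does not use it: one must actually run the comparison of exact couples (or of filtered objects) for the cofiber sequence $S^{a,b} \map S^{0,0} \map C_{\alpha_1}$, track where a crossing differential $d_t\colon E_t^{s+1,f',w}\map E_t^{s,f'',w}$ with $f'<f-r+1$, $f''>f$ would obstruct either the survival of the lifted class or the identification of its filtration, and verify that the stated vanishing range is precisely what is needed. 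This bookkeeping is delicate (it is why the convergence condition has the specific inequalities it does), and it is the part of the argument that cannot be waved through; in addition, a complete writeup should record why detection and the bracket/Massey indeterminacies correspond under these choices, and why the motivic trigrading and the completion at $2$ and $\eta$ cause no new issues. As it stands, your text is a correct outline of the standard route rather than a proof; to finish it you should either carry out the crossing-differentials analysis explicitly or cite it, as the paper does, to \cite{Moss70} and \cite{Isaksen14c}*{Section 3.1}.
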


\begin{thm}\label{thm:Toda}
Table \ref{tab:Toda} lists some Toda brackets in $\pi_{*,*}$.
\end{thm}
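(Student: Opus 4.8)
The statement that Table~\ref{tab:Toda} lists Toda brackets in $\pi_{*,*}$ will be proved by a case-by-case analysis, with the Moss Convergence Theorem~\ref{thm:Moss} as the workhorse. For a typical entry $\alpha \in \an{\alpha_0,\alpha_1,\alpha_2}$, the plan is: first fix Adams-$E_\infty$ names $a_0,a_1,a_2$ detecting $\alpha_0,\alpha_1,\alpha_2$; second, identify the page $r$ and exhibit the Massey product $\an{a_0,a_1,a_2}_{E_r}$ in $\Ext_\R$ (for $r=2$, using the $\rho$-Bockstein computations of Sections~\ref{sctn:Bock-diff} and~\ref{section:bockstein-extn}, in particular Table~\ref{tab:Massey}); third, verify the ``no crossing differentials'' hypothesis~(2) of Theorem~\ref{thm:Moss} by inspecting the Adams differentials computed elsewhere in Section~\ref{sctn:Adams-diff}; and finally conclude that the named element of $\pi_{*,*}$ is detected by the Massey product, hence lies in the Toda bracket. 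In many rows the cleanest route is instead comparison: the $\C$-motivic and classical Toda brackets are known (see \cite{Isaksen14c}), and naturality of Toda brackets under extension of scalars and Betti realization, together with the long exact sequence of Corollary~\ref{cor:LES}, pins down the $\R$-motivic bracket up to $\rho$-multiples; the residual $\rho$-ambiguity is then killed by the same shuffling trick used in Proposition~\ref{prop:Massey}, namely $\rho^{a+b}\an{x,y,z} = \rho^b \an{\rho^a,x,y} z = 0$ when $\rho^a x$ and $\rho^b z$ vanish.

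\textbf{Order of operations.} I would organize the table into three groups. The first group consists of brackets that are the image under $p\colon\Ext_\C \map \Ext_\R$ or extension of scalars of already-known $\C$-motivic brackets; these follow by naturality with only bookkeeping of $\rho$-powers. The second group consists of brackets established directly by Moss's theorem using a single, clearly-identified $\rho$-Bockstein differential to produce the needed Massey product (this reuses the Massey products already tabulated in Table~\ref{tab:Massey}, so in those cases the Massey product computation is not redone). The third group, which I expect to be the genuinely hard cases, are brackets whose bracket element is detected only in higher Adams filtration, or where the Massey product has nontrivial indeterminacy that must be matched against the homotopy indeterminacy; for these I would need an ad hoc argument, typically a shuffle against a known product relation in $\pi_{*,*}$ (e.g. using $\eta$-, $\hsf$-, and $\rho$-multiplications from Section~\ref{sctn:Adams-extns}) to see that a candidate bracket element is forced.

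\textbf{Main obstacle.} The delicate point is hypothesis~(2) of the Moss Convergence Theorem: ruling out crossing Adams differentials into or out of the relevant degrees. This is where the computation can fail silently, because a single overlooked longer differential would invalidate the bracket. The safeguard is that all Adams differentials through the relevant range are determined in Section~\ref{sctn:Adams-diff}, so for each row the verification is a finite inspection; but it must be done carefully for each entry rather than appealing to a uniform bound. A secondary obstacle is tracking $\rho$-divisibility precisely: since the $\R$-motivic groups carry $\rho$-torsion that is invisible $\C$-motivically, a bracket that looks determined by comparison may still have a $\rho$-multiple ambiguity, and I would resolve each such case by the shuffling identity above or by direct reference to the hidden $\rho$-extension analysis. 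The indeterminacy column of Table~\ref{tab:Toda} records exactly these residual ambiguities, and confirming its correctness is part of the proof.
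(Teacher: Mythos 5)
Your proposal is correct and is essentially the paper's own argument: the paper proves each row either by the Moss Convergence Theorem \ref{thm:Moss} (with $r=2$ using the Massey products of Table \ref{tab:Massey}, or with $r>2$ using the Adams differential listed in the proof column), or by comparison along extension of scalars to the $\C$-motivic case, with the one remaining entry $\an{\rho^2,\tau\eta,\nu_4}$ handled in Lemma \ref{lem:<rho^2, tau eta, nu_4>} by exactly the kind of shuffling you describe for your ``hard'' group. One small correction: Table \ref{tab:Toda} has no indeterminacy column --- the paper explicitly declines to specify the (nontrivial) indeterminacies of these brackets --- so verifying such a column is not part of what must be proved.
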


\begin{proof}
Most of these Toda brackets are straightforward
applications of the Moss Convergence Theorem \ref{thm:Moss}.
When a Massey product appears in the fifth column of Table
\ref{tab:Toda}, the Toda bracket follows from
the Moss Convergence Theorem \ref{thm:Moss} with $r = 2$.
When an Adams differential appears in the fifth column of
Table \ref{tab:Toda}, the Toda bracket follows from
the Moss Convergence Theorem \ref{thm:Moss} with $r > 2$, and
the given Adams differential is relevant for computing the 
Toda bracket.

In some cases, the Toda brackets follow by comparison
along the extension of scalars functor
to the $\C$-motivic case.  This is denoted by the word
``$\C$-motivic" in the fifth column of Table \ref{tab:Toda}.

One slightly different case is handled below in 
Lemma \ref{lem:<rho^2, tau eta, nu_4>}. 
\end{proof}

Table \ref{tab:Toda} is not meant to be exhaustive in any sense.
It merely provides the Toda brackets that are 
needed for various specific computations.
Beware that these brackets have non-trivial indeterminacies,
although we have not specified the indeterminacies because they
are not generally relevant to our specific needs.

Beware that some of the Toda brackets in Table \ref{tab:Toda} 
require knowledge of Adams differentials that are established below
in Section \ref{subsctn:Adams-d2}.

\begin{lemma}
\label{lem:<rho^2, tau eta, nu_4>}
The Toda bracket $\an{\rho^2, \tau \eta, \nu_4}$
is detected by $\tau^2 h_2 \cdot h_4$.
\end{lemma}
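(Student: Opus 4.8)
The plan is to apply the Moss Convergence Theorem \ref{thm:Moss} to the Toda bracket $\an{\rho^2, \tau\eta, \nu_4}$, using the detecting elements $\rho^2$, $\tau h_1$, and the Adams $E_\infty$-class that detects $\nu_4$ (which I expect to be $h_4 h_2$ in an appropriate $\R$-motivic degree, matching the classical fact that $\nu_4$ is detected by $h_2 h_4$). First I would check that the Toda bracket is defined, i.e.\ that $\rho^2 \cdot \tau\eta = 0$ and $\tau\eta \cdot \nu_4 = 0$ in $\pi_{*,*}^\R$; the first of these should follow from the corresponding relation in $\Ext_\R$ together with the fact that $\rho \hsf = 0$, while $\tau\eta\cdot\nu_4$ should vanish by a comparison argument (via extension of scalars, where $\tau\eta\cdot\nu_4$ lands in a group that forces it to be zero, or because the classical product $\eta\nu_4$ is zero). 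I would then locate the relevant Massey product $\an{\rho^2, \tau h_1, h_2 h_4}$ in $\Ext_\R$ on the appropriate Adams page, and verify that the May Convergence Theorem \ref{thm:may-convergence}, or a direct $\rho$-Bockstein argument, shows this Massey product contains $\tau^2 h_2 \cdot h_4$.

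The key computational input will be the $\rho$-Bockstein structure: I expect that $\tau^2 h_2 \cdot h_4$ arises as $\an{\rho^2, \tau h_1, h_2 h_4}$ because $d_2(\tau^2) = \rho^2 \tau h_1$ (a consequence of $d_1(\tau) = \rho h_0$ and the Leibniz rule, or more directly Proposition \ref{prop:bock-tau^k}(2) with $k=1$), so that $\tau^2$ serves as the bracket-filling element witnessing $\rho^2 \cdot \tau h_1 = d_2(\tau^2)$; the second null-homotopy in the Massey product comes from $\tau h_1 \cdot h_2 h_4$ being zero already on the $E_1$-page (since $h_1 h_4 = 0$ classically and hence $h_1 \cdot h_2 h_4 = 0$), so the corresponding filling element has negative $\rho$-Bockstein degree and the May Convergence Theorem applies in its simplified form. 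The product $\tau^2 \cdot h_2 h_4$ is then the surviving representative, and the crossing-differential hypothesis should be easy to check since there are no competing longer differentials in that degree.

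The main obstacle will be verifying the crossing-differentials condition (condition (2)) of the Moss Convergence Theorem for the homotopy Toda bracket: one must rule out all Adams differentials in the relevant band of filtrations that could interfere with the convergence of the Massey product $\an{\rho^2, \tau h_1, h_2 h_4}_{E_r}$ to an element of the Toda bracket. This requires knowing the Adams differentials in a neighborhood of stem $s$ and weight $w$ corresponding to $\tau^2 h_2 h_4$, which may in turn depend on differentials established in Section \ref{subsctn:Adams-d2}; I would handle this by citing those differentials and checking by inspection of the Adams chart (as in \cite{BI20}) that no differential of the prohibited type exists. A secondary subtlety is confirming that the bracket $\an{\rho^2, \tau\eta, \nu_4}$ is actually nonzero and not killed by indeterminacy — but since $\tau^2 h_2 \cdot h_4$ is a nonzero permanent cycle (being a product of permanent cycles, or by comparison to $\C$-motivic where $\tau^2 h_2 h_4$ detects a nonzero element), and the indeterminacy of the bracket is parametrized by $\rho^2 \pi_{*,*}^\R + \pi_{*,*}^\R \cdot \nu_4$ which lies in strictly higher Adams filtration, the detection statement is unambiguous.
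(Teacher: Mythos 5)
Your proposal is correct and is essentially the paper's argument: the paper obtains the same Massey product $\an{\rho^2, \tau h_1, h_2 h_4}$ by multiplying the tabulated bracket $\an{\rho^2, \tau h_1, h_2} \ni \tau^2 h_2$ (Table \ref{tab:Massey}, itself proved from $d_2(\tau^2) = \rho^2 \tau h_1$) by $h_4$ and inspecting indeterminacies, and then applies the Moss Convergence Theorem \ref{thm:Moss}, which is the same Bockstein-plus-Moss route you describe. One small correction: $h_1 h_4$ is not zero classically or motivically (it detects $\eta_4$); the vanishing of $\tau h_1 \cdot h_2 h_4$ on the $E_1$-page comes from the relation $h_1 h_2 = 0$.
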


\begin{proof}
Table \ref{tab:Massey} shows that 
$\tau^2 h_2$ is contained in the Massey product 
$\an{\rho^2, \tau h_1, h_2}$.
By inspection of indeterminacies,
\[
\tau^2 h_2 \cdot h_4 = 
\an{\rho^2, \tau h_1, h_2} h_4 =
\an{\rho^2, \tau h_1, h_2 h_4}.
\]
The Moss Convergence Theorem \ref{thm:Moss} implies that
$\tau^2 h_2 \cdot h_4$ detects the corresponding Toda bracket.
\end{proof}

\subsection{Adams $d_2$ differentials}
\label{subsctn:Adams-d2}

We now proceed to analyze Adams differentials.

\begin{thm}
\label{thm:adams-d2}
Table \ref{tab:adams-d2} lists some values of the
$\R$-motivic Adams $d_2$ differential.
Through coweight $12$,
the $d_2$ differential is zero on all other multiplicative
generators of the $\R$-motivic Adams $E_2$-page.
\end{thm}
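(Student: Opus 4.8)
The plan is to compute the $\R$-motivic Adams $d_2$ differential on the multiplicative generators of $\Ext_\R$ through coweight $12$, proceeding generator by generator through the chart. The basic strategy has three ingredients. First, extension of scalars induces a map of Adams spectral sequences from the $\R$-motivic one to the $\C$-motivic one (Section \ref{sctn:compare-R-C}), and the $\C$-motivic Adams differentials are known in this range by \cite{Isaksen14c}. Since $d_2$ is natural, the $d_2$ on any class $x$ whose image $i(x)$ in $\Ext_\C$ is nonzero is forced: it must map to $d_2(i(x))$. This handles all generators whose extension of scalars is nonzero, which is the bulk of the chart. Second, for the remaining generators (those that are $\rho$-divisible, hence die under extension of scalars), I would use the long exact sequence of Remark \ref{rem:Ext-R-C-exact} together with the isomorphism $\pi_{*,*}(S/\rho) \cong \pi^\C_{*,*}$ of Theorem \ref{thm:pi-S/rho}: a class $\rho^k y$ in $\Ext_\R$ supports a $d_2$ forced by the corresponding $\C$-motivic differential on the image under $p$, or it is a permanent cycle because the $\C$-motivic class it comes from is. Third, for the genuinely new cases where naturality does not decide matters, I would invoke the Moss Convergence Theorem \ref{thm:Moss} and the Toda brackets in Table \ref{tab:Toda} to show that certain classes are permanent cycles (being detectors of well-defined Toda brackets of permanent cycles), thereby ruling out $d_2$ on them.

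The key steps, in order, are: (i) list the multiplicative generators of $\Ext_\R$ through coweight $12$ from the output of Sections \ref{sctn:Bock-diff} and \ref{section:bockstein-extn}; (ii) for each generator with nonzero image in $\Ext_\C$, read off $d_2$ by naturality of extension of scalars, using the $\C$-motivic $d_2$ values from \cite{Isaksen14c}; (iii) for each $\rho$-power-divisible generator, apply Remark \ref{rem:Ext-R-C-exact} and Theorem \ref{thm:pi-S/rho} to transport the $\C$-motivic answer; (iv) for the residual classes, establish permanent-cycle status via Toda brackets and the Moss Convergence Theorem, or via comparison to the classical Adams spectral sequence through Betti realization; (v) verify that in every remaining slot the only possible target of a $d_2$ is ruled out by one of these arguments, so that $d_2$ vanishes there. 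Assembling these case analyses yields exactly the entries of Table \ref{tab:adams-d2} and the vanishing claim for all other generators.

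The main obstacle I expect is step (iv): the handful of classes that are $\rho$-divisible, whose $p$-image in $\Ext_\C$ also lands in a region where the $\C$-motivic differential is delicate, or where there is a priori room for a $d_2$ not visible from either $\C$-motivic or classical data. These are precisely the cases sensitive to the $\rho$-Bockstein hidden extension data of Section \ref{section:bockstein-extn} and to the exact choice of representatives in $\Ext_\R$; pinning them down will require careful bookkeeping of which element of $\Ext_\R$ (up to the $\rho$-annihilation convention) a given $E_\infty$-name refers to, and then a targeted Toda bracket computation. I anticipate that a small number of such cases — likely involving classes near $\tau^2 h_2 g$, the $P$-power families, or $c_1$-related classes — will each need an individual lemma in the style of Lemma \ref{lem:<rho^2, tau eta, nu_4>}, and these lemmas are where the real work lies; the rest of the theorem is a bookkeeping exercise driven by naturality.
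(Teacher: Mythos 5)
Your overall architecture (comparison to the $\C$-motivic and classical Adams spectral sequences, the Moss Convergence Theorem and Toda brackets to force permanent cycles, plus a residue of case-by-case lemmas) does match the paper's proof in outline, but your step (ii) contains a genuine gap that would put wrong values into the table. Naturality of extension of scalars gives $i(d_2(x)) = d_2(i(x))$, which determines $d_2(x)$ only modulo the kernel of $i$; by Remark \ref{rem:Ext-R-C-exact} that kernel is exactly the $\rho$-divisible part of $\Ext_\R$, so the differential is \emph{not} forced even when $i(x)\neq 0$. Several entries of Table \ref{tab:adams-d2} live precisely in this blind spot: $d_2(\tau h_0 h_3^2)=\rho^2 h_1 d_0$ and $d_2(\tau^2 h_1 g)=\rho^2 c_0 d_0$ are entirely $\rho$-divisible (your step (ii) would declare these classes $d_2$-cycles), $d_2(\tau^2 f_0)=h_0^2\cdot\tau^2 e_0+\rho^3\tau h_2^2\cdot d_0$ carries a $\rho$-divisible correction term invisible to the comparison, and even $d_2(f_0)$ is only pinned down between two candidates by an extra argument. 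The paper resolves all of these not with Toda brackets but with multiplicative arguments leaning on the hidden extensions of Section \ref{section:bockstein-extn}: for instance $h_1\cdot\tau h_0 h_3^2=\rho^2 e_0$ together with $d_2(e_0)=h_1^2 d_0$ gives Lemma \ref{lem:d2(th0h3^2)}; $h_1\cdot\tau^2 h_1 g=\rho\tau h_2^2\cdot e_0$ and $h_1\cdot\tau h_2^2=\rho c_0$ give Lemma \ref{d_2(t^2 h1 g)}; and Lemma \ref{lem:d2(t^2 f0)} is settled by multiplying by $h_0^2+\rho^2 h_1^2$. So the real work is exactly the ambiguity modulo $\rho$ that your proposal treats as automatic, and the tool is multiplicative structure plus Bockstein hidden extensions rather than the bracket computations you forecast near $\tau^2 h_2 g$ or the $c_1$-related classes.

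A smaller but related problem is step (iii): in the long exact sequence of Remark \ref{rem:Ext-R-C-exact}, the image of $p$ is the $\rho$-torsion of $\Ext_\R$, not the set of $\rho$-divisible classes, so ``transporting the $\C$-motivic answer along $p$'' is not available for a general $\rho$-power-divisible generator; and while $p$ does commute with differentials, permanence of a $\C$-motivic class only yields permanence of its image $p(x)$, which need not be the generator in question. The paper instead disposes of the vanishing claims by the multiplicative structure (its example is $d_2(\tau^5 h_1)$), by the permanent cycles of Table \ref{tab:perm} via the Moss Convergence Theorem, and by comparison --- your ingredients, but with the modulo-$\rho$ indeterminacy tracked explicitly in every case.
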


\begin{proof}
The multiplicative structure rules out many possible differentials.
For example, $d_2(\tau^5 h_1)$ cannot equal $\tau^4 h_0 \cdot h_0^2$
because $h_0^2 \cdot \tau^5 h_1 = 0$, while $\tau^4 h_0 \cdot h_0^4$ is non-zero.

Other multiplicative generators are known to be permanent cycles,
because the Moss Convergence Theorem \ref{thm:Moss} shows that
they must survive to detect various Toda brackets.
These instances are shown in Table \ref{tab:perm}.
In one case, the element $h_4 \cdot \tau c_0$ must survive to 
detect the product $\sigma \cdot \tau \eta_4$, by comparison to the
$\C$-motivic stable homotopy groups.

Many non-zero differentials follow by comparison to the $\C$-motivic
or classical Adams spectral sequences.

Several more difficult cases are established in the following
lemmas.
\end{proof}

\begin{remark}
\label{rem:perm-t^4h3}
Table \ref{tab:perm} shows that $\tau^4 h_3$ is a permanent cycle
because it detects the Toda bracket $\an{\rho^4, \tau^2 \nu, \sigma}$.
We give an alternative proof that is geometrically interesting,
following the method of \cite{DI17}*{Lemma 7.3}.

There is a functor from classical homotopy theory
to $\R$-motivic homotopy theory that takes the sphere
$S^p$ to $S^{p,0}$.
Let $\sigma_{\tp}: S^{15,0} \map S^{8,0}$ 
be the image of the classical Hopf map $\sigma: S^{15} \map S^8$
under this functor.

The cohomology of the cofiber of $\sigma_{\tp}$ 
is free on two generators $x$ and $y$ of degrees
$(8,0)$ and $(16,0)$, satisfying
$\Sq^8(x) = \tau^4 y$ and $\Sq^{16}(x) = \rho^8 y$.
The proof of these formulas is essentially identical
to the proof of \cite{DI17}*{Lemma 7.4}.

This shows that $\tau^4 h_3 + \rho^8 h_4$ is a permanent
cycle in the Adams spectral sequence, since 
it detects the stabilization of $\sigma_{\tp}$ in
$\pi_{7,0}$.
Also, $\rho^8 h_4$ is a permanent cycle because there are 
no possible values for differentials.
Therefore, $\tau^4 h_3$ is a permanent cycle.
\end{remark}

\begin{lemma}
\label{lem:d2(th0h3^2)}
$d_2(\tau h_0 h_3^2) = \rho^2 h_1 d_0$.
\end{lemma}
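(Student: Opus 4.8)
The plan is to establish the differential $d_2(\tau h_0 h_3^2) = \rho^2 h_1 d_0$ by combining comparison to the $\C$-motivic Adams spectral sequence with the multiplicative and Massey-product structure of $\Ext_\R$. First I would record what is already known in the $\C$-motivic case: the classical/$\C$-motivic differential $d_2(h_0 h_3^2) = 0$ (the element $h_0 h_3^2$ is a permanent cycle, since $h_3^2$ and $h_0$ are), so a naive comparison along extension of scalars does not immediately give the answer; instead the $\rho$-multiple structure must enter. The key observation is that $\tau h_0 h_3^2$ is \emph{not} a permanent cycle in the $\R$-motivic Adams spectral sequence, and the only available target for $d_2$ in the appropriate tridegree is $\rho^2 h_1 d_0$ (or possibly a small list of alternatives that I would rule out by $h_0$- or $\rho$-multiplication, using relations such as $h_0 h_1 = 0$ and $\rho h_0 = 0$ in $\Ext_\R$).

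The main step is to show that $\tau h_0 h_3^2$ must support a nonzero $d_2$. I would do this via a Massey/Toda product argument in the spirit of the hidden-extension proofs earlier in the paper. Concretely, I expect $\tau h_0 h_3^2$ (or a suitable multiple) to be expressible through a Massey product in $\Ext_\R$ built from the $\rho$-Bockstein differential $d_1(\tau) = \rho h_0$ — for instance relating $\tau h_0 h_3^2$ to $\an{\rho, h_0, \tau h_3^2 \cdot(\text{something})}$ or to $\tau h_3^2 \cdot h_0$ where $\tau h_3^2$ already carries a known $d_2$ inherited from the $\C$-motivic differential $d_2(\tau h_3^2) = ?$. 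Alternatively — and this may be cleaner — I would use the Leibniz rule against an already-established $d_2$: if $\tau h_0 h_3^2 = \tau h_0 \cdot h_3^2$ and a multiplicative generator among these factors is known (from Table~\ref{tab:adams-d2} or Table~\ref{tab:perm}) to support or not support a $d_2$, then $d_2(\tau h_0 h_3^2)$ is forced. The cleanest route is likely: compare to $\C$-motivic, where the relevant element is divisible by $\rho$ after applying $p: \Ext_\C \to \Ext_\R$, and use the long exact sequence of Remark~\ref{rem:Ext-R-C-exact} together with the known $\C$-motivic differential hitting $h_1 d_0$-type classes.

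The last step is to pin down the target as exactly $\rho^2 h_1 d_0$ rather than zero or another class: this is a degree count in the Adams tridegree $(s,f,w)$ shifted by $d_2$, combined with the observation that the target must be $\rho$-divisible (since the source carries $\tau$ and the $\C$-motivic comparison kills the non-$\rho$-divisible part), and $\rho^2 h_1 d_0$ is the unique such class of the right internal degree and filtration. I would also double-check consistency against the hidden $h_1$-extension from $\tau^3 h_2^3$ to $\rho^4 d_0$ (Lemma~\ref{lem:h1 t^3 h1^2 h3}) and the relation of Lemma~\ref{lem:compound}, to make sure the $d_0$-family behaves coherently. The hard part will be producing the Massey product (or locating the precise $\C$-motivic input) that forces $\tau h_0 h_3^2$ to be non-permanent; once that is in hand, the identification of the target is essentially a bookkeeping exercise.
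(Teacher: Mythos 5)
There is a genuine gap: you never actually produce the mechanism that forces $\tau h_0 h_3^2$ to support a nonzero $d_2$, and you acknowledge this yourself (``the hard part will be producing the Massey product\dots''). The several routes you sketch are left as guesses, and the one concrete suggestion you make is not viable: the factorizations $\tau h_0 \cdot h_3^2$ and $h_0 \cdot \tau h_3^2$ do not exist in $\Ext_\R$, because neither $\tau h_0$ nor $\tau h_3^2$ survives the $\rho$-Bockstein spectral sequence (indeed $d_1(\tau) = \rho h_0$ gives $d_1(\tau h_0) = \rho h_0^2 \neq 0$ and $d_1(\tau h_3^2) = \rho h_0 h_3^2 \neq 0$); the class $\tau h_0 h_3^2$ is an indecomposable generator of the Adams $E_2$-page, so a naive Leibniz argument against its ``factors'' cannot get started. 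Comparison to $\C$-motivic homotopy also cannot detect this differential, since the target $\rho^2 h_1 d_0$ maps to zero in $\Ext_\C$; it only tells you that any value of $d_2$ must die under extension of scalars, which is consistent with the differential being zero.

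The missing idea is the one the paper uses: multiply by $h_1$ and invoke the hidden $h_1$ extension recorded in Table \ref{tab:Bock-h1-extn} (coweight $7$), which says $h_1 \cdot \tau h_0 h_3^2 = \rho^2 e_0$ in $\Ext_\R$. Combining this with the already-established differential $d_2(e_0) = h_1^2 d_0$ from Table \ref{tab:adams-d2} and the Leibniz rule gives
\[
h_1 \cdot d_2(\tau h_0 h_3^2) = d_2(\rho^2 e_0) = \rho^2 h_1^2 d_0,
\]
from which $d_2(\tau h_0 h_3^2) = \rho^2 h_1 d_0$ follows. So the correct Leibniz argument runs through an $h_1$-multiple of the source realized via a hidden Bockstein extension, not through a factorization of the source itself; without identifying that relation (or some equivalent input), your outline does not establish that the differential is nonzero, and the concluding ``degree count'' step has nothing to pin down.
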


\begin{proof}
Table \ref{tab:adams-d2} shows that $d_2(e_0) = h_1^2 d_0$.
Therefore,
\[
d_2(h_1 \cdot \tau h_0 h_3^2) = d_2(\rho^2 e_0) = \rho^2 h_1^2 d_0.
\]
It follows that $d_2(\tau h_0 h_3^2)$ equals $\rho^2 h_1 d_0$.
\end{proof}

\begin{lemma}\label{lem:d2(f_0)}
$d_2(f_0) = h_0^2 e_0$.
\end{lemma}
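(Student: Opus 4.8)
The plan is to establish the differential $d_2(f_0) = h_0^2 e_0$ by comparison along the extension of scalars map to the $\C$-motivic Adams spectral sequence, supplemented by the information Theorem \ref{thm:adams-d2} has already recorded. First I would note that in the $\C$-motivic Adams spectral sequence the differential $d_2(f_0) = h_0^2 e_0$ is known (see \cite{Isaksen14c}), so the naturality of Adams differentials under extension of scalars forces $d_2(f_0)$ to be detected by $h_0^2 e_0$ up to $\rho$-torsion corrections; that is, $d_2(f_0)$ either equals $h_0^2 e_0$ or differs from it by a class in higher $\rho$-Bockstein filtration in the same tridegree. So the work is entirely in ruling out those $\rho$-multiple ambiguities, i.e.\ in showing the $\rho$-torsion part of the target group in this tridegree contributes nothing new. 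I would identify the relevant tridegree of $h_0^2 e_0$ and inspect the $\rho$-Bockstein $E_\infty$-page (equivalently $\Ext_\R$) there; the candidates for $d_2(f_0)$ beyond $h_0^2 e_0$ are classes of the form $\rho^k z$, and one checks by the coweight bookkeeping that there are at most one or two such, each of which I expect to eliminate.

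The elimination step is where the real content lies, and it mirrors the style of the miscellaneous relations in Section \ref{section:bockstein-extn} and the other $d_2$ lemmas. If $x$ is a competing candidate $\rho$-multiple, I would either (a) show $x$ cannot be a $d_2$ target because it is itself a target or source of a $\rho$-Bockstein phenomenon incompatible with being hit in the Adams SS, or (b) multiply the putative differential by $h_1$ or $h_0$ and derive a contradiction with a differential already listed in Table \ref{tab:adams-d2}. Concretely, since Table \ref{tab:adams-d2} gives $d_2(e_0) = h_1^2 d_0$, the relation $h_1 f_0 = 0$ in $\Ext_\R$ (noted in the examples of Section \ref{sctn:Bock-diff}) severely constrains which classes can be $d_2(f_0)$: any candidate must be annihilated by $h_1$, which knocks out the $\rho$-multiple corrections that carry nonzero $h_1$-multiplication. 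I would also use that $h_0^2 e_0$ is not itself a permanent cycle (it is hit, or supports further structure as recorded in the $\C$-motivic case), while the competing $\rho$-multiples are, so they cannot serve as the target.

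The main obstacle I anticipate is not any single hard argument but the careful tridegree analysis: correctly enumerating every class in $\Ext_\R$ in the tridegree of $h_0^2 e_0$ together with its $\rho$- and $h_0$- and $h_1$-multiplication behavior, using the tables from Sections \ref{sctn:Bock-diff} and \ref{section:bockstein-extn}. Once that enumeration is in hand, each candidate falls to one of the routine contradictions above. I would present the proof as: (1) cite the $\C$-motivic $d_2(f_0) = h_0^2 e_0$ and invoke naturality; (2) list the candidate corrections in the target tridegree; (3) eliminate each using $h_1 f_0 = 0$ together with $d_2(e_0) = h_1^2 d_0$ and the multiplicative structure; (4) conclude $d_2(f_0) = h_0^2 e_0$ exactly.
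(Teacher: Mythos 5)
Your proposal is correct and follows essentially the same route as the paper: comparison to the $\C$-motivic (or classical) case pins down $d_2(f_0)$ up to a $\rho$-multiple correction (which in the paper is identified explicitly as $\rho^2 h_1^2 e_0$), and the relation $h_1 f_0 = 0$ combined with the fact that the correction term supports a nonzero $h_1$-multiplication rules it out. The extra ingredients you mention ($d_2(e_0) = h_1^2 d_0$ and the permanent-cycle considerations) are not needed; the $h_1$-multiplication argument alone suffices, exactly as in the paper.
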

\begin{proof}
Comparison to the $\C$-motivic or classical case shows that
$d_2(f_0)$ equals either $h_0^2 e_0$ or $h_0^2 e_0 + \rho^2 h_1^2 e_0$.
But $h_1 \cdot f_0 = 0$ in the $E_2$-page, while
$h_1 (h_0^2 e_0 + \rho^2 h_1^2 e_0)$ is non-zero.
The only possibility is that
$d_2(f_0)$ equals $h_0^2 e_0$.
\end{proof}

\begin{lemma}\label{lem:d2(t^2 f0)}
$d_2(\tau^2 f_0) = h_0^2 \cdot \tau^2 e_0 + \rho^3 \tau h_2^2 \cdot d_0$.\end{lemma}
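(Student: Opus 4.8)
The plan is to compute $d_2(\tau^2 f_0)$ by combining comparison information with the already-established differential $d_2(f_0) = h_0^2 e_0$ from Lemma \ref{lem:d2(f_0)} and the multiplicative structure of $\Ext_\R$. First I would invoke comparison along extension of scalars to the $\C$-motivic Adams spectral sequence: this pins down the ``$\rho$-free part'' of $d_2(\tau^2 f_0)$, showing that it equals $h_0^2 \cdot \tau^2 e_0$ modulo terms divisible by $\rho$. So the content of the lemma is to determine the $\rho$-divisible correction term, which must land in the appropriate degree; the candidates are $\rho$-multiples of classes in the target degree of $d_2$ on $\tau^2 f_0$, and I expect the only live possibility (after using $h_1 \cdot \tau^2 f_0 = 0$, as in Lemma \ref{lem:d2(f_0)}, to kill some candidates) to be $\rho^3 \tau h_2^2 \cdot d_0$, possibly together with other $\rho$-power multiples that get ruled out by similar $h_0$- or $h_1$-multiplication tests.

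To actually force the term $\rho^3 \tau h_2^2 \cdot d_0$ to appear, I would multiply by $h_1$ and use the hidden $h_1$ extensions recorded in Table \ref{tab:Bock-h1-extn} and Lemma \ref{lem:t^2 f0 * h1}. Specifically, Lemma \ref{lem:t^2 f0 * h1} gives a hidden $h_1$ extension from $\tau^2 f_0$ to $\rho^2 \tau^2 h_1 g$, so $h_1 \cdot \tau^2 f_0 = \rho^2 \tau^2 h_1 g$ in $\Ext_\R$ (or at least this is what $h_1 \cdot \tau^2 f_0$ is detected by). Then the Leibniz rule gives $d_2(h_1 \cdot \tau^2 f_0) = h_1 \cdot d_2(\tau^2 f_0)$, and the left side can be computed as $d_2(\rho^2 \tau^2 h_1 g) = \rho^2 h_1 \cdot d_2(\tau^2 h_1 g)$. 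I would then look up or deduce $d_2(\tau^2 h_1 g)$ (presumably from Table \ref{tab:adams-d2} or by a short argument) and compare: if $h_1 \cdot h_0^2 \cdot \tau^2 e_0 = 0$ but $h_1 \cdot \rho^3 \tau h_2^2 \cdot d_0$ matches the computed value of $h_1 \cdot d_2(\tau^2 f_0)$, this forces the $\rho^3 \tau h_2^2 d_0$ term to be present.

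An alternative and perhaps cleaner route, which I would pursue in parallel, is to use the Massey product description $\tau^2 f_0 \in \an{\tau^2 h_2, h_3, h_0^2 h_3}$ from Table \ref{tab:Massey}, together with the Leibniz-type rule for Adams differentials on Massey products (Moss Convergence Theorem \ref{thm:Moss} machinery), to expand $d_2$ of this bracket in terms of $d_2$ on $\tau^2 h_2$, $h_3$, and $h_0^2 h_3$. Since $h_3$ and $h_0^2 h_3$ are permanent cycles and $d_2(\tau^2 h_2)$ is known (again from Table \ref{tab:adams-d2}, likely involving $\rho^3 h_1 \cdot (\text{something})$ or zero), this should produce both summands directly. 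I would check consistency between the two approaches.

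The main obstacle will be the bookkeeping of the $\rho$-power corrections: extension of scalars only determines $d_2(\tau^2 f_0)$ modulo $\rho$-divisible classes, and there may be several $\rho$-multiples in the relevant tridegree that need to be individually eliminated. The decisive step is the $h_1$-multiplication argument (using the hidden extension of Lemma \ref{lem:t^2 f0 * h1} and a known value of $d_2(\tau^2 h_1 g)$), since $h_1 \cdot h_0^2 \cdot \tau^2 e_0$ should vanish while $h_1$ times the correction term does not; getting that comparison exactly right, and confirming that no \emph{other} $\rho$-power multiple can be hiding in $d_2(\tau^2 f_0)$, is where the real work lies.
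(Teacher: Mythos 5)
Your proposal follows the same two-step template as the paper's proof, but the decisive step is carried out differently. The paper also starts from the $\C$-motivic differential $d_2(\tau^2 f_0)=\tau^2 h_0^2 e_0$, which (since the kernel of $\Ext_\R\to\Ext_\C$ consists of $\rho$-divisible classes) leaves exactly the two candidates $h_0^2\cdot\tau^2 e_0$ and $h_0^2\cdot\tau^2 e_0+\rho^3\tau h_2^2\cdot d_0$; it then rules out the first by multiplying by $h_0^2+\rho^2 h_1^2$, using the exact relation $(h_0^2+\rho^2 h_1^2)\cdot\tau^2 f_0=0$ in $\Ext_\R$ together with $(h_0^2+\rho^2 h_1^2)\cdot h_0^2\cdot\tau^2 e_0=\rho^6 h_1 c_0 d_0\neq 0$ --- a Leibniz argument needing no auxiliary Adams differential. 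You instead multiply by $h_1$, using the hidden extension $h_1\cdot\tau^2 f_0=\rho^2\tau^2 h_1 g$ of Lemma \ref{lem:t^2 f0 * h1} and the differential $d_2(\tau^2 h_1 g)=\rho^2 c_0 d_0$ of Lemma \ref{d_2(t^2 h1 g)} (which is proved from $d_2(e_0)$, so there is no circularity); since $h_1\cdot h_0^2\cdot\tau^2 e_0=0$ while $h_1\cdot\rho^3\tau h_2^2\cdot d_0=\rho^4 c_0 d_0\neq 0$ (via $h_1\cdot\tau h_2^2=\rho c_0$), this forces the correction term. Your route works and is in the paper's style; it trades the paper's on-the-nose annihilation relation for one hidden-extension input plus one extra known differential, and it requires the usual caveat that a hidden extension determines $h_1\cdot\tau^2 f_0$ only up to higher $\rho$-Bockstein filtration, so one should check on the charts that no such ambiguity can affect the $d_2$ computation (the paper's choice of multiplier avoids this issue entirely).

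Three repairs to the write-up. First, the parenthetical ``using $h_1\cdot\tau^2 f_0=0$'' in your opening paragraph is false and contradicts the hidden extension you rely on two sentences later; the correct analogue of the $f_0$ argument is precisely that $h_1\cdot\tau^2 f_0$ is \emph{nonzero}. Second, the identity $d_2(\rho^2\tau^2 h_1 g)=\rho^2 h_1\cdot d_2(\tau^2 h_1 g)$ has a spurious factor of $h_1$; it should read $d_2(\rho^2\tau^2 h_1 g)=\rho^2 d_2(\tau^2 h_1 g)=\rho^4 c_0 d_0$. Third, the alternative route via $\tau^2 f_0\in\an{\tau^2 h_2,h_3,h_0^2 h_3}$ does not work as described: the Moss Convergence Theorem \ref{thm:Moss} is a statement about Massey products converging to Toda brackets, not a Leibniz rule for $d_r$ applied to Massey products, and since all three entries are permanent cycles such an expansion could never produce the nonzero answer; discard that paragraph and keep the $h_1$-multiplication argument as the proof.
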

\begin{proof}
The $\C$-motivic differential $d_2(\tau^2 f_0)=\tau^2 h_0^2 e_0$ implies that
$d_2(\tau^2 f_0)$ equals either $h_0^2 \cdot \tau^2 e_0$ or 
$h_0^2 \cdot \tau^2 e_0 + \rho^3 \tau h_2^2 \cdot d_0$. We rule out the first possibility by noting that
$(h_0^2 + \rho^2 h_1^2)\cdot \tau^2 f_0 = 0$ in $\Ext_\R$ whereas $(h_0^2 +
\rho^2 h_1^2)\cdot \tau^2 h_0^2 e_0 = \rho^6 h_1 c_0 d_0$.
\end{proof}

\begin{lemma}\label{d_2(t^2 h1 g)}
$d_2(\tau^2 h_1 g) = \rho^2 c_0 d_0$.
\end{lemma}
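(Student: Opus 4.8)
The plan is to determine this differential by comparison with the $\C$-motivic Adams spectral sequence and then an analysis of the possible targets. First I would observe that $\tau^2 h_1 g$ is a permanent cycle in the $\C$-motivic Adams spectral sequence (it detects $\tau^2 \eta \bar\kappa$, by the known $\C$-motivic computations, since $h_1 g$ agrees with the classical permanent cycle of the same name and multiplication by $\tau^2$ preserves this). Since extension of scalars $\Ext_\R \to \Ext_\C$ carries $\tau^2 h_1 g$ to $\tau^2 h_1 g$ and commutes with Adams differentials, $d_2(\tau^2 h_1 g)$ must map to zero in $\Ext_\C$, so by the long exact sequence of Remark \ref{rem:Ext-R-C-exact} it is divisible by $\rho$. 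Inspecting the part of the $\rho$-Bockstein $E_\infty$-page lying in the target degree (computed in Sections \ref{sctn:Bock-diff} and \ref{section:bockstein-extn}), together with the hidden $h_0$- and $h_1$-extensions of Propositions \ref{prop:h0-extn} and \ref{prop:h1-extn} and the relation $d_2(e_0) = h_1^2 d_0$ from Table \ref{tab:adams-d2}, I expect $\rho^2 c_0 d_0$ to be the only $\rho$-divisible class that can occur.

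The main obstacle is showing that the differential is actually nonzero rather than merely $\rho$-divisible, since the classical and $\C$-motivic comparisons only give vanishing. The cleanest route I anticipate is to transfer the computation through the map $p \colon \Ext_\C \to \Ext_\R$ of Remark \ref{rem:Ext-R-C-exact}, which is also compatible with Adams differentials (it is a connecting map for the cofiber sequence defining $S/\rho$), so that $d_2^\R(p(x)) = p\bigl(d_2^\C(x)\bigr)$ for any $\C$-motivic class $x$. Using the $\rho$-Bockstein differential $d_2(\tau^2 g) = \rho^2 h_2 f_0$ of Lemma \ref{lem:d2-t^2g} to compute $p$ on suitable $\tau$-power multiples of $g$, one produces a class $x$ with $p(x)$ a $\rho$-multiple of $\tau^2 h_1 g$; then a direct computation of $d_2^\C(x)$ and its image under $p$ should exhibit a nonzero $\rho$-multiple of $c_0 d_0$ and thereby force the stated value. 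Alternatively, one could extract nonzero-ness from a multiplicative relation in which $\tau^2 h_1 g$, or one of its multiples, is a factor of a class with known $d_2$. The delicate parts are the bookkeeping of which power of $\rho$ appears and checking that any residual $\rho$-torsion ambiguity in the target degree does not affect the conclusion that $d_2(\tau^2 h_1 g) = \rho^2 c_0 d_0$ exactly.
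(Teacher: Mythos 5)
Your first step (divisibility of $d_2(\tau^2 h_1 g)$ by $\rho$, via comparison with the $\C$-motivic spectral sequence) is correct, but the proof has a genuine gap exactly where you flag it: nothing in the proposal actually shows the differential is nonzero, nor pins down its value. The assertion that $\rho^2 c_0 d_0$ is the only $\rho$-divisible class available in degree $(20,7,11)$ is left unverified, and the mechanism you propose for nonvanishing does not work as described. Lemma \ref{lem:d2-t^2g} concerns $\tau^2 g$, and the $\rho$-Bockstein differential $d_2(\tau^2 g) = \rho^2 h_2 f_0$ only tells you $p(\tau^2 g) = \rho\, h_2 f_0$, which (since $h_2 f_0 = \tau h_1 g$ in $\Ext_\C$, weight $12$) is a $\rho$-multiple of $\tau h_1 g$, not of $\tau^2 h_1 g$ (weight $11$). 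In fact no $\tau$-power multiple of $g$ has Bockstein differential hitting a $\rho$-multiple of $\tau^2 h_1 g$: the relevant differential is $d_5(\tau^4 e_0) = \rho^5 \tau^2 h_1 g$ from Table \ref{tab:Bock}, so the class you would have to feed into $p$ is $\tau^4 e_0$. Even then, evaluating $p(d_2^\C(\tau^4 e_0)) = p(\tau^4 h_1^2 d_0)$ is not a formal Bockstein computation, because the Leibniz term $\rho^5\, h_1^2 \cdot \tau^2 h_1 e_0$ vanishes on the $E_5$-page (the product $h_1^2 \cdot \tau^2 h_1 e_0$ is only detected in higher $\rho$-filtration), so you are forced back onto precisely the hidden $h_1$-extension data of Table \ref{tab:Bock-h1-extn} that you were trying to route around, plus a $\rho$-torsion bookkeeping argument to divide by $\rho^4$ at the end.

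The paper's proof is the "multiplicative relation" alternative you gesture at in your last sentence, and it is a two-line Leibniz argument: by Table \ref{tab:Bock-h1-extn} one has $h_1 \cdot \tau^2 h_1 g = \rho\, \tau h_2^2 \cdot e_0$, so $h_1 \cdot d_2(\tau^2 h_1 g) = \rho\, \tau h_2^2 \cdot d_2(e_0) = \rho\, \tau h_2^2 \cdot h_1^2 d_0 = \rho^2 h_1 c_0 d_0$, using $h_1 \cdot \tau h_2^2 = \rho c_0$; since this is nonzero, $d_2(\tau^2 h_1 g) = \rho^2 c_0 d_0$. To repair your write-up you should either carry out this multiplication-by-$h_1$ argument explicitly, or, if you insist on the $p$-map route, replace the $g$-family input by $\tau^4 e_0$ and supply the hidden-extension computation of $p(\tau^4 h_1^2 d_0)$ together with the final division by $\rho^4$; as written, the decisive step is only "should exhibit," which is not a proof.
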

\begin{proof}
Table \ref{tab:Bock-h1-extn} shows that 
$h_1 \cdot \tau^2 h_1 g = \rho \tau h_2^2 \cdot e_0$.
Therefore,
\[
h_1 \cdot d_2(\tau^2 h_1 g) = \rho \tau h_2^2 \cdot d_2(e_0) = 
\rho \tau h_2^2 \cdot h_1^2 d_0,
\]
which equals $\rho^2 h_1 c_0 d_0$ because Table \ref{tab:Bock-h1-extn}
shows that $h_1 \cdot \tau h_2^2 = \rho c_0$.
\end{proof}

\subsection{Higher Adams differentials}

Theorem \ref{thm:adams-d2} completely describes the Adams
$d_2$ differential through coweight $12$.
From this information, one can compute the Adams $E_3$-page in 
a range.  We now proceed to analyze higher differentials.

\begin{thm}
\label{thm:adams-higher}
Table \ref{tab:higher-adams} lists some values of the
$\R$-motivic Adams $d_3$ differential for $r \geq 3$.
Through coweight $12$,
the $d_3$ differential is zero on all other multiplicative
generators of the $\R$-motivic Adams $E_3$-page.
Moreover, through coweight $12$, there are no higher differentials,
and the $\R$-motivic Adams $E_4$-page equals the 
$\R$-motivic Adams $E_\infty$-page.
\end{thm}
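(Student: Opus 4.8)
The plan is to mirror the proof of Theorem~\ref{thm:adams-d2}, now for the Adams $d_3$ and higher differentials, using the same four ingredients: the Leibniz rule, comparison to the $\C$-motivic and classical Adams spectral sequences, the Moss Convergence Theorem~\ref{thm:Moss}, and the long exact sequence of Corollary~\ref{cor:LES} coming from the identification $\pi_{*,*}(S/\rho) \cong \pi^\C_{*,*}$ of Theorem~\ref{thm:pi-S/rho}. Having computed the $d_2$ differential and hence the $E_3$-page in the range, one knows the list of candidate $d_3$ sources and targets, and the argument then has three parts.

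\emph{First}, I would establish the nonzero $d_3$ differentials recorded in Table~\ref{tab:higher-adams} by pulling back known $\C$-motivic and classical $d_3$ differentials (see~\cite{Isaksen14c}) along extension of scalars and Betti realization. Each such comparison pins down the $\R$-motivic differential only up to an error term divisible by $\rho$; these error terms are then determined using the hidden $h_0$- and $h_1$-extensions of Section~\ref{section:bockstein-extn} and the multiplicative structure of the $E_3$-page, exactly as in Lemmas~\ref{lem:d2(f_0)}, \ref{lem:d2(t^2 f0)}, and~\ref{d_2(t^2 h1 g)}. The long exact sequence of Corollary~\ref{cor:LES} gives a second, independent handle: a $\C$-motivic Adams differential forces a matching pattern on the $\rho$-torsion part of $\Ext_\R$, hence on the $\R$-motivic $E_3$-page.

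\emph{Second}, I would rule out every remaining possible $d_3$ on a multiplicative generator of the $E_3$-page. The tools are the Leibniz rule (a product of permanent cycles is permanent, and $d_3$ of a product is computed from its factors), the Moss Convergence Theorem~\ref{thm:Moss} (which forces various generators to survive in order to detect Toda brackets, extending Table~\ref{tab:perm}), and comparison (which kills a putative $d_3$ whenever the generator maps to a nonzero permanent cycle in $\Ext_\C$ or $\Ext_\cl$ with no room to absorb the target). A handful of stubborn cases, especially those attached to $h_1$-periodic families and to classes such as $\tau^k f_0$ and $\tau^k g$, would be isolated into separate lemmas proved by Massey- and Toda-bracket shuffling, in the style of Lemma~\ref{lem:d2(t^2 f0)}.

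\emph{Third}, and this is where I expect the real difficulty to lie, I would show that $E_4 = E_\infty$ in the range. Having computed the $E_4$-page, one proceeds degree by degree: for each surviving class $x$, either $x$ is already known to be a permanent cycle (detecting a Toda bracket, or mapping to a permanent cycle under comparison, or a product of such), or the target degree $(s-1, f+r, w)$ is empty for all $r \geq 4$, a sparseness phenomenon in this low range. The $h_1$-periodic families are handled separately, their Adams behavior being governed by~\cite{guillou-isaksen-eta-R} and~\cite{GI14}, and they contribute no late differentials here. The hard part will be the bookkeeping across many generators and many candidate targets, together with a few cases — those involving genuinely $\R$-motivic classes not visible after inverting or killing $\rho$ — that resist the routine arguments and require bespoke bracket computations to certify.
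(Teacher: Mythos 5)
Your plan is essentially the paper's proof: it likewise combines comparison to the $\C$-motivic and classical Adams spectral sequences, permanent cycles forced by the Moss Convergence Theorem \ref{thm:Moss} (Table \ref{tab:perm}), the multiplicative structure and sparseness of targets, and the $S/\rho$ comparison, with the few hard cases isolated into Lemmas \ref{lem:p^6 e0 perm}, \ref{lem:h0 h4}, and \ref{lem:p j}. One caveat on emphasis: the genuinely new differentials of Lemma \ref{lem:p j} (and the vanishing in Lemma \ref{lem:p^6 e0 perm}) are settled not by transporting known $\C$-motivic/classical differentials or by bracket shuffles, but by the $\rho$-divisibility criterion of Corollary \ref{cor:pi-rho} --- a class whose image in $\pi^\C_{*,*}$ vanishes and which cannot be the target of a (hidden) $\rho$ extension cannot survive --- which is the precise form of the ``second handle'' you sketch via Corollary \ref{cor:LES}.
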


\begin{proof}
As in the proof of Theorem \ref{thm:adams-d2}, many
multiplicative generators cannot support differentials because
there are no possible targets.
Comparison to the $\C$-motivic and classical cases 
also determines some differentials.
For example, $d_3(h_1 h_4)$ cannot equal $h_1 d_0$.

Other multiplicative generators are known to be permanent cycles,
because the Moss Convergence Theorem \ref{thm:Moss} shows that
they must survive to detect various Toda brackets.
These instances are shown in Table \ref{tab:perm}.

The multiplicative structure rules out additional cases.
For example $d_3(\rho h_4)$ cannot equal $\rho d_0$
because of the relation $h_1 \cdot \rho h_4 = \rho \cdot h_1 h_4$,
together with the fact that $d_3(h_1 h_4)$ is already known
to be zero.

The harder cases are established
in the following lemmas.
\end{proof}

\begin{lemma}\label{lem:p^6 e0 perm}
$d_3(\rho^6 e_0) = 0$.
\end{lemma}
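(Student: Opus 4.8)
The plan is to pin down the target degree of the hypothetical differential and then eliminate the one surviving candidate by a multiplicative argument. Since $e_0$ sits in stem $17$, filtration $4$, weight $10$, the class $\rho^6 e_0$ lies in stem $11$, filtration $4$, weight $4$, i.e.\ in coweight $7$; a $d_3$ out of it would land in stem $10$, filtration $7$, coweight $6$. First I would list the classes of the Adams $E_3$-page in that degree, reading $\Ext_\R$ off of the $\rho$-Bockstein $E_\infty$-page (Theorem \ref{thm:Bock} together with the hidden extensions of Section \ref{section:bockstein-extn}) and then discarding whatever dies under the $d_2$'s of Theorem \ref{thm:adams-d2}. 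I expect that after this bookkeeping there is at most one non-zero possibility, and that it is a $\rho$-multiple of an $h_1$-periodic class (of the shape $\rho^a \tau^b h_1^2 P h_1$ or $\rho^a \tau^b P c_0$).

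To rule that candidate out I would compare $h_1$-multiplications. On one hand $h_1 \cdot \rho^6 e_0 = \rho^6 h_1 e_0$, and $h_1 e_0$ is killed by a small power of $\rho$ in $\Ext_\R$; by inspection of the $h_1$-periodic Bockstein differentials of Table \ref{tab:Bock-h1-local} this power should be low enough that $\rho^6 h_1 e_0 = 0$. Hence $h_1 \cdot d_3(\rho^6 e_0) = d_3(\rho^6 h_1 e_0) = 0$, so the differential is annihilated by $h_1$. On the other hand the surviving candidate target is $h_1$-periodic, so $h_1$ acts injectively on it. These two facts are incompatible unless $d_3(\rho^6 e_0) = 0$.

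An alternative route, in case the target degree turns out not to be handled cleanly this way, is to use the map $p \colon \Ext_\C \map \Ext_\R$ of Remark \ref{rem:Ext-R-C-exact}: it is induced by the map $S/\rho \map S^{1,1}$ of $\R$-motivic spectra, hence commutes with Adams differentials once the Adams spectral sequence of $S/\rho$ is identified with the $\C$-motivic one (Theorem \ref{thm:pi-S/rho}). If $\rho^7 e_0 = 0$ in $\Ext_\R$, then $\rho^6 e_0 = p(z)$ for some $z$ in $\Ext_\C$, and $d_3(\rho^6 e_0) = p(d_3 z)$ would vanish because the corresponding $\C$-motivic Adams $d_3$ is known to be zero \cite{Isaksen14c}.

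The main obstacle is the first step: correctly enumerating the $E_3$-page in the target degree, a region where the $\rho$-Bockstein filtration and the Adams filtration interact, and in particular confirming that $\rho^6 h_1 e_0$ really does vanish in $\Ext_\R$. Everything else in either argument is routine once that input is secured; if instead the target degree is empty on the $E_3$-page, the lemma collapses to a one-line degree count.
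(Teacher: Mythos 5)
Your main argument collapses at its key numerical claim: $\rho^6 h_1 e_0$ is \emph{not} zero in $\Ext_\R$. In the $\rho$-Bockstein spectral sequence the class $h_1 e_0$ is annihilated by exactly $\rho^{12}$: Table \ref{tab:Bock} records $d_{12}(\tau^5 h_0^3 h_3) = \rho^{12} h_1 e_0$, and Table \ref{tab:Bock-h0-extn} shows $\rho^{11} h_1 e_0 \neq 0$, since it is the target of a hidden $h_0$ extension from $\tau^4 h_0^3 h_3$. So $h_1 \cdot \rho^6 e_0 = \rho^6 h_1 e_0 \neq 0$, the identity $h_1 \cdot d_3(\rho^6 e_0) = d_3(\rho^6 h_1 e_0) = 0$ is not available, and no contradiction with $h_1$-injectivity on the candidate target ever materializes. (In fact the nonvanishing of this $\rho$-tower is exactly what the paper's own proof exploits.) Your fallback route fails for the analogous reason: by exactness in Remark \ref{rem:Ext-R-C-exact}, $\rho^6 e_0$ lies in the image of $p\colon \Ext_\C \to \Ext_\R$ only if $\rho^7 e_0 = 0$, but $e_0$ is annihilated by exactly $\rho^{15}$ (Table \ref{tab:Bock}: $d_{15}(\tau^8 h_1^3) = \rho^{15} e_0$, with $\rho^{14} e_0 \neq 0$ by Table \ref{tab:Bock-h0-extn}), so $\rho^6 e_0$ is not in the image of $p$ and the hypothesis of that argument is false.

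The paper instead argues at the level of homotopy groups. The only possible value of the differential is $\rho h_1 \cdot \tau h_1 \cdot \tau P h_1$ (your guess for the shape of the target is essentially right), and this class dies after one more multiplication by $\rho$ because $\rho^2 \tau h_1 = 0$. Hence, if the differential were nonzero, $\rho^7 e_0$ would be a permanent cycle detecting an element $\alpha \in \pi_{10,3}$ that is not divisible by $\rho$; by Corollary \ref{cor:pi-rho} its image $\beta \in \pi^\C_{10,3}$ would be nonzero and detected by $\tau^3 P h_1^2$, so $\eta\beta \neq 0$; but $\eta\alpha$ would be detected by $\rho^7 h_1 e_0$ and be divisible by $\rho$, hence map to zero in $\pi^\C_{11,4}$ --- a contradiction. (Remark \ref{rem:kq} gives a second proof by mapping to $kq$.) Some argument of this homotopical or comparison type is needed here; the purely algebraic $h_1$- and $\rho$-divisibility shortcuts you propose are blocked precisely because $e_0$ and $h_1 e_0$ support long $\rho$-towers in $\Ext_\R$.
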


\begin{proof}
If $d_3(\rho^6 e_0)$ equaled $\rho h_1 \cdot \tau h_1\cdot \tau P h_1$,
then $\rho^7 e_0$ would be a permanent cycle 
that detected an element $\alpha$ of $\pi_{10,3}$, and $\alpha$
could not be divisible by $\rho$.
Therefore, by Corollary \ref{cor:pi-rho},
$\alpha$ would map to a non-zero element $\beta$ in
$\pi^\C_{10,3}$.  Then $\beta$ 
would have to be detected by $\tau^3 P h_1^2$, so 
$\eta \beta$ would also have to be non-zero in $\pi^C_{11,4}$.

But $\eta \alpha$ would be detected by $\rho^7 h_1 e_0$
and would be divisible by $\rho$, so it would map to 
zero in $\pi_{11,4}^\C$.
This contradicts that $\eta \beta$ is non-zero.
\end{proof}

\begin{remark}
\label{rem:kq}
Lemma \ref{lem:p^6 e0 perm} can also be proved using the 
$\R$-motivic spectrum $kq$, which is the very effective
slice cover of the Hermitian $K$-theory spectrum $KQ$
\cite{ARO17}.
The cohomology of $kq$ is isomorphic to $\mathcal{A}//\mathcal{A}(1)$,
where $\mathcal{A}(1)$ is the $\M_2$-subalgebra of the $\R$-motivic
Steenrod algebra that is generated by $\Sq^1$ and $\Sq^2$.

By a change-of-rings isomorphism,
the homotopy of $kq$ is computed by an Adams spectral
sequence whose $E_2$-page is $\Ext_{\mathcal{A}(1)}(\M_2, \M_2)$.
This $E_2$-page was computed in \cite{Hill11}, and also in
\cite{GHIR20}*{Section 6}.

The element $\rho \tau h_1 \cdot \tau P h_1 \cdot h_1$
maps to a non-zero permanent cycle in
\[
\Ext_{A(1)}(\M_2, \M_2),
\]
so it cannot be the target of a differential.
\end{remark}

\begin{lemma}\label{lem:h0 h4} 
$d_3(h_0 h_4) = h_0 d_0 + \rho h_1 d_0$
\end{lemma}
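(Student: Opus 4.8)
We first fix the answer modulo $\rho$, then chase down the single remaining $\rho$-divisible ambiguity. The classical (equivalently $\C$-motivic) Adams differential $d_3(h_0 h_4) = h_0 d_0$ is standard \cite{Isaksen14c}, and extension of scalars induces a map of Adams spectral sequences whose effect on $E_2$ is the quotient $\Ext_\R \to \Ext_\C$ that kills $\rho$. Since $h_0 h_4$ maps to $h_0 h_4$, naturality forces $d_3(h_0 h_4)$ to agree with $h_0 d_0$ up to an element of the kernel of $\Ext_\R \to \Ext_\C$, i.e.\ up to a $\rho$-divisible class in tridegree $(14,5,8)$. Inspecting $\Ext_\R$ in that tridegree (and discarding classes already killed by $d_2$, since the target lives on the $E_3$-page), the only possible $\rho$-divisible correction is $\rho h_1 d_0$; any stray $\rho^2$-divisible candidates are ruled out by the usual multiplicative bookkeeping. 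Hence $d_3(h_0 h_4)$ equals either $h_0 d_0$ or $h_0 d_0 + \rho h_1 d_0$, and it remains to see that the correction term is present.

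The crux is therefore Step~2. I would use the earlier $\R$-motivic $d_2$-differential $d_2(h_4) = h_0 h_3^2$ from Theorem~\ref{thm:adams-d2} (there is no $\rho$-correction here: $\rho^8 h_4$ is a permanent cycle by Remark~\ref{rem:perm-t^4h3}, while $h_1 h_3^2$ is $\rho$-periodic, so a $\rho h_1 h_3^2$ term is impossible). Since $h_0^2 h_3^2 = 0$ in $\Ext_\R$, the class $h_0 h_4$ survives to $E_3$, and, exactly as classically, the $d_3$-differential on $h_0 h_4$ is computed from $d_2(h_4) = h_0 h_3^2$ by a May/Moss-convergence (Theorem~\ref{thm:Moss}) or geometric-boundary argument: $d_3(h_0 h_4)$ is detected by a Massey product whose outer factor traces back to the ``$h_0$'' in the source $h_0 h_4$, and that outer factor must be read homotopically as multiplication by $2$ rather than by $\hsf$. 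Because $2 = \hsf + \rho\eta$ in $\pi_{0,0}^\R$ (Remark~\ref{rem:hsf}) --- equivalently, because the relevant $\Ext$-level bracket carries an extra $\rho h_1$ --- the value is $(h_0 + \rho h_1)d_0 = h_0 d_0 + \rho h_1 d_0$, ruling out the first option. As a consistency check, note that $\rho^2 h_1 d_0 = 0$ on the $E_3$-page precisely because $d_2(\tau h_0 h_3^2) = \rho^2 h_1 d_0$ (Lemma~\ref{lem:d2(th0h3^2)}), so multiplying the proposed differential by $\rho$ gives $0$, as it must since $\rho\cdot h_0 h_4 = 0$.

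The main obstacle is making Step~2 fully rigorous, because the two cheapest tools fail to see the correction: multiplying $d_3(h_0 h_4)$ by $h_0$ annihilates $\rho h_1 d_0$ (as $\rho h_0 = 0$), and multiplying by $h_1$ gives $h_1\cdot h_0 h_4 = 0$ on one side and $\rho h_1^2 d_0$ --- which vanishes on $E_3$ --- on the other, so neither multiplication distinguishes the two candidate values. A robust fallback is to run the comparison through the connecting map $p\colon \Ext_\C \to \Ext_\R$ of Remark~\ref{rem:Ext-R-C-exact} (the $\Ext$-level boundary for the cofiber sequence of $\rho$): one tracks the $\C$-motivic $d_2(\tau h_4) = \tau h_0 h_3^2$ and the resulting $d_3(\tau h_0 h_4) = \tau h_0 d_0$ through the long exact sequence, using that $\tau h_4$ and $\tau h_0 h_4$ support short $\rho$-Bockstein differentials hitting $\rho h_0 h_4$ and related classes. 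Either way, the delicate point is the bookkeeping of $\rho$-powers and of which neighboring classes survive $d_2$; this is where the proof will require care.
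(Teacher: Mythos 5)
Your first step (comparison along extension of scalars narrows $d_3(h_0h_4)$ to $h_0d_0$ or $h_0d_0+\rho h_1d_0$) matches the paper. But the decisive second step --- showing the $\rho h_1 d_0$ term is actually present --- is not proved in your proposal. The claim that the differential ``is detected by a Massey product whose outer factor \dots must be read homotopically as multiplication by $2$ rather than by $\hsf$,'' so that $2=\hsf+\rho\eta$ forces the value $(h_0+\rho h_1)d_0$, is exactly the conclusion restated, not an argument: neither the Moss Convergence Theorem \ref{thm:Moss} (which detects Toda brackets, not differentials) nor any identified geometric boundary map is actually applied, and you give no reason why the relevant bracket ``carries an extra $\rho h_1$.'' Your fallback via $p\colon \Ext_\C\to\Ext_\R$ also does not go through as sketched: $p$ is a map of spectral sequences, so to push a $\C$-motivic $d_3$ into the $\R$-motivic $E_3$-page you need a class of the $\C$-motivic $E_3$-page mapping to $h_0h_4$, but the natural preimage $\tau h_4$ (with Bockstein $d_1(\tau h_4)=\rho h_0h_4$) already supports the $\C$-motivic $d_2(\tau h_4)=\tau h_0h_3^2$ and dies before $E_3$; the classes that do survive (e.g.\ $\tau h_0 h_4$) only yield $d_3(h_0^2h_4)=h_0^2d_0$, which both candidates satisfy. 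So, as you yourself note, the crux remains open.

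The paper closes this gap with a short multiplicative trick you did not find: multiply by $\tau h_1$ instead of by $h_0$ or $h_1$. The hidden $h_0$ extension from $\tau h_1$ to $\rho\tau h_1^2$ (Table \ref{tab:Bock-h0-extn}) gives $\tau h_1\cdot h_0h_4=\rho\,\tau h_1\cdot h_1h_4$, which is a permanent cycle because $\tau h_1\cdot h_4$ is one (Table \ref{tab:perm}, via the Toda bracket $\an{\sigma^2,2,\tau\eta}$); on the other hand $\tau h_1\cdot h_0d_0=\rho\,\tau h_1\cdot h_1d_0$ is nonzero on the $E_3$-page, while $\tau h_1\cdot(h_0d_0+\rho h_1d_0)=0$. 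Hence $d_3(h_0h_4)$ cannot equal $h_0d_0$, and the correction term must be present. In short: the classes $\hsf$ and $2=\hsf+\rho\eta$ are distinguished not by $h_0$- or $h_1$-multiplication (which, as you observed, both fail) but by $\tau h_1$-multiplication, where the relation $h_0\cdot\tau h_1=\rho h_1\cdot\tau h_1$ in $\Ext_\R$ does the work. You would need to supply this (or some equally concrete) argument before your Step~2 can be accepted.
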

\begin{proof}
The classical differential $d_3(h_0 h_4) = h_0 d_0$ implies that 
in the $\R$-motivic case,
$d_3(h_0 h_4)$ equals either $h_0 d_0$ or
$h_0 d_0 + \rho h_1 d_0$. 

Note that $\tau h_1 \cdot h_0 d_0 = \rho \tau h_1 \cdot h_1 d_0$
is non-zero on the $E_3$-page, but
$\tau h_1 \cdot h_0 h_4 = \rho \tau h_1 \cdot h_1 h_4$ 
is a permanent cycle, as shown in Table \ref{tab:perm}.
Therefore, $d_3(h_0 h_4)$ cannot equal
$h_0 d_0$.
\end{proof}

\begin{lemma}\label{lem:p j}
\mbox{}
\begin{enumerate}
\item
$d_3(\tau h_2^2 \cdot \tau^2 e_0) = \rho \tau P h_1 \cdot d_0$.
\item
$d_3(\rho j) = \tau P h_1 \cdot h_1 d_0$.
\end{enumerate}
\end{lemma}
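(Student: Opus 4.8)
The plan is to establish each differential by multiplying a known lower-coweight differential by a permanent cycle and using the Leibniz rule, then pinning down the $\rho$-power by comparing with $\C$-motivic or classical data. For part (1), I would start from the already-established $d_3$ differential with source $\tau^2 e_0$ (this appears in Table \ref{tab:higher-adams}, and is the $\R$-motivic lift of the classical $d_3(e_0)$-type differential), and multiply through by the permanent cycle $\tau h_2^2$: the Leibniz rule gives $d_3(\tau h_2^2 \cdot \tau^2 e_0) = \tau h_2^2 \cdot d_3(\tau^2 e_0)$. The resulting expression should simplify to $\rho \tau P h_1 \cdot d_0$ once one uses the hidden $h_1$-extension $h_1 \cdot \tau h_2^2 = \rho c_0$ from Table \ref{tab:Bock-h1-extn} together with the relation $c_0 \cdot (\text{something}) = h_1 \cdot P h_1 \cdot (\text{something})$ in $\Ext_\R$ that rewrites $\rho$-divisible $c_0 d_0$-type classes in terms of $P h_1 \cdot d_0$. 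If the naive Leibniz computation produces a class ambiguous up to higher $\rho$-Bockstein filtration, I would resolve the ambiguity by multiplying by $h_1$ (using that $h_1$-multiplication is detected by the $\rho$-Bockstein structure) and comparing, exactly as in the proofs of Lemmas \ref{lem:d2(th0h3^2)} and \ref{d_2(t^2 h1 g)}.

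For part (2), the element $\rho j$ and its image $\tau P h_1 \cdot h_1 d_0$ should be handled similarly, now exploiting the hidden $h_1$-extensions involving $j$ recorded in Lemma \ref{lem:t^3 h2^2 e0}: there is a hidden $h_1$-extension from $\tau^3 h_2^2 e_0$ to $\rho^2 j$ and from $j$ to $\rho d_0^2$. From the first of these, $\rho^2 \cdot d_3(j)$ or $h_1^2 \cdot d_3(j)$ can be computed by applying $d_3$ to the source class $\tau^3 h_2^2 e_0$, which is itself a product of the permanent cycle $\tau h_2^2$ with $\tau^2 h_2 e_0$-type classes, so its $d_3$ follows from the Leibniz rule and part (1). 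Tracing the $h_1$-extension backwards then yields $d_3(\rho j)$, and the relation $h_1 \cdot P h_1 \cdot h_1 d_0 = \cdots$ lets one identify the target. Alternatively, part (2) may follow directly from part (1) by multiplying by a permanent cycle or by the $p \colon \Ext_\C \to \Ext_\R$ comparison of Remark \ref{rem:Ext-R-C-exact}, since $d_3(j) = 0$ or $d_3(j) = P h_1 \cdot h_1 d_0$ in the $\C$-motivic setting is known, and the $\rho$-multiple kills the lower term.

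The main obstacle I anticipate is the bookkeeping of which $\rho$-power and which higher-filtration correction term appears in each target: the Leibniz rule alone typically determines the differential only modulo classes of higher $\rho$-Bockstein filtration in the same tridegree, and for $\rho j$ there are several candidate targets (e.g.\ $\tau P h_1 \cdot h_1 d_0$ versus that plus a $\rho$-multiple of another $d_0$-divisible class). Resolving this will require the same style of argument used repeatedly in Section \ref{subsctn:Adams-d2}: multiply the tentative differential by $h_0$ or $h_1$, use a known relation in $\Ext_\R$ (such as $h_0 h_1 = 0$ or a hidden extension) to detect a contradiction for the wrong choice, and conclude. A secondary subtlety is making sure the $d_3$ differential on $\tau^2 e_0$ that we multiply by is genuinely a $d_3$ and not killed earlier — but this is guaranteed because $\tau^2 e_0$ survives the $d_2$ computation of Theorem \ref{thm:adams-d2}, where $d_2(\tau^2 e_0)$ lands in a group already accounted for.
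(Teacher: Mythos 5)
Your plan for part (1) rests on an input that does not exist: there is no $d_3$ differential with source $\tau^2 e_0$. According to Theorem \ref{thm:adams-d2} and Table \ref{tab:adams-d2}, $d_2(\tau^2 e_0) = (\tau h_1)^2 d_0 \neq 0$, so $\tau^2 e_0$ does not survive to the $E_3$-page, and your closing claim that ``$\tau^2 e_0$ survives the $d_2$ computation'' is directly contradicted by the paper's own table. The product $\tau h_2^2 \cdot \tau^2 e_0$ does survive to $E_3$, but only because its $d_2$, namely $\tau h_2^2 \cdot (\tau h_1)^2 d_0$, vanishes; its $d_3$ therefore cannot be produced by any Leibniz computation of the form $d_3(\tau h_2^2 \cdot \tau^2 e_0) = \tau h_2^2 \cdot d_3(\tau^2 e_0)$. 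The same problem undermines your routes to part (2): classically and $\C$-motivically $j$ supports $d_2(j) = P h_2 \cdot d_0$ (Table \ref{tab:adams-d2}), so there is no ``$\C$-motivic $d_3(j)$'' to compare with, and since the source $\rho j$ is a $\rho$-multiple it maps to zero under extension of scalars, so no comparison to $\C$-motivic or classical data can detect this differential. Likewise the hidden $h_1$ extensions of Lemma \ref{lem:t^3 h2^2 e0} are relations in $\Ext_\R = E_2$ and do not by themselves generate any Adams $d_3$.

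The missing idea is the paper's ``the target must die'' argument, which runs in the opposite order from your plan: part (2) is proved first and part (1) is deduced from it. If $\tau P h_1 \cdot h_1 d_0$ survived, it would detect an element $\alpha$ of $\pi_{24,13}$ whose image under extension of scalars is zero (by comparison of Adams $E_\infty$-pages); Corollary \ref{cor:pi-rho} would then force $\alpha$ to be divisible by $\rho$, which is impossible because $\tau P h_1 \cdot h_1 d_0$ is not the target of a $\rho$ extension, hidden or otherwise. Hence $\tau P h_1 \cdot h_1 d_0$ must be hit by some Adams differential, and the only possibility is $d_3(\rho j) = \tau P h_1 \cdot h_1 d_0$. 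Part (1) then follows from part (2) by multiplying by $h_1$, using the relation $h_1 \cdot \tau h_2^2 \cdot \tau^2 e_0 = \rho c_0 \cdot \tau^2 e_0$ in $\Ext_\R$. Your instinct to pin down ambiguities by $h_0$- or $h_1$-multiplication is reasonable in general, but here the essential mechanism is the interplay between $\rho$-divisibility and the kernel of extension of scalars (Corollary \ref{cor:pi-rho}), not the Leibniz rule.
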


\begin{proof}
Let $\alpha$ be an element of $\pi_{24,13}$ that is represented by
$\tau P h_1 \cdot h_1 d_0$.
By comparison of Adams spectral sequences, 
extension of scalars must take $\alpha$ to zero
in $\pi^{\C}_{24,13}$.
Moreover, $\tau P h_1 \cdot h_1 d_0$ cannot be the target of a 
hidden $\rho$ extension.
Therefore, by Corollary \ref{cor:pi-rho},
$\tau P h_1 \cdot h_1 d_0$ must be the target of an 
$\R$-motivic Adams differential, and there is only one possible
such differential.
This establishes the second formula.

The first formula follows immediately from the second one, using the 
relation 
$h_1 \cdot \tau h_2^2 \cdot \tau^2 e_0 = \rho c_0 \cdot \tau^2 e_0$.
\end{proof}

\section{Hidden extensions in the Adams spectral sequence}
\label{sctn:Adams-extns}

We have now obtained the Adams $E_\infty$-page through coweight 11.
It remains to determine hidden extensions that are hidden
in the $\R$-motivic Adams spectral sequence.
As in Section \ref{section:bockstein-extn}, we use the
precise definition of a hidden extension given in
\cite{Isaksen14c}*{Section 4.1.1}.
We will analyze all hidden extensions by
$\rho$, $\hsf$, and $\eta$ through coweight $11$.

We begin by analyzing all hidden extensions
by $\rho$. The main tools are Corollaries \ref{cor:pi-rho}
and \ref{cor:LES}.  

\begin{prop}\label{prop:Adams-rho-extn}
Table \ref{tab:Adams-rho-extn} lists all
hidden $\rho$ extensions in the
Adams spectral sequence, through coweight $11$.
\end{prop}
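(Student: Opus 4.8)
The plan is to exploit the two tools already singled out: Corollary \ref{cor:pi-rho}, which says an $\R$-motivic homotopy class is divisible by $\rho$ exactly when extension of scalars sends it to zero, and the long exact sequence of Corollary \ref{cor:LES}, which controls the kernel and cokernel of multiplication by $\rho$. Together these convert the search for hidden $\rho$ extensions into a question about the extension-of-scalars map $\pi^\R_{*,*} \to \pi^\C_{*,*}$, whose values are recorded in Table \ref{tab:R-to-C} and analyzed in Section \ref{sctn:extn-scalars}.

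First I would fix the range of coweight at most $11$ and, working stem by stem, enumerate the classes $y$ on the Adams $E_\infty$-page that are candidate targets, i.e.\ those that are not already $\rho$-multiples on the $E_\infty$-page. For such a $y$ to receive a hidden $\rho$ extension, some homotopy class $\beta$ detected by $y$ must be $\rho$-divisible, hence by Corollary \ref{cor:pi-rho} must map to zero under extension of scalars. This first pass eliminates every $y$ whose detected homotopy classes are nonzero in $\pi^\C_{*,*}$, which disposes of the large majority of potential targets at once. For the survivors I would identify the source $\alpha$ with $\rho\alpha = \beta$, namely the preimage supplied by Corollary \ref{cor:LES}, and then verify that the detecting class $y$ lies strictly above $\rho x$ in Adams filtration, where $x$ detects $\alpha$; this is precisely what makes the extension hidden rather than visible.

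Next I would use the long exact sequence of Corollary \ref{cor:LES} as a bookkeeping device: it pins down $\dim_{\F_2}$ of the image of $\rho$ in each $\pi^\R_{s,w}$, and it exhibits the classes in $\ker(\rho)$ as precisely those coming from $\pi^\C_{s,w+1}$. Matching these dimension counts against the extensions found in the first pass confirms both that no hidden extension has been overlooked and that certain sources have vanishing $\rho$-multiple. Decomposable hidden extensions (in the sense adapted from Definition \ref{defn:decomposable-hidden}) then follow from multiplicativity once the indecomposable ones are in hand, and the relation $2 = \hsf + \rho\eta$ from Remark \ref{rem:hsf} is used to pass between $\hsf$-divisibility and $2$-divisibility where convenient.

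A residual handful of cases are not settled by extension of scalars alone, typically because more than one class in a given degree is $\rho$-divisible, or because the exact Adams filtration of $\rho\alpha$ must be pinned down. For these I would use Toda bracket shuffles via the Moss Convergence Theorem \ref{thm:Moss} together with the brackets in Table \ref{tab:Toda}, and comparison with the $\C$-motivic and classical Adams spectral sequences. The main obstacle I anticipate is exactly this last batch of arguments, together with the completeness claim: ruling out hidden $\rho$ extensions on the many $E_\infty$-generators absent from Table \ref{tab:Adams-rho-extn} requires patient case analysis, and the borderline degrees where the target filtration is delicate are where the genuine work lies.
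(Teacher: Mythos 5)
Your proposal is correct and follows essentially the same route as the paper: the paper's proof also rests on Corollary \ref{cor:pi-rho} together with the short exact sequences $0 \to (\coker \rho)_{s,w} \to \pi^\C_{s,w} \to (\ker \rho)_{s,w+1} \to 0$ coming from Corollary \ref{cor:LES}, using the known ranks of $\pi^\C_{s,w}$ to force the presence or absence of each hidden $\rho$ extension, with essentially one possibility in each degree in the range. The only residual case in the paper (the extension from $\tau h_1 c_0 d_0$ to $P h_0 d_0$, Lemma \ref{lem:p * t h1 c0 d0}) is settled by a hidden $\eta$ extension from Table \ref{tab:Adams-eta-extn} rather than by Toda brackets, a minor variation within the same strategy.
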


\begin{proof}
The long exact sequence of
Corollary \ref{cor:LES} gives short exact sequences
\[
0 \map (\coker \rho)_{s,w} \map \pi^\C_{s,w} \map
(\ker \rho)_{s,w+1} \map 0.
\]
The rank of $\pi^\C_{s,w}$, which is entirely known
in our range \cite{Isaksen14c} \cite{IWX19},
severely constrains the possible ranks
of $\coker \rho$ and $\ker \rho$.
From these constraints, we can generally deduce the presence and
absence of hidden $\rho$ extensions, and there is typically
only one possibility in each case in the range under consideration.
The only exception is considered below in Lemma \ref{lem:p * t h1 c0 d0}.
\end{proof}

\begin{lemma}\label{lem:p * t h1 c0 d0}
There is a hidden $\rho$ extension from $\tau h_1 c_0 d_0$ to $P h_0 d_0$.
\end{lemma}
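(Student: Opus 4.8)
The plan is to run the rank count that drives the proof of Proposition~\ref{prop:Adams-rho-extn} to force the existence of a hidden $\rho$ extension out of $\tau h_1 c_0 d_0$, and then to pin down its target by an auxiliary argument, since in this stem the bare count is not decisive. Concretely, let $\alpha$ in $\pi_{23,13}^\R$ be an element detected by $\tau h_1 c_0 d_0$, which is a permanent cycle as it lies in coweight~$10$. If non-zero, $\rho\alpha$ lies in $\pi_{22,12}^\R$, and in stem~$22$, weight~$12$ the only classes of the Adams $E_\infty$-page above filtration~$8$ are $P h_0 d_0$ in degree $(22,9,12)$ and possibly a few classes in still higher filtration. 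Since $\tau h_1 c_0 d_0$ is already $\rho$-torsion in $\Ext_\R$ (the $\rho$-Bockstein $E_\infty$-page carries no $\rho$ extensions, by Remark~\ref{rem:rho-extn}, and $d_0$ is $\rho$-torsion), any $\rho$ extension on it is automatically hidden. Feeding the known dimensions of $\pi_{22,12}^\C$, $\pi_{23,12}^\C$, and $\pi_{23,13}^\C$ from \cite{Isaksen14c}\cite{IWX19}, together with the already-computed Adams $E_\infty$-page, into the long exact sequence of Corollary~\ref{cor:LES} around $\pi_{23,13}^\R$ and around $\pi_{22,12}^\R$ forces $\alpha$ to lie outside the image of the connecting map $\pi_{23,12}^\C\to\pi_{23,13}^\R$, hence $\rho\alpha\neq 0$; the same count bounds the filtration in which $\rho\alpha$ can be detected, leaving only finitely many candidate targets.

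Second, I would single out $P h_0 d_0$ from among those candidates. By Corollary~\ref{cor:pi-rho} the element $\rho\alpha$ maps to zero under extension of scalars, and so does every $\hsf$- and $\eta$-multiple of it; each such multiple must therefore be detected by a $\rho$-divisible class of the Adams $E_\infty$-page, or vanish. Comparing this against the $h_0$- and $h_1$-module structure of the Adams $E_\infty$-page in these degrees — using the hidden $h_0$- and $\eta$-extensions established elsewhere in Section~\ref{sctn:Adams-extns}, and the relation $h_0 h_1 = 0$ — should eliminate the higher-filtration candidates and force $\rho\alpha$ to be detected at the bottom of the relevant tower, namely by $P h_0 d_0$. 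An equivalent route is to observe that the higher-filtration candidates are already pinned down as ($h_0$-multiples of) targets of other $\rho$ extensions in Table~\ref{tab:Adams-rho-extn}, so that $P h_0 d_0$ is the only possibility left consistent with the computed rank of $\coker\rho$ in this degree.

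The hard part is exactly this second step: this lemma is singled out in Proposition~\ref{prop:Adams-rho-extn} precisely because the Corollary~\ref{cor:LES} count does not on its own locate the target, so one genuinely needs the interaction between $\rho$-divisibility of $\rho\alpha$ and the $h_0/h_1$-action on the Adams $E_\infty$-page. One minor subtlety to watch when invoking $\C$-motivic or classical comparisons is that $2 = \hsf + \rho\eta$ in $\pi_{0,0}^\R$, so multiplication by $2$ decomposes non-trivially and must not be conflated with the $h_0$-action.
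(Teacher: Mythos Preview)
Your proposal outlines a plausible strategy but leaves the decisive step unexecuted: you say the $h_0$/$h_1$-module structure and the other entries of Table~\ref{tab:Adams-rho-extn} ``should eliminate'' the higher-filtration candidates, but you do not actually perform that elimination. That is the content of the lemma, and until it is carried out this is a sketch rather than a proof. There is also a circularity hazard in your second route: the hidden $\hsf$ extension from $\tau c_0 \cdot d_0$ to $P h_0 d_0$ (Lemma~\ref{lem:h0 * p^3 c0 e0}) is \emph{deduced from} the present lemma, so you cannot invoke it here.

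The paper's argument bypasses all of this with a single observation you did not consider. Table~\ref{tab:Adams-eta-extn} records a hidden $\eta$ extension from $\rho\,\tau c_0 \cdot d_0$ to $P h_0 d_0$, established independently by comparison to the $\C$-motivic case. Since $\rho$ and $\eta$ commute, if $\beta$ is detected by $\tau c_0 \cdot d_0$ then $\eta(\rho\beta) = \rho(\eta\beta)$ is detected by $P h_0 d_0$; but $\eta\beta$ is detected by $h_1 \cdot \tau c_0 \cdot d_0 = \tau h_1 c_0 d_0$, so this is exactly the claimed hidden $\rho$ extension. The key idea you are missing is to leverage an already-known hidden $\eta$ extension on a $\rho$-multiple, rather than trying to isolate the target by elimination.
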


\begin{proof}
Table \ref{tab:Adams-eta-extn} shows that there is a hidden
$\eta$ extension from $\rho \tau c_0 \cdot d_0$ to $P h_0 d_0$.
Therefore, there must be a hidden $\rho$ extension from
$h_1 \cdot \tau c_0 \cdot d_0$ to $P h_0 d_0$.
\end{proof}

\begin{thm}
\label{thm:Adams-h}
Table \ref{tab:Adams-h0-extn} lists all hidden $\hsf$ extensions
in the $\R$-motivic Adams spectral sequence,
through coweight $11$.
\end{thm}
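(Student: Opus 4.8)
The plan is to establish all hidden $\hsf$ extensions through coweight $11$ by combining three main sources of information, as in the analogous proofs for $h_0$ in the $\rho$-Bockstein setting (Proposition \ref{prop:h0-extn}) and for $\rho$ in the Adams setting (Proposition \ref{prop:Adams-rho-extn}). First I would use comparison along extension of scalars to the $\C$-motivic Adams spectral sequence: many hidden $\hsf$ extensions in $\pi_{*,*}^\R$ are forced by the corresponding hidden $2$ extensions in $\pi_{*,*}^\C$, which are entirely known in our range by \cite{Isaksen14c} and \cite{IWX19}. Care is needed because $\hsf$ does not equal $2$ (Remark \ref{rem:hsf}); rather $2 = \hsf + \rho\eta$, so an extension by $2$ in $\pi^\C$ pulls back to an extension by $\hsf$ only modulo the contribution of $\rho\eta$, which in many cases is zero or already accounted for by the hidden $\eta$ extensions of Table \ref{tab:Adams-eta-extn}. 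Second, I would exploit the long exact sequence of Corollary \ref{cor:LES} and Corollary \ref{cor:pi-rho}: an element on the Adams $E_\infty$-page that is not $\rho$-divisible must map nontrivially to $\pi^\C_{*,*}$, and tracking how $\hsf$-multiplication interacts with this map (since $\hsf$ maps to $2$ under extension of scalars) pins down both the presence and the target of hidden $\hsf$ extensions.

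Third, for the remaining cases I would use the Moss Convergence Theorem \ref{thm:Moss} together with the Toda brackets of Table \ref{tab:Toda} and the multiplicative relations in $\Ext_\R$ and in $\pi_{*,*}$. The standard technique is: if $\hsf \cdot \alpha = 0$ were to hold, then a Toda bracket $\an{\hsf, \alpha, -}$ or $\an{-, \hsf, \alpha}$ would be defined, and Moss's theorem would force some element to be a permanent cycle or to survive, contradicting a known Adams differential from Theorem \ref{thm:adams-d2} or Theorem \ref{thm:adams-higher}. This establishes that $\hsf \cdot \alpha \neq 0$; then the target is pinned down by requiring it to lie in the correct degree, to have strictly higher Adams filtration than the source, to not be $\rho$-divisible in a way that conflicts with $\rho\hsf = 0$-type relations, and to be consistent with $h_0 h_1 = 0$-type annihilations. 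For the non-existence half of the statement — showing that the listed extensions are \emph{all} of them — I would rely on simple multiplicative relations as in the proof of Proposition \ref{prop:h0-extn}: if $x$ is already known to not support an $\hsf$ extension, then $xy$ does not either; and if $\eta y$ or $\rho y$ is nonzero, then $y$ cannot be the target of a hidden $\hsf$ extension because of the relations $\hsf \eta = \{h_0 h_1\text{-type}\}$ and $\rho\hsf = 0$ in the relevant ranges.

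The main obstacle I anticipate is disentangling the $\hsf$-versus-$2$ distinction in the comparison arguments. Because $2 = \hsf + \rho\eta$, a given hidden $2$ extension in $\pi^\C$ does not immediately yield a clean hidden $\hsf$ extension in $\pi^\R$; one must separately analyze the $\rho\eta$ term, which requires knowing the hidden $\eta$ extensions (Table \ref{tab:Adams-eta-extn}) and the $\rho$-multiplication structure. In practice this means the proof of Theorem \ref{thm:Adams-h} will be interleaved with, or depend on, the analysis of hidden $\eta$ extensions, so the cleanest organization is to prove the hidden $\rho$, $\hsf$, and $\eta$ extensions as a single coordinated package and present the $\hsf$ results here with forward references to the $\eta$ table. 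A secondary difficulty is the handful of cases where degree and filtration considerations leave two candidate targets differing by a $\rho$-multiple of a class in higher filtration; these will need individual ad hoc arguments using Toda bracket shuffles, exactly in the style of Lemmas \ref{lem:h1 t^3 h1^2 h3} through \ref{lem:t^3 h2^2 e0} in the Bockstein setting.
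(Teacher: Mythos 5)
Your proposal matches the paper's argument in all essentials: the paper determines the extensions via the short exact sequences coming from Corollary \ref{cor:LES} together with the known $2$ extensions in $\pi^\C_{*,*}$, rules out extensions by the same multiplicative relations you cite ($\rho\hsf=0$, $\hsf\eta=0$), and settles the few remaining cases by exactly the kind of ad hoc arguments you anticipate — a Toda bracket shuffle for $h_2 f_0$, interplay with the hidden $\eta$ and $\rho$ extension tables for $\tau h_2^2\cdot h_4$ and $\tau c_0\cdot d_0$ (where the relation $2=\hsf+\rho\eta$ enters just as you predicted), and $\C$-motivic comparison for $h_4 c_0$. No substantive difference in method.
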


\begin{proof}
The long exact sequence of
Corollary \ref{cor:LES} gives short exact sequences
\[
0 \map (\coker \rho)_{s,w} \map \pi^\C_{s,w} \map
(\ker \rho)_{s,w+1} \map 0.
\]
Some of the extensions can be determined via these short exact sequences,
using known $2$ extensions in $\pi^C_{*,*}$.
For example, the element $\rho^6 e_0$ in the
$\R$-motivic Adams $E_\infty$-page lies in $(\coker \rho)_{11,4}$,
and it maps to the element $\tau^2 \zeta_{11}$ in
$\pi^\C_{11,4}$ that is detected by $\tau^2 P h_2$.
But $2 \tau^2 \zeta_{11}$ is non-zero in $\pi^\C_{11,4}$,
so $\hsf \alpha$ must also be non-zero.
It follows that $\rho^6 e_0$ supports a hidden
$\hsf$ extension.

We must also show that many elements do not support hidden
$\hsf$ extensions.
In most of the cases through coweight 11, the non-existence
follows from simple multiplicative relations.
For example, if $x$ is a multiple of $\rho$ or of $h_1$,
then $x$ cannot support a hidden $\hsf$ extension
because of the relations $\rho \hsf = 0$ and $\hsf \eta = 0$.
Similarly, if $h_1 y$ or $\rho y$ is non-zero, 
then $y$ cannot be the target of a hidden $\hsf$ extension.

The following lemmas handle a few additional more complicated cases.
\end{proof}

\begin{lemma}\label{lem:h0 * t h1 g}
There is a hidden $\hsf$ extension from $h_2 f_0$ to $\rho c_0 d_0$.
\end{lemma}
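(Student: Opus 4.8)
The plan is to establish this extension by an argument internal to $\R$-motivic homotopy, because the target $\rho c_0 d_0$ is divisible by $\rho$: by Corollary \ref{cor:pi-rho} the product $\hsf\gamma$ then vanishes under extension of scalars, so no comparison with the $\C$-motivic or classical Adams spectral sequences can see it. Write $\gamma$ for a class of $\pi^\R_{21,12}$ detected by $h_2 f_0$. First I would list the candidate targets: among the classes in stem $21$, weight $12$ of Adams filtration strictly larger than that of $h_2 f_0$, the only one that is simultaneously a nonzero permanent cycle and divisible by $\rho$ is $\rho c_0 d_0$, every other candidate being excluded at once by the relations $\rho\hsf = 0$ and $\hsf\eta = 0$ in $\Ext_\R$ together with the fact that the target of a hidden extension cannot itself support multiplication by $\rho$. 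Thus, once $\hsf\gamma \neq 0$ is known, the value is forced.

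To see $\hsf\gamma \neq 0$ I would argue by contradiction, playing it off against the Adams differential $d_2(\tau^2 h_1 g) = \rho^2 c_0 d_0$ of Lemma \ref{d_2(t^2 h1 g)}. Using a Massey product description of $h_2 f_0$ in $\Ext_\R$ — for instance $h_2 f_0 = \an{\tau h_1, h_1^4, h_4}$ as in the proof of Lemma \ref{lem:d2-t^2g}, or the presentation underlying Lemma \ref{lem:t^2 f0 * h1} after dividing by $\tau$ — the class $\gamma$ lies in the corresponding Toda bracket in $\pi^\R_{*,*}$. If $\hsf\gamma$ were zero, a further triple bracket involving $\hsf$ would be defined, and the Moss Convergence Theorem \ref{thm:Moss} would force a class detected by $\tau^2 h_1 g$, or a $\rho$-multiple of it, to be a permanent cycle, contradicting Lemma \ref{d_2(t^2 h1 g)}. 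Shuffling $\hsf$ through the bracket and reading off the detecting element then identifies the value as $\rho c_0 d_0$. As a cross-check, one can combine the hidden $h_2$ extension from $h_2 f_0$ to $\rho h_1^2 h_4 c_0$ of Lemma \ref{lem: h2 * h2 f0} with the hidden $h_1$ extensions $h_1 \cdot \tau h_2^2 = \rho c_0$ and $h_1^2 \cdot \tau^2 h_2 = \rho^3 c_0$ of Table \ref{tab:Bock-h1-extn}, which tie $h_1 h_2 h_4 c_0$ to $c_0 d_0$ on the Bockstein $E_\infty$-page and so corroborate the answer.

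The main obstacle is the bookkeeping imposed by the Moss Convergence Theorem: several differentials land near $\rho c_0 d_0$ — notably $d_2(\tau^2 h_1 g) = \rho^2 c_0 d_0$ and $d_2(\tau^2 f_0) = h_0^2 \cdot \tau^2 e_0 + \rho^3 \tau h_2^2 \cdot d_0$ of Lemma \ref{lem:d2(t^2 f0)} — so one must verify the crossing-differential hypothesis and check that the indeterminacy in the choice of $\gamma$ (parametrized by higher Adams filtration) does not change the conclusion. A secondary obstacle is precisely that the target is $\rho$-divisible, which means the proof cannot be outsourced to the classical or $\C$-motivic computations: every input — the Massey product, the Toda bracket, and the relevant Bockstein and Adams differentials — must be an $\R$-motivic fact established earlier in the paper.
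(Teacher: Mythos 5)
Your strategy differs from the paper's, which is a short direct computation: by Table \ref{tab:Toda} the element $h_2 f_0$ detects the Toda bracket $\an{\rho, \{h_2 e_0\}, \eta}$, and the shuffle $\an{\rho, \{h_2 e_0\}, \eta}\hsf = \rho\an{\{h_2 e_0\}, \eta, \hsf}$ together with the fact that $c_0 d_0$ detects $\an{\{h_2 e_0\}, \eta, \hsf}$ gives both the non-vanishing of the product and its detecting element $\rho c_0 d_0$ in one step. The bracket is chosen so that $\rho$ is an outer entry and $\eta\hsf=0$, which is exactly what makes the shuffle produce a $\rho$-multiple.

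As written, your argument has a genuine gap at its central step. The Moss Convergence Theorem \ref{thm:Moss} does not ``force a class detected by $\tau^2 h_1 g$ to be a permanent cycle''; its conclusion is only that the corresponding Massey product on the $E_2$-page contains \emph{some} permanent cycle detecting the Toda bracket. To turn your contradiction with $d_2(\tau^2 h_1 g)=\rho^2 c_0 d_0$ (Lemma \ref{d_2(t^2 h1 g)}) into a proof you would have to: specify the bracket (presumably $\an{\gamma,\hsf,\rho}$) and check it is defined; check that the $E_2$ Massey product $\an{h_2 f_0, h_0, \rho}$ is defined in $\Ext_\R$ (this needs $h_0\cdot h_2 f_0=0$, which you have not verified); compute that Massey product \emph{with its indeterminacy} and show every element of it supports an Adams differential (the indeterminacy contains $\rho$-multiples which could be permanent cycles, destroying the contradiction); and verify the crossing-differential hypothesis you flag but do not resolve. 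None of this is carried out. Your alternative bracket coming from $\an{\tau h_1, h_1^4, h_4}$ is also problematic: since $\eta$ is not nilpotent $\R$-motivically, the vanishing of $\tau\eta\cdot\eta^4$ and of $\eta^4\cdot\eta_4$ in homotopy would have to be established before the Toda bracket is even defined, and shuffling $\hsf$ through that bracket does not produce a $\rho$-multiple, so it cannot ``read off'' $\rho c_0 d_0$. Likewise the target-by-elimination step presupposes a list of the $E_\infty$-classes in $(21,12)$ that you do not exhibit, and the closing ``cross-check'' via the hidden $h_2$ and $h_1$ extensions has no logical bearing on an $\hsf$ extension. The fix is essentially to replace the contradiction argument by the paper's bracket shuffle.
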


\begin{proof}
Table \ref{tab:Toda} shows that $h_2 f_0$ detects the Toda bracket
$\an{\rho, \{h_2 e_0\}, \eta}$.
Shuffle to obtain
\[
\an{\rho, \{h_2 e_0\}, \eta} \hsf =
\rho \an{ \{h_2 e_0\}, \eta, \hsf}.
\]
Table \ref{tab:Toda} shows that 
$c_0 d_0$ detects the latter bracket.
\end{proof}

\begin{lemma}\label{lem:h0 * t h2^2 h4}
There is no hidden $\hsf$ extension on $\tau h_2^2 \cdot h_4$.
\end{lemma}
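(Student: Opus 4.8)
The plan is to enumerate the possible targets of a hidden $\hsf$ extension on $\tau h_2^2 h_4$ and to eliminate each one. The class $\tau h_2^2 h_4$ lies in degree $(21,3,11)$ on the Adams $E_\infty$-page, and since $h_0 h_2 = 0$ in $\Ext_\R$ the non-hidden product $h_0 \cdot \tau h_2^2 h_4$ is zero; thus a hidden $\hsf$ extension on $\tau h_2^2 h_4$ would have to be detected in Adams filtration at least $5$ in stem $21$ and weight $11$. So first I would write down the classes of the $\R$-motivic Adams $E_\infty$-page in these degrees. Let $\alpha$ be the element of $\pi_{21,11}^\R$ detected by $\tau h_2^2 h_4$; we must show that $\hsf\alpha$ is not detected in filtration at least $5$, and in fact we will show $\hsf\alpha = 0$.

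Many of the candidate targets are eliminated immediately: by the relations $\rho\hsf = 0$ and $\hsf\eta = 0$ in $\pi_{**}$, no class supporting a nonzero multiplication by $\rho$ or by $h_1$ on the $E_\infty$-page can detect $\hsf\alpha$. This disposes of all $\rho$-multiples, as well as the $h_1$-multiple $\tau^2 h_1 g$ (which in any case does not survive the Adams spectral sequence, by Lemma \ref{d_2(t^2 h1 g)}). For any candidate that is not $\rho$- or $h_1$-divisible, I would compare along extension of scalars: $\alpha$ maps to the element $\overline{\alpha}$ of $\pi_{21,11}^\C$ detected by the class of the same name, $\hsf$ maps to $2$, and the known $\C$-motivic computations \cite{Isaksen14c} \cite{IWX19} show that $2\overline{\alpha} = 0$. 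By Corollary \ref{cor:pi-rho} this means $\hsf\alpha$ is divisible by $\rho$; combining with $\rho\hsf = 0$ and inspecting the short exact sequences coming from Corollary \ref{cor:LES} (equivalently, Table \ref{tab:Adams-rho-extn}), one checks that no $\rho$-divisible class of filtration at least $5$ is available in this degree, so $\hsf\alpha = 0$.

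Should a recalcitrant candidate survive this analysis, the fallback is a Toda-bracket argument in the style of the preceding Lemma \ref{lem:h0 * t h1 g}: realize $\alpha$ (or a convenient multiple) as a Toda bracket using the Moss Convergence Theorem \ref{thm:Moss}, then shuffle $\hsf$ into the bracket so that a vanishing sub-bracket, or the relation $\hsf\eta = 0$, forces the product to be zero.

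The main obstacle I anticipate is not any single step but the bookkeeping in the second paragraph: promoting ``$\hsf\alpha$ maps to zero in $\pi_{**}^\C$'' to ``$\hsf\alpha = 0$ in $\pi_{**}^\R$'' requires ruling out the possibility that some hidden $\rho$ extension supplies a nonzero target of filtration at least $5$, and making sure the list of $E_\infty$-classes in stem $21$, weight $11$ is complete and correctly understood is where the care is needed.
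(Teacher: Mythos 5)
There is a genuine gap, and it sits exactly at the class your filters cannot reach. In stem $21$, weight $11$, the one candidate target that matters is $\rho \tau c_0 \cdot d_0$ in filtration $7$. Your first criterion (eliminate $y$ with $\rho y \neq 0$ or $h_1 y \neq 0$ on the $E_\infty$-page) does not apply to it: $h_1 \cdot \rho \tau c_0 \cdot d_0$ vanishes on the $E_\infty$-page (which is precisely why its $\eta$ extension to $P h_0 d_0$ in Table \ref{tab:Adams-eta-extn} is \emph{hidden}), and no nonzero $\rho$-multiple of it on $E_\infty$ is available to you either. Your claim that this criterion ``disposes of all $\rho$-multiples'' is a non sequitur --- being $\rho$-divisible is not the same as supporting a nonzero $\rho$- or $h_1$-multiplication --- and it is also false in substance: $\rho$-divisible classes do occur as targets of hidden $\hsf$ extensions, e.g.\ the extension from $h_2 f_0$ to $\rho c_0 d_0$ in Table \ref{tab:Adams-h0-extn} (Lemma \ref{lem:h0 * t h1 g}), where $\hsf \alpha$ is literally $\rho$ times another homotopy class. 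For the same reason your second step cannot close the argument: from $2\overline{\alpha} = 0$ in $\pi^\C_{21,11}$ and Corollary \ref{cor:pi-rho} you only learn that $\hsf\alpha$ is $\rho$-divisible, and the assertion that ``no $\rho$-divisible class of filtration at least $5$ is available in this degree'' is exactly wrong --- $\rho \tau c_0 \cdot d_0$ is such a class, it is invisible to the $\C$-motivic comparison because it maps to zero under extension of scalars, and $\hsf\alpha$ being detected by it would be perfectly consistent with everything you have checked.

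The missing idea is the one the paper uses: Table \ref{tab:Adams-eta-extn} records a hidden $\eta$ extension from $\rho \tau c_0 \cdot d_0$ to $P h_0 d_0$, so if $\hsf\alpha$ were detected by $\rho \tau c_0 \cdot d_0$ then $\eta \hsf \alpha$ would be detected by $P h_0 d_0 \neq 0$, contradicting $\hsf \eta = 0$. In other words, the non-hidden multiplicative tests must be supplemented by a \emph{hidden} $\eta$ extension on the candidate target; your fallback paragraph gestures at a Toda-bracket shuffle on $\alpha$ but does not identify this step, and without it the recalcitrant candidate is never eliminated.
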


\begin{proof}
The only possible target is $\rho \tau c_0 \cdot d_0$.
Table \ref{tab:Adams-eta-extn} shows that
$\rho \tau c_0 \cdot d_0$ supports a hidden $\eta$ extension,
so it cannot be the target of a hidden $\hsf$ extension.
\end{proof}

\begin{lemma}
\label{lem:h0 * p^3 c0 e0}
There is a hidden $\hsf$ extension from
$\tau c_0 \cdot d_0$ to $P h_0 d_0$.
\end{lemma}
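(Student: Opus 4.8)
The plan is to follow the same strategy as in the proof of Lemma~\ref{lem:h0 * t h1 g}, combined with the relation $2 = \hsf + \rho\eta$ from Remark~\ref{rem:hsf}. The starting point is the hidden $\eta$ extension from $\rho\tau c_0 d_0$ to $P h_0 d_0$ recorded in Table~\ref{tab:Adams-eta-extn}: if $\alpha$ in $\pi_{22,12}$ is a homotopy class detected by $\tau c_0 d_0$, this says precisely that $\rho\eta \cdot \alpha$ is detected by $P h_0 d_0$, and in particular is nonzero. Since $\hsf \cdot \alpha = 2\alpha - \rho\eta\cdot\alpha$, it then suffices to show that $2\alpha$ is either zero or detected in Adams filtration strictly above $P h_0 d_0$; in either case $\hsf\cdot\alpha$ is detected by $P h_0 d_0$. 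Finally one notes that $h_0 \cdot \tau c_0 d_0$ vanishes on the Adams $E_\infty$-page (as $h_0 c_0 = 0$ in $\Ext_\R$), so there is no interfering class in lower filtration and the $\hsf$ extension is genuinely hidden.

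To control $2\alpha$ I would compare along the extension of scalars and Betti realization functors of diagram~\eqref{eq:comparison}. The image of $\alpha$ in the classical stable stem $\pi_{22}$ lies in a group all of whose elements have order $2$, so its double vanishes there; tracking this back through Corollary~\ref{cor:pi-rho} and the known $\C$-motivic stable homotopy groups in this degree (see \cite{Isaksen14c} and \cite{IWX19}) should force $2\alpha$ to be zero, or at worst to be detected strictly above $P h_0 d_0$ in Adams filtration. An alternative route, closer in spirit to Lemma~\ref{lem:h0 * t h1 g}, is to realize $\alpha$ (or the target class $\{P h_0 d_0\}$) as a Toda bracket drawn from Table~\ref{tab:Toda}, shuffle $\hsf$ across the bracket, and apply the Moss Convergence Theorem~\ref{thm:Moss} to identify which Adams $E_\infty$-class detects the resulting bracket.

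The main obstacle is exactly this analysis of $2\alpha$. A priori $2\alpha$ could itself be detected by $P h_0 d_0$ --- note, for example, that $P h_0 d_0$ is the target of a hidden $\rho$ extension by Lemma~\ref{lem:p * t h1 c0 d0}, hence detects a $\rho$-divisible homotopy class --- and then the contribution of $2\alpha$ would cancel that of $\rho\eta\cdot\alpha$, so that $\hsf\cdot\alpha$ would fail to be detected by $P h_0 d_0$ (and could even be zero). Ruling this out means pinning down $2\alpha$ precisely, not merely up to $\rho$-divisibility, and this is where the detailed comparison with the fully understood $\C$-motivic and classical computations has to do the real work.
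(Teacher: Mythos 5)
Your reduction is the right one --- starting from the fact that $\rho\eta\alpha$ is detected by $P h_0 d_0$ (equivalently, the hidden $\rho$ extension of Lemma \ref{lem:p * t h1 c0 d0}) and writing $\hsf\alpha = 2\alpha - \rho\eta\alpha$, the whole lemma comes down to controlling $2\alpha$. But that is exactly the step you leave open, and your proposed routes do not close it. Comparison to the classical $22$-stem (or to $\pi^\C_{22,12}$) only controls the \emph{image} of $2\alpha$ under the comparison functors; by Corollary \ref{cor:pi-rho} this leaves open precisely the possibility you yourself flag, namely that $2\alpha$ is a nonzero $\rho$-divisible class detected by $P h_0 d_0$ (and $P h_0 d_0$ does detect $\rho$-divisible classes, by Lemma \ref{lem:p * t h1 c0 d0}), in which case the cancellation you worry about really could occur. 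So as written the argument is incomplete: the ``detailed comparison'' you defer to is not a routine verification but the entire content of the lemma.

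The paper closes the gap with a choice you did not exploit: it does not take an arbitrary $\alpha$ detected by $\tau c_0 \cdot d_0$, but the specific product $\alpha'\kappa$, where $\alpha'\in\pi_{8,4}$ is detected by $\tau c_0$ and $\kappa$ by $d_0$. Then $2(\alpha'\kappa) = \alpha'\cdot(\hsf+\rho\eta)\kappa = 0$ on the nose, because $(\hsf+\rho\eta)\kappa = 2\kappa = 0$ already in the $14$-stem. Hence $\hsf\alpha'\kappa = \rho\eta\alpha'\kappa$, which is detected by $P h_0 d_0$ by the hidden $\rho$ extension on $h_1\cdot\tau c_0\cdot d_0$ --- no analysis of the $22$-stem, and no Toda bracket shuffle, is needed. (A smaller point: even granting your reduction, ruling out interference requires knowing there is no class in filtration $8$ of tridegree $(22,*,12)$ detecting $2\alpha$ or $\hsf\alpha$; the vanishing of the single product $h_0\cdot\tau c_0\cdot d_0$ does not by itself settle this.) The lesson is that choosing the detecting homotopy class to be a $\kappa$-multiple converts the problematic ``pin down $2\alpha$'' step into the elementary relation $2\kappa=0$.
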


\begin{proof}
Let $\alpha$ be an element of $\pi_{8,4}$ that is detected by
$\tau c_0$, so $\tau c_0 \cdot d_0$ detects $\alpha \kappa$.
Table \ref{tab:Adams-rho-extn} shows that there is a hidden
$\rho$ extension from $h_1 \cdot \tau c_0 \cdot d_0$ to
$P h_0 d_0$,
so $P h_0 d_0$ detects $\rho \eta \alpha \kappa$.
But $(\hsf + \rho \eta) \kappa$ is zero,
so $(\hsf + \rho \eta) \alpha \kappa$ must also be zero.
This implies that $\hsf \alpha \kappa$ is also detected
by $P h_0 d_0$.
\end{proof}

\begin{lemma}
\label{lem:h0 * h4 c0}
There is no hidden $\hsf$ extension on $h_4 c_0$.
\end{lemma}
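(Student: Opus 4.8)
The plan is to argue by enumeration of candidate targets, in the style of Lemma~\ref{lem:h0 * t h2^2 h4}. First I would record that $h_0 \cdot h_4 c_0 = 0$ in $\Ext_\R$; this pulls back along $p \colon \Ext_\C \map \Ext_\R$ from the relation $h_0 c_0 = 0$ in $\Ext_\C$, or follows directly from comparison to the $\C$-motivic and classical cases. Hence any $\hsf$ extension on $\{h_4 c_0\} \in \pi_{23,13}$ is hidden, and its target must lie in degree $(23,f,13)$ with $f \geq 6$. So the first step is to list, from the Adams $E_\infty$-page obtained in Section~\ref{sctn:Adams-diff}, the classes in this range of degrees. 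As in the proof of Theorem~\ref{thm:Adams-h}, most are excluded immediately: a class $y$ with $\rho y \neq 0$ or $h_1 y \neq 0$ cannot be the target of a hidden $\hsf$ extension, because $\rho \hsf = 0$ and $\hsf \eta = 0$; and any class supporting a nonzero $\eta$ extension, hidden or not, is excluded for the same reason, via Table~\ref{tab:Adams-eta-extn}. I expect at most one candidate $y$ to remain after these tests.

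To eliminate the surviving candidate I would use extension of scalars. Since $\hsf$ maps to $2 \in \pi^\C_{0,0}$, the element $\hsf\{h_4 c_0\}$ maps to $2$ times the $\C$-motivic class detected by $h_4 c_0$; because $h_0 h_4 c_0 = 0$ in $\Ext_\C$ and $\{h_4 c_0\}$ has order $2$ in $\pi^\C_{23,13}$ by the classical and $\C$-motivic computations of \cite{Isaksen14c} \cite{IWX19}, this image is zero. Corollary~\ref{cor:pi-rho} then shows that $\hsf\{h_4 c_0\}$ is divisible by $\rho$ in $\pi^\R_{23,13}$. It remains to see that a $\rho$-divisible element of $\pi^\R_{23,13}$ cannot be detected by the candidate $y$. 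If $y$ is a multiple of $\rho$ on the $E_\infty$-page, one checks the power of $\rho$ annihilating it against the long exact sequence of Corollary~\ref{cor:LES} and the ranks of $\pi^\C_{*,*}$, exactly as in the proof of Proposition~\ref{prop:Adams-rho-extn}; if it is not, the $\eta$-multiplication argument of Lemma~\ref{lem:p^6 e0 perm} applies, since a $\rho$-divisible element is annihilated by a controlled power of $\rho$ while its $\eta$-multiple can be traced against a known $h_1$-periodic or $\C$-motivic relation to reach a contradiction.

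The main obstacle is the bookkeeping of the first step: correctly listing the Adams $E_\infty$-classes in degree $(23,f,13)$ for $f \geq 6$, and in particular isolating the one candidate that survives the easy $\rho$- and $h_1$-tests, since that is precisely where the comparison or bracket arguments are needed. A secondary subtlety, already visible in Lemma~\ref{lem:h0 * t h2^2 h4}, is that a candidate which is a multiple of $\rho$ on the $E_\infty$-page is not automatically excluded, so one must verify that it is annihilated by the appropriate power of $\rho$ before ruling it out. If any of this becomes delicate, the $kq$-based method of Remark~\ref{rem:kq} gives an independent handle: the image of $h_4 c_0$ under $\Ext_\R \map \Ext_{\mathcal{A}(1)}(\M_2,\M_2)$, together with the known homotopy of $kq$, further constrains the possibilities.
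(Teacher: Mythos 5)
Your proposal diverges from the paper's argument and, as written, has genuine gaps. The decisive one is in your extension-of-scalars step: you claim that $\hsf\{h_4 c_0\}$ maps to zero in $\pi^\C_{23,13}$ because ``$\{h_4 c_0\}$ has order $2$ in $\pi^\C_{23,13}$.'' But the extension-of-scalars image of an $\R$-motivic class detected by $h_4 c_0$ is only pinned down modulo Adams filtration $\geq 5$, and the coset of $h_4 c_0$ in $\pi^\C_{23,13}$ contains elements of order greater than $2$: the higher-filtration part of $\pi^\C_{23,13}$ includes $\tau \nu \kappabar$ (detected by $\tau h_2 g$), whose multiples $2\tau\nu\kappabar$ and $4\tau\nu\kappabar$ are detected by $\tau h_0 h_2 g$ and $P h_1 d_0$. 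So ``order $2$'' is not a statement about the whole coset, and doubling the image need not vanish; the $\rho$-divisibility of $\hsf\{h_4 c_0\}$, which your whole second paragraph rests on, is not established. Beyond that, the proof is a plan rather than an argument: the enumeration of $E_\infty$-classes in degree $(23,f,13)$, $f\geq 6$, is never carried out (``I expect at most one candidate''), and the final exclusion of the surviving candidate is only gestured at -- Lemma \ref{lem:p^6 e0 perm} rules out an Adams differential and does not, by itself, say anything about which $E_\infty$-classes can detect $\rho$-divisible homotopy elements.

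The idea you are missing is the one the paper uses, and it collapses the whole problem: by comparison to the $\C$-motivic (or classical) case, $h_4 c_0$ detects the decomposable product $\sigma \eta_4$, and $\hsf \eta_4 = 0$ in $\pi_{16,9}$ by inspection of that (already computed) group. Hence $\hsf \cdot \sigma\eta_4 = \sigma \cdot \hsf\eta_4 = 0$, and exhibiting one element detected by $h_4 c_0$ whose $\hsf$-multiple vanishes rules out a hidden $\hsf$ extension in the sense of \cite{Isaksen14c}*{Section 4.1.1}. If you want to salvage your route, you would have to actually list the candidate targets in $(23,f,13)$ and replace the order-$2$ claim by an argument that controls the higher-filtration part of the image, at which point the decomposability argument is strictly easier.
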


\begin{proof}
By comparison to the $\C$-motivic (or classical) case,
$h_4 c_0$ detects the product $\sigma \eta_4$.
By inspection, $\hsf \eta_4$ is zero in $\pi_{16,9}$.
\end{proof}

\begin{thm}
\label{thm:Adams-eta}
Table \ref{tab:Adams-eta-extn} lists some hidden $\eta$ extensions
in the $\R$-motivic Adams spectral sequence, through coweight $11$. 
\end{thm}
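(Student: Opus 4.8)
The plan is to establish each entry of Table \ref{tab:Adams-eta-extn} by the same mixture of techniques that proved Proposition \ref{prop:Adams-rho-extn} and Theorem \ref{thm:Adams-h}: comparison along the extension of scalars functor (and, through it, Betti realization to the classical case), the long exact sequence of Corollary \ref{cor:LES}, Toda bracket shuffles via the Moss Convergence Theorem \ref{thm:Moss}, and multiplicative relations to resolve ambiguities. The hidden $\eta$ extensions of lowest coweight are already recorded in \cite{DI17}, so those require no new argument.

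First I would transport as many extensions as possible from the $\C$-motivic and classical Adams spectral sequences. Since extension of scalars and Betti realization both commute with multiplication by $\eta$, a hidden $\eta$ extension in $\pi^\C_{*,*}$ or in $\pi_*$ whose source detects an $\R$-motivic class not divisible by $\rho$ forces a hidden $\eta$ extension in $\pi^\R_{*,*}$, with target determined only up to a correction by an element of strictly higher Adams filtration that is a multiple of $\rho$. These $\rho$-multiple ambiguities are pinned down exactly as in the Massey-product discussion in the proof of Proposition \ref{prop:Massey}: if the source of a prospective extension is annihilated by a power of $\rho$, then so is its $\eta$-multiple, and this typically leaves only one candidate target.

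Second, for the extensions invisible to $\C$-motivic and classical comparison, I would run the argument used for $\hsf$ in the proof of Theorem \ref{thm:Adams-h}. Given an $E_\infty$ element $y$ that is not itself a $\rho$-multiple and a class $\alpha$ in the relevant stem, if $\eta\alpha$ were zero then, by Corollary \ref{cor:pi-rho} and the exact sequence of Corollary \ref{cor:LES}, the image of $\alpha$ in $\pi^\C_{*,*}$ would have trivial $\eta$-multiple, contradicting the known $\C$-motivic structure; hence $\eta\alpha$ is detected by the unique available target $y$. The subtler entries are obtained by shuffling: if $x$ detects a Toda bracket $\an{\alpha, \beta, \gamma}$ from Table \ref{tab:Toda}, then $\eta x$ detects $\an{\alpha, \beta, \gamma}\eta = \alpha \an{\beta, \gamma, \eta}$, and one reads the inner bracket off the table, as in Lemma \ref{lem:h0 * t h1 g}. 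The few genuinely hard cases will be isolated in lemmas immediately following the theorem, mirroring Lemmas \ref{lem:h0 * t h1 g} through \ref{lem:h0 * h4 c0} in the $\hsf$ analysis.

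The main obstacle is, as elsewhere in the paper, the $\rho$-multiple ambiguity: a $\C$-motivic or classical $\eta$ extension only determines the $\R$-motivic target modulo higher-filtration multiples of $\rho$, and in the handful of places where neither comparison nor a clean divisibility argument applies one must produce a bespoke Toda-bracket computation, taking care with the relation $2 = \hsf + \rho\eta$, which couples $\eta$ extensions to $\hsf$ and $2$ extensions. Because the theorem asserts only that the table lists \emph{some} hidden $\eta$ extensions, an exhaustive non-existence analysis is not required; the cheap relations $\hsf\eta = 0$ and $\rho$-divisibility nonetheless eliminate most spurious candidates and serve chiefly to isolate the correct targets.
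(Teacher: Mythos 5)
Your toolkit coincides with the paper's (the short exact sequences coming from Corollary \ref{cor:LES}, comparison with known $\C$-motivic structure, Toda brackets via the Moss Convergence Theorem \ref{thm:Moss}, and simple multiplicative relations such as $\hsf\eta=0$ for ruling things out; the two delicate non-existence cases, Lemmas \ref{lem:eta * t^2 h3^2} and \ref{lem:eta * t c1}, are indeed handled by a bracket shuffle and by the $h_2$ extension of Lemma \ref{lem: h2 * h2 f0}, much as you anticipate). However, the comparison mechanism you actually spell out operates only from the source side: either transport an $\eta$ extension along extension of scalars, or argue that if $\eta\alpha$ were zero then the image of $\alpha$ in $\pi^\C_{*,*}$ would have trivial $\eta$-multiple, contradicting known $\C$-motivic structure. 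For several rows of Table \ref{tab:Adams-eta-extn} this cannot work, because the target of the claimed extension is divisible by $\rho$ and hence dies under extension of scalars (Corollary \ref{cor:pi-rho}); consequently $\eta$ times the image of the source really is zero in $\pi^\C_{*,*}$, so there is no contradiction to exploit and nothing to transport. The extension from $\tau^2 h_0 \cdot h_0^3 h_4$ to $\rho\tau h_1\cdot \tau P c_0$, and the extensions into and out of $\rho\tau c_0\cdot d_0$, are of exactly this type.

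The missing idea is the other half of the short exact sequence. For these entries the paper lifts the ($\rho$-torsion) target along the surjection $\pi^\C_{s,w}\map(\ker\rho)_{s,w+1}$, uses a known $\C$-motivic hidden $\eta$ extension (e.g., from $\tau^3 h_0^3 h_4$ to $\tau^3 P c_0$) to see that the lift is divisible by $\eta$, and then pushes that divisibility back to $\pi^\R_{*,*}$ because the connecting map is a map of $\pi^\R_{*,*}$-modules; the source of the $\R$-motivic extension is then read off from the $E_\infty$-page. Your fallback of a ``bespoke Toda-bracket computation'' does not identify this step, and without it the entries above are out of reach of your plan as written. Two smaller remarks: detection by a $\rho$-multiple on the $E_\infty$-page does not by itself mean the class dies over $\C$ (compare the hidden extension-of-scalars values in Table \ref{tab:R-to-C}, e.g., $\rho^3 h_1^2 e_0\mapsto P c_0$), so the dichotomy you draw needs to be applied at the level of homotopy classes rather than $E_\infty$ names; and although the theorem claims only ``some'' extensions, the non-existence statements are still needed elsewhere in the paper, so they cannot simply be waved away, though they do follow from the cheap relations you mention.
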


\begin{proof}
The long exact sequence of
Corollary \ref{cor:LES} gives short exact sequences
\[
0 \map (\coker \rho)_{s,w} \map \pi^\C_{s,w} \map
(\ker \rho)_{s,w+1} \map 0.
\]
Many of these extensions can be obtained by comparison to the
$\C$-motivic case, using these short exact sequences,
as in the proof of Theorem \ref{thm:Adams-h}.
For example, the element $\rho \tau h_1 \cdot \tau P c_0$ detects
an element $\alpha$ in $(\ker \rho)_{16,7}$.
The pre-image $\beta$ of $\alpha$ in $\pi^\C_{16,6}$ is detected by
$\tau^3 P c_0$.  There is a $\C$-motivic hidden $\eta$ extension 
from $\tau^3 h_0^3 h_4$ to $\tau^3 P c_0$, so
$\beta$ is divisible by $\eta$.
This implies that $\alpha$ is also divisible by $\eta$, 
and that there is an $\R$-motivic hidden $\eta$ extension from
$\tau^2 h_0 \cdot h_0^3 h_4$ to $\rho \tau h_1 \cdot \tau P c_0$.

We must also show that many elements do not support hidden
$\eta$ extensions.
In all cases through coweight 11, the non-existence
follows from simple multiplicative relations.
For example, if $x$ is a multiple of $h_0$,
then $x$ cannot support a hidden $\eta$ extension
because of the relation $\hsf \eta = 0$.
Similarly, if $h_0 y$ is non-zero, 
then $y$ cannot be the target of a hidden $\eta$ extension.
\end{proof}

\begin{lemma}\label{lem:eta * t^2 h3^2}
There is no hidden $\eta$ extension on $\tau^2 h_3^2$.
\end{lemma}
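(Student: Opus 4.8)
The plan is to show that any would-be hidden $\eta$ extension on $\tau^2h_3^2$ is obstructed, using the $\C$-motivic comparison together with the $\rho$-divisibility criterion of Corollary~\ref{cor:pi-rho}. The element $\tau^2h_3^2$ lies in degree $(14,2,6)$, so a hidden $\eta$ extension on it would land in degree $(15,f,7)$ at Adams filtration $f$ strictly greater than the filtration in which $h_1\cdot\tau^2h_3^2$ is nonzero on the $\R$-motivic Adams $E_\infty$-page. The first step is to determine $h_1\cdot\tau^2h_3^2$ there. On the $\rho$-Bockstein $E_1$-page this product is $\tau^2h_1h_3^2$, which vanishes since $h_1h_3^2=0$ in $\Ext_\C$; using the Bockstein computation of Section~\ref{section:bockstein-extn} (in particular Table~\ref{tab:Bock-h1-extn}) and the Adams differentials of Section~\ref{sctn:Adams-diff}, one reads off its value on the Adams $E_\infty$-page. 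If it is nonzero there, it is automatically in Adams filtration $3$, which is the filtration expected of $\eta\cdot\{\tau^2h_3^2\}$, so no extension is hidden and we are done.

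If instead $h_1\cdot\tau^2h_3^2$ vanishes on the Adams $E_\infty$-page, I would list the classes in degree $(15,f,7)$ in higher filtration and rule out each as a target. The main tool is extension of scalars: $\{\tau^2h_3^2\}$ maps to $\tau^2\sigma^2$ in $\pi^\C_{14,6}$, so $\eta\cdot\{\tau^2h_3^2\}$ maps to $\eta\cdot\tau^2\sigma^2$ in $\pi^\C_{15,7}$, which is pinned down by the known $\C$-motivic groups (the classical product $\eta\sigma^2$ vanishes, as one sees from $h_1h_3^2=0$ and the absence of a hidden $\eta$ extension on $h_3^2$). Combined with Corollary~\ref{cor:pi-rho}, a target killed by extension of scalars must be divisible by $\rho$, which excludes any candidate that is not a $\rho$-multiple; and, as in the proof of Theorem~\ref{thm:Adams-eta}, any candidate $y$ with $\hsf y\neq 0$ is excluded because $\hsf\eta=0$. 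I expect these considerations together to eliminate every candidate in this degree.

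The step I anticipate to be the main obstacle is the bookkeeping: correctly identifying $h_1\cdot\tau^2h_3^2$ on the Adams $E_\infty$-page (in particular tracking any hidden $\rho$-Bockstein contribution to it, and any interaction in the Adams spectral sequence with $\rho$-divisible classes that are invisible after extension of scalars) and then confirming that the list of candidate targets in degree $(15,f,7)$ is complete. Once a candidate is named, each individual elimination is short.
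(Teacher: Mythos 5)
Your strategy has a genuine gap, and it is exactly the step you defer at the end. Since $\eta\sigma^2=0$ classically and $\C$-motivically, the product $\eta\cdot\{\tau^2 h_3^2\}$ does map to zero under extension of scalars, so the comparison argument combined with Corollary~\ref{cor:pi-rho} can only ever show that this product is \emph{divisible by $\rho$} --- it cannot show it is zero. The burden therefore falls entirely on eliminating the candidate detecting classes in degree $(15,f,7)$ with $f>3$ that detect $\rho$-divisible elements, and the two tools you name (nonzero image in $\pi^\C_{*,*}$, and $\hsf y\neq 0$) are in principle powerless against a candidate that is $\rho$-divisible and annihilated by $\hsf$. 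Such candidates exist in this degree: for instance $\rho^3 f_0$ is a nonzero permanent class in $(15,4,7)$ (it even appears in Table~\ref{tab:Adams-rho-extn} as the source of a hidden $\rho$ extension), and there are further $\rho$-multiples in higher filtration such as $\rho$ times classes in the $16$-stem of coweight $8$. Neither of your criteria disposes of these, and you have not supplied any substitute argument; ``I expect these considerations together to eliminate every candidate'' is precisely the part that needs a proof.

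The paper avoids this bookkeeping entirely with a Toda bracket shuffle: by Table~\ref{tab:Toda}, $\tau^2 h_3^2$ detects $\an{\tau^2\nu,\sigma,\nu}$, and
\[
\an{\tau^2\nu,\sigma,\nu}\,\eta \;=\; \tau^2\nu\,\an{\sigma,\nu,\eta} \;=\; 0,
\]
since the latter bracket vanishes. This shows the $\eta$-product is actually zero in $\pi_{15,7}$, with no need to enumerate or analyze the $E_\infty$-page in the $15$-stem. If you want to salvage your approach, you would need an input of this kind (or an ad hoc argument against each surviving $\rho$-divisible candidate); the $\C$-motivic comparison alone cannot close the argument, because the statement being proved lives entirely in the kernel of extension of scalars.
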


\begin{proof}
Table \ref{tab:Toda} shows that $\tau^2 h_3^2$ detects the Toda bracket
$\an{\tau^2 \nu, \sigma, \nu}$.  Shuffle to obtain
\begin{align*}
\an{\tau^2 \nu, \sigma, \nu} \eta  & = \tau^2 \nu \an{\sigma,\nu,\eta}.
\end{align*}
The latter bracket is zero.
\end{proof}

\begin{lemma}
\label{lem:eta * t c1}
There is no hidden $\eta$ extension on $\tau c_1$.
\end{lemma}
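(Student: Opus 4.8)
The plan is to determine every class that could serve as the target of a hidden $\eta$ extension on $\tau c_1$ and then rule each one out, in the same spirit as the other lemmas asserting the non-existence of hidden $\eta$ extensions. Such a target must lie in the $20$-stem, in weight $11$, and in Adams filtration strictly greater than that of the product $h_1 \cdot \tau c_1$ on the $E_\infty$-page; reading off the $\R$-motivic Adams charts of \cite{BI20}, the list of candidates is short — essentially the $\hsf$-tower built on $\tau g$ together with a small number of $\rho$-multiples of lower-filtration classes in the $20$-stem.

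Most of these candidates are eliminated by the standard multiplicative observations used in the proof of Theorem \ref{thm:Adams-eta}: any candidate $y$ with $h_0 y \neq 0$ cannot be the target of a hidden $\eta$ extension because $\hsf \eta = 0$, which disposes of all of the $\hsf$-tower on $\tau g$ except its top class, and any candidate already appearing in Table \ref{tab:Adams-eta-extn} as the \emph{source} of a hidden $\eta$ extension cannot also be a target. For the handful of remaining candidates — the top of the $\hsf$-tower and any $\rho$-divisible class — I would compare along extension of scalars: in the $\C$-motivic Adams spectral sequence there is no hidden $\eta$ extension on $\tau c_1$, so the image of $\eta \cdot \tau c_1$ in $\pi^\C_{20,11}$ vanishes, and Corollaries \ref{cor:pi-rho} and \ref{cor:LES} then force $\eta \cdot \tau c_1$ to be $\rho$-divisible; one checks from the charts that no $\rho$-divisible class in the $20$-stem and weight $11$ sits in high enough filtration, so $\eta \cdot \tau c_1$ must already be detected by $h_1 \cdot \tau c_1$ (or be zero). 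As an alternative to this last step, one could instead try a Toda bracket shuffle as in Lemma \ref{lem:eta * t^2 h3^2}, using a Massey product description of $c_1$ (for instance $c_1 = \an{h_1, h_2, h_3^2}$) to realize $\tau c_1$ as detecting a Toda bracket into which $\eta$ can be shuffled, so that a relation such as $\eta\nu = 0$ collapses the product.

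The main obstacle is the last step of the primary argument — pinning down that $\C$-motivically there really is no hidden $\eta$ extension on $\tau c_1$ and that no $\rho$-divisible class obstructs the conclusion — since this is the one point where the simple multiplicative relations do not suffice and one must appeal to the full comparison machinery of Section \ref{sctn:compare-R-C}. If the Toda bracket route is used instead, the difficulty shifts to verifying that the relevant bracket is defined (the appropriate secondary products must vanish) and that the crossing and indeterminacy hypotheses of the Moss Convergence Theorem \ref{thm:Moss} are met, so that no Adams differential spoils the bracket computation; in either case the remaining work is routine chart bookkeeping.
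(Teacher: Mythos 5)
There is a genuine gap, and it sits exactly at the harder of the two candidates that the paper has to eliminate. In the relevant degree $(20,11)$, coweight $9$, the Adams $E_\infty$-page has two classes in filtration high enough to be the target of a hidden $\eta$ extension on $\tau c_1$: namely $\rho h_2 f_0$ (filtration $5$) and $\tau h_2^2 \cdot d_0$ (filtration $6$). Your identification of the candidates as ``the $\hsf$-tower built on $\tau g$'' is not right to begin with: $\tau g$ supports the Bockstein differential $d_1(\tau g) = \rho h_0 g$ and does not survive to $\Ext_\R$. More seriously, your main argument disposes of the $\rho$-divisible candidates by asserting that no $\rho$-divisible class in the $20$-stem, weight $11$, lies in high enough filtration; this is false, since $\rho h_2 f_0$ is precisely such a class. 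The comparison along extension of scalars cannot rule it out either: $\rho h_2 f_0$ maps to zero on the $\C$-motivic $E_\infty$-page, so the vanishing of $\eta \cdot \tau \sigmabar$ in $\pi^\C_{20,11}$ is perfectly consistent with $\eta \cdot \tau\sigmabar$ being nonzero and detected by $\rho h_2 f_0$ (indeed, by Corollary \ref{cor:pi-rho} it would then merely be $\rho$-divisible, which is no contradiction). Your secondary principle, that a class appearing as the source of a hidden $\eta$ extension cannot also be a target of one, is likewise not valid in general (that would only contradict $\eta^2$-divisibility statements, not anything automatic), so it cannot be used to plug the hole.

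The paper closes exactly this gap with a multiplicative argument: Lemma \ref{lem: h2 * h2 f0} gives a hidden $h_2$ extension from $h_2 f_0$ to $\rho h_1^2 h_4 c_0$, so if $\eta \cdot \tau\sigmabar$ were detected by $\rho h_2 f_0$, then $\nu\eta \cdot \tau\sigmabar$ would be detected by the nonzero class $\rho^2 h_1^2 h_4 c_0$, contradicting $\eta\nu = 0$. The other candidate, $\tau h_2^2 \cdot d_0$, maps to the nonzero class $\tau h_2^2 d_0$ in the $\C$-motivic $E_\infty$-page, and it is this candidate (not the $\rho$-multiple) that is eliminated by comparison to the $\C$-motivic case, as in the part of your argument that does work. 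Your alternative Toda-bracket route is only a sketch: the Massey product expression you propose for $c_1$ is not verified, and without a specific bracket whose shuffle collapses $\eta \cdot \tau\sigmabar$ it cannot substitute for the missing step. So as written the proposal does not establish the lemma; it needs the $h_2$-extension argument (or some equivalent) to kill the target $\rho h_2 f_0$.
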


\begin{proof}
The possible target $\rho h_2 f_0$ is ruled out by the fact that
$\rho h_2 f_0$ supports an $h_2$ extension, as shown in
Lemma \ref{lem: h2 * h2 f0}.
The possible target $\tau h_2^2 \cdot d_0$ is ruled out by comparison
to the $\C$-motivic case.
\end{proof}

\section{Extension of scalars}
\label{sctn:extn-scalars}

We will now study the values of the extension
of scalars map $\pi^\R_{*,*} \map \pi^\C_{*,*}$.
Corollary \ref{cor:pi-rho} tells us exactly which elements
of $\pi^\R_{*,*}$ have non-trivial images in
$\pi^\C_{*,*}$.
This information about extension of scalars is essential
to our approach to the Mahowald invariant described in 
Section \ref{sctn:root}.

For the most part, the extension of scalars map is detected by
the map from the $\R$-motivic Adams $E_\infty$-page to the
$\C$-motivic Adams $E_\infty$-page.
For example, the element $(\tau \eta)^2$ 
of $\pi^\R_{2,0}$ is detected by $\tau h_1^2$
in the $\R$-motivic Adams $E_\infty$-page, so
its image in $\pi^\C_{2,0}$ must be $\tau^2 \eta^2$,
which is detected by $\tau^2 h_1^2$ in the
$\C$-motivic Adams $E_\infty$-page.

However, there are a few values that are hidden by the
Adams spectral sequence.  In other words, there
exist elements $\alpha$ in $\pi^\R_{*,*}$ such that the
Adams filtration of $\alpha$ is strictly less than the
Adams filtration of its image in $\pi^\C_{*,*}$.

\begin{thm}
\label{thm:R-to-C}
Through coweight $11$, 
Table \ref{tab:R-to-C} lists all
hidden values of the 
extension of scalars map $\pi_{*,*}^\R \map \pi_{*,*}^\C$.
\end{thm}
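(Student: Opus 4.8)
The plan is to use the fact that extension of scalars $\phi\colon \pi^\R_{*,*} \to \pi^\C_{*,*}$ is a map of Adams-filtered groups whose associated graded is the map $E_\infty^\R \to E_\infty^\C$ induced by setting $\rho = 0$. This $E_\infty$-level map is surjective with kernel generated by $\rho$, so a value of $\phi$ is hidden exactly when the $E_\infty^\R$-class detecting $\alpha$ is a $\rho$-multiple (equivalently, maps to zero in $E_\infty^\C$). On the other hand, Corollary \ref{cor:pi-rho} says that $\phi$ kills $\alpha$ if and only if $\alpha$ is divisible by $\rho$. Combining the two, the hidden values are precisely the nonzero images of those $\alpha \in \pi^\R_{*,*}$ that are detected by a $\rho$-divisible class but are themselves not divisible by $\rho$. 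The proof therefore splits into locating all such $\alpha$ and then identifying $\phi(\alpha)$ in each case.

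For the first part, I would proceed degree by degree through coweight $11$. In each degree, Corollary \ref{cor:LES} identifies the image of $\phi$ with $\coker(\rho\colon \pi^\R_{*,*} \to \pi^\R_{*,*})$, whose rank is forced by the known ranks of $\pi^\C_{*,*}$ \cite{Isaksen14c} \cite{IWX19} together with the $\R$-motivic Adams $E_\infty$-page (Sections \ref{sctn:Bock-diff}--\ref{sctn:Adams-diff}) and the hidden $\rho$ extensions of Proposition \ref{prop:Adams-rho-extn}. Comparing this cokernel with the Adams filtrations of the classes detecting its elements isolates the finitely many degrees in which a filtration jump is forced; in every other degree the detecting class is not a $\rho$-multiple and maps to a nonzero class of the same filtration in $E_\infty^\C$, so there is no hidden value. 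The flagged degrees are exactly those appearing in Table \ref{tab:R-to-C}.

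For the second part, I would identify each target $\phi(\alpha)$ using the tools already in place. The long exact sequence of Corollary \ref{cor:LES} constrains $\phi(\alpha)$ through its image under the connecting map into $(\ker\rho)_{s,w+1}$; naturality of the comparison square \eqref{eq:comparison} pins down the classical Betti realization of $\phi(\alpha)$; multiplicative relations in $\pi^\C_{*,*}$ settle the decomposable entries; and Toda bracket shuffles, in the style of Section \ref{sctn:Adams-extns}, handle the rest. Several entries — such as the elements named $\tau\sigma^2$ and $\tau\kappabar$ — are recorded directly in the naming conventions of Section \ref{subsctn:pi-notation}; for these one must only verify that a representative with the stated name is genuinely not $\rho$-divisible, which is where the content lies.

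The main obstacle will be the individual target identifications together with the completeness claim. Verifying that no further hidden values occur means checking, in each flagged degree, that every remaining element of $\coker\rho$ is detected by a class that survives nonzero to $E_\infty^\C$; and nailing down targets that live in higher $\C$-motivic Adams filtration, where several classes compete, will require ad hoc arguments of the same flavor as Lemma \ref{lem:h1 t^3 h1^2 h3}, Lemma \ref{lem:t^2 f0 * h1}, and the hidden-extension lemmas of Section \ref{sctn:Adams-extns}. I would state these as a short run of lemmas following the main argument.
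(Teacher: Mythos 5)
Your proposal is correct and follows essentially the same route as the paper: one inspects the $\R$-motivic Adams $E_\infty$-classes that are not targets of $\rho$ extensions (these detect the elements with nonzero image, by Corollary \ref{cor:pi-rho}), flags the few that map to zero in the $\C$-motivic $E_\infty$-page as the only possible hidden values, and then identifies each target individually by Toda-bracket arguments and the exact sequence of Corollary \ref{cor:LES}, exactly as in Lemmas \ref{lem:R-to-C-ph4} and \ref{lem:R-to-C-pf0} and the counting argument for $\rho^3\tau^2 f_0$. (Only a cosmetic caveat: the working criterion is simply that the detecting class maps to zero in $E_\infty^\C$; the stated surjectivity/``kernel generated by $\rho$'' claim for the $E_\infty$-level map is neither literally true nor needed.)
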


\begin{proof}
We inspect all elements of the $\R$-motivic Adams $E_\infty$-page
that are not targets of $\rho$ extensions.
Most of these elements map non-trivially to the
$\C$-motivic Adams $E_\infty$-page.
For example, $(\tau h_1)^2$ maps to $\tau^2 h_1^2$.

A few elements map to zero in the $\C$-motivic Adams
$E_\infty$-page.  We treat these elements individually.
In some cases, there is only one possible target in
sufficiently high Adams filtration.
The remaining cases are handled by the following lemmas.
\end{proof}

\begin{lemma}
\label{lem:R-to-C-ph4}
Extension of scalars takes 
elements detected by $\rho h_4$ to elements detected by
$\tau h_3^2$.
\end{lemma}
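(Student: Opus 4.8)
The plan is to combine Corollary \ref{cor:pi-rho} with the long exact sequence of Corollary \ref{cor:LES}. Fix an element $\alpha$ of $\pi^\R_{14,7}$ detected by $\rho h_4$. The first step is to check that $\alpha$ is not divisible by $\rho$. Comparison to the $\C$-motivic Adams spectral sequence, in which $d_2(h_4) = h_0 h_3^2$ is nonzero, shows that $d_2(h_4)$ is also nonzero in the $\R$-motivic Adams spectral sequence, so $h_4$ is not a permanent cycle. On the other hand $\Ext_\R$ is one-dimensional in degree $(15,1,8)$, spanned by $h_4$, and $\pi^\R_{15,8}$ contains no element of Adams filtration zero. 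A short filtration argument then shows that there is no permanent cycle $y$ with $\rho y = \rho h_4$ in the $E_\infty$-page, so $\alpha$ is not of the form $\rho\beta$. By Corollary \ref{cor:pi-rho}, extension of scalars therefore sends $\alpha$ to a nonzero element $\mathrm{es}(\alpha)$ of $\pi^\C_{14,7}$.

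Since $\rho h_4$ maps to zero in $\Ext_\C$, this nonzero image is detected in Adams filtration at least $2$. Inspecting the $\C$-motivic Adams $E_\infty$-page, the only classes of filtration at least $2$ in stem $14$ and weight $7$ are $\tau h_3^2$, in filtration $2$, and $\tau d_0$, in filtration $4$; moreover $\pi^\C_{14,7} \cong (\Z/2)^2$ with these as its two filtration layers, detecting $\tau\sigma^2$ and $\tau\kappa$ respectively. It therefore suffices to prove that $\tau\kappa$ does not lie in the image of extension of scalars: the image is a subgroup containing the nonzero element $\mathrm{es}(\alpha)$, so if it misses $\tau\kappa$ then every nonzero element of the image --- in particular $\mathrm{es}(\alpha)$ --- is detected by $\tau h_3^2$. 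By the long exact sequence of Corollary \ref{cor:LES}, $\tau\kappa$ lies in the image of extension of scalars exactly when the boundary map $\pi^\C_{14,7} \map \pi^\R_{14,8}$ annihilates $\tau\kappa$, so it is enough to show that this boundary map is nonzero on $\tau\kappa$.

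This last step is the main obstacle. I would attack it in one of two ways. One option is a multiplicative comparison: $\mathrm{es}(\eta\alpha) = \eta\cdot\mathrm{es}(\alpha)$, where $\eta\alpha$ is detected by $\rho h_1 h_4$, can be compared with the known $\C$-motivic products $\eta\cdot\tau\sigma^2$ and $\eta\cdot\tau\kappa = \tau\eta\kappa$ (the latter detected by $\tau h_1 d_0$), so that the Adams filtration of $\mathrm{es}(\eta\alpha)$ distinguishes the two cases; a similar comparison with $\nu$ or another low-dimensional element may serve instead. The other option is a rank count in the long exact sequence of Corollary \ref{cor:LES} using the groups $\pi^\R_{15,8}$, $\pi^\R_{14,7}$, $\pi^\C_{14,7}$, and $\pi^\R_{14,8}$, all of which are determined by the computations of this paper, to pin down the image of extension of scalars directly. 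In either case the argument must avoid invoking the classical value of $R(\sigma)$, since that would make the Mahowald invariant computation of Example \ref{ex:R(sigma)} circular.
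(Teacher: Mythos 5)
Your setup is fine --- the non-divisibility of $\alpha$ by $\rho$ (via $d_2(h_4)=h_0h_3^2$ and the absence of filtration $\leq 1$ classes in $\pi^\R_{15,8}$), the appeal to Corollary \ref{cor:pi-rho}, and the identification of the two filtration layers $\tau h_3^2$ and $\tau d_0$ of $\pi^\C_{14,7}$ all match what is needed. But the proof is not complete: the one step that actually carries the content of the lemma, namely ruling out that $\mathrm{es}(\alpha)$ equals $\tau\kappa$, is exactly the step you leave open (``the main obstacle''), and neither of your two proposed attacks closes it as stated. The rank count cannot work in principle: the short exact sequence of Corollary \ref{cor:LES} only determines the \emph{order} of the image of extension of scalars inside $\pi^\C_{14,7}\cong(\Z/2)^2$, not \emph{which} order-two subgroup it is, and distinguishing $\{0,\tau\sigma^2\}$ from $\{0,\tau\kappa\}$ is precisely the assertion being proved; since the maps in the sequence do not respect Adams filtration (that is the whole point of a hidden value), exactness alone gives no leverage. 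The multiplicative comparison with $\eta$ is also not obviously viable as sketched: you would need to know that $\eta\cdot\tau\sigma^2$ and $\eta\cdot\tau\kappa$ are actually distinguished in $\pi^\C_{15,8}$, and you would need to control $\mathrm{es}(\eta\alpha)$, which is again a potentially hidden (or zero, if $\eta\alpha$ is $\rho$-divisible) value --- so this route requires additional input that your proposal does not supply.

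The paper resolves the ambiguity by a different mechanism: Toda brackets and their compatibility with extension of scalars. Table \ref{tab:Toda} shows (via the Moss Convergence Theorem \ref{thm:Moss} and $d_2(h_4)=h_0h_3^2$) that $\rho h_4$ detects the bracket $\an{\rho,\hsf,\sigma^2}$. Extension of scalars carries this bracket into the $\C$-motivic bracket $\an{0,2,\sigma^2}$, which is computed to be $\{0,\tau\sigma^2\}$; since the image is nonzero, it must be $\tau\sigma^2$, detected by $\tau h_3^2$. This bypasses any need to analyze the boundary map on $\tau\kappa$ or the subgroup structure of the image, and it is the kind of extra input your argument is missing. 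If you want to salvage your approach, you should either import this bracket argument or prove directly that the boundary $\pi^\C_{14,7}\map\pi^\R_{14,8}$ is nonzero on $\tau\kappa$ (for instance by identifying its image with a class related to $h_0d_0$, which survives $\R$-motivically since $d_3(h_0h_4)=h_0d_0+\rho h_1d_0$ by Lemma \ref{lem:h0 h4}); as it stands, that claim is asserted but not established.
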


\begin{proof}
Table \ref{tab:Toda} shows that 
$\rho h_4$ detects 
the Toda bracket $\an{\rho, \hsf, \sigma^2}$.
Extension of scalars takes $\an{\rho, \hsf, \sigma^2}$ 
in $\pi_{14,7}^\R$ to
$\an{0, 2, \sigma^2}$ in $\pi_{14,7}^\C$, 
which equals $\{0, \tau \sigma^2\}$.
The only non-zero value is $\tau \sigma^2$, which is detected by
$\tau h_3^2$.
\end{proof}

\begin{lemma}
\label{lem:R-to-C-pf0}
Extension of scalars takes elements detected by $\rho f_0$ to 
elements detected by $\tau h_2 d_0$.
\end{lemma}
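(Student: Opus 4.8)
The plan is to follow the template of the preceding Lemma \ref{lem:R-to-C-ph4}: realize $\rho f_0$ as the detecting class of a Toda bracket one of whose entries is sent to zero by extension of scalars, and then compute the image of that bracket in $\pi^\C_{*,*}$. Write $\alpha$ for an element of $\pi^\R_{17,9}$ detected by $\rho f_0$.

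First I would check that extension of scalars does not annihilate $\alpha$. If $\alpha$ were divisible by $\rho$, say $\alpha = \rho\beta$, then, by inspection of $\Ext_\R$ in the relevant degree $(18,4,10)$, the only possibility would be that $\beta$ be detected by $f_0$; but $f_0$ supports the Adams differential $d_2(f_0) = h_0^2 e_0$ of Lemma \ref{lem:d2(f_0)}, so it is not a permanent cycle and no such $\beta$ exists. By Corollary \ref{cor:pi-rho}, the image $\overline\alpha$ of $\alpha$ in $\pi^\C_{17,9}$ is therefore non-zero; and since $\rho f_0$ maps to $0$ in the $\C$-motivic Adams $E_\infty$-page, $\overline\alpha$ must be detected in Adams filtration at least $5$.

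Next I would locate in Table \ref{tab:Toda} a Toda bracket detected by $\rho f_0$, of the form $\an{\rho, \beta_1, \beta_2}$ (or $\an{\rho^k, \beta_1, \beta_2}$), with $\overline{\beta_1}$ and $\overline{\beta_2}$ understood. Since extension of scalars is a ring homomorphism that sends $\rho$ to $0$, we obtain $\overline\alpha \in \an{0, \overline{\beta_1}, \overline{\beta_2}}$ in $\pi^\C_{17,9}$. This bracket is a coset of $\pi^\C_{*,*}\cdot\overline{\beta_2}$ that contains $0$; using the known $\C$-motivic stable stems \cite{Isaksen14c} \cite{IWX19} one identifies it, and — because $\overline\alpha$ is non-zero — $\overline\alpha$ is forced to be the non-zero class in that coset, which is detected by $\tau h_2 d_0$.

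The main obstacle is this coset computation: one must understand $\pi^\C_{17,9}$ and the relevant products well enough to be sure that the only non-zero element of $\an{0, \overline{\beta_1}, \overline{\beta_2}}$ is the filtration-$5$ class $\tau h_2 d_0$, rather than a class of still higher filtration. If Table \ref{tab:Toda} does not supply a convenient bracket of this form, the fallback is a filtration-counting argument built on Corollary \ref{cor:LES}: $\overline\alpha$ lies in the image of extension of scalars in degree $(17,9)$ and has filtration at least $5$, and among the $\C$-motivic Adams $E_\infty$-classes in that degree of filtration $\geq 5$ the multiplicative structure pins down $\overline\alpha$ — for instance by comparing $\hsf\overline\alpha$ or $\eta\overline\alpha$ to values already computed, or by pulling $\overline\alpha$ back along the map $p\colon\Ext_\C\map\Ext_\R$ of Remark \ref{rem:Ext-R-C-exact}.
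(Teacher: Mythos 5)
Your proposal matches the paper's proof: Table \ref{tab:Toda} indeed supplies the needed bracket, namely $\an{\rho, \hsf, \nu\kappa}$ detected by $\rho f_0$ (via $d_2(f_0)=h_0^2 e_0$), and extension of scalars sends it into $\an{0,2,\nu\kappa}=\{0,\tau\nu\kappa\}$, whose only non-zero element is detected by $\tau h_2 d_0$. Your preliminary non-vanishing step (ruling out $\rho$-divisibility using the differential on $f_0$) is exactly the implicit input from Corollary \ref{cor:pi-rho} that the paper relies on, so the argument is essentially identical.
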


\begin{proof}
Table \ref{tab:Toda} shows that $\rho f_0$ detects
the Toda bracket $\an{\rho, \hsf, \nu \kappa}$.
Extension of scalars takes
$\an{\rho, \hsf, \nu \kappa}$ in $\pi_{17,9}^\R$ to
$\an{0, 2, \nu \kappa}$ in $\pi_{17,9}^\C$, which equals
$\{0, \tau \nu \kappa\}$.
The only non-zero value is $\tau \nu \kappa$, which is detected
by $\tau h_2 d_0$.
\end{proof}

\begin{lemma}
Extension of scalars takes elements detected by
$\rho^3 \tau^2 f_0$ to elements detected by $\tau^4 h_1 d_0$.
\end{lemma}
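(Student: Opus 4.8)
The plan is to imitate the method of Lemmas \ref{lem:R-to-C-ph4} and \ref{lem:R-to-C-pf0}. In each of those arguments one exhibits the relevant $\R$-motivic $E_\infty$-class as the detecting element of a Toda bracket whose first entry is a power of $\rho$; extension of scalars then carries that bracket to one of the form $\an{0, \beta', \gamma'}$ in $\pi^\C_{*,*}$, and the latter bracket is forced to consist of $0$ together with a single nonzero element lying in its indeterminacy, which is a $\tau$-power multiple of a classical class. So the first step is to locate a Toda bracket in $\pi_{15,5}$ detected by $\rho^3 \tau^2 f_0$. Since the target $\tau^4 h_1 d_0$ detects a $\tau$-power multiple of $\eta\kappa$, the natural candidate has the shape $\an{\rho^j, \beta, \gamma}$ in which $\gamma$ realizes to a class detected by $h_1 d_0$ (or by $d_0$) in $\pi^\C_{*,*}$, and the exponent $j$ is chosen so that the bracket lands in the correct tridegree. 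I would establish this bracket using the Moss Convergence Theorem \ref{thm:Moss}, drawing on a Massey product in $\Ext_\R$ of the type recorded in Table \ref{tab:Massey} (or provable by the same $\rho$-Bockstein techniques), together with one of the relations $d_1(\tau) = \rho h_0$ or $d_2(\tau^2) = \rho^2 \tau h_1$ of Proposition \ref{prop:bock-tau^k}, since these are precisely what convert a $\rho$-power entry into a $\tau$-power of the answer.

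Granting the bracket, the remainder is formal, exactly as in the two preceding lemmas. By the commutativity of the diagram \eqref{eq:comparison} and the fact that $\rho$ maps to $0$ in $\pi^\C_{*,*}$, extension of scalars sends $\an{\rho^j, \beta, \gamma}$ into $\an{0, \beta', \gamma'}$ in $\pi^\C_{15,5}$. Since the leading entry is $0$, the indeterminacy of this bracket equals $\pi^\C_{|\rho^j| + |\beta'| + 1} \cdot \gamma'$; a degree count shows that the unique nonzero element of $\pi^\C_{15,5}$ in this subgroup is a $\tau$-power multiple of $\eta\kappa$, namely the class detected by $\tau^4 h_1 d_0$. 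Hence that class is the image of $\rho^3 \tau^2 f_0$. That the image is genuinely this nonzero class, rather than the $0$ also lying in the coset, follows from Corollary \ref{cor:pi-rho}, together with the observation (checked directly on the Adams $E_\infty$-page) that the homotopy class detected by $\rho^3 \tau^2 f_0$ is not $\rho$-divisible.

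The main obstacle is the first step: choosing the correct bracket and verifying hypothesis (2) of Theorem \ref{thm:Moss}. The subtlety is that $\rho^3 \tau^2 f_0$ sits in Adams filtration $4$, strictly below the filtration of the naive Massey-product representative $\tau^4 h_1 d_0$, so one cannot simply apply Moss with $r = 2$; instead one must take the exponent $j$ (presumably $j = 3$, matching the three powers of $\rho$ that disappear from the target's name) so that a higher version of the Moss Convergence Theorem applies, and then rule out interfering (crossing) Adams differentials in the relevant tridegrees. Comparison to the $\C$-motivic and classical Adams spectral sequences, as used throughout Section \ref{sctn:Adams-diff}, should suffice to exclude such differentials. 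Once the bracket is correctly set up, the argument proceeds exactly as in Lemmas \ref{lem:R-to-C-ph4} and \ref{lem:R-to-C-pf0}.
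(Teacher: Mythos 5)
Your write-up is a plan, not a proof, and the missing step is exactly the load-bearing one. The method of Lemmas \ref{lem:R-to-C-ph4} and \ref{lem:R-to-C-pf0} only gets off the ground once a specific Toda bracket $\an{\rho^j,\beta,\gamma}$ detected by the source class is actually exhibited; for $\rho h_4$ and $\rho f_0$ those brackets are $\an{\rho,\hsf,\sigma^2}$ and $\an{\rho,\hsf,\nu\kappa}$, proved via explicit Massey products and Adams $d_2$'s recorded in Tables \ref{tab:Massey} and \ref{tab:Toda}. For $\rho^3\tau^2 f_0$ you never name $\beta$, $\gamma$, or $j$, no such bracket appears in Table \ref{tab:Toda}, and no Massey product of the required shape (with a $\rho$-power as leading entry and $\rho^3\tau^2 f_0$ as value) is available in Table \ref{tab:Massey} or proved by you. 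You yourself flag the obstruction --- the source sits in Adams filtration $4$, so a naive $r=2$ application of the Moss Convergence Theorem \ref{thm:Moss} is unavailable and crossing-differential hypotheses must be checked --- but you do not resolve it. Until the bracket is produced and verified, the subsequent ``formal'' part has nothing to apply to; moreover the claim that the indeterminacy of $\an{0,\beta',\gamma'}$ in $\pi^\C_{15,5}$ contains only one nonzero element cannot be checked without knowing $\gamma'$, since $\pi^\C_{15,5}$ also contains a $\Z/32$ generated by a class detected by $\tau^3 h_0^3 h_4$. (A lesser point: to rule out the zero value you must exclude $\rho$-divisibility of the homotopy class, which is not just a statement about the $E_\infty$-page --- one must also exclude hidden $\rho$ extensions hitting $\rho^3\tau^2 f_0$.)

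The paper proves this lemma by an entirely different and much shorter route, which you may want to compare: the long exact sequence of Corollary \ref{cor:LES} gives a short exact sequence
\[
0 \map (\coker \rho)_{15,5} \map \pi^\C_{15,5} \map (\ker \rho)_{15,6} \map 0,
\]
and one simply counts: $\pi^\C_{15,5}$ is generated by an element of order $32$ detected by $\tau^3 h_0^3 h_4$ and an element of order $2$ detected by $\tau^4 h_1 d_0$, while $(\ker\rho)_{15,6}$ is a single $\Z/32$ detected by $\tau^2 h_0\cdot h_0^3 h_4$; hence $(\coker\rho)_{15,5}$ injects onto a subgroup of order $2$ whose nonzero element is detected by $\tau^4 h_1 d_0$. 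This bypasses Toda brackets altogether and uses only the already-computed groups. If you want to salvage your approach, the work you must supply is precisely the bracket construction you deferred, together with the Moss hypothesis verification; as it stands, the argument has a genuine gap.
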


\begin{proof}
The long exact sequence of
Corollary \ref{cor:LES} gives a short exact sequence
\[
0 \map (\coker \rho)_{15,5} \map \pi^\C_{15,5} \map
(\ker \rho)_{15,6} \map 0.
\]
The group $\pi^\C_{15,5}$ is generated by an element
of order $32$, detected by $\tau^3 h_0^3 h_4$, and
an element of order $2$, detected by $\tau^4 h_1 d_0$.
Also $(\ker \rho)_{15,6}$ is generated by an element
of order $32$, detected by $\tau^2 h_0 \cdot h_0^3 h_4$.
It follows that $(\coker \rho)_{15,5}$ maps onto an
element of order $2$ that is detected by $\tau^4 h_1 d_0$.
\end{proof}

\section{Tables}\label{sec:appendix}

\begin{longtable}{llll}
\caption{Some values of the $\R$-motivic Mahowald invariant} \\
\toprule 
$s$ & $\alpha$ & $M^\R(\alpha)$ & indeterminacy \\
\midrule \endhead
\bottomrule \endfoot
\label{tab:R-Mahowald}
$0$ & $2^k$ & $\eta^k$ \\
$1$ & $\eta$ & $\nu$ & $2\nu$, $4 \nu$ \\
$2$ & $\eta^2$ & $\nu^2$ \\
$3$ & $\nu$ & $\sigma$ & $2 \sigma$, $4 \sigma$, $8 \sigma$ \\
$3$ & $2\nu$ & $\eta \sigma$ & $\epsilon$ \\
$3$ & $4 \nu$ & $\eta^2 \sigma$ & $\eta \epsilon$ \\
$6$ & $\nu^2$ & $\sigma^2$ & $\kappa$ \\
$7$ & $\sigma$ & $\tau \sigma^2$ \\
$7$ & $2 \sigma$ & $\eta_4$ & $\eta \rho_{15}$ \\
$7$ & $4 \sigma$ & $\eta \eta_4$ & $\eta^2 \rho_{15}$, $\nu \kappa$ \\
$7$ & $8 \sigma$ & $\eta^2 \eta_4$ & $\eta^3 \rho_{15}$ \\
$8$ & $\eta \sigma$ & $\nu_4$ & $2 \nu_4$, $4 \nu_4$ \\
$8$ & $\epsilon$ & $\sigmabar$ \\
$9$ & $\eta^2 \sigma$ & $\nu \nu_4$ & $\tau \eta \kappabar$ \\
$9$ & $\eta \epsilon$ & $\nu \sigmabar$ & $\tau \eta^2 \kappabar$ \\
$9$ & $\mu_9$ & $\nu \kappabar$ & $2 \nu \kappabar$, $4 \nu \kappabar$ \\
$10$ & $\eta \mu_9$ & $\nu \cdot \nu \kappabar$ \\
$11$ & $\zeta_{11}$ & $\tau \nu^2 \kappabar$ & $\eta^3 \rho_{23}$ \\
$11$ & $2 \zeta_{11}$ & $\{h_1 h_3 g\}$ & $\eta^5 \rho_{23}$ \\
$11$ & $4 \zeta_{11}$ & $\eta \{h_1 h_3 g\}$ & $\eta^6 \rho_{23}$ \\
\end{longtable}

\begin{longtable}{lllll}
\caption{$h_1$-periodic Bockstein differentials} \\
\toprule 
coweight & $(s,f,w)$ & $x$ & $d_r$ & $d_r(x)$ \\
\midrule \endfirsthead
\caption[]{$h_1$-periodic Bockstein differentials} \\
\toprule 
coweight & $(s,f,w)$ & $x$ & $d_r$ & $d_r(x)$ \\
\midrule \endhead
\bottomrule \endfoot
\label{tab:Bock-h1-local}
$4$ & $(9,5,5)$ & $P h_1$ & $d_3$ & $h_1^3 c_0$ \\
$7$ & $(16,7,9)$ & $P c_0$ & $d_3$ & $h_1^4 d_0$ \\
$8$ & $(17,9,9)$ & $P^2 h_1$ & $d_7$ & $h_1^6 e_0$ \\
$10$ & $(22,8,12)$ & $P d_0$ & $d_3$ & $h_1^2 c_0 d_0$ \\
$11$ & $(25,8,14)$ & $P e_0$ & $d_3$ & $h_1^2 c_0 e_0$ \\
$12$ & $(25,13,13)$ & $P^3 h_1$ & $d_3$ & $P^2 h_1^3 c_0$ \\
$13$ & $(30,11,17)$ & $P c_0 d_0$ & $d_3$ & $h_1^4 d_0^2$ \\
\end{longtable}

\begin{longtable}{lllll}
\caption{Bockstein differentials} \\
\toprule 
coweight & $(s,f,w)$ & $x$ & $d_r$ & $d_r(x)$ \\
\midrule \endfirsthead
\caption[]{Bockstein differentials} \\
\toprule 
coweight & $(s,f,w)$ & $x$ & $d_r$ & $d_r(x)$ \\
\midrule \endhead
\bottomrule \endfoot
\label{tab:Bock}
$1$ & $(0, 0, -1)$ & $\tau$ & $d_1$ & $h_0$ \\
$2$ & $(0, 0, -2)$ & $\tau^2$ & $d_2$ & $\tau h_1$ \\
$4$ & $(0, 0, -4)$ & $\tau^4$ & $d_4$ & $\tau^2 h_2$ \\
$4$ & $(1, 1, -3)$ & $\tau^4 h_1$ & $d_6$ & $\tau h_2^2$ \\
$4$ & $(2, 2, -2)$ & $\tau^4 h_1^2$ & $d_7$ & $c_0$ \\
$4$ & $(7, 4, 3)$ & $\tau h_0^3 h_3$ & $d_4$ & $h_1^2 c_0$ \\
$4$ & $(9, 5, 5)$ & $P h_1$ & $d_3$ & $h_1^3 c_0$ \\
$5$ & $(6, 2, 1)$ & $\tau^3 h_2^2$ & $d_3$ & $\tau c_0$ \\
$6$ & $(7, 4, 1)$ & $\tau^3 h_0^3 h_3$ & $d_3$ & $\tau P h_1$ \\
$6$ & $(9, 4, 3)$ & $\tau^3 h_1 c_0$ & $d_3$ & $P h_2$ \\
$7$ & $(8, 3, 1)$ & $\tau^4 c_0$ & $d_7$ & $d_0$ \\
$7$ & $(11, 5, 4)$ & $\tau^2 P h_2$ & $d_6$ & $h_1^2 d_0$ \\
$7$ & $(14, 6, 7)$ & $\tau h_0^2 d_0$ & $d_4$ & $h_1^3 d_0$ \\
$7$ & $(16, 7, 9)$ & $P c_0$ & $d_3$ & $h_1^4 d_0$ \\
$8$ & $(0, 0, -8)$ & $\tau^8$ & $d_8$ & $\tau^4 h_3$ \\
$8$ & $(2, 2, -6)$ & $\tau^8 h_1^2$ & $d_{13}$ & $\tau h_0 h_3^2$ \\
$8$ & $(3, 3, -5)$ & $\tau^8 h_1^3$ & $d_{15}$ & $e_0$ \\
$8$ & $(7, 4, -1)$ & $\tau^5 h_0^3 h_3$ & $d_{12}$ & $h_1 e_0$ \\
$8$ & $(9, 5, 1)$ & $\tau^4 P h_1$ & $d_{11}$ & $h_1^2 e_0$ \\
$8$ & $(15, 8, 7)$ & $\tau h_0^7 h_4$ & $d_8$ & $h_1^5 e_0$ \\
$8$ & $(17, 9, 9)$ & $P^2 h_1$ & $d_7$ & $h_1^6 e_0$ \\
$9$ & $(3, 1, -6)$ & $\tau^8 h_2$ & $d_{12}$ & $\tau^2 h_3^2$ \\
$9$ & $(14, 3, 5)$ & $\tau^3 h_0 h_3^2$ & $d_5$ & $f_0$ \\
$9$ & $(14, 6, 5)$ & $\tau^3 h_0^2 d_0$ & $d_3$ & $\tau P c_0$ \\
$9$ & $(20, 4, 11)$ & $\tau g$ & $d_1$ & $h_0 g$ \\
$10$ & $(6, 2, -4)$ & $\tau^8 h_2^2$ & $d_{14}$ & $\tau c_1$ \\
$10$ & $(9, 3, -1)$ & $\tau^7 h_1^2 h_3$ & $d_9$ & $\tau^2 e_0$ \\
$10$ & $(14, 4, 4)$ & $\tau^4 d_0$ & $d_5$ & $\tau^2 h_1 e_0$ \\
$10$ & $(15, 8, 5)$ & $\tau^3 h_0^7 h_4$ & $d_3$ & $\tau P^2 h_1$ \\
$10$ & $(17, 8, 7)$ & $\tau^3 P h_1 c_0$ & $d_3$ & $P^2 h_2$ \\
$10$ & $(20, 4, 10)$ & $\tau^2 g$ & $d_2$ & $\tau h_1 g$ \\
$10$ & $(22, 8, 12)$ & $P d_0$ & $d_3$ & $h_1^2 c_0 d_0$ \\
$11$ & $(8, 2, -3)$ & $\tau^8 h_1 h_3$ & $d_{12}$ & $\tau^2 c_1$ \\
$11$ & $(14, 3, 3)$ & $\tau^5 h_0 h_3^2$ & $d_5$ & $\tau^2 f_0$ \\
$11$ & $(17, 4, 6)$ & $\tau^4 e_0$ & $d_5$ & $\tau^2 h_1 g$ \\
$11$ & $(20, 6, 9)$ & $\tau^3 h_0 h_2 e_0$ & $d_6$ & $c_0 e_0$ \\
$11$ & $(23, 5, 12)$ & $\tau^2 h_2 g$ & $d_3$ & $h_1^2 h_4 c_0$ \\
$11$ & $(23, 7, 12)$ & $i$ & $d_4$ & $h_1 c_0 e_0$ \\
$11$ & $(25, 8, 14)$ & $P e_0$ & $d_3$ & $h_1^2 c_0 e_0$ \\
$12$ & $(7, 4, -5)$ & $\tau^9 h_0^3 h_3$ & $d_5$ & $\tau^6 P h_2$ \\
$12$ & $(9, 5, -3)$ & $\tau^8 P h_1$ & $d_6$ & $\tau^5 h_0^2 d_0$ \\
$12$ & $(10, 6, -2)$ & $\tau^8 P h_1^2$ & $d_7$ & $\tau^4 P c_0$ \\
$12$ & $(14, 2, 2)$ & $\tau^6 h_3^2$ & $d_6$ & $\tau^3 c_1$ \\
$12$ & $(15, 8, 3)$ & $\tau^5 h_0^7 h_4$ & $d_5$ & $\tau^2 P^2 h_2$ \\
$12$ & $(17, 9, 5)$ & $\tau^4 P^2 h_1$ & $d_6$ & $\tau P h_0^2 d_0$ \\
$12$ & $(18, 10, 6)$ & $\tau^4 P^2 h_1^2$ & $d_7$ & $P^2 c_0$ \\
$12$ & $(23, 12, 11)$ & $\tau h_0^5 i$ & $d_4$ & $P^2 h_1^2 c_0$ \\
$12$ & $(25, 13, 13)$ & $P^3 h_1$ & $d_3$ & $P^2 h_1^3 c_0$ \\
$13$ & $(14, 3, 1)$ & $\tau^7 h_0 h_3^2$ & $d_7$ & $\tau^4 g$ \\
$13$ & $(17, 4, 4)$ & $\tau^6 e_0$ & $d_5$ & $\tau^4 h_1 g$ \\
$13$ & $(18, 5, 5)$ & $\tau^6 h_1 e_0$ & $d_6$ & $\tau^3 h_0 h_2 g$ \\
$13$ & $(20, 6, 7)$ & $\tau^5 h_0 h_2 e_0$ & $d_7$ & $j$ \\
$13$ & $(22, 10, 9)$ & $\tau^3 P h_0^2 d_0$ & $d_3$ & $\tau P^2 c_0$ \\
$13$ & $(23, 7, 10)$ & $\tau^2 i$ & $d_6$ & $d_0^2$ \\
$13$ & $(25, 8, 12)$ & $\tau^2 P e_0$ & $d_5$ & $h_1 d_0^2$ \\
\end{longtable}

\begin{landscape}

\begin{longtable}{lllllll}
\caption{Some Massey products in $\Ext_\R$} \\
\toprule 
coweight & $(s,f,w)$ & bracket & contains & indeterminacy & proof & used in \\
\midrule \endhead
\bottomrule \endfoot
\label{tab:Massey}
3 & $(3,1,0)$ & $\an{\rho^2, \tau h_1, h_2}$ & $\tau^2 h_2$ & $\rho^4 h_3$ &
	$d_2(\tau^2) = \rho^2 \tau h_1$ & $\an{\rho^2, \tau \eta, \nu}$, Lemma \ref{lem:<rho^2, tau eta, nu_4>} \\
4 & $(8, 3, 4)$ & $\an{c_0, h_0, \rho}$ & $\tau c_0$ & 
	$\rho \tau h_1 \cdot h_1 h_3$ & $d_1(\tau) = \rho h_0$ & 
	$\an{\epsilon, \hsf, \rho}$ \\
$7$ & $(7, 1, 0)$ & $\an{\rho^4, \tau^2 h_2, h_3}$ & $\tau^4 h_3$ &
	$\rho^8 h_4$ & $d_4(\tau^4) = \rho^4 \tau^2 h_2$ &
	$\an{\rho^4, \tau^2 \nu, \sigma}$ \\
9 & $(21, 5, 12)$ & $\an{\tau h_1, h_1^4, h_4}$ & $h_2 f_0$ & $0$ &
	$\C$-motivic & Lemma \ref{lem:d2-t^2g} \\
9 & $(21,5,12)$ & $\an{\rho, h_2 e_0, h_1}$ & $h_2 f_0$ & $\rho^2 h_2 g$
	& $d_1(\tau g) = \rho h_2 e_0$ & $\an{\rho, \{h_2 e_0\}, \eta}$ \\
10 & $(18,4,8)$ & $\an{\tau^2 h_2, h_3, h_0^2 h_3}$ & $\tau^2 f_0$ &
	$\tau^2 h_2 \cdot h_0^2 h_4$, $\rho^5 h_4 c_0$ & $\C$-motivic & Lemma \ref{lem:t^2 f0 * h1} \\
10 & $(21,5,11)$ & $\an{\tau h_2^2, h_3, h_0^2 h_3}$ & $\tau^2 h_1 g$ &
	$\rho^3 h_1 h_4 c_0$ & $\C$-motivic & Lemma \ref{lem:t^2 f0 * h1} \\
11 & $(3,1,-8)$ & $\an{\rho^2, \tau^9 h_1, h_2}$ & $\tau^{10}h_2$ & 0 &
	$d_2(\tau^{10}) = \rho^2\tau^9 h_1$ & $\an{\rho^2, \tau^9 \eta, \nu}$ \\
11 & $(9,4, -2)$ & $\an{\tau h_1 \cdot \tau^5 c_0, \tau h_1, \rho^2}$
	& $h_1 \cdot \tau^8 c_0$ & 0 & $d_2(\tau^2) = \rho^2 \tau h_1$ & Lemma \ref{lem:t^8 h1 c0 * h1} \\
11 & $(11,5,0)$ & $\an{\rho^2, \tau^5 h_1, P h_2}$ & $\tau^6 P h_2$ &
	$\rho^{16} h_3 g$ & $d_2(\tau^6) = \rho^2\tau^5 h_1$ & 
	$\an{\rho^2, \tau^5 \eta, \zeta_{11}}$ \\
11 & $(14,6,3)$ & $\an{h_1, \tau^4 P h_2, \tau h_1}$ & $\tau^5 h_0^2 d_0$ &
	$0$ & $\C$-motivic & Lemma \ref{lem:t^8 h1 c0 * h1} \\
11 & $(17, 8, 6)$ & $\an{\tau h_1 \cdot \tau P c_0, \tau h_1, \rho^2}$ &
	$h_1 \cdot \tau^4 P c_0$ & $0$ & $d_2(\tau^2) = \rho^2 \tau h_1$ &
	Lemma \ref{lem:t^8 h1 c0 * h1} \\
11 & $(19,3,8)$ & $\an{\rho, h_0, \tau^2 c_1}$ & $\tau^3 c_1$ &
	$\rho^2 \tau^2 h_2 \cdot h_2 h_4$ & $d_1(\tau) = \rho h_0$ & 
	$\an{\rho, \hsf, \tau^2 \sigmabar}$ \\
11 & $(19, 3, 8)$ & $\an{\rho^2, \tau h_1, \tau c_1}$ & $\tau^3 c_1$ &
	$\rho^2 \tau^2 h_2 \cdot h_2 h_4$ & $d_2(\tau^2) = \rho^2 \tau h_1$ &
	Lemma \ref{lem:h1 * t^3 c1} \\
11 & $(19,9,8)$ & $\an{\rho^2,\tau h_1,P^2 h_2}$ & $\tau^2 P^2 h_2$ & 0 &
	$d_2(\tau^2) = \rho^2 \tau h_1$ & $\an{\rho^2, \tau \eta, \zeta_{19}}$ \\
11 & $(22,4,11)$ & $\an{\tau h_1, \tau c_1, h_1}$ & $h_2 \cdot \tau^2 c_1$ & $\rho h_4 \cdot \tau c_0$ & 
	$\C$-motivic & Lemma \ref{lem:h1 * t^3 c1} \\
11 & $(22,10,11)$ & $\an{h_1, P^2 h_2, \tau h_1}$ & $\tau P h_0^2 d_0$ &
	$0$ & $\C$-motivic & Lemma \ref{lem:t^8 h1 c0 * h1} \\
12 & $(20,4,8)$ & $\an{\rho, \tau^2 h_0, \rho, h_2 e_0}$ & $\tau^4 g$ & $\rho^2 h_2 \cdot \tau^3 c_1$ &
	$d_1(\tau^3) = \rho \tau^2 h_0$, & 
	$\an{\rho, \tau^2 \hsf, \rho, \{h_2 e_0\}}$
	\\&&&&&$d_1(\tau g) = \rho h_2 e_0$ \\
\end{longtable}

\end{landscape}

\begin{longtable}{lllll}
\caption{Hidden $h_0$ extensions in the $\rho$-Bockstein spectral sequence} \\
\toprule 
coweight & $(s,f,w)$ & source & target \\
\midrule \endhead
\bottomrule \endfoot
\label{tab:Bock-h0-extn}
1 & $(1, 1, 0)$ & $\tau h_1$ & $\rho \tau h_1^2$ 
\\3 & $(3, 3, 0)$ & $\tau^2 h_0^2 h_2$ & $\rho^6 h_1 c_0$ 
\\3 & $(7, 4, 4)$ & $h_0^3 h_3$ & $\rho^3 h_1^2 c_0$ 
\\4 & $(6, 2, 2)$ & $\tau^2 h_2^2$ & $\rho^2 \tau c_0$ 
\\4 & $(8, 3, 4)$ & $\tau c_0$ & $\rho \tau h_1 c_0$ 
\\5 & $(1, 1, -4)$ & $\tau^5 h_1$ & $\rho \tau^5 h_1^2$ 
\\5 & $(7, 4, 2)$ & $\tau^2 h_0^3 h_3$ & $\rho^2 \tau P h_1$ 
\\5 & $(9, 4, 4)$ & $\tau^2 h_1 c_0$ & $\rho^2 P h_2$ 
\\5 & $(9, 5, 4)$ & $\tau P h_1$ & $\rho \tau P h_1^2$ 
\\6 & $(6, 2, 0)$ & $\tau^4 h_2^2$ & $\rho^3 \tau^3 h_2^3$ 
\\6 & $(14, 6, 8)$ & $h_0^2 d_0$ & $\rho^3 h_1^3 d_0$ 
\\7 & $(3, 3, -4)$ & $\tau^6 h_0^2 h_2$ & $\rho^{14} e_0$ 
\\7 & $(7, 4, 0)$ & $\tau^4 h_0^3 h_3$ & $\rho^{11} h_1 e_0$ 
\\7 & $(11, 7, 4)$ & $\tau^2 P h_0^2 h_2$ & $\rho^{10} h_1^4 e_0$ 
\\7 & $(15, 8, 8)$ & $h_0^7 h_4$ & $\rho^7 h_1^5 e_0$ 
\\8 & $(8, 3, 0)$ & $\tau^5 c_0$ & $\rho \tau^5 h_1 c_0$
\\8 & $(14, 3, 6)$ & $\tau^2 h_0 h_3^2$ & $\rho^4 f_0$
\\8 & $(14, 6, 6)$ & $\tau^2 h_0^2 d_0$ & $\rho^2 \tau P c_0$
\\8 & $(16, 7, 8)$ & $\tau P c_0$ & $\rho \tau P h_1 c_0$
\\9 & $(1, 1, -8)$ & $\tau^9 h_1$ & $\rho \tau^9 h_1^2$
\\9 & $(7, 4, -2)$ & $\tau^6 h_0^3 h_3$ & $\rho^2 \tau^5 P h_1$
\\9 & $(9, 3, 0)$ & $\tau^6 h_1^2 h_3$ & $\rho^8 \tau^2 e_0$
\\9 & $(9, 4, 0)$ & $\tau^6 h_1 c_0$ & $\rho^2 \tau^4 P h_2$
\\9 & $(9, 5, 0)$ & $\tau^5 P h_1$ & $\rho \tau^5 P h_1^2$
\\9 & $(15, 8, 6)$ & $\tau^2 h_0^7 h_4$ & $\rho^2 \tau P^2 h_1$
\\9 & $(17, 8, 8)$ & $\tau^2 P h_1 c_0$ & $\rho^2 P^2 h_2$
\\9 & $(17, 9, 8)$ & $\tau P^2 h_1$ & $\rho \tau P^2 h_1^2$
\\10 & $(14, 3, 4)$ & $\tau^4 h_0 h_3^2$ & $\rho^4 \tau^2 f_0$
\\10 & $(18, 5, 8)$ & $\tau^2 h_0 f_0$ & $\rho^5 \tau h_2^2 e_0$
\\10 & $(20, 6, 10)$ & $\tau^2 h_0 h_2 e_0$ & $\rho^5 c_0 e_0$ 
\\11 & $(3, 3, -8)$ & $\tau^{10} h_0^2 h_2$ & $\rho^6 \tau^8 h_1 c_0$
\\11 & $(7, 4, -4)$ & $\tau^8 h_0^3 h_3$ & $\rho^4 \tau^6 P h_2$
\\11 & $(11, 7, 0)$ & $\tau^6 P h_0^2 h_2$ & $\rho^6 \tau^4 P h_1 c_0$
\\11 & $(15, 8, 4)$ & $\tau^4 h_0^7 h_4$ & $\rho^4 \tau^2 P^2 h_2$
\\11 & $(19, 3, 8)$ & $\tau^3 c_1$ & $\rho^3 \tau^2 h_2 c_1$
\\11 & $(19, 11, 8)$ & $\tau^2 P^2 h_0^2 h_2$ & $\rho^6 P^2 h_1 c_0$
\\11 & $(23, 12, 12)$ & $h_0^5 i$ & $\rho^3 P^2 h_1^2 c_0$
\\12 & $(6, 2, -6)$ & $\tau^{10} h_2^2$ & $\rho^2 \tau^9 c_0$
\\12 & $(8, 3, -4)$ & $\tau^9 c_0$ & $\rho \tau^9 h_1 c_0$
\\12 & $(14, 3, 2)$ & $\tau^6 h_0 h_3^2$ & $\rho^6 \tau^4 g$
\\12 & $(14, 6, 2)$ & $\tau^6 h_0^2 d_0$ & $\rho^2 \tau^5 P c_0$
\\12 & $(16, 7, 4)$ & $\tau^5 P c_0$ & $\rho \tau^5 P h_1 c_0$
\\12 & $(18, 5, 6)$ & $\tau^6 h_0 f_0$ & $\rho^5 \tau^3 h_2^2 e_0$
\\12 & $(20, 6, 8)$ & $\tau^4 h_0^2 g$ & $\rho^6 j$
\\12 & $(22, 10, 10)$ & $\tau^2 P h_0^2 d_0$ & $\rho^2 \tau P^2 c_0$
\\12 & $(24, 11, 12)$ & $\tau P^2 c_0$ & $\rho \tau P^2 h_1 c_0$
\\12 & $(26, 9, 14)$ & $h_0^2 j$ & $\rho^4 h_1^2 d_0^2$
\\
\end{longtable}

\begin{longtable}{lllll}
\caption{Hidden $h_1$ extensions in the $\rho$-Bockstein spectral sequence} \\
\toprule 
coweight & $(s,f,w)$ & source & target & proof \\
\midrule \endhead
\bottomrule \endfoot
\label{tab:Bock-h1-extn}
2 & $(0, 1, -2)$ & $\tau^2 h_0$ & $\rho \tau^2 h_1^2$ & 
\\3 & $(3, 1, 0)$ & $\tau^2 h_2$ & $\rho^2 \tau h_2^2$ & 
\\3 & $(6, 2, 3)$ & $\tau h_2^2$ & $\rho c_0$ & 
\\5 & $(9, 4, 4)$ & $\tau^2 h_1 c_0$ & $\rho P h_2$ & 
\\6 & $(0, 1, -6)$ & $\tau^6 h_0$ & $\rho \tau^6 h_1^2$ & 
\\6 & $(9, 3, 3)$ & $\tau^3 h_2^3$ & $\rho^4 d_0$ & Lemma \ref{lem:h1 t^3 h1^2 h3}
\\7 & $(14, 3, 7)$ & $\tau h_0 h_3^2$ & $\rho^2 e_0$ & 
\\9 & $(9, 3, 0)$ & $\tau^6 h_1^2 h_3$ & $\rho^7 \tau^2 e_0$ & 
\\9 & $(9, 4, 0)$ & $\tau^6 h_1 c_0$ & $\rho \tau^4 P h_2$ & 
\\9 & $(17, 8, 8)$ & $\tau^2 P h_1 c_0$ & $\rho P^2 h_2$ & 
\\9 & $(18, 5, 9)$ & $\tau^2 h_1 e_0$ & $\rho \tau h_2^2 d_0$ & 
\\10 & $(0, 1, -10)$ & $\tau^{10} h_0$ & $\rho \tau^{10} h_1^2$ & 
\\10 & $(14, 2, 4)$ & $\tau^4 h_3^2$ & $\rho^4 \tau^2 c_1$ & 
\\10 & $(18, 4, 8)$ & $\tau^2 f_0$ & $\rho^2 \tau^2 h_1 g$ & Lemma \ref{lem:t^2 f0 * h1}
\\10 & $(19, 3, 9)$ & $\tau^2 c_1$ & $\rho^2 \tau h_2 c_1$ &
\\11 & $(3,1,-8)$ & $\tau^{10} h_2$ & $\rho^2 \tau^9 h_2^2$ & 
\\11 & $(6,2,-5)$ & $\tau^9 h_2^2$ & $\rho \tau^8 c_0$ & 
\\11 & $(9,4,-2)$ & $\tau^8 h_1 c_0$ & $\rho \tau^6 P h_2$ & Lemma \ref{lem:t^8 h1 c0 * h1}
\\11 & $(11,5,0)$ & $\tau^6 P h_2$ & $\rho^2 \tau^5 h_0^2 d_0$ & Lemma
\ref{lem:t^8 h1 c0 * h1}
\\11 & $(14,6,3)$ & $\tau^5 h_0^2 d_0$ & $\rho \tau^4 P c_0$ & 
\\11 & $(17,8,6)$ & $\tau^4 P h_1 c_0$ & $\rho \tau^2 P^2 h_2$ & Lemma \ref{lem:t^8 h1 c0 * h1}
\\11 & $(19,3,8)$ & $\tau^3 c_1$ & $\rho^2 \tau^2 h_2 c_1$ & Lemma \ref{lem:h1 * t^3 c1}
\\11 & $(19,9,8)$ & $\tau^2 P^2 h_2$ & $\rho^2 \tau P h_0^2 d_0$ & Lemma
\ref{lem:t^8 h1 c0 * h1}
\\11 & $(22,10,11)$ & $\tau P h_0^2 d_0$ & $\rho P^2 c_0$ & 
\\12 & $(21,5,9)$ & $\tau^4 h_1 g$ & $\rho \tau^3 h_2^2 e_0$ & 
\\12 & $(22,9,10)$ & $\tau^2 P h_0 d_0$ & $\rho \tau^2 P h_1^2 d_0$ &
\\12 & $(23,6,11)$ & $\tau^3 h_2^2 e_0$ & $\rho^2 j$ & Lemma \ref{lem:t^3 h2^2 e0}
\\12 & $(26,7,14)$ & $j$ & $\rho d_0^2$ & Lemma \ref{lem:t^3 h2^2 e0}
\end{longtable}

\newpage

\begin{longtable}{llll}
\caption{Multiplicative generators of $\pi^\R_{*,*}$} \\
\toprule 
coweight & $(s,w)$ & element & detected by  \\
\midrule \endhead
\bottomrule \endfoot
\label{tab:pi-notation}
$0$ & $(-1,-1)$ & $\rho$ & $\rho$ \\
$0$ & $(0,0)$ & $\hsf$ & $h_0$  \\
$0$ & $(1,1)$ & $\eta$ & $h_1$  \\
$1$ & $(1,0)$ & $\tau \eta$ & $\tau h_1$ \\
$1$ & $(3,2)$ & $\nu$ & $h_2$ \\
$2$ & $(0,-2)$ & $\tau^2 \hsf$ & $\tau^2 h_0$ \\
$3$ & $(3,0)$ & $\tau^2 \nu$ & $\tau^2 h_2$ \\
$3$ & $(6,3)$ & $\tau \nu^2$ & $\tau h_2^2$ \\ 
$3$ & $(7,4)$ & $\sigma$ & $h_3$ \\
$3$ & $(8,5)$ & $\epsilon$ & $c_0$ \\
$4$ & $(0,-4)$ & $\tau^4 \hsf$ & $\tau^4 h_0$ \\ 
$4$ & $(8,4)$ & $\tau \epsilon$ & $\tau c_0$ \\ 
$5$ & $(1,-4)$ & $\tau^5 \eta$ & $\tau^5 h_1$ \\
$5$ & $(9,4)$ & $\tau \mu_9$ & $\tau P h_1$ \\
$5$ & $(11,6)$ & $\zeta_{11}$ & $P h_2$ \\
$6$ & $(0,-6)$ & $\tau^6 \hsf$ & $\tau^6 h_0$ \\ 
$6$ & $(14,8)$ & $\kappa$ & $d_0$ \\
$7$ & $(7,0)$ & $\tau^4 \sigma$ & $\tau^4 h_3$ \\ 
$7$ & $(11,4)$ & $\tau^2 \zeta_{11}$ & $\rho^6 e_0$ \\
$7$ & $(14,7)$ & $\tau \sigma^2$ & $\rho h_4$ \\ 
$7$ & $(15,8)$ & $\rho_{15}$ & $h_0^3 h_4$ \\ 
$7$ & $(16,9)$ & $\eta_4$ & $h_1 h_4$ \\
$8$ & $(0,-8)$ & $\tau^8 \hsf$ & $\tau^8 h_0$ \\ 
$8$ & $(8,0)$ & $\tau^5 \epsilon$ & $\tau^5 c_0$ \\ 
$8$ & $(14,6)$ & $\tau^2 \sigma^2$ & $\tau^2 h_3^2$ \\ 
$8$ & $(16,8)$ & $\tau \eta_4$ & $\tau h_1 \cdot h_4$ \\
$8$ & $(17,9)$ & $\tau \nu \kappa$  & $\rho f_0$ \\ 
$8$ & $(18,10)$ & $\nu_4$ & $h_2 h_4$ \\
$8$ & $(19,11)$ & $\bar{\sigma}$ & $c_1$ \\ 
$8$ & $(20,12)$ & $\{ h_2 e_0\}$ & $h_2 e_0$ \\
$9$ & $(1,-8)$ & $\tau^9 \eta$ & $\tau^9 h_1$ \\
$9$ & $(9,0)$ & $\tau^5 \mu_9$ & $\tau^5 P h_1$ \\ 
$9$ & $(11,2)$ & $\tau^4 \zeta_{11}$ & $\tau^4 P h_2$ \\ 
$9$ & $(15,6)$ & $\tau^3 \eta \kappa$ & $\rho^2 \tau^2 e_0$ \\ 
$9$ & $(17,8)$ & $\tau\mu_{17}$ & $\tau P^2 h_1$ \\
$9$ & $(19,10)$ & $\tau \sigmabar$ & $\tau c_1$ \\ 
$9$ & $(19, 10)$ & $\zeta_{19}$ & $P^2 h_2$ \\
$9$ & $(21,12)$ & $\tau \eta \kappabar$ & $h_2 f_0$ \\ 
$9$ & $(23,14)$ & $\nu \kappabar$ & $h_2 g$ \\ 
$10$ & $(0,-10)$ & $\tau^{10} \hsf$ & $\tau^{10} h_0$ \\ 
$10$ & $(15,5)$ & $\tau^4 \eta \kappa$ & $\rho^3 \tau^2 f_0$ \\ 
$10$ & $(18,8)$ & $\tau^2 \nu_4$ & $\tau^2 h_2 \cdot h_4$ \\ 
$10$ & $(19, 9)$ & $\tau^2 \sigmabar$ & $\tau^2 c_1$ \\
$10$ & $(20,10)$ & $\tau^2 \hsf \kappabar$ & $h_2 \cdot \tau^2 e_0$ \\ 
$10$ & $(21,11)$ & $\tau \nu \nu_4$ & $\tau h_2^2 \cdot h_4$ \\ 
$11$ & $(3,-8)$ & $\tau^{10} \nu$ & $\tau^{10} h_2$ \\ 
$11$ & $(6,-5)$ & $\tau^9 \nu^2$ & $\tau^9 h_2^2$ \\ 
$11$ & $(8,-3)$ & $\tau^8 \epsilon$ & $\tau^8 c_0$ \\ 
$11$ & $(11,0)$ & $\tau^6 \zeta_{11}$ & $\tau^6 P h_2$ \\ 
$11$ & $(15,4)$ & $\tau^4 \rho_{15}$ & $\tau^4 h_0^3 h_4$ \\ 
$11$ & $(17,6)$ & $\tau^4 \nu \kappa$ & $\tau^2 h_0 \cdot \tau^2 e_0$ \\ 
$11$ & $(19,8)$ & $\tau^3 \sigmabar$ & $\tau^3 c_1$ \\ 
$11$ & $(19,8)$ & $\tau^2 \zeta_{19}$ & $\tau^2 P^2 h_2$ \\ 
$11$ & $(23,12)$ & $\rho_{23}$ & $h_0^2 i$ \\ 
$11$ & $(26,15)$ & $\tau \nu^2 \kappabar$ & $\rho h_3 g$ \\ 
$11$ & $(28,17)$ & $\{h_1 h_3 g\}$ & $h_1 h_3 g$ \\ 
\end{longtable}

\begin{landscape}

\begin{longtable}{llllll}
\caption{Some Toda brackets in $\pi_{*,*}$} \\
\toprule 
coweight & $(s,w)$ & bracket & detected by & proof & used in \\
\midrule 
\label{tab:Toda}
3 & $(3,0)$ & $\an{\rho^2, \tau \eta, \nu}$ & $\tau^2 h_2$ &
$\an{\rho^2, \tau h_1, h_2}$ & Table \ref{tab:perm}
\\4 & $(8,4)$ & $\an{\epsilon, \hsf, \rho}$ & $\tau c_0$ &
	$\an{c_0, h_0, \rho}$ & Table \ref{tab:perm}
\\$7$ & $(7,0)$ & $\an{\rho^4, \tau^2 \nu, \sigma}$ & $\tau^4 h_3$ &
	$\an{\rho^4, \tau^2 h_2, h_3}$ & Table \ref{tab:perm}
\\$7$ & $(14,7)$ & $\an{\rho, \hsf, \sigma^2}$ & $\rho h_4$ &
$d_2(h_4) = h_0 h_3^2$ & Lemma \ref{lem:R-to-C-ph4}
\\8 & $(8,0)$ & $\an{\tau^5 \eta, \hsf \nu, \nu}$ & $\tau^5 c_0$ &
$\C$-motivic & Table \ref{tab:perm}
\\8 & $(14,6)$ & $\an{\tau^2 \nu, \sigma, \nu}$ & $\tau^2 h_3^2$ &
	$\C$-motivic & Table \ref{tab:perm}, Lemma \ref{lem:eta * t^2 h3^2}
\\8 & $(16,8)$ & $\an{\sigma^2, 2, \tau \eta}$ & $\tau h_1 \cdot h_4$ & 
	$d_2(h_4) = (h_0 + \rho h_1) h_3^2$ & Table \ref{tab:perm}
\\8 & $(16, 8)$ & $\an{\tau \mu_9, \hsf\nu, \nu}$ & $\tau P c_0$ & 
	$\C$-motivic & Table \ref{tab:perm}
\\$8$ & $(17,9)$ & $\langle \rho, \hsf, \nu \kappa \rangle$ & $\rho f_0$ &
$d_2(f_0) = h_0^2 e_0$ & Lemma \ref{lem:R-to-C-pf0}
\\8 & $(18,10)$ & $\an{\nu, \sigma, \hsf \sigma}$ & $h_2 h_4$ & $d_2(h_4) = h_0 h_3^2$ & Table \ref{tab:perm}
\\9 & $(15, 6)$ & $\an{\rho, \rho \tau \eta, \tau \eta \cdot \kappa}$ &
	$\rho^2 \tau^2 e_0$ & $d_2(\tau^2 e_0) = \tau^2 h_1^2 d_0$ &
	Table \ref{tab:perm}
\\9 & $(21, 12)$ & $\an{\rho, \{h_2 e_0\}, \eta}$ & $h_2 f_0$ & 
$\an{\rho, h_2 e_0, h_1}$ & Lemma \ref{lem:h0 * t h1 g}
\\9 & $(21, 13)$ & $\an{\{h_2 e_0\}, \eta, \hsf}$ &
	$c_0 d_0$ & $\C$-motivic & Lemma \ref{lem:h0 * t h1 g}
\\10 & $(18,8)$ & $\an{\rho^2, \tau \eta, \nu_4}$ & $\tau^2 h_2 \cdot h_4$ &
Lemma \ref{lem:<rho^2, tau eta, nu_4>} & Table \ref{tab:perm}
\\10 & $(19,9)$ & $\an{\tau^2 \nu, \eta \sigma, \sigma}$ & $\tau^2 c_1$ &
	$\C$-motivic & Table \ref{tab:perm}
\\11 & $(3,-8)$ & $\an{\rho^2, \tau^9 \eta, \nu}$ & $\tau^{10}h_2$ &
$\an{\rho^2, \tau^9 h_1, h_2}$ & Table \ref{tab:perm}
\\11 & $(11,0)$ & $\an{\rho^2, \tau^5 \eta, \zeta_{11}}$ & $\tau^6 P h_2$ & $\an{\rho^2, \tau^5 h_1, P h_2}$ & Table \ref{tab:perm}
\\11 & $(19,8)$ & $\an{\rho^2,\tau \eta,\zeta_{19}}$ & $\tau^2 P^2 h_2$ & $\an{\rho^2, \tau h_1, P^2 h_2}$ & Table \ref{tab:perm}
\\11 & $(19,8)$ & $\an{\rho, \hsf, \tau^2 \bar{\sigma}}$ & $\tau^3 c_1$ &
$\an{\rho, h_0, \tau^2 c_1}$ & Table \ref{tab:perm}
\\12 & $(8,-4)$ & $\an{\tau^9 \eta, \hsf \nu, \nu}$ & $\tau^9 c_0$ & $\C$-motivic & Table \ref{tab:perm}
\\12 & $(16,4)$ & $\an{\sigma^2, 2, \tau^5 \eta}$ & $\tau^5 h_1 \cdot h_4$ &
	$d_2(h_4) = (h_0 + \rho h_1) h_3^2$ & Table \ref{tab:perm}
\\12 & $(16,4)$ & $\an{\tau^5 \mu_9, \hsf \nu,\nu}$  & $\tau^5 P
c_0$ & $\C$-motivic & Table \ref{tab:perm}
\\12 & $(20,8)$ & $\an{\rho,\tau^2 \hsf,\rho,\{ h_2 e_0 \}}$ & $\tau^4 g$ &
$\an{\rho, \tau^2 h_0,\rho,h_2e_0}$ &
Table \ref{tab:perm}
\\12 & $(24, 12)$ & $\an{\tau \mu_{17}, \hsf \nu,\nu}$ & $\tau P^2 c_0$ &
$\C$-motivic & Table \ref{tab:perm}
\\\bottomrule
\end{longtable}

\end{landscape}

\begin{longtable}{llll}
\caption{Some permanent cycles in the $\R$-motivic Adams spectral sequence} \\
\toprule 
coweight & $(s,f,w)$ & element & proof \\
\midrule \endhead
\bottomrule \endfoot
\label{tab:perm}
3 & $(3,1,0)$ & $\tau^2 h_2$ & $\an{\rho^2, \tau \eta, \nu}$
\\4 & $(8,3,4)$ & $\tau c_0$ & $\an{\epsilon, \hsf, \rho}$
\\7 & $(7,1,0)$ & $\tau^4 h_3$ & $\an{\rho^4, \tau^2 \nu, \sigma}$
\\7 & $(11,4)$ & $\rho^6 e_0$ & Lemma \ref{lem:p^6 e0 perm}
\\8 & $(8,3,0)$ & $\tau^5 c_0$  & $\an{\tau^5 \eta, \hsf \nu, \nu}$
\\8 & $(14,6)$ & $\tau^2 h_3^2$ & $\an{\tau^2 \nu, \sigma, \nu}$
\\8 & $(16,7,8)$ & $\tau P c_0$ & $\an{\tau \mu_9, \hsf \nu, \nu}$
\\8 & $(16,2,8)$ & $\tau h_1 \cdot h_4$ & $\an{\sigma^2, 2, \tau \eta}$
\\8 & $(18,2,10)$ & $h_2 h_4$ & $\an{\nu, \sigma, \hsf \sigma}$
\\9 & $(15,4,6)$ & $\rho^2 \tau^2 e_0$ & 
	$\an{\rho, \rho \tau \eta, \tau \eta \cdot \kappa}$
\\10 & $(18,2,8)$ & $\tau^2 h_2 \cdot h_4$ & $\an{\rho^2, \tau \eta, \nu_4}$ 
\\10 & $(19,3,9)$ & $\tau^2 c_1$ & $\an{\tau^2 \nu, \eta \sigma, \sigma}$
\\11 & $(3,1,-8)$ & $\tau^{10}h_2$ & $\an{\rho^2,\tau^9 \eta,\nu}$
\\11 & $(11,5,0)$ & $\tau^6 P h_2$ & $\an{\rho^2,\tau^5 \eta, \zeta_{11}}$
\\11 & $(19,3,8)$ & $\tau^3 c_1$ & $\an{\rho, \hsf, \tau^2 \bar{\sigma}}$
\\11 & $(19,9,8)$ & $\tau^2 P^2 h_2$ & $\an{\rho^2,\tau \eta,\zeta_{19}}$
\\11 & $(23,4,12)$ & $h_4 \cdot \tau c_0$ & $\sigma \cdot \tau \eta_4$
\\12 & $(8,3,-4)$ & $\tau^9 c_0$ & $\an{\tau^9 \eta, \hsf \nu, \nu}$
\\12 & $(16,2,4)$ & $\tau^5 h_1 \cdot h_4$ & $\an{\sigma^2, 2, \tau^5 \eta}$
\\12 & $(16,7,4)$ & $\tau^5 P c_0$ & $\an{\tau^5 \mu_9, \hsf \nu,\nu}$
\\12 & $(20,4,8)$ & $\tau^4 g$ & $\an{\rho, \tau^2 h_0, \rho, h_2
e_0}$
\\12 & $(24,11,12)$ & $\tau P^2 c_0$ & $\an{\tau \mu_{17}, \hsf \nu,\nu}$
\end{longtable}

\begin{longtable}{lllll}
\caption{Adams $d_2$ differentials} \\
\toprule 
coweight & $(s,f,w)$ & $x$ & $d_2(x)$ & proof \\
\midrule \endhead
\bottomrule \endfoot
\label{tab:adams-d2}
7 & $(15,1,8)$ & $h_4$ & $h_0 h_3^2$  & classical
\\7 & $(17,4,10)$ & $e_0$ & $h_1^2 d_0$  & classical
\\7 & $(14, 3, 7)$ & $\tau h_0 h_3^2$ & $\rho^2 h_1 d_0$ & Lemma \ref{lem:d2(th0h3^2)}
\\8 & $(18, 4, 10)$ & $f_0$ & $h_0^2 e_0$  & Lemma \ref{lem:d2(f_0)}
\\9 & $(17, 4, 8)$ & $\tau^2 e_0$ & $(\tau h_1)^2 d_0$ & classical 
\\10 & $(18, 4, 8)$ & $\tau^2 f_0$ & $\tau^2 h_0^2 e_0 + \rho^3 \tau h_2^2 \cdot d_0$ & Lemma \ref{lem:d2(t^2 f0)}
\\10 & $(21, 5, 11)$ & $\tau^2 h_1 g$ & $\rho^2 c_0 d_0$ & Lemma \ref{d_2(t^2 h1 g)} 
\\ 11 & $(23, 8, 12)$ & $h_0 i$ & $P h_0^2 d_0$ & classical
\\ 11 & $(27, 5, 16)$ & $h_3 g$ & $h_1^3 h_4 c_0$ & $\C$-motivic
\\ 12 & $(26, 7, 14)$ & $j$ & $P h_2 \cdot d_0$ & classical
\end{longtable}

\newpage

\begin{longtable}{lllll}
\caption{Adams $d_3$ differentials} \\
\toprule 
coweight & $(s,f,w)$ & $x$ & $d_r(x)$ & proof \\
\midrule \endhead
\bottomrule \endfoot
\label{tab:higher-adams}
7 & $(15, 2, 8)$ & $h_0h_4$ & $h_0d_0 + \rho h_1 d_0$ & Lemma \ref{lem:h0 h4}
\\12 & $(23, 6, 11)$ & $\tau h_2^2 \cdot \tau^2 e_0$ & $\rho \tau P h_1 \cdot d_0$ & Lemma \ref{lem:p j}
\\12 & $(25,7,13)$ & $c_0 \cdot \tau^2 e_0$ & $\tau P h_1 \cdot h_1 d_0$ & Lemma \ref{lem:p j}
\end{longtable}

\begin{longtable}{lllll}
\caption{Hidden $\rho$ extensions in the $\R$-motivic 
Adams spectral sequence} \\
\toprule 
coweight & $(s,f,w)$ & source & target \\
\midrule \endhead
\bottomrule \endfoot
\label{tab:Adams-rho-extn}
$7$ & $(15, 4, 8)$ & $h_0^3 h_4$ & $\rho^4 h_1 e_0$ \\
$7$ & $(17,5,10)$ & $h_2 d_0$ & $\tau h_1 \cdot h_1 d_0$ \\
$8$ & $(15,2,7)$ & $\rho \tau h_1 \cdot h_4$ & $h_0 \cdot \tau^2 h_3^2$\\
$8$ & $(15,4,7)$ & $\rho^3 f_0$ & $\tau^2 h_0 \cdot d_0$ \\
10 & $(15,2,5)$ & $\rho^3 \tau^2 h_2 \cdot h_4$ & 
	$\tau^4 h_3 \cdot h_0 h_3$ \\
10 & $(15,4,5)$ & $\rho^3 \tau^2 f_0$ & $\tau^4 h_0 \cdot d_0$ \\
10 & $(23,8,13)$ & $h_1 \cdot \tau c_0 \cdot d_0$ & $P h_0 d_0$ \\
11 & $(15,4,4)$ & $\tau^4 h_0 \cdot h_0^2 h_4$ & $\tau^5 h_0^2 d_0$ \\
11 & $(17,5,6)$ & $\tau^2 h_0 \cdot \tau^2 e_0$ & $\tau^5 h_1 \cdot h_1 d_0$ \\
11 & $(18,5,7)$ & $\rho^3 f_0 \cdot \tau^2 h_2$ & $h_0 \cdot \tau^2 h_0 \cdot \tau^2 e_0$ \\
11 & $(23,9,12)$ & $h_0^2 i$ & $\tau P h_0^2 d_0$
\end{longtable}

\begin{longtable}{llll}
\caption{Hidden $\hsf$ extensions in the 
$\R$-motivic Adams spectral sequence} \\
\toprule 
coweight & $(s,w)$ & source & target  \\
\midrule \endhead
\bottomrule \endfoot
\label{tab:Adams-h0-extn}
7 & $(11,4)$ & $\rho^6 e_0$  &  $\tau^2 h_0\cdot Ph_2$  
\\9 & $(21, 12)$ & $h_2 f_0$ & $\rho c_0 d_0$ 
\\9 & $(23, 14)$ & $h_0 h_2 g$ & $h_1 c_0 d_0$ 
\\10 & $(22,12)$ & $\tau c_0 \cdot d_0$ & $Ph_0 d_0$ 
\\11 & $(23,12)$ & $\tau^2 h_0 \cdot h_2 g$ & $\tau P h_1 \cdot d_0$
\end{longtable}

\begin{longtable}{llll}
\caption{Hidden $\eta$ extensions in the $\R$-motivic Adams
spectral sequence} \\
\toprule 
coweight & $(s,f,w)$ & source & target \\
\midrule \endhead
\bottomrule \endfoot
\label{tab:Adams-eta-extn}
$7$ & $(15, 4, 8)$ & $h_0^3 h_4$ & $\rho^3 h_1^2 e_0$  
\\$9$ & $(15, 5, 6)$ & $\tau^2 h_0 \cdot h_0^3 h_4$ &
	$\rho \tau h_1 \cdot \tau P c_0$
\\9 & $(21,5,12)$ & $h_2 f_0$ & $c_0 d_0$
\\10 & $(20,5,10)$ & $h_2 \cdot \tau^2 e_0$ & $\rho \tau c_0 \cdot d_0$
\\10 & $(21,7,11)$ & $\rho \tau c_0 \cdot d_0$ & $P h_0 d_0$
\\11 & $(15,4,4)$ & $\tau^4 h_0 \cdot h_0^2 h_4$ & $\tau^4 P c_0$ 
\\11 & $(23,9,12)$ & $h_0^2 i$ & $P^2c_0$
\end{longtable}

\begin{longtable}{lllll}
\caption{Hidden values of extension by scalars} \\
\toprule 
coweight & $(s,f,w)$ & source & target \\
\midrule \endhead
\bottomrule \endfoot
\label{tab:R-to-C}
$7$ & $(11, 4, 4)$ & $\rho^6 e_0$ & $\tau^2 P h_2$
\\$7$ & $(14, 1, 7)$ & $\rho h_4$ & $\tau h_3^2$
\\$7$ & $(16+k, 6+k, 9+k)$ & $\rho^3 h_1^{k+2} e_0$ & $P h_1^k c_0$
\\$8$ & $(17, 4, 9)$ & $\rho f_0$ & $\tau h_2 d_0$
\\9 & $(15,4,6)$ & $\rho^2\tau^2 e_0$ & $\tau^3 h_1 d_0$
\\10 & $(15, 4, 5)$ & $\rho^3 \tau^2 f_0$ & $\tau^4 h_1 d_0$
\\10 & $(22, 7, 12)$ & $\tau c_0 \cdot d_0$ & $P d_0$
\\10 & $(23, 8, 13)$ & $h_1 \cdot \tau c_0 \cdot d_0$ & $P h_1 d_0$
\\11 & $(20, 5, 9)$ & $\tau^2 h_2 \cdot \rho f_0$ & $\tau^3 h_0^2 g$ 
\\11 & $(26,5,15)$ & $\rho h_3 g$ & $\tau h_2^2 g$
\\
\end{longtable}

\begin{bibdiv}
\begin{biblist}

\bib{ARO17}{article}{
	author={Ananyevskiy, Alexey},
	author={R\"{o}ndigs, Oliver},
	author={{\O}stv{\ae}r, Paul Arne},
	title={On very effective hermitian $K$-theory},
	eprint={arXiv:1712.01349},
	status={preprint},
}

\bib{AI82}{article}{
   author={Araki, Sh{\^o}r{\^o}},
   author={Iriye, Kouyemon},
   title={Equivariant stable homotopy groups of spheres with involutions. I},
   journal={Osaka J. Math.},
   volume={19},
   date={1982},
   number={1},
   pages={1--55},
   issn={0030-6126},
   review={\MR{656233 (83h:55028)}},
}

\bib{Bachmann18}{article}{
   author={Bachmann, Tom},
   title={Motivic and real \'{e}tale stable homotopy theory},
   journal={Compos. Math.},
   volume={154},
   date={2018},
   number={5},
   pages={883--917},
   issn={0010-437X},
   review={\MR{3781990}},
   doi={10.1112/S0010437X17007710},
}

\bib{Behrens07}{article}{
   author={Behrens, Mark},
   title={Some root invariants at the prime 2},
   conference={
      title={Proceedings of the Nishida Fest (Kinosaki 2003)},
   },
   book={
      series={Geom. Topol. Monogr.},
      volume={10},
      publisher={Geom. Topol. Publ., Coventry},
   },
   date={2007},
   pages={1--40},
   review={\MR{2402775}},
   doi={10.2140/gtm.2007.10.1},
}

\bib{behrens-shah-C2}{article}{
    title={{$C_2$}-equivariant stable homotopy from real motivic stable homotopy},
    author={Behrens, Mark},
    author={Shah, Jay},
    year={2019},
    eprint={arXiv:1908.08378},
    status={preprint},
}

\bib{BGI19}{article}{
	title={$C_2$-equivariant and $\R$-motivic stable stems, II},
	author={Belmont, Eva},
	author={Guillou, Bertrand J.},
	author={Isaksen, Daniel C.},
	status={preprint},
	date={2020}
}

\bib{BI20}{article}{
	title={$\mathbb{R}$-motivic Adams charts},
	author={Belmont, Eva},
	author={Isaksen, Daniel C.},
	status={preprint},
	date={2020},
	eprint={s.wayne.edu/isaksen/adams-charts}
}

\bib{Bredon67}{article}{
   author={Bredon, Glen E.},
   title={Equivariant stable stems},
   journal={Bull. Amer. Math. Soc.},
   volume={73},
   date={1967},
   pages={269--273},
   issn={0002-9904},
   review={\MR{206947}},
   doi={10.1090/S0002-9904-1967-11713-0},
}

\bib{bredon-loffler}{article}{
   author={Bruner, Robert},
   author={Greenlees, John},
   title={The Bredon-L\"{o}ffler conjecture},
   journal={Experiment. Math.},
   volume={4},
   date={1995},
   number={4},
   pages={289--297},
   issn={1058-6458},
   review={\MR{1387694}},
}

\bib{DI13}{article}{
   author={Dugger, Daniel},
   author={Isaksen, Daniel C.},
   title={Motivic Hopf elements and relations},
   journal={New York J. Math.},
   volume={19},
   date={2013},
   pages={823--871},
   issn={1076-9803},
   review={\MR{3141814}},
}

\bib{DI17}{article}{
    author={Dugger, Daniel},
	author={Isaksen, Daniel C.},
     title={Low-dimensional {M}ilnor-{W}itt stems over {$\mathbb{R}$}},
   journal={Ann. K-Theory},
    volume={2},
      date={2017},
    number={2},
     pages={175--210},
      issn={2379-1683},
}

\bib{GHIR20}{article}{
	author={Guillou, Bertrand J.},
	author={Hill, Michael A.},
	author={Isaksen, Daniel C.},
	author={Ravenel, Douglas Conner},
	title={The cohomology of $C_2$-equivariant $\mathcal{A}(1)$ and the homotopy of $ko_{C_2}$},
	journal={Tunisian J. Math.},
	date={2020},
	volume={2},
	number={3},
	pages={567--632},
}

\bib{GI14}{article}{
   author={Guillou, Bertrand J.},
   author={Isaksen, Daniel C.},
   title={The $\eta$-local motivic sphere},
   journal={J. Pure Appl. Algebra},
   volume={219},
   date={2015},
   number={10},
   pages={4728--4756},
   issn={0022-4049},
   review={\MR{3346515}},
   doi={10.1016/j.jpaa.2015.03.004},
}

\bib{guillou-isaksen-1/2}{article}{
   author={Guillou, Bertrand J.},
   author={Isaksen, Daniel C.},
   title={The motivic Adams vanishing line of slope $\frac12$},
   journal={New York J. Math.},
   volume={21},
   date={2015},
   pages={533--545},
   review={\MR{3386536}},
}

\bib{guillou-isaksen-eta-R}{article}{
   author={Guillou, Bertrand J.},
   author={Isaksen, Daniel C.},
   title={The $\eta$-inverted $\mathbb{R}$-motivic sphere},
   journal={Algebr. Geom. Topol.},
   volume={16},
   date={2016},
   number={5},
   pages={3005--3027},
   issn={1472-2747},
   review={\MR{3572357}},
   doi={10.2140/agt.2016.16.3005},
}

\bib{Hill11}{article}{
   author={Hill, Michael A.},
   title={Ext and the motivic Steenrod algebra over $\mathbb{R}$},
   journal={J. Pure Appl. Algebra},
   volume={215},
   date={2011},
   number={5},
   pages={715--727},
   issn={0022-4049},
   review={\MR{2747214 (2012i:55020)}},
   doi={10.1016/j.jpaa.2010.06.017},
}

\bib{HKO11a}{article}{
   author={Hu, P.},
   author={Kriz, I.},
   author={Ormsby, K.},
   title={Convergence of the motivic Adams spectral sequence},
   journal={J. K-Theory},
   volume={7},
   date={2011},
   number={3},
   pages={573--596},
   issn={1865-2433},
   review={\MR{2811716 (2012h:14054)}},
   doi={10.1017/is011003012jkt150},
}

\bib{Isaksen14c}{article}{
	author={Isaksen, Daniel C.},
	title={Stable stems},
	status={to appear},
	journal={Mem.\ Amer.\ Math.\ Soc.},
}

\bib{IWX19}{article}{
	author={Isaksen, Daniel C.},
	author={Wang, Guozhen},
	author={Xu, Zhouli},
	title={More stable stems},
	status={preprint},
	date={2020},
}

\bib{mahowald-ravenel-root}{article}{
   author={Mahowald, Mark E.},
   author={Ravenel, Douglas C.},
   title={The root invariant in homotopy theory},
   journal={Topology},
   volume={32},
   date={1993},
   number={4},
   pages={865--898},
   issn={0040-9383},
   review={\MR{1241877}},
   doi={10.1016/0040-9383(93)90055-Z},
}

\bib{May69}{article}{
   author={May, J. Peter},
   title={Matric Massey products},
   journal={J. Algebra},
   volume={12},
   date={1969},
   pages={533--568},
   issn={0021-8693},
}

\bib{Morel04}{article}{
   author={Morel, Fabien},
   title={An introduction to $\mathbb{A}\sp 1$-homotopy theory},
   conference={
      title={Contemporary developments in algebraic $K$-theory},
   },
   book={
      series={ICTP Lect. Notes, XV},
      publisher={Abdus Salam Int. Cent. Theoret. Phys., Trieste},
   },
   date={2004},
   pages={357--441 (electronic)},
   review={\MR{2175638 (2006m:19007)}},
}

\bib{Moss70}{article}{
   author={Moss, R. Michael F.},
   title={Secondary compositions and the Adams spectral sequence},
   journal={Math. Z.},
   volume={115},
   date={1970},
   pages={283--310},
   issn={0025-5874},
   review={\MR{0266216 (42 \#1123)}},
}

\bib{Quigley18}{article}{
	author={Quigley, J. D.},
	title={The motivic Mahowald invariant},
	eprint={arXiv:1801.06035},
	status={preprint},
	date={2018},
}

\bib{Quigley19}{article}{
	author={Quigley, J. D.},
	title={Real motivic and $C_2$-equivariant Mahowald invariants},
	eprint={arXiv:1904.12996},
	date={2019},
	status={preprint},
}

\bib{Voevodsky03b}{article}{
   author={Voevodsky, Vladimir},
   title={Motivic cohomology with ${\bf Z}/2$-coefficients},
   journal={Publ. Math. Inst. Hautes \'Etudes Sci.},
   number={98},
   date={2003},
   pages={59--104},
   issn={0073-8301},
   review={\MR{2031199 (2005b:14038b)}},
   doi={10.1007/s10240-003-0010-6},
}

\bib{Voevodsky10}{article}{
   author={Voevodsky, Vladimir},
   title={Motivic Eilenberg-Maclane spaces},
   journal={Publ. Math. Inst. Hautes \'Etudes Sci.},
   number={112},
   date={2010},
   pages={1--99},
   issn={0073-8301},
   review={\MR{2737977 (2012f:14041)}},
   doi={10.1007/s10240-010-0024-9},
}

\end{biblist}
\end{bibdiv}

\end{document}